\definecolor{red}{RGB}{255,0,0}
\definecolor{green}{RGB}{0,150,0}
\definecolor{blue}{RGB}{0,0,255}  
\crefname{equation}{equation}{equations}
\crefname{figure}{Figure}{Figures}
\newtheorem{thm}{Theorem}[section]
\newtheorem{prop}[thm]{Proposition}
\newtheorem{lem}[thm]{Lemma}
\newtheorem{cor}[thm]{Corollary}
\theoremstyle{definition}
\newtheorem{definition}[thm]{Definition} 
\newtheorem{example}[thm]{Example}
\theoremstyle{remark}
\newtheorem{remark}[thm]{Remark} 
\numberwithin{equation}{section} 
\DeclareMathOperator{\re}{Re}
\DeclareMathOperator{\im}{Im}
\renewcommand{\Re}{\mathop{\rm Re}}
\renewcommand{\Im}{\mathop{\rm Im}}
\DeclareMathOperator{\supp}{supp} 
\DeclareMathOperator{\discr}{Discr}
\DeclareMathOperator{\const}{const}
\DeclareMathOperator{\res}{Res}
\DeclareMathOperator{\card}{card}
\newcommand{\N}{\mathbb{N}}
\newcommand{\C}{\mathbb{C}}
\newcommand{\R}{\mathbb{R}}
\newcommand{\Q}{\mathbb{Q}}
\newcommand{\boh}{\mathit{o}}
\newcommand{\Boh}{\mathcal{O}}
\newcommand{\lebesgue}{\operatorname{mes}}
\title[Critical measures: structure of trajectories]{Critical measures for vector energy: global structure of trajectories of quadratic differentials}
\author[A. Mart\'{\i}nez-Finkelshtein]{Andrei Mart\'{\i}nez-Finkelshtein}
\address[AMF]{Department of Mathematics, University of Almer\'{\i}a, Almer\'{\i}a, Spain}
\email{andrei@ual.es}
\author[G.~Silva]{Guilherme L.~F.~Silva}
\address[GS]{KU Leuven, Department of Mathematics, Celestijnenlaan 200B bus 2400, B-3001 Leuven, Belgium
\newline
*Current address: Department of Mathematics, University of Michigan, 530 Church Street, Ann Arbor, MI 48109, USA}
\email{silvag@umich.edu}
\keywords{Logarithmic potential theory, vector energy, extremal problems, equilibrium on the complex plane, critical measures, $S$-property, quadratic differentials trajectories, compact Riemann surface.}
\subjclass[2010]{Primary: 31A15; Secondary: 14H05, 30C70, 30E10, 30F10, 30F30}
\begin{document}

\begin{abstract} 
Saddle points of a vector logarithmic energy with a vector polynomial external field on the plane constitute the {\it vector-valued critical measures}, a notion that finds a natural motivation in several branches of analysis. We study in depth the case of measures $\vec \mu=(\mu_1, \mu_2,\mu_3)$ when the mutual interaction comprises both attracting and 
repelling forces.

For arbitrary vector polynomial external fields we establish general structural results about critical measures, such as their characterization in terms of an algebraic equation 
solved by an appropriate combination of their Cauchy transforms, and the symmetry properties (or the $S$-properties) exhibited by such measures. In consequence, we conclude that vector-valued critical measures are supported on a finite number of analytic arcs, that are trajectories of a quadratic differential globally defined on a three-sheeted Riemann surface. 
The complete description of the so-called critical graph for such a differential is the key to the construction of the critical measures.

We illustrate these connections studying in depth a one-parameter family of critical measures under the action of a cubic external field. This choice is motivated by the 
asymptotic analysis of a family of (non-hermitian) multiple orthogonal polynomials, that is subject of a forthcoming paper. Here we compute explicitly the Riemann surface and the 
corresponding quadratic differential, and analyze the dynamics of its critical graph as a function of the parameter, 
giving a detailed description of the occurring phase transitions. When projected back to the complex plane, this construction gives us the complete family of vector-valued critical measures, that in this context turn out to be vector-valued equilibrium measures.

\end{abstract}

\maketitle


\section{Introduction and statement of main results}

\subsection{Historical background} \label{sec:historical}

Statistical systems of many particles have been object of intense analysis for a long time, both from the perspective of physics and mathematics. The study of a particular type of 
interacting particle systems, the so-called determinantal point processes (and their cousins, Pfaffian processes), has been especially fruitful in the past thirty years. This 
success can be 
explained both by the ubiquitous character and flexibility of these models (describing the eigenvalues of several random matrix ensembles, non-intersecting diffusion paths, random 
growth models, random tilings, to mention a few) and the introduction of new tools, intrinsically related with the analytic theory of orthogonal polynomials and their 
generalizations. 

A common feature of these models is that either the joint probability density, the correlation functions, the normalization constant or the generating function can be expressed as 
a determinant (resp., a Pfaffian), and the right selection of the functions appearing in these determinants unveils the integrable structure of the underlying processes. Another 
unified property of these models is the possibility to put them in the framework of the so-called log gases, where the particles behave as charges on one or two dimensional sets, 
subject to the logarithmic interaction. 

The best known case is the spectrum of some unitary matrix ensembles \cite{deift_book,Deiftetal1999}, described in terms of classical families of orthogonal polynomials on the real
line. Their zeros, all real and simple, satisfy an electrostatic model that goes all the way back to Stieltjes \cite{Stieltjes1885}, and solve a minimization problem for the 
associated (logarithmic) energy. In other words, the zeros provide an equilibrium configuration on the conducting real line. Similar situation occurs for some non-intersecting 
paths models or  for the  six-vertex model in statistical mechanics \cite{bleher_liechty_book}, to mention some more examples. 

Another classical framework for orthogonal polynomials (orthogonality on the unit circle), developed in the seminal works of Szeg\H{o} \cite{szego_book}, is connected to the 
analysis of the Ising model  \cite{deift_its_krasovsky_ising,deift_its_krasovsky_fisher_hartwig}.

Further immediate generalizations of the problems above oblige us to leave the real line and extend the notion of orthogonality and the associated log gas models to the complex 
plane. 
The so-called \emph{non-hermitian orthogonal polynomials} appear naturally in approximation theory, as denominators of rational (e.g.~Pad\'e) approximants to analytic functions 
\cite{gonchar_rakhmanov_rato_rational_approximation,nikishin_sorokin_book}, or equivalently, in the study of continued fractions. Electrostatic models on a conducting plane are 
also associated with the polynomial solutions of certain linear differential equations \cite{martinez_rakhmanov,MR2647571}.

Recently, non-hermitian orthogonality found its way to areas with a much more ``modern'' flavor, playing the crucial role in the description of the 
rational solutions to  Painlevé equations \cite{balogh_bertola_bothner_generalized_vorobev_yablonski,bertola_bothner_painleve}, in theoretical physics  
\cite{alvarez_alonso_medina_1,alvarez_alonso_medina_2,bleher_deano_painleve_I,bleher_deano_partition_function} and in numerical analysis  
\cite{deano_kuijlaars_huybrechs_complex_orthogonal_polynomials}. 

Observe that due to analyticity, there is a certain freedom in the choice of the integration contour for the non-hermitian orthogonal polynomials, which means that the location of 
their zeros is a priori not clear. As it follows from the fundamental works of Nuttall \cite{MR1036606} and Stahl  
\cite{stahl_orthogonal_polynomials_complex_measures,stahl_orthogonal_polynomials_complex_weight_function} (see also the survey \cite{MR2963451}), the limiting zero counting measure 
of these polynomials can be characterized in terms of certain max-min problems of the logarithmic potential theory, where the 
energy is minimized on each compact set, but these minima are maximized over an appropriately chosen class of contours. These questions (whose simplest example is the Chebotarev’s 
problem \cite{MR2664566,Kuzmina1980} about the selection of sets of minimal logarithmic capacity) are closely related to the free boundary problems in the theory of partial 
differential equations.

Motivated by his study of convergence of Pad\'e approximants to functions with branch points, Stahl  
\cite{stahl_orthogonal_polynomials_complex_measures,stahl_orthogonal_polynomials_complex_weight_function} not only proved that the zeros of the associated non-hermitian orthogonal 
polynomials distribute asymptotically as the equilibrium measure on the set of minimal capacity within the appropriate class (proving in passing their existence), but also 
characterized these sets in terms of a symmetry property, now known as the {\it $S$-property}. These ideas were considerably extended by Gonchar and Rakhmanov 
\cite{gonchar_rakhmanov_rato_rational_approximation} to include the case when the orthogonality weight depends on the degree of the polynomial. The electrostatic model features now 
a non-trivial external field (or background potential) that persists in the asymptotic regime; in consequence, the notion of the $S$-property had to be extended to deal with this 
external field.

The resulting Gonchar-Rakhmanov-Stahl (or GRS) theory allows to formulate conditional theorems of the form: ``\emph{if} there exists a compact set with the $S$-property 
corresponding to the associated electrostatic model, then the weak limit of the zero counting measures of the related non-hermitian orthogonal polynomials is the equilibrium 
measure of this set''. However, the existence of such compact sets is a problem from the geometric function theory, and it usually has to be settled in each concrete situation, 
basically from scratch. There are very few genuinely general rigorous results in this sense, for instance the most recent one of Kuijlaars and Silva 
\cite{kuijlaars_silva} settling down the existence for polynomial external fields on the plane, and also the contribution by Kamvissis and Rakhmanov \cite{kamvissis_rakhmanov,kamvissis} 
which deals with the Green potential instead of the logarithmic potential.

The notion of the $S$-property found also a natural interpretation in the light of the Deift-Zhou's non-linear steepest descent method for the Riemann-Hilbert problems 
\cite{deift_book}. One of the key steps in the asymptotic analysis is the deformation of the contours, which transforms oscillatory behavior into a non-oscillatory plus 
exponentially decaying one. This deformation consists in ``opening of lenses'' along the  level sets of certain functions (known as {\it $g$-functions}). It is precisely the 
$S$-property of these sets which guarantees the exponential decay on all non-relevant contours.

The available log gas models on the conducting plane suggest to extend the analysis of the equilibrium distributions (local energy minima on the prescribed sets) to the study of 
saddle points of the logarithmic energy on the plane. This yields the notion of  \emph{critical measures}, studied systematically in  \cite{martinez_rakhmanov}, following the 
original ideas in the unpublished manuscript  \cite{Perevoznikova_Rakhmanov}. The advantage of dealing with saddle points of the energy is the possibility to avoid the 
complications of the free boundaries, as well as to use variational techniques that go back to the work of L\"owner and Schiffer.

One of the main conclusions of \cite{martinez_rakhmanov} (see also \cite{kuijlaars_silva}) is the fact that the 
Cauchy transform of any critical measure satisfies (a.e.~on the plane) a quadratic equation, known as the \emph{spectral curve} in the context of random matrix theory. An immediate
consequence of this fact is that the support of such a measure is a finite union of analytic curves, all of them being trajectories of the same quadratic differential on the plane.

Quadratic differentials are known to play an essential role in the study of conformal and quasi-conformal mappings, univalent functions, etcetera. Teichm\"uller \cite{MR0003242},
based on the fundamental work of Gr\"otzsch, formulated a principle according to which quadratic differentials and their singularities are associated with the solutions of 
extremal 
problems in geometric function theory. Although he did not prove any general result supporting his principle, we should give credit to Teichm\"uller for putting spotlight on 
quadratic differentials. Schiffer \cite{MR1575335,MR0064149} introduced several variational techniques in the geometric function theory, allowing to find differential equations for 
the 
boundaries of the extremal domains, which in turn give a more formal connection with quadratic differentials. See \cite{jenkins_book,Pommerenkebook,MR867407} for further details.

As it was mentioned above, the Riemann-Hilbert asymptotic analysis for $2\times 2$ matrix-valued functions characterizing the non-hermitian orthogonality provides another natural 
connection with quadratic differentials: the ``right'' level curves for the $g$-functions, that usually  have a quite complicated structure, are trajectories of a suitable 
quadratic differential. Thus, the problem of existence of the appropriate curves can be reduced to the question about the global structure of such trajectories. This is not a 
simplification (the description of the global structure can be a formidable task), but it opens the gate to other techniques from the geometric function theory. These ideas have 
been successfully applied by Bertola \cite{bertola_boutroux}, being closely related (in fact, in some sense equivalent) to 
the max-min approach mentioned before, see also \cite{bertola_tovbis_quartic_weight}. In the same spirit and in a much more explicit form, quadratic differentials also appeared in 
similar asymptotic problems in  
\cite{baik_deift_et_al_quadratic_differential,atia_martinez_et_al_quadratic_differentials,martinez_thabet_quadratic_differentials,Kuijlaars/McLaughlin:01,Kuijlaars/Mclaughlin:04,holst_shapiro}. 
We emphasize that in all these problems, the underlying quadratic differentials are defined on the complex plane.

Returning to our motivation rooted in the interacting particle systems, recent works have showed that more general models, such as the hermitian matrix model with an external 
source 
\cite{kuijlaars_bleher_external_source_multiple_orthogonal,bleher_delvaux_kuijlaars_external_source,kuijlaars_bleher_external_source_gaussian_I}, the two-matrix model
\cite{duits_kuijlaars_two_matrix_model,duits_kuijlaars_mo,bertola_gekhtman_szmigielski_cauchy_two_matrix_model,balogh_bertola_vector_potential_problem,duits_geudens_kuijlaars}, 
some classes of non-intersecting paths
\cite{kuijlaars_multiple_orthogonal_random_matrix_theory_ICM,kuijlaars_martinez-finkelshtein_wielonsky_bessel_paths} or the normal matrix model 
\cite{bleher_kuijlaars_normal_matrix_model,kuijlaars_lopez_normal_matrix_model} require  considering log gases with more than one group of charges. In this case, standard 
orthogonal polynomials are not enough, and we must turn to the so-called \emph{multiple orthogonal polynomials} and to the associated vector equilibrium problems \cite{MR807734}. 
The connection of these two notions was put forward first by Gonchar and Rakhmanov~\cite{MR807734,gonchar_rakhmanov_1981_pade}, motivated by the analytic theory of Hermite--Pad\'e 
approximants. The reader interested in further details is advised to look into the monograph by Nikishin and Sorokin~\cite{nikishin_sorokin_book}, which features a nice 
introduction to vector equilibrium problems from the approximation theory's perspective. 

Although the analytic theory of multiple orthogonal polynomial is still in its infancy, two somewhat extreme cases are fairly well understood (see 
e.g.~\cite{MR2247778,MR2963452}). 
The first one, known as the Angelesco system, corresponds to orthogonality with respect to measures living on disjoint sets, whose asymptotics is described by a vector equilibrium 
for repelling systems of charges. The other one, called the Nikishin system, where on the contrary the orthogonality is defined on the same set for all measures, is governed by a 
system of mutually attracting charges. In both situations the size of the charged ensembles is fixed. From the recent works of Aptekarev \cite{MR2475084} and  Rakhmanov 
\cite{MR2796829} it became apparent that the ``intermediate'' cases require more complex log-gases models for their asymptotic descriptions, mixing both attracting and repelling 
charges, as well as allowing for charges ``flowing'' from one ensemble to another, see Sections \ref{sec:mainresults} and \ref{section_general_variational} below.

In the context of the non-hermitian orthogonality,  multiple orthogonal polynomials yield naturally the notion of the \emph{critical vector-valued measures}, defined again as 
saddle points (and thus, solutions of some variational problems) for the total logarithmic energy.  

In the same vein as the standard non-hermitian orthogonality can be characterized by a $2\times 2$ matrix valued Riemann-Hilbert problem \cite{deift_book,Fokas92}, the multiple 
orthogonality can be associated with larger ($k\times k$, $k\geq 3$) matrix valued Riemann-Hilbert problems \cite{Assche01}. Some heuristics (that in some situations can be made 
rigorous using the WKB approximation arguments \cite{MR2964145}) indicates that the asymptotics of multiple orthogonal polynomials (and the critical vector-valued measures) can be 
described using higher-order algebraic equations. This is indeed the case, as it follows from some previous works and from the results of this paper.

Some instances of the critical vector-valued measures have appeared in literature, basically in the study of the limit zero distribution of some  multiple orthogonal polynomials 
satisfying non-hermitian orthogonality conditions. The most studied cases are those presenting strong symmetry, so that the support of these measures can be easily derived. 
Other works, such as 
\cite{aptekarev_kuijlaars_vanassche_hermite_pade_genus_0,aptekarev_toulyakov_vanassche_hyperelliptic_uniformization}, consider non-symmetric situations. Their basic approach is to 
start from a general form 
expected for a cubic equation that should govern the system, and then make a genus ansatz, hence reducing the free parameters of the equation to a minimum. Although a 
vector equilibrium is present, due to the explicit form of the deduced algebraic equation its detailed study is not needed and usually bypassed. Moreover, in such situations there 
is no external 
field involved, which is indeed a considerable simplification, even in the scalar case. 

One of our main motivations is to understand the structure of the critical vector-valued measures with harmonic external fields. We follow the approach initiated in 
\cite{martinez_rakhmanov,kuijlaars_silva}, and show that the variational method, applied to critical measures, allows to deduce the corresponding algebraic equation (spectral 
curve) 
and the associated trajectories of a quadratic differential, now on a compact Riemann surface instead of the plane, whose analysis will be the central part of this paper.

\subsection{Main results: general polynomial external fields}\label{sec:mainresults}

Assume we are given a vector of three non-negative measures $\vec\mu=(\mu_1,\mu_2,\mu_3) $, compactly supported on the plane. For the 
interaction matrix $A$ and vector of real-valued functions (``external fields'') $\vec\phi=(\phi_1,\phi_2,\phi_3)$
\begin{equation}
\label{interactionmatrix}
A=(a_{j,k})=
\begin{pmatrix}
 1 & \frac{1}{2} & \frac{1}{2} \\
 \frac{1}{2} & 1 & -\frac{1}{2} \\
 \frac{1}{2} & -\frac{1}{2} & 1
\end{pmatrix},
\quad
\phi_j=\re \Phi_j,\quad j=1,2,3,
\end{equation}
where $\Phi_1,\Phi_2,\Phi_3$ are polynomials\footnote{Although our methodology is, generally speaking, valid also for general positive-semidefinite interaction matrices and 
when $\Phi_j'$ are rational, we restrict ourselves here to the given matrix $A$ and the
polynomial external fields; these choices avoid some technicalities related to cumbersome notation and the order or coincidence of the poles.}, and denoting by
$$
I(\mu,\nu):=\iint\log\frac{1}{|x-y|}d\mu(x)d\nu(y)
$$
the logarithmic energy of two compactly supported measures $\mu$ and $\nu$, we consider the total energy 
functional of the form \cite{MR807734}
\begin{equation}\label{energy_functional}
E(\vec \mu)=E(\vec\mu,\vec\phi)=\sum_{j,k=1}^3a_{j,k} I(\mu_j,\mu_k)+\sum_{j=1}^3\int \phi_j d\mu_j.
\end{equation}

Matrix $A$ is symmetric, singular and positive-semidefinite. Its only 
non-zero eigenvalue (of order 2) is $3/2$, and the corresponding eigenspace is given by the vectors $(v_1, v_2, v_3)^T \in \C^3$ (in what follows, $(\cdot)^T$ stands for the 
transpose) satisfying
$$
v_1-v_2-v_3=0.
$$
As it will be seen below, the external fields in \eqref{interactionmatrix} must satisfy the compatibility condition
of $\vec \Phi'=\left(\Phi'_1,  \Phi'_2, \Phi'_3 \right)^T$ being an eigenvector for the eigenvalue $3/2$, that is,
\begin{equation}\label{compatibility_condition_external_fields}
\Phi_1'(z)-\Phi_2'(z)=\Phi_3'(z),\quad z\in\C.
\end{equation}
We complete the description of the electrostatic model by introducing the constraints on vector-valued measures under consideration: given a parameter $\alpha\in [0,1]$, we require that 
$\vec\mu=(\mu_1,\mu_2,\mu_3) $ satisfies 
\begin{equation}
\label{massconstraint}
|\mu_1|+|\mu_2|=1,\quad |\mu_1|+|\mu_3|=\alpha,\quad |\mu_2|-|\mu_3|=1-\alpha. 
\end{equation}
Here $|\mu|$ denotes the total mass (total variation) of the measure $\mu$.

It should be pointed out that a particular case of this model (with all $\Phi_j\equiv 0$ and $\alpha=1/2$) appears in the work of Aptekarev \cite{MR2475084} and Rakhmanov 
\cite{MR2796829} in their study of Hermite-Padé approximants 
\cite{aptekarev_kuijlaars_vanassche_hermite_pade_genus_0,rakhmanov_hermite_pade,aptekarev_vanassche_yatsselev}. Notice that $A$ in \eqref{interactionmatrix} contains submatrices 
that are the interaction matrices both for the Angelesco and for the Nikishin cases.

Let us denote by $\lebesgue_2$ the plane Lebesgue measure on $\C$. 
We are interested in the existence of critical vector-valued measures within a family $\mathcal M_\alpha$ of measures  $\vec\mu=(\mu_1,\mu_2,\mu_3) $ 
satisfying \eqref{massconstraint} plus the following conditions:
\begin{itemize}
\item each $\mu_j$, $j=1, 2, 3$, is a non-negative Borel measure, supported on a compact set in $\C$ of zero plane Lebesgue measure,
$$
\lebesgue_2(\supp \mu_j)=0, \quad j=1, 2, 3,
$$
and such that the energy $E(\vec \mu)$, defined in \eqref{energy_functional}, is finite.
\item the set
\begin{equation} \label{defSetS}
S_\alpha:=\bigcup_{1\leq j<k\leq 3}(\supp\mu_j\cap \supp\mu_k)
\end{equation}
is finite, 
\begin{equation}\label{assumption_intersection_supports}
\card S_\alpha< \infty.
\end{equation}
\end{itemize}

The \textit{vector equilibrium problems} deal with the minimizers of the energy functional  \eqref{energy_functional} over a family of measures $\vec\mu$ living on prescribed (and 
fixed) 
subsets of $\C$, see e.g.~\cite{MR807734,beckermann_et_al_equilibrium_problems,hardy_kuijlaars_vector_equilibrium} and the references  therein. As it follows from 
\cite{beckermann_et_al_equilibrium_problems}, the equilibrium measure (global minimizer of $E(\vec \mu)$), if it exists in the class $\mathcal M_\alpha$, is unique. 

Here we consider a natural extension of the notion of vector equilibrium for the conducting plane: the \textit{critical vector-valued measures}, defined as 
follows. For $t\in \C$ and $h\in C^2(\C)$, denote 
by $\mu^t$ the pushforward measure of $\mu$ induced by the variation of the plane $z\mapsto h_t(z)=z+th(z)$, $z\in\C$. 

\begin{definition}\label{def:criticalmeasures}
We say that for $\alpha\in [0,1]$, $\vec\mu=(\mu_1,\mu_2,\mu_3)$ is an {\it $\alpha$-critical 
	measure} (or a saddle point of the energy $E(\cdot)$) if $\vec \mu\in \mathcal M_\alpha$, and 
\begin{equation}\label{variations_critical_measure}
\lim_{t\to 0}\frac{E(\vec\mu^t)-E(\vec\mu)}{t}=0,
\end{equation}
for every function $h\in C^2(\C)$.
\end{definition}
Sometimes, when the value of $\alpha$ is irrelevant or is clear from the context, we will simplify the terminology speaking simply about \emph{critical measures}.

In order to formulate our first results we introduce some notation. Given a (scalar) non-negative Borel measure $\lambda$, compactly supported on the plane, we denote by
\begin{equation*}
U^{\lambda}(z):=\int \log \frac{1}{|x-z|}d\lambda(x)
\end{equation*}
its logarithmic potential, which is harmonic in $\C\setminus \supp\lambda$ and superharmonic in $\C$. Additionally, the principal value of the Cauchy transform
\begin{equation}\label{principalvalue}
C^\lambda(z):= \lim_{\epsilon \to 0+} \int_{|z-x|> \epsilon }  \frac{1}{ x-z }\, d\lambda(x)
\end{equation}

is analytic in $\C\setminus \supp\lambda$, and
\begin{equation}\label{derivative_potential}
2 \, \frac{\partial U^\lambda}{\partial z} (z)=  C^{\lambda}(z),\quad z\in \C\setminus \supp\lambda,
\end{equation}
with
$$
\frac{\partial}{\partial   z} = \frac{1}{2}\, \left(\frac{\partial}{\partial x} - i \frac{\partial}{\partial y}\right).
$$

The interaction matrix $A$ in \eqref{interactionmatrix} satisfies the following identities:
\begin{equation}\label{identitiesforA}
2A =   B^T B = 3I_3 - b b^T, \quad B= \begin{pmatrix}
1 & 1 & 0 \\
-1 & 0 & -1 \\
0 & -1 & 1
\end{pmatrix}, \quad b= \begin{pmatrix}
1  \\
-1 \\
-1  
\end{pmatrix},
\end{equation}
where $I_3$ is the $3\times 3$ identity matrix. For a vector of measures $\vec\mu=(\mu_1,\mu_2,\mu_3)\in \mathcal M_\alpha$, define the vector of functions $\vec \xi=\left(\xi_1, 
\xi_2, \xi_3 \right)^T$ through the equality
\begin{equation}\label{definitionofXivec} 
\vec \xi = B \left( \frac{1}{3}\vec \Phi' + \vec C \right),
\end{equation}
where $\vec C = \left(C^{\mu_1}, C^{\mu_2}, C^{\mu_3} \right)^T$. Alternatively, the functions $\xi_1,\xi_2,\xi_3$ are given explicitly as

 \begin{equation}\label{definition_xi_functions}
\begin{aligned}
\xi_1(z) & =\frac{\Phi_1'(z)}{3}+\frac{\Phi_2'(z)}{3}+C^{\mu_1}(z)+C^{\mu_2}(z),\\
\xi_2(z) & =-\frac{\Phi_1'(z)}{3}-\frac{\Phi_3'(z)}{3}-C^{\mu_1}(z)-C^{\mu_3}(z),\\
\xi_3(z) & =-\frac{\Phi_2'(z)}{3}+\frac{\Phi_3'(z)}{3}-C^{\mu_2}(z)+C^{\mu_3}(z).
\end{aligned}
\end{equation}

It was proved in \cite[Section 5]{martinez_rakhmanov} (see also \cite{kuijlaars_silva}) that the Cauchy transform of a scalar critical measure is a solution of a 
quadratic equation $\lebesgue_2$-a.e.~on $\C$, which is a characterizing property of such measures. Our first result is a natural generalization of this property to the critical vector-valued measures:
\begin{thm}\label{thm_spectral_curve}
Suppose that for $\alpha\in [0,1]$, $\vec \mu=(\mu_1,\mu_2,\mu_3) \in \mathcal M_\alpha$ is an $\alpha$-critical measure in the sense of Definition~\ref{def:criticalmeasures}. 
Then there exist a polynomial $R$ and a rational function $D$ with poles of order at most $2$ such that the functions $\xi_1,\xi_2,\xi_3$ in \eqref{definitionofXivec}
satisfy $\lebesgue_2$-a.e.~on $\C$  the algebraic equation
\begin{equation}\label{spectral_curve_general}
\xi^3- R(z)\xi+D(z)=0.
\end{equation}
The polynomial $R$ is given by
\begin{equation}\label{expression_R}
R(z)=\frac{1}{9}\sum_{j,k=1}^3 a_{j,k}\Phi_j'(z)\Phi_k'(z)-\sum_{j=1}^3 \int \frac{\Phi'_j(x)-\Phi'_j(z)}{x-z}\, d\mu_j(x),\quad z\in\C.
\end{equation} 

Moreover, each of the measures $\mu_1,\mu_2,\mu_3$ is supported on a finite union of analytic arcs, and they are absolutely continuous with respect to the arclength measure of 
their supports. 

Each pole $p$ of $D$, if exists, belongs to the support of at least two of the measures $\mu_1$, $\mu_2$, $\mu_3$, and for each such a 	measure $\mu_k$ its density satisfies
\begin{equation}\label{local_behavior_blow_up}
\frac{d\mu_k}{|dz|}(z)=|z-p|^{-\nu}(c+\boh(1)),\quad \mbox{ as } z\to p \mbox{ along } \supp\mu_k,
\end{equation}
for some nonzero constant $c$ (which may depend on $p$ and $k$), where $3\nu$ is the order of $p$ as a pole of $D$.
\end{thm}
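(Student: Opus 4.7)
The plan is to adapt the variational approach of \cite{martinez_rakhmanov,kuijlaars_silva} from the scalar to the vector setting, and then to read off the structural consequences of the resulting algebraic equation.

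\emph{Step 1: the variational identity.} For the pushforward $\mu\mapsto\mu^t$ along $z\mapsto z+th(z)$, the standard Schiffer-type computations give
\[
\frac{d}{dt}I(\mu_j^t,\mu_k^t)\Big|_{t=0}=-\re\iint\frac{h(x)-h(y)}{x-y}\,d\mu_j(x)\,d\mu_k(y),\quad \frac{d}{dt}\int\phi_j\,d\mu_j^t\Big|_{t=0}=\re\int\Phi_j'(z)h(z)\,d\mu_j(z).
\]
Assembling these via \eqref{energy_functional} and using that both $h$ and $ih$ are admissible in \eqref{variations_critical_measure}, the real-part operation disappears and one obtains
\[
\sum_{j,k}a_{j,k}\iint\frac{h(x)-h(y)}{x-y}\,d\mu_j(x)\,d\mu_k(y)=\sum_j\int\Phi_j'(z)h(z)\,d\mu_j(z)
\]
for every $h\in C^2(\C)$ of sufficient decay. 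Testing with $h(z)=\chi(z)/(z-w)$, where $w\notin\bigcup_j\supp\mu_j$ and $\chi\equiv 1$ on a neighbourhood of the supports, and then removing the cut-off, yields
\begin{equation*}
\sum_{j,k}a_{j,k}C^{\mu_j}(w)C^{\mu_k}(w)+\sum_j\Phi_j'(w)C^{\mu_j}(w)+P(w)=0,\qquad P(w):=\sum_j\int\frac{\Phi_j'(x)-\Phi_j'(w)}{x-w}\,d\mu_j(x),
\end{equation*}
with $P$ a polynomial because each $\Phi_j'$ is.

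\emph{Step 2: the cubic equation.} Since $(1,1,1)B=(0,0,0)$, the definition \eqref{definitionofXivec} gives $\sum_j\xi_j\equiv 0$. Using $2A=B^{T}B$ from \eqref{identitiesforA} and writing $\vec u=\tfrac{1}{3}\vec\Phi'+\vec C$,
\[
\tfrac{1}{2}\sum_j\xi_j^2=\vec u^{T}A\vec u=\tfrac{1}{9}\sum_{j,k}a_{j,k}\Phi_j'\Phi_k'+\tfrac{2}{3}\,(\vec\Phi')^{T}A\vec C+\sum_{j,k}a_{j,k}C^{\mu_j}C^{\mu_k}.
\]
The compatibility condition \eqref{compatibility_condition_external_fields} is equivalent to $A\vec\Phi'=\tfrac{3}{2}\vec\Phi'$, which collapses the middle term to $\sum_j\Phi_j'C^{\mu_j}$; substituting Step~1 then produces $\tfrac{1}{2}\sum_j\xi_j^2=R$ with $R$ as in \eqref{expression_R}. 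Combined with $\sum_j\xi_j=0$, the first two elementary symmetric functions of $(\xi_1,\xi_2,\xi_3)$ are $e_1=0$ and $e_2=-R$, so upon setting $D:=-\xi_1\xi_2\xi_3$ the $\xi_j$'s solve $\xi^3-R(z)\xi+D(z)=0$, which is \eqref{spectral_curve_general}.

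\emph{Step 3: rationality of $D$ and structural conclusions.} Inspecting \eqref{definition_xi_functions}, across every arc of $\supp\mu_k\setminus S_\alpha$ exactly one $\xi_{j_0}$ -- the one not involving $C^{\mu_k}$ -- has no jump. With $D^\pm$ the boundary values of $D$, the relation $\xi_{j_0}^3-R\xi_{j_0}+D^\pm=0$ on both sides forces $D^+=D^-$, so $D$ extends holomorphically across $\bigcup_k\supp\mu_k\setminus S_\alpha$. Since $S_\alpha$ is finite by \eqref{assumption_intersection_supports} and $D$ has polynomial growth at infinity (each $\Phi_j'$ is polynomial and $C^{\mu_j}(z)=O(1/|z|)$), $D$ is globally a rational function on $\C$, with poles only in $S_\alpha\subset\bigcup_{j<k}(\supp\mu_j\cap\supp\mu_k)$. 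A Newton--Puiseux analysis of \eqref{spectral_curve_general} at a pole $p$ of $D$ of order $m\geq 1$ -- where $R$ is regular -- shows the dominant roots satisfy $\xi\sim(-D(z))^{1/3}\sim(z-p)^{-m/3}$; local integrability of Cauchy transforms of finite measures then forces $m/3<1$, hence $m\leq 2$, and with $3\nu=m$ the same expansion delivers \eqref{local_behavior_blow_up}. Finally, off $S_\alpha$ the density of $\mu_k$ equals, up to the Plemelj--Sokhotski constant, the jump across $\supp\mu_k$ of the relevant pair of branches of the algebraic function defined by \eqref{spectral_curve_general}; these branches are real-analytic on each connected arc (so the density is real-analytic and $\mu_k$ is absolutely continuous with respect to arclength) and the branch locus is itself a finite union of analytic arcs, giving the remaining assertions.

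The main technical difficulty is to upgrade the identity $\tfrac{1}{2}\sum_j\xi_j^2=R$, which a priori holds only in $\C\setminus\bigcup_j\supp\mu_j$, to a pointwise statement with well-defined boundary values on the supports -- a necessary ingredient for the single-valuedness argument on $D$. This requires controlling the non-tangential boundary behaviour of the Cauchy transforms via Plemelj--Sokhotski and then handling the finite singular set $S_\alpha$ through the local Newton--Puiseux analysis; the truncation used to justify the test function $h(z)=\chi(z)/(z-w)$ is a secondary technical point requiring careful control of the cut-off contributions.
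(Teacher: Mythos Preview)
Your approach is essentially the paper's: derive $\sum_j\xi_j=0$ and $\tfrac12\sum_j\xi_j^2=R$ from the Schiffer variation $h_z(x)=1/(x-z)$, define $D$ via Vieta, show it is single-valued off $S_\alpha$ and rational, then read off the local behaviour. Two points deserve tightening.

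First, your Step~3 argument for the single-valuedness of $D$ is phrased in terms of ``arcs of $\supp\mu_k$'' and boundary values across them, but at that stage you have not yet established that the supports consist of arcs, so the argument is circular. The paper avoids this: from $\sum_j\xi_j=0$ and $\sum_j\xi_j^2=2R$ one gets the algebraic identity $\xi_j^3-R\xi_j=\xi_k^3-R\xi_k$ for all $j,k$, so $D:=-\xi_j^3+R\xi_j$ is \emph{independent of $j$}. Taking $j=1,2,3$ in turn shows $D$ is analytic on each of $\C\setminus(\supp\mu_1\cup\supp\mu_2)$, $\C\setminus(\supp\mu_1\cup\supp\mu_3)$, $\C\setminus(\supp\mu_2\cup\supp\mu_3)$, hence on $\C\setminus S_\alpha$. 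No boundary-value analysis is needed.

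Second, your bound $m/3<1$ does not follow from ``local integrability of Cauchy transforms'': local $L^1$-integrability with respect to $\lebesgue_2$ only yields $m/3<2$, i.e.\ $m\le 5$. The paper's sharper bound uses that the measures $\mu_k$ have finite logarithmic energy and hence no atoms, so $(z-p)C^{\mu_k}(z)\to 0$ as $z\to p$; combined with the Puiseux expansion $\xi_j\sim c_j(z-p)^{-m/3}$ (with $c_j\neq 0$, forced by the cubic with $D$ dominant) this gives $m/3<1$. Finally, the assertion that the supports are finite unions of analytic arcs is not something you can simply read off from the branch locus; the paper invokes the result of Bj\"ork--Borcea--B{\o}gvad on measures whose Cauchy transforms satisfy an algebraic equation, and this is where the hypothesis $\lebesgue_2(\supp\mu_j)=0$ is actually used.
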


We also prove a converse to Theorem~\ref{thm_spectral_curve}.

\begin{thm}\label{thm_reciprocal_spectral_curve}
 Suppose that for $\alpha\in [0,1]$, $\vec \mu=(\mu_1,\mu_2,\mu_3) \in \mathcal M_\alpha$ is such that the measures $\mu_1,\mu_2,\mu_3$ are supported on a finite union of 
analytic arcs, and are absolutely continuous with respect to the arclength.

If for some polynomials $V_1,V_2,V_3$ with
\begin{equation}\label{compatibility_condition_pols_V}
V_1'(z)+V_2'(z)+V_3'(z)=0,\quad z\in \C,
\end{equation}
the functions
 \begin{equation}\label{shifted_resolvents_pols_V}
\begin{aligned}
\xi_1(z) & =V_1'(z)+C^{\mu_1}(z)+C^{\mu_2}(z),\quad z\in \C\setminus(\supp\mu_1\cup\supp\mu_2),\\
\xi_2(z) & =V_2'(z)-C^{\mu_1}(z)-C^{\mu_3}(z),\quad z\in \C\setminus(\supp\mu_1\cup\supp\mu_3),\\
\xi_3(z) & =V_3'(z)-C^{\mu_2}(z)+C^{\mu_3}(z),\quad z\in \C\setminus(\supp\mu_2\cup\supp\mu_3).
\end{aligned}
\end{equation}
satisfy an algebraic equation of the form \eqref{spectral_curve_general} for a polynomial $R$ and a rational function $D$, then $\vec \mu$ is a 
critical measure for the energy functional \eqref{interactionmatrix}--\eqref{energy_functional} and the external fields defined by
\begin{equation}\label{definition_new_external_fields}
\Phi_1(z):=V_1(z)-V_2(z),\quad \Phi_2(z):=V_1(z)-V_3(z),\quad \Phi_3(z):=V_3(z)-V_2(z).
\end{equation}
\end{thm}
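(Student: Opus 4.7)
The plan is to verify directly that the first variation of $E$ along the flow $z\mapsto z+th(z)$ vanishes for every $h\in C^2(\C)$, by pairing the variational computation with a branch-permutation argument for the cubic \eqref{spectral_curve_general}.

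First I would compute $\tfrac{d}{dt}E(\vec \mu^t)|_{t=0}$ in the usual way. Expanding $\log|1+t(h(x)-h(y))/(x-y)|$ and using the principal-value definition \eqref{principalvalue} gives
$$
\frac{d}{dt}I(\mu_j^t,\mu_k^t)\Big|_{t=0}=\re\int h\,C^{\mu_k}\,d\mu_j+\re\int h\,C^{\mu_j}\,d\mu_k,
$$
while $\tfrac{d}{dt}\int \phi_j\circ h_t\,d\mu_j|_{t=0}=\re\int\Phi_j' h\,d\mu_j$ because $\phi_j=\re\Phi_j$ and $\Phi_j$ is polynomial. Summing over $j,k$ with weights $a_{j,k}$ and using the symmetry of $A$ collapses the variation to
$$
\frac{d}{dt}E(\vec\mu^t)\Big|_{t=0}=\re\sum_{j=1}^3\int h\,F_j\,d\mu_j,\qquad F_j:=\Phi_j'+2\sum_k a_{j,k}C^{\mu_k}.
$$
A short algebraic step from \eqref{identitiesforA}, namely $2A=B^TB=3I_3-bb^T$, combined with the compatibility $b^T\vec\Phi'=0$ (inherited from \eqref{compatibility_condition_pols_V} via \eqref{definition_new_external_fields}), shows that $(F_1,F_2,F_3)^T=B^T\vec\xi$, i.e.
$$
F_1=\xi_1-\xi_2,\qquad F_2=\xi_1-\xi_3,\qquad F_3=\xi_3-\xi_2;
$$
a direct substitution also confirms that the $\xi_j$ in \eqref{shifted_resolvents_pols_V} match those in \eqref{definition_xi_functions} once $V_1'+V_2'+V_3'=0$ is used.

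The crux of the proof is a branch-permutation argument on the spectral cubic. Fix a smooth arc of $\supp\mu_1$, away from the finite set $S_\alpha$, the poles of $D$, and the zeros of the discriminant of \eqref{spectral_curve_general}; at such a point $z$ the Cauchy transforms $C^{\mu_2}$ and $C^{\mu_3}$ are analytic, so only $\xi_1$ and $\xi_2$ can jump across the arc while $\xi_3^+(z)=\xi_3^-(z)$. Since both triples $(\xi_1^\pm,\xi_2^\pm,\xi_3^\pm)(z)$ solve the same cubic $w^3-R(z)w+D(z)=0$ and share the root $\xi_3^\pm$, the unordered pairs $\{\xi_1^+,\xi_2^+\}$ and $\{\xi_1^-,\xi_2^-\}$ coincide. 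Either there is no jump (and then $\mu_1$ has zero density at $z$), or $\xi_1^+=\xi_2^-$, in which case
$$
F_1^+(z)+F_1^-(z)=(\xi_1^+-\xi_2^+)+(\xi_1^--\xi_2^-)=0.
$$
In both cases the principal value $\tfrac12(F_1^++F_1^-)$, multiplied by the density $d\mu_1/|dz|$, vanishes a.e.~on $\supp\mu_1$, so $\int h F_1\,d\mu_1=0$ for every $h\in C^2(\C)$. The analogous check on $\supp\mu_2$ reveals that the pair $\{\xi_1,\xi_3\}$ swaps (these are the roots involving $C^{\mu_2}$), giving $F_2^++F_2^-=0$, and similarly the pair $\{\xi_2,\xi_3\}$ swaps on $\supp\mu_3$ and yields $F_3^++F_3^-=0$. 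Summing produces $\tfrac{d}{dt}E(\vec\mu^t)|_{t=0}=0$, which is exactly Definition~\ref{def:criticalmeasures}.

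The main obstacle I expect is the branch-permutation step: one has to match the list of functions that jump across each support with the right pair of roots of the cubic and to check that the exceptional points (elements of $S_\alpha$, zeros of the discriminant of \eqref{spectral_curve_general}, and poles of $D$) form a finite set of arclength zero that does not contribute to the variational integrals. The hypotheses that the $\mu_j$ are supported on analytic arcs and absolutely continuous with respect to arclength are used precisely here, to justify the Plemelj--Sokhotski formulas on each smooth piece and to ensure that the pointwise identity $\tfrac12(F_j^++F_j^-)=0$ on $\supp\mu_j$ truly implies the vanishing of the corresponding integral.
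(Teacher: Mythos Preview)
Your core idea---compute the variation as $\mathcal D_h(\vec\mu)=\sum_j\int hF_j\,d\mu_j$ with $F_j=\xi_{j'}-\xi_{j''}$, then use the branch permutation of the cubic to conclude $\tfrac12(F_j^++F_j^-)=0$ on $\supp\mu_j$---is exactly the mechanism behind the paper's proof. The identification $(F_1,F_2,F_3)^T=B^T\vec\xi$ is correct, and the permutation argument on each arc is the content of the boundary relations~\eqref{standard_orientation1} that the paper uses as well (see the proof of Proposition~\ref{lemma_variations_without_double_poles}, where the same cancellation appears in the matrix form $XM_1\vec\xi_-=XM_2\vec\xi_+$).

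Where your argument has a genuine gap is in the passage from the double integral in $\mathcal D_h$ to the single integrals $\int hF_j\,d\mu_j$. Writing $\iint\frac{h(x)-h(y)}{x-y}\,d\mu_j\,d\mu_k$ as $-\int h\,C^{\mu_k}\,d\mu_j-\int h\,C^{\mu_j}\,d\mu_k$ requires the individual Cauchy-transform integrals to converge. Near a \emph{double} pole $p$ of $D$ the densities behave like $|z-p|^{-2/3}$ (see~\eqref{local_behavior_blow_up}), and a short computation shows that, for instance, $\pv C^{\mu_j}(x)\sim c|x-p|^{-2/3}$ there, so $\int h\,\pv C^{\mu_j}\,d\mu_j$ picks up a non-integrable factor $|x-p|^{-4/3}$. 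Thus the ``formula'' $\mathcal D_h=\sum_j\int hF_j\,d\mu_j$ is only formal at double poles: the right-hand side is an indeterminate $\infty-\infty$, and the fact that the pointwise principal value of $F_j$ vanishes does not by itself imply vanishing of $\mathcal D_h$. Your remark that the exceptional points ``form a finite set of arclength zero'' is correct but insufficient; the obstruction is integrability, not the size of the bad set.

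The paper deals with precisely this difficulty by splitting the argument in two. First (Proposition~\ref{lemma_variations_without_double_poles}) it proves $\mathcal D_h(\vec\mu)=0$ only for $h$ supported away from the set $\widehat S_\alpha$ of double poles of $D$; there the densities blow up at worst like $|z-p|^{-1/3}$ (see~\eqref{local_behavior_density_outside_double_poles}), which is enough to justify the interchange of integration needed for your splitting (the paper invokes~\cite{monakhov_book}). Second (Appendix~\ref{appendix1}) it bootstraps to arbitrary $h\in C^2(\C)$: one first shows $\mathcal D_q(\vec\mu)=0$ for polynomials $q$ (Lemma~\ref{lemma_variations_polynomial}, obtained from the already-known Cauchy-kernel case by expanding $h_z$ at $z=\infty$), and then, using a partition of unity, approximates $h$ near the double poles by polynomials with uniformly bounded derivatives on the relevant arc via a Mergelyan-type construction, so that the divided differences converge and dominated convergence applies. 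Your argument is the conceptual heart of the first step; what is missing is the entire second step, and without it the proof does not close when $D$ has double poles.
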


Notice that with the definition \eqref{definition_new_external_fields} and the compatibility condition \eqref{compatibility_condition_pols_V} the equalities 
\eqref{shifted_resolvents_pols_V} take the form \eqref{definition_xi_functions}.

Few remarks are in order. 

Theorems \ref{thm_spectral_curve} and \ref{thm_reciprocal_spectral_curve} show that there is a clear connection between critical measures and cubic equations of a specific form. 
The fact that the equation is cubic has to do with the  rank of the interaction matrix $A$ in \eqref{interactionmatrix}, which is 2: from the proofs it is natural to expect that 
critical measures (with $\geq 3$ components) for an energy functional with an interaction matrix of rank $r$ should be related to algebraic equations of 
degree $r+1$.

Furthermore, the compatibility condition \eqref{compatibility_condition_pols_V} has to do with the structure of the interaction matrix 
\eqref{interactionmatrix}, and finds its expression also in the absence of the $\xi^2$  term in the algebraic equation 
\eqref{spectral_curve_general}. Equation \eqref{compatibility_condition_pols_V} and the structure of \eqref{shifted_resolvents_pols_V} should be seen as normalization conditions, 
ensuring that the underlying energy functional is of the precise form \eqref{energy_functional}. Different choices for these identities would lead to a different interaction 
matrix in \eqref{energy_functional}. 

The notion of the critical vector-valued measures can be extended to accommodate also log-rational external fields and domains with non-trivial boundaries (``hard edges'', in the 
terminology of random matrix theory). The variations of the energy in these situations must allow for fixed points, see \cite{martinez_rakhmanov} for the 
scalar case. Fixed points and singularities of the external fields will appear among possible points of blow-up of the density of the critical measures, and also as possible poles 
of the coefficients $R$ and $D$ in \eqref{spectral_curve_general}.

Finally, a reader familiar with the notion of the scalar equilibrium and scalar critical measures might be puzzled by the fact that we allow for a blow-up of the densities of such 
measures in the case of the polynomial external fields. Indeed, in the scalar case all critical measures in such circumstances have uniformly bounded densities on the whole 
complex 
plane \cite{kuijlaars_silva}. This is no longer the case in the vector case, as the following two examples illustrate: they show that the components of the critical vector-valued measures can create 
``artificial hard edges'', obstructing each other and presenting unbounded densities when it was not expected, i.e.~not created by the external field or by the boundaries of the conducting domain. 

\begin{example}[Repulsive or Angelesco interaction]\label{example_angelesco}
Consider the algebraic equation
$$
\xi^3-\xi-\frac{1}{z}=0.
$$
It has simple branch points at $\pm 3\sqrt{3}/2$, and a double branch point at $z=0$, and three solutions $\xi_j$ specified by their behavior as $z\to \infty$:
$$
\xi_1(z)=-\frac{1}{z}+\mathcal O\left(\frac{1}{z^2}\right), \quad \xi_2(z)=1+\frac{1}{2z}+\mathcal O\left(\frac{1}{z^2}\right), \quad \xi_3(z)=-1+\frac{1}{2z}+\mathcal 
O\left(\frac{1}{z^2}\right)  ,
$$
with $ \xi_1(z)+\xi_2(z)+\xi_3(z)=0$. A direct calculation shows that there are two measures, $\mu_1$, living on $[-3\sqrt{3}/2,0]$, and $\mu_2(x)=\mu_1(-x)$, such that 
$|\mu_1|=|\mu_2|=1/2$, and
\begin{align*}
C^{\mu_1}(z) & =1-\xi_2(z), \quad z\in \C\setminus [-3\sqrt{3}/2, 0], \\ C^{\mu_2}(z) &=-1-\xi_3(z),  \quad z\in \C\setminus [0, 3\sqrt{3}/2].
\end{align*}

Hence Theorem~\ref{thm_reciprocal_spectral_curve} shows that $\vec \mu =(\mu_1, \mu_2, 0)\in \mathcal M_{1/2}$ is a $1/2$-critical vector-valued measure, according to Definition 
\ref{def:criticalmeasures}, corresponding to the interaction matrix \eqref{interactionmatrix} and the polynomial external fields given by
$$
\Phi_1(z)=-z=-\Phi_2(z),  \quad \Phi_3(z)=-2z.
$$ 
Notice however that the densities of both measures $\mu_j$, $j=1, 2$, blow up at $z=0$ as $|z|^{-1/3}$.

It is worth mentioning that the same pair of measures $(\mu_1, \mu_2)$ solves the Ange\-lesco-type vector \emph{equilibrium} problem of minimizing \eqref{energy_functional} under 
the assumptions
$$
\supp\mu_1 \subset (-\infty,0],\quad \supp\mu_2 \subset [0,+\infty),\quad |\mu_1|=|\mu_2|=\frac{1}{2}, \quad \mu_3=0.
$$
This equilibrium problem does exhibit a hard edge at the origin, and the proof is based on the same type of energy variation, but leaving $z=0$ as a fixed point. It should be pointed out that a similar problem (but on finite intervals) was analyzed first by Kalyagin in \cite{kalyagin}.
\end{example}

\begin{example}[Attractive or Nikishin interaction]\label{example_bertola}
Let us consider now the following cubic equation, which (up to a certain rescaling) have appeared already in \cite{balogh_bertola_vector_potential_problem}:
\begin{equation}\label{secondCubiceq}
\xi^3-\frac{1}{3}\xi-\frac{1}{z^2}+\frac{2}{27}=0.
\end{equation}
As in the Example \ref{example_angelesco}, it has simple branch points at $\pm 3\sqrt{3}/2$, and a double branch point at $z=0$.

Let $(81-12z^2)^{1/2}$ denote the branch of the square root in $\C\setminus [-3\sqrt{3}/2,3\sqrt{3}/2]$ with positive boundary values on  $(-3\sqrt{3}/2,3\sqrt{3}/2)$ from the upper half plane, and let 
$$
\Upsilon(z):= \left( -1+ \frac{3(81-12z^2)^{1/2}}{2z^2}\right)^{1/3}
$$
also have positive boundary values on  the same interval from the upper half plane. Then $\Upsilon$ is holomorphic in $\C\setminus [-3\sqrt{3}/2,3\sqrt{3}/2]$, and 
$$
\xi_1(z)=\frac{1}{3}\left(e^{\pi i /3 }\Upsilon(z)+ \frac{1}{e^{\pi i /3 } \Upsilon(z)}  \right)=\frac{1}{3}-\frac{1}{z}+\mathcal O\left(\frac{1}{z^2}\right) \text{ as } z\to \infty
$$
is holomorphic in $\C\setminus [0,3\sqrt{3}/2]$. Moreover, Cardano's formula shows that $\xi_1$ is a solution of \eqref{secondCubiceq}; the other two solutions are given by  
$$
\xi_2(z)=\xi_1(-z) \quad \text{and} \quad \xi_3(z)=-\xi_1(z)-\xi_2(z).
$$

For the selected branch of $\Upsilon$, the $\xi_{1,\pm}$ boundary values of $\xi_1$ on $(0,3\sqrt{3}/2)$ from the upper and lower half plane, respectively, satisfy
$$
w(x):=\frac{1}{2\pi i} \left( \xi_{1,+}-\xi_{1,-} \right)(x)>0, \quad x \in (0,3\sqrt{3}/2).
$$
Thus, $d\mu_2(x):=w(x)dx$ is a positive unit Borel measure on $[0,3\sqrt{3}/2]$, and its Cauchy transform is
$$
C^{\mu_2}(z) =\xi_1(z)-\frac{1}{3}, \quad  z\in \C \setminus [0,3\sqrt{3}/2)].
$$
Analogously, $\mu_3(z):=\mu_2(-z)$ is a positive unit Borel measure on $[-3\sqrt{3}/2, 0]$, and
$$
C^{\mu_3}(z) =-\xi_2(z)+\frac{1}{3}, \quad  z\in \C \setminus [-3\sqrt{3}/2),0].
$$
Theorem~\ref{thm_reciprocal_spectral_curve} shows that $\vec \mu =(0, \mu_2/2, \mu_3/2)\in \mathcal M_{1/2}$ is a $1/2$-critical vector-valued measure, according to Definition 
\ref{def:criticalmeasures}, corresponding to the interaction matrix \eqref{interactionmatrix} and the polynomial external fields given by
$$
\Phi_1(z)=0, \quad \Phi_2(z)=z=-\Phi_3(z) .
$$ 
Notice that now the densities of both measures $\mu_j$, $j=2, 3$, blow up at $z=0$ as $|z|^{-2/3}$.
\end{example}

The next example illustrates that in Theorem~\ref{thm_reciprocal_spectral_curve} we cannot discard  the degenerate cases when the coefficient $D$ in 
\eqref{spectral_curve_general} is identically zero, or other situations when \eqref{spectral_curve_general} becomes reducible over the field of rational functions:
\begin{example}\label{example_scalar_critical_measure}
If $\mu$ is a scalar critical measure for the external field $\re V$, where $V$ is a polynomial, then there exists a polynomial $Q$ for 
which \cite{martinez_rakhmanov,kuijlaars_silva}
\begin{equation}\label{alg_equation_scalar_critical_measure}
\left(C^{\mu}(z)+\frac{1}{2}V'(z)\right)^2=Q(z),\quad \lebesgue_2\mbox{-- a.e. } z\in \C. 
\end{equation}

If we set
$$
\mu_1=\mu_3=0,\quad \mu_2=\mu,\quad \Phi_2=V, \quad \Phi_1=- \Phi_3=\frac{V}{2},
$$
then
\begin{equation}\label{energy_scalar}
E(\vec\mu^t)=I(\mu^t,\mu^t)+\int \re V d\mu^t,
\end{equation}
which shows that the vector-valued measure $\vec \mu=(\mu_1,\mu_2,\mu_3)$ is $\alpha$-critical for $\alpha=0$. The function $\xi_2$ in \eqref{definition_xi_functions} becomes zero, 
whereas $\xi_1$, $\xi_3$ are analytic continuations of each other across $\supp\mu$. From the proof of Theorem~\ref{thm_spectral_curve} in 
Section~\ref{section_general_variational} we conclude that the corresponding algebraic equation \eqref{spectral_curve} is then reducible and has the form
\begin{equation}\label{reduced_algebraic_equation}
\xi(\xi^2-Q(z))=0,
\end{equation}
where $Q$ is the polynomial in \eqref{alg_equation_scalar_critical_measure}.

Reciprocally, if the Cauchy transform of a probability measure $\mu$ satisfies an algebraic equation of the form \eqref{alg_equation_scalar_critical_measure} for some polynomials 
$V$ and $Q$, then the construction just carried out combined with Theorem~\ref{thm_reciprocal_spectral_curve} gives us that $\mu$ is a scalar critical measure for the external 
field $\re V$.

In very much the same spirit, if we now set 
\begin{equation}\label{example_scalar_critical_measure_2}
 \mu_2=\mu_3=\mu,\quad \mu_1=0,\quad \Phi_1=2\Phi_2=2\Phi_3=V,
\end{equation}
 then the energy of $\vec\mu^t$ is also given by \eqref{energy_scalar}, showing again that $\vec\mu^t$ is $\alpha$-critical, now for $\alpha=1$. The respective functions $\xi_1,\xi_2$ are 
analytic continuation of each other across $\supp\mu$, whereas $\xi_3\equiv 0$ on $\C$, and the corresponding algebraic equation is reducible to the form 
\eqref{reduced_algebraic_equation}.
\end{example}

The critical measure in \eqref{example_scalar_critical_measure_2} violates the assumption \eqref{assumption_intersection_supports}, but the conclusions of 
Theorem~\ref{thm_spectral_curve} still hold. However, condition \eqref{assumption_intersection_supports} cannot be simply dropped, as the following example shows:
	
\begin{example}\label{example_arno}
Let $\Phi_1$, $\Phi_2$ be two arbitrary polynomials. For $\alpha=1/2$ and the polynomial vector external field $\vec\phi=(\phi_1, \phi_2, \phi_1-\phi_2)^T$, 
$\phi_j=\Re \Phi_j$, satisfying \eqref{compatibility_condition_external_fields}, consider the vector-valued measure $\vec \mu=(\mu_1, \mu_2, \mu_3)$, with
$$
\mu_1=\mu_2=\frac{1}{2}\mu,\quad \mu_3=0.
$$
The total energy of $\vec \mu$ is
$$
E(\vec\mu )= \frac{3}{4} \left( I(\mu ,\mu )   +  \frac{2}{3}  \int( \phi_1 + \phi_2) d\mu  \right).
$$
Thus, if $\mu$ is a \textit{scalar} probability critical measure in the external field
\begin{equation*}
\phi =\frac{2}{3}(\phi_1+\phi_2).
\end{equation*}
then \eqref{variations_critical_measure} for $\vec\mu$ takes place. In other words, $\vec \mu$ is $1/2$-critical in the external field $\vec\phi$. However, the statement of Theorem~\ref{thm_spectral_curve} does not hold: each of the corresponding functions $\xi_1,\xi_2,\xi_3$ in 
\eqref{definition_xi_functions} satisfies an algebraic equation of degree $2$, but if $\phi_1\neq  \phi_2$, these 
equations cannot be combined into a single algebraic equation of degree $3$. To be more precise, calculations in the spirit of the proof of Theorem~\ref{thm_spectral_curve} in 
Section~\ref{section_general_variational} below show that if $Q$ is the polynomial in the right hand side of the equation \eqref{alg_equation_scalar_critical_measure} for $\mu$ 
(with $V'=(\Phi_1'+\Phi_2')/3$), then
$$
\xi_1^2=Q, \quad \xi_2= -\frac{1}{2}(\xi_1+\Phi_1'- \Phi_2'), \quad \xi_3= -\frac{1}{2}(\xi_1-\Phi_1'+ \Phi_2').
$$

Clearly, the only assumption not satisfied by the measure $\vec \mu$ in this example is   \eqref{assumption_intersection_supports}. 
\end{example}

A consequence of Theorem~\ref{thm_spectral_curve} is that the set 
\begin{equation}\label{definitionXialpha}
\Xi_\alpha = \left(\supp \mu_1 \cup \supp \mu_2 \cup \supp \mu_3\right)\setminus S_\alpha
\end{equation}
is a finite union of disjoint analytic arcs, each of them contained in exactly one of the supports $\supp\mu_1,\supp\mu_2,\supp\mu_3$.

Any orientation of an arc $\Sigma\in \Xi_\alpha$ defines its left and right sides, as well as the left (or $+$) and right (or $-$) continuous boundary values of each $\xi_j$ on 
$\Sigma$, that we denote by $\xi_{j\pm}$. In what follows, we use extensively (and without further warning) the notation mod $3$, so that $\xi_0=\xi_3$, $\xi_4=\xi_1$ and so 
forth. 

Since $\xi_j$'s are the three branches of the cubic equation \eqref{spectral_curve_general}, 
assumption \eqref{assumption_intersection_supports} implies that for any curve $\Sigma\subset \Xi_\alpha$ there exists an index $j\in \{1, 2, 3\}$ such that
\begin{equation}\label{standard_orientation1}
\xi_{j+}(z)=\xi_{j-}(z),\quad \xi_{(j+1)\pm}(z)=\xi_{(j-1)\mp}(z),\quad  z\in \Sigma.
\end{equation}

Critical measures are intimately connected with vector equilibrium problems and the vector symmetry ($S$-property), as it is evidenced by the next theorem.

\begin{thm}\label{thm_s_property} 
Given $j\in \{1, 2, 3\}$, let $\Sigma$ be an open analytic arc in $  \supp\mu_j\setminus S_\alpha$, not containing any branch point of \eqref{spectral_curve_general}. 
There exists a constant $l=l(\Sigma)\in 
\R$ for which both the Euler-Lagrange variational equation (\emph{equilibrium 
condition})
\begin{equation}\label{variational_equations_critical_measures}
\sum_{k=1}^3a_{j,k}U^{\mu_k}(z)+\frac{\phi_{j}(z)}{2}=l,
\end{equation}
and the $S$-property
\begin{equation*}
\frac{\partial}{\partial n_+}\left(\sum_{k=1}^3a_{jk}U^{\mu_k}(z)+\frac{\phi_{j}(z)}{2}\right)=\frac{\partial}{\partial 
n_-}\left(\sum_{k=1}^3a_{jk}U^{\mu_k}(z)+\frac{\phi_{j}(z)}{2}\right)
\end{equation*}
hold true for $z\in \Sigma$, where $n_\pm$ are the unit normal vectors to $\Sigma$, pointing in the opposite directions.

\end{thm}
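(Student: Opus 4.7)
The plan is to extract a single complex pointwise identity on $\Sigma$ from the variational characterization of critical measures, and then translate that identity into the equilibrium condition and the $S$-property using the standard dictionary relating Cauchy transforms to tangential and normal derivatives of logarithmic potentials.

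\textbf{Step 1 (variational identity).} For a real-valued $h\in C^2_c(\C)$ and the pushforward $\mu^t$ by $h_t(z)=z+th(z)$, I would use the expansions
$$\log|h_t(x)-h_t(y)| = \log|x-y| + \re\!\left[t\,\frac{h(x)-h(y)}{x-y}\right] + O(|t|^2), \qquad \phi_j(z+th(z)) = \phi_j(z) + \re\bigl[t\,h(z)\Phi_j'(z)\bigr] + O(|t|^2),$$
so that $E(\vec\mu^t)-E(\vec\mu)=\re(tF[h])+O(|t|^2)$ for a complex number $F[h]$. Since $E(\vec\mu^t)-E(\vec\mu)$ is real-valued, the existence of the limit \eqref{variations_critical_measure} as $t\to 0$ in every complex direction forces $F[h]=0$, yielding
$$\sum_{j,k=1}^3 a_{jk}\iint\frac{h(x)-h(y)}{x-y}\,d\mu_j(x)\,d\mu_k(y) \;=\; \sum_{j=1}^3\int h(z)\,\Phi_j'(z)\,d\mu_j(z)$$
for every real $h\in C^2_c(\C)$.

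\textbf{Step 2 (localization and pointwise identity).} Fix $j\in\{1,2,3\}$ and $\Sigma\subset\supp\mu_j\setminus S_\alpha$. Because $\Sigma$ avoids $S_\alpha$, a neighborhood $U\supset\Sigma$ can be chosen with $U\cap\supp\mu_k=\emptyset$ for $k\neq j$. Inserting $h\in C^2_c(U)$ into the variational identity, the only surviving blocks are $(j,j)$ and the cross terms with one index equal to $j$; the latter vanish when the $h$-argument belongs to $\supp\mu_k$, $k\neq j$, producing the corresponding $C^{\mu_k}$ evaluated against $\mu_j$. Symmetrizing the diagonal $(j,j)$ block via $\iint\frac{h(x)-h(y)}{x-y}\,d\mu_j\,d\mu_j = -2\int h\,C^{\mu_j}_{\pv}\,d\mu_j$ and invoking the Plemelj relations $2C^{\mu_j}_{\pv}=C^{\mu_j}_++C^{\mu_j}_-$ on $\Sigma$ (valid because $\mu_j$ is absolutely continuous with respect to arclength on $\Sigma$ by Theorem~\ref{thm_spectral_curve}) together with $C^{\mu_k}_\pm=C^{\mu_k}$ for $k\neq j$, the identity collapses to
$$\int_\Sigma h(z)\left[\sum_{k=1}^3 a_{jk}\bigl(C^{\mu_k}_+(z)+C^{\mu_k}_-(z)\bigr) + \Phi_j'(z)\right] d\mu_j(z)=0.$$
Since this complex-valued integral vanishes for every real $h\in C^2_c(U)$ and the arclength density of $\mu_j$ is positive a.e. on $\Sigma$, both the real and imaginary parts of the bracketed function must vanish, giving the pointwise identity
$$\sum_{k=1}^3 a_{jk}\bigl(C^{\mu_k}_+(z)+C^{\mu_k}_-(z)\bigr)+\Phi_j'(z)=0 \qquad \text{on } \Sigma.$$

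\textbf{Step 3 (equilibrium and $S$-property).} Define $g_j(z):=\sum_k a_{jk}U^{\mu_k}(z)+\phi_j(z)/2$. By \eqref{derivative_potential} and $2\partial_z\phi_j=\Phi_j'$ one has $4\partial_z g_j = 2\sum_k a_{jk} C^{\mu_k}+\Phi_j'$, so the identity from Step 2 becomes $\partial_z g_j|_++\partial_z g_j|_-=0$ on $\Sigma$. Parametrize $\Sigma$ locally with unit tangent $\tau$ and outward normals $n_+=i\tau$, $n_-=-i\tau$. The elementary relations $\partial_\tau g=2\re(\tau\partial_z g)$ and $\partial_{n_\pm}g\big|_\pm=\mp 2\im(\tau\partial_z g|_\pm)$ split the vanishing of $\tau(\partial_z g_j|_++\partial_z g_j|_-)$ into two real statements. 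The real part, $\partial_\tau g_j|_++\partial_\tau g_j|_-=0$, combined with the continuity of $g_j$ across $\Sigma$ (finite-energy logarithmic potentials are continuous on $\C$, so $\partial_\tau g_j|_+=\partial_\tau g_j|_-$), yields $\partial_\tau g_j|_\Sigma\equiv 0$, hence $g_j\equiv l$ on each component of $\Sigma$ — the equilibrium condition~\eqref{variational_equations_critical_measures}. The imaginary part reads directly $\partial_{n_+}g_j|_+=\partial_{n_-}g_j|_-$, the stated $S$-property.

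\textbf{Main obstacle.} The delicate point is Step 2: the integral identity has a complex-valued integrand paired against a real test function, and extracting a pointwise statement requires treating real and imaginary parts simultaneously while controlling the principal-value Cauchy transform of $\mu_j$ along $\Sigma$. This depends essentially on the regularity conclusions of Theorem~\ref{thm_spectral_curve} (supports are finite unions of analytic arcs, densities absolutely continuous with respect to arclength), together with the hypotheses that $\Sigma$ avoids $S_\alpha$ and the branch points of~\eqref{spectral_curve_general}, which are the only places where the three-branch structure~\eqref{standard_orientation1} of the $\xi_k$'s — and hence the clean splitting of the Cauchy transforms at the boundary of $\supp\mu_j$ — could degenerate.
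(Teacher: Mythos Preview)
Your argument is correct and follows a genuinely different route from the paper's. The paper never returns to the variational identity $\mathcal D_h(\vec\mu)=0$; instead it works purely with the algebraic consequences of Theorem~\ref{thm_spectral_curve}. Concretely, the paper writes $\vec\xi = B(\tfrac13\vec\Phi'+\vec C)$, integrates to express $A\vec U$ in terms of $\int^z\vec\xi$, uses the factorization $B^T=M_1-M_2$ to split $2A\vec U = M_1 B\vec U_- - M_2 B\vec U_+$ on $\Sigma$, and then invokes the jump relations \eqref{standard_orientation1} for the branches of the cubic to see that the relevant entry of $M_1\vec\xi_- - M_2\vec\xi_+$ (resp.\ of $B^T(\vec\xi_++\vec\xi_-)$) vanishes, which is the equilibrium condition (resp.\ the $S$-property).

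Your approach, by contrast, localizes $\mathcal D_h(\vec\mu)=0$ to a neighborhood of $\Sigma$ disjoint from $\supp\mu_k$ for $k\neq j$, reduces the double integral to a single integral against $\mu_j$, and reads off the pointwise identity $\sum_k a_{jk}(C^{\mu_k}_++C^{\mu_k}_-)+\Phi_j'=0$ directly. This is the natural vector analogue of the scalar argument in \cite{martinez_rakhmanov}. It is more elementary in that it avoids the matrix algebra with $B$, $M_1$, $M_2$ and does not appeal to the three-branch structure \eqref{standard_orientation1}; the only input from Theorem~\ref{thm_spectral_curve} you actually use is the regularity (analytic arcs, absolute continuity), which is exactly what you need to justify the principal-value manipulation in Step~2. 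The paper's route, on the other hand, makes the link with the Riemann-surface picture explicit and is what one would naturally reuse when passing to Theorem~\ref{thm_quadratic_differential}. Both proofs rest on the same regularity, but yours extracts the equilibrium and $S$-property with less machinery.
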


If, for $j=1, 2, 3$, the constants $l(\Sigma)=:l_j$ above are independent of the connected
component $\Sigma$ of $\operatorname{supp}\mu_j$, and additionally satisfy $l_1-l_2=l_3$, then the critical vector-valued measure is the
equilibrium measure for the vector of contours $(\operatorname
{supp}\mu_1,\operatorname{supp}\mu_2,\operatorname{supp}\mu_3)$,
see \cite[Theorem~1.8]{beckermann_et_al_equilibrium_problems}. Examples of critical but non-equilibrium measures in the scalar case are well-known (see e.g.~\cite{martinez_rakhmanov}). Using the construction of Example~\ref{example_scalar_critical_measure}, in particular \eqref{example_scalar_critical_measure_2}, we can easily build vector critical non-equilibrium measures. 

The $S$-property is the natural generalization of the scalar $S$-property, taking into account that in those arcs the potentials of the measures 
$\mu_{j-1}$ and $\mu_{j+1}$ are harmonic. 

Critical vector-valued measures $\vec \mu$ are also closely connected with trajectories of a quadratic differential on the Riemann surface $\mathcal R=\mathcal R_1 \cup \mathcal 
R_2 \cup \mathcal R_3$ of the algebraic equation \eqref{spectral_curve_general}, with 
\begin{equation}\label{construction_riemann_surface_general}
\begin{aligned}
\mathcal R_1 & = \overline \C\setminus (\supp\mu_1\cup \supp\mu_2), \\
\mathcal R_2 & =\overline \C\setminus (\supp\mu_1\cup \supp\mu_3), \\
\mathcal R_3 & =\overline\C\setminus (\supp\mu_2\cup \supp\mu_3).
\end{aligned}
 \end{equation}
 
The sheets are connected across arcs in $\Xi_\alpha$ according to the following rule: if for $\Sigma\subset \Xi_\alpha$ the condition  \eqref{standard_orientation1} holds, then we connect 
the sheets $\mathcal R_{j+1}$ and $\mathcal R_{j-1}$ through $\Sigma$ crosswise, identifying the $\pm$-side of $\Sigma$ on $\mathcal R_{j+1}$ with its $\mp$-side on $\mathcal 
R_{j-1}$.

As usual, we regard the solutions $\xi_1,\xi_2,\xi_3$ in \eqref{definition_xi_functions} as branches of the same meromorphic function $\xi:\mathcal R\to \overline \C$, defined by 
\eqref{spectral_curve_general}, so that function $\xi_j$ is the restriction of $\xi$ to the sheet $\mathcal R_j$ of the Riemann surface $\mathcal R$. 

\begin{thm}\label{thm_quadratic_differential}
Let
 \begin{equation*}
 Q(z) =
 \begin{cases}
 \xi_2(z)-\xi_3(z)  , & \text{ on } \mathcal R_1, \\
 \xi_1(z)-\xi_3(z) , & \text{ on } \mathcal R_2, \\
 \xi_1(z)-\xi_2(z) , & \text{ on } \mathcal R_3.
 \end{cases}
 \end{equation*}
Then $Q^2$ extends to a meromorphic function on $\mathcal R$. Moreover, 
 \begin{equation}
\label{defQqd}
\varpi =-Q^2 (z)dz^2
\end{equation}
is a meromorphic quadratic differential on $\mathcal R$ with possible poles only at the points at $\infty$. Finally, for $j=1,2,3$, each arc of $\supp\mu_j$ is an arc 
of trajectory of $\varpi$.
\end{thm}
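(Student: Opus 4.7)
The proof hinges on an algebraic identity for $Q^2$ combined with the $S$-property from Theorem~\ref{thm_s_property}, and I would proceed in three steps.

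\emph{Step 1: Meromorphy of $Q^2$ on $\mathcal R$.} Vieta's formulas applied to \eqref{spectral_curve_general} give $\xi_1+\xi_2+\xi_3=0$, $\xi_1\xi_2+\xi_1\xi_3+\xi_2\xi_3=-R(z)$, and $\xi_1\xi_2\xi_3=-D(z)$. For $\{i,j,k\}=\{1,2,3\}$ I would then compute
\begin{equation*}
(\xi_i-\xi_j)^2=(\xi_i+\xi_j)^2-4\xi_i\xi_j=\xi_k^2+\frac{4D(z)}{\xi_k};
\end{equation*}
multiplying by $\xi_k$ and using $\xi_k^3=R\xi_k-D$ rearranges this to the uniform expression
\begin{equation*}
Q^2=R(z)+\frac{3D(z)}{\xi}\qquad\text{on each sheet }\mathcal R_k.
\end{equation*}
Since $R,D$ are rational in $z$ and $\xi$ is meromorphic on $\mathcal R$, this single formula extends $Q^2$ to a meromorphic function on all of $\mathcal R$.

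\emph{Step 2: The quadratic differential $\varpi=-Q^2\,dz^2$ has all poles at infinity.} Away from branch points $z$ is a local coordinate, so the only candidates for finite singularities of $Q^2$ come from zeros of $\xi$ (which force $D=0$ via the cubic, leaving $3D/\xi$ bounded) or finite poles of $D$. At such a pole $p$ of order $3\nu\in\{1,2\}$ (the orders allowed by Theorem~\ref{thm_spectral_curve}), the dominant balance $\xi^3\sim -D$ forces three cube-root branches $\xi\sim a(z-p)^{-\nu}$ with $a^3=-\lim_{z\to p}(z-p)^{3\nu}D(z)$, making $p$ a triple branch point of $\mathcal R$. Introducing the local coordinate $\tau$ with $z-p=\tau^3$, a short calculation then yields $Q^2\sim -3a^2\tau^{-6\nu}$ and $dz^2=9\tau^4\,d\tau^2$, whence
\begin{equation*}
\varpi\sim 27a^2\,\tau^{\,4-6\nu}\,d\tau^2,\qquad 4-6\nu\in\{0,2\},
\end{equation*}
so $\varpi$ is in fact holomorphic at $p$. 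At simple branch points of $\mathcal R$, the factor $dz^2=4\tau^2\,d\tau^2$ absorbs the bounded $Q^2$, giving at worst a zero. Hence no finite poles.

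\emph{Step 3: Arcs of $\supp\mu_j$ are trajectories.} Set $U_j:=\sum_k a_{jk}U^{\mu_k}+\tfrac12\phi_j$ and $F_j:=2\partial U_j/\partial z=\sum_k a_{jk}C^{\mu_k}+\tfrac12\Phi_j'$. Direct manipulation of \eqref{definition_xi_functions} using the compatibility \eqref{compatibility_condition_external_fields} produces the identities
\begin{equation*}
\xi_1-\xi_2=2F_1,\qquad \xi_1-\xi_3=2F_2,\qquad \xi_2-\xi_3=-2F_3,
\end{equation*}
so on the unique sheet $\mathcal R_k$ on which an arc $\Sigma\subset\supp\mu_j\setminus S_\alpha$ lies in the interior (namely $k=3,2,1$ for $j=1,2,3$ respectively) we have $Q^2|_{\mathcal R_k}=4F_j^2$. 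By Theorem~\ref{thm_s_property} the real potential $U_j$ is constant on $\Sigma$ with equal normal derivatives from both sides. A standard tangent--normal computation then converts the Euler--Lagrange equation together with the $S$-property into the single assertion that $F_j\,dz$ is purely imaginary along $\Sigma$ from either side. Consequently $F_j^2\,dz^2\le 0$ and $\varpi=-4F_j^2\,dz^2\ge 0$ along $\Sigma$, which is exactly the defining condition for $\Sigma$ (lifted to $\mathcal R_k$) to be a trajectory arc of $\varpi$.

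\emph{Main obstacle.} The delicate point is Step~2: the cancellation $4-6\nu\ge 0$ at the triple branch points relies crucially on the sharp bound $3\nu\le 2$ from Theorem~\ref{thm_spectral_curve}. Without it $\varpi$ would acquire finite poles and the clean trajectory description would fail. Steps~1 and~3 are comparatively mechanical once the algebraic relations $Q^2=R+3D/\xi$ and $\xi_i-\xi_j=\pm 2F_k$ are in hand.
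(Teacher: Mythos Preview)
Your proof is correct and takes a genuinely different route from the paper's. The main divergences are in Steps~1 and~3.

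For Step~1, the paper establishes meromorphy by a sheet-by-sheet boundary-value check: it verifies that each $Q_j^2$ is analytic across the cuts lying inside $\mathcal R_j$, and then that consecutive $Q_j^2$'s match across the cuts connecting adjacent sheets, invoking the jump relations \eqref{standard_orientation1} repeatedly. Your single identity $Q^2=R+3D/\xi$ bypasses all of this at once, since $R$, $D$ are rational in $z$ and $\xi$ is globally meromorphic on $\mathcal R$. In fact, substituting $D/\xi=R-\xi^2$ gives the even cleaner $Q^2=4R-3\xi^2$, which also removes your residual worry about zeros of $\xi$. This algebraic shortcut is a real gain in clarity.

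For Step~3, the paper works directly from the Sokhotsky--Plemelj formula and the positivity of the measures: it writes $d\mu_j(s)=\tfrac{1}{2\pi i}(\xi_{k+}-\xi_{l+})\,ds$ on the relevant arc and reads off that $Q\,dz$ is purely imaginary there. You instead invoke Theorem~\ref{thm_s_property} through the neat relations $\xi_1-\xi_2=2F_1$, $\xi_1-\xi_3=2F_2$, $\xi_2-\xi_3=-2F_3$. This is perfectly legitimate since Theorem~\ref{thm_s_property} is proved independently. Note, however, that you are using slightly more than you need: the Euler--Lagrange equality alone (constancy of $U_j$ along $\Sigma$) already forces the tangential derivative $\Re(F_j\,dz)$ to vanish on each side, which is the trajectory condition; the $S$-property is then redundant here (it is equivalent to the continuity of $Q^2$ across $\Sigma$ on $\mathcal R_k$, which you already have from Step~1).

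Step~2 is essentially identical to the paper's argument, with the same local-coordinate computation at triple branch points and the same crucial reliance on the bound $3\nu\le 2$.
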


As it was mentioned in Section \ref{sec:historical}, the connection between quadratic differentials and critical (or equilibrium) measures is not new. For scalar critical measures, 
the quadratic 
differential can be globally projected on the complex plane, so it is not necessary to consider it on a non-trivial Riemann surface. Even for vector-valued measures, this connection has 
been 
exploited before, see for instance \cite{tovbis_kuijlaars_supercritical,kuijlaars_vanassche_wielonsky_hermite_pade}, but only locally on the plane. To our knowledge, 
Theorem~\ref{thm_quadratic_differential} is the first time this connection is stated on the whole surface $\mathcal R$, which provides a powerful tool for construction of critical 
measures. 

\begin{remark} \label{remark:Angelesco-Nikishin} 
Assume that at $z=p$ we have \eqref{local_behavior_blow_up} with $\nu \in \{1/3, 2/3\}$ (see Theorem~\ref{thm_spectral_curve}). Then $p$ 
is a triple branch point of $\mathcal R$, so that  the local coordinate for $\mathcal R$ at $z=p$ is $z=p+w^3$. 
Theorem~\ref{thm_quadratic_differential}  asserts 
that $\supp\mu_j$ lives on trajectories of the quadratic differential $\varpi$ defined in \eqref{defQqd}, which in this local coordinate takes the form  $\varpi=w^{4-6\nu}(\const+o(1)) dw^2$, $\const\neq 0$.

If $\nu=1/3$ (or equivalently if the polynomial $D$ in \eqref{spectral_curve_general} has a simple pole at $p$), then by the local structure of the trajectories, their canonical 
projection from $\mathcal R$ onto $\C$ consists of two analytic arcs intersecting  at $p$ perpendicularly (a ``cross''). As Example~\ref{example_angelesco} shows, this 
configuration can correspond to the interaction of two repulsive measures, and $p$ can potentially belong to the intersection of the three components $\mu_j$.

However, for $\nu=2/3$ (which means that $D$ has a double pole at $p$) the canonical projection of the trajectories from $\mathcal R$ onto $\C$ at $p$ consists of a single 
analytic arc through this point. It implies, due to \eqref{assumption_intersection_supports}, that in this case $p$ belongs to the intersection of at most two (and then, exactly 
two) components of $\vec \mu$; moreover, it will be the endpoint of an arc in each of the supports. A quick analysis of the cubic branching shows that these two positive measures 
cannot interact repulsively (``Angelesco'' interaction), and hence, must attract each other (``Nikishin'' interaction). Thus, Example~\ref{example_bertola} represents (at least, 
locally) the most generic case of the $2/3$-blowup of the density of a critical vector-valued measure.  
\end{remark}

\begin{remark}
 We should point out that \eqref{assumption_intersection_supports} plays a substantial role for Theorem~\ref{thm_spectral_curve}, but the conclusion of 
Theorem~\ref{thm_reciprocal_spectral_curve} still holds true even if we drop \eqref{assumption_intersection_supports} completely. In this situation, if $\Sigma$ is an analytic arc 
contained in the support of at least two of the measures $\mu_1,\mu_2,\mu_3$, and $\mu_2\neq \mu_3$ on $\Sigma$, then we can choose an orientation on $\Sigma$ for which
\begin{equation}\label{standard_orientation}
\xi_{1+}(z)=\xi_{3-}(z),\quad \xi_{2+}(z)=\xi_{1-}(z),\quad \xi_{3+}(z)=\xi_{2-}(z),\quad z\in \Sigma.
\end{equation}

The Euler-Lagrange equation \eqref{variational_equations_critical_measures} holds true for the three measures $\mu_1,\mu_2,\mu_3$ (and possibly different constants 
$l$), and with respect to the orientation \eqref{standard_orientation} the $S$-property now becomes
\begin{equation*}
\frac{\partial}{\partial n_+}\left(\sum_{k=1}^3a_{jk}U^{\mu_k}(z)+\frac{\phi_j(z)}{2}\right)=(-1)^{j+1}\frac{\partial}{\partial 
n_-}\left(\sum_{k=1}^3a_{j+1,k}U^{\mu_k}(z)+\frac{\phi_{j+1}(z)}{2}\right).
\end{equation*}

The conclusion of Theorem~\ref{thm_quadratic_differential} is also valid in this situation (with appropriate gluing of the sheets of $\mathcal R$ across the overlaps of the supports). However, we do not know whether there are positive measures with non-trivial overlapping supports for which the functions \eqref{definition_xi_functions} are the three solutions to the 
same cubic equation as in Theorem~\ref{thm_reciprocal_spectral_curve}. 

\end{remark}

\subsection{Main results: the cubic case}\label{sec:resultscubic}

In order to illustrate the results stated above we analyze in depth the interesting and highly non-trivial cubic case, corresponding to the energy functional 
\eqref{interactionmatrix}--\eqref{energy_functional}, with
\begin{equation}\label{cubic_choice_potentials}
\Phi_1(z)=\Phi_2(z)=z^3,\quad \Phi_3(z)=0,\quad z\in\C,
\end{equation}
so that the external fields are 
\begin{equation}\label{external_fields_cubic_potential}
\phi(z)=\phi_1(z)=\phi_2(z)=\re z^3,\quad \phi_3(z)=0,\quad z\in\C;
\end{equation}
in what follows, we consider only the values of the parameter $\alpha\in [0,1/2)$; see the discussion in the Remark~\ref{restrictiononAlpha} and also in the 
introduction to Section~\ref{section_cubic_main}.

We build a continuous one-parameter family of 
critical vector-valued measures $\vec \mu$ by choosing in  \eqref{spectral_curve_general} the  coefficients 
\begin{equation}\label{coefficients_cubic}
R(z)= 3z^4-3z-c,\quad D(z)=-2z^6+3z^3+cz^2-3\alpha(1-\alpha),
\end{equation}
where $c=c(\alpha)$ is the real parameter given by
\begin{equation}\label{definition_c_intro}
c=-\left(\frac{243}{64}(1-4\alpha(1-\alpha))^2\right)^{\frac{1}{3}}.
\end{equation}

\begin{thm}\label{thm_genus_zero}
For $\alpha\in [0,1/2)$, there exists a one-parameter family $\vec\mu=\vec\mu_\alpha\in \mathcal M_\alpha$ of critical measures for the potentials 
\eqref{external_fields_cubic_potential} for which the 
corresponding algebraic equation \eqref{spectral_curve_general} has coefficients \eqref{coefficients_cubic}.

Moreover, for any such a critical measure, if the associated algebraic equation \eqref{spectral_curve_general} defines a Riemann surface of 
genus $0$, then its coefficients are given by \eqref{coefficients_cubic} for some choice of the cubic root in \eqref{definition_c_intro}.
\end{thm}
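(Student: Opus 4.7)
The proof naturally splits into two parts: constructing the family (existence) and showing that the genus-zero assumption forces the claimed coefficients (uniqueness).

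For the existence claim, the plan is to realize $\vec\mu_\alpha$ as the triple of scalar measures produced by the spectral curve \eqref{spectral_curve_general} with coefficients \eqref{coefficients_cubic} and $c$ as in \eqref{definition_c_intro}, and then invoke Theorem~\ref{thm_reciprocal_spectral_curve}. The main steps are: first, verify (by computing the discriminant $\Delta=4R^3-27D^2$, or more conveniently by exhibiting a rational uniformization) that with this particular value of $c$ the curve $\xi^3-R(z)\xi+D(z)=0$ defines a compact Riemann surface $\mathcal R$ of genus zero; second, identify three branches $\xi_1,\xi_2,\xi_3$ over $\C$ with the asymptotics at infinity dictated by \eqref{definition_xi_functions} combined with \eqref{massconstraint}, namely $\xi_1\sim 2z^2-z^{-1}$, $\xi_2\sim -z^2+\alpha z^{-1}$, $\xi_3\sim -z^2+(1-\alpha)z^{-1}$; and third, extract the scalar measures $\mu_j$ from the jumps of the $\xi_j$'s across their branch cuts in $\C$, verifying positivity and the mass constraints \eqref{massconstraint} directly from the uniformization. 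With $V_1=V_2=z^3/3$ and $V_3=-2z^3/3$ satisfying \eqref{compatibility_condition_pols_V} and producing \eqref{cubic_choice_potentials} via \eqref{definition_new_external_fields}, Theorem~\ref{thm_reciprocal_spectral_curve} applies to yield that $\vec\mu_\alpha$ is a critical measure in $\mathcal M_\alpha$. This detailed construction is the content of the subsequent sections of the paper.

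For the uniqueness claim, suppose $\vec\mu\in\mathcal M_\alpha$ is critical for \eqref{cubic_choice_potentials} and its spectral curve has genus zero. Theorem~\ref{thm_spectral_curve} yields the cubic \eqref{spectral_curve_general} with $R$ computed from \eqref{expression_R}. Since $\Phi_1'=\Phi_2'=3z^2$ and $\Phi_3'=0$, a direct computation gives $\sum_{j,k}a_{j,k}\Phi_j'\Phi_k' = 27z^4$, so the polynomial part of $R$ is $3z^4$; the integral term uses $(3x^2-3z^2)/(x-z)=3(x+z)$ on $\mu_1,\mu_2$ and vanishes on $\mu_3$, producing $3z(|\mu_1|+|\mu_2|) + 3\int x\,d(\mu_1+\mu_2) = 3z + c$ with $c := 3\int x\,d(\mu_1+\mu_2)$ by the mass constraint $|\mu_1|+|\mu_2|=1$. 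Thus $R(z)=3z^4-3z-c$, matching \eqref{coefficients_cubic}. Next, because the external fields are polynomial and $S_\alpha$ already carries all possible finite poles of $D$, one verifies that $D$ is a polynomial whose leading behavior $D\sim -2z^6$ follows from $-D=\xi_1\xi_2\xi_3\sim -2z^6$ at infinity. Expanding the product further and matching Laurent coefficients using the asymptotics above forces the coefficients of $z^6,z^5,z^4,z^3,z^2$ in $D$ to equal $-2,0,0,3,c$, respectively, so that
\[
D(z)=-2z^6+3z^3+cz^2+d_1 z+d_0,
\]
with two free parameters $d_1, d_0 \in \R$ tied to the second and third moments of $\mu_1+\mu_2$.

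The main obstacle is the last step, where the genus-zero hypothesis must be converted into algebraic relations for $c, d_1, d_0$. The Newton polygon of $\xi^3-R(z)\xi+D(z)=0$ is the triangle with vertices $(0,0),(0,3),(6,0)$, containing four interior lattice points, so the generic arithmetic genus is $4$; one unit is automatically consumed by the cusp-type singularity at infinity arising from the double factor $(\eta+1)^2$ in the edge polynomial $\eta^3-3\eta-2$, so the geometric genus is generically $\le 3$ in our family. Dropping to zero requires three more units of $\delta$-invariant in the affine part, which translates into very specific factorization demands on $\Delta(z)=4R^3(z)-27D^2(z)$. A direct analysis of the resulting algebraic identities forces $d_1=0$, $d_0=-3\alpha(1-\alpha)$, and the cubic relation $c^3=-\tfrac{243}{64}(1-4\alpha(1-\alpha))^2$, recovering \eqref{definition_c_intro} up to the choice of cube root. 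A cleaner alternative is to postulate $\mathcal R\cong\overline\C$ and write down a rational uniformization $z(t),\xi(t)$ with the prescribed principal parts at the two preimages of $\infty$; the identity $\xi^3-R\xi+D\equiv 0$ then becomes a polynomial identity in $t$ that determines $R,D$ as explicit rational expressions in the uniformization parameters, and comparison with the forms found above yields the relation for $c$.
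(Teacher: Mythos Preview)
Your existence sketch matches the paper's plan (build the measures from the spectral curve and invoke Theorem~\ref{thm_reciprocal_spectral_curve}, details deferred), so there is nothing to add there beyond a minor slip: with $\Phi_1=\Phi_2=z^3$, $\Phi_3=0$ one needs $V_1'=2z^2$, $V_2'=V_3'=-z^2$, not the symmetric choice you wrote.

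The real gap is in the uniqueness half, where you introduce two spurious free parameters $d_1,d_0$ in $D$. You computed $D$ only through $D=R\xi_1-\xi_1^3$, whose lower-order terms involve the second and third moments of $\mu_1+\mu_2$. But each $\xi_j$ satisfies the cubic, so also $D=R\xi_2-\xi_2^3$, and the expansion $\xi_2=-z^2+\alpha/z+O(z^{-2})$ uses only the mass constraint $|\mu_1|+|\mu_3|=\alpha$. Plugging this in gives directly
\[
D(z)=-2z^6+3z^3+cz^2+0\cdot z-3\alpha(1-\alpha)+O(1/z),
\]
so $d_1=0$ and $d_0=-3\tau$ are forced, with $c$ the sole free parameter. (Equating the two expressions for $D$ is what determines the higher moments of $\mu_1+\mu_2$ that you left free.) This is precisely the derivation in Section~\ref{section_cubic_spectral_curve}.

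With only $c$ unknown, the paper avoids Newton polygons entirely: $\discr_\xi=4R^3-27D^2$ is a degree-$6$ polynomial in $z$, Riemann--Hurwitz forces a multiple root, hence $\discr_z(4R^3-27D^2)=0$, which factors as a constant times $\tau\,p_1(c)\,p_2(c)^3$ for explicit polynomials $p_1,p_2$. A direct check shows that roots of $p_2$ make the double zero of $\discr_\xi$ a triple branch point, yielding genus $1$; so genus $0$ forces $c$ to be a root of $p_1$, which is~\eqref{definition_c_intro}.

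Incidentally, your Newton-polygon count is also off: at infinity the branches $\xi_2,\xi_3\sim -z^2$ meet, and in the local chart $w=1/z$, $u=\xi/z^2+1$ they read $u\approx\alpha w^3$ and $u\approx(1-\alpha)w^3$, two smooth branches with intersection number $3$. Hence the $\delta$-invariant at infinity is $3$, not $1$, and the generic geometric genus in the family is $4-3=1$ (consistent with six simple branch points), requiring only one further condition---again a single equation for $c$.
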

\begin{remark}\label{remarkaboutthegenus}
As it follows from the proof of this theorem in Section~\ref{section_cubic_spectral_curve} (see also Remark~\ref{rem:qdforDifferentC}), there are other choices for the 
coefficient $c$ that yield critical vector-valued measures, but for which the associated Riemann surface is of genus 1. We do not consider this case here.

Theorem~\ref{thm_equilibrium_conditions_cubic} below (see also Remark~\ref{rem_equilibrium_conditions_cubic}) shows that the measures $\vec \mu_\alpha$ from Theorem~\ref{thm_genus_zero} are in fact equilibrium measures on an appropriate family of contours in $\mathbb C$.
\end{remark}

For the critical measure $\vec\mu_\alpha$ given by Theorem~\ref{thm_genus_zero}, the algebraic function $\xi$ defined by \eqref{spectral_curve_general} has two real branch points 
$a_1<b_1$ and two nonreal branch 
points $b_2$, $a_2=\overline{b_2}$ ($\im b_2>0$). The support of the components $\mu_j$ of the critical measure  $\vec\mu_\alpha =(\mu_1,\mu_2,\mu_3)$ can be easily described; we 
show that it exhibits a phase transition for a certain value of $\alpha$:
\begin{thm}\label{thm_existence_critical_measures_cubic}
There exists a critical value $\alpha_c\in (0,1/2)$, determined by
\begin{equation}\label{equation_definition_alphac}
\im\int_{b_2}^{a_1}(\xi_1(s)-\xi_3(s))ds=0,
\end{equation}
where $\xi_1,\xi_3$ are the functions in \eqref{definition_xi_functions}, such that
\begin{itemize}
 \item If $\alpha<\alpha_c$ then $\mu_3=0$, $\supp\mu_1=[a_1,b_1]$, and $\supp\mu_2$ is an analytic arc, disjoint from $\supp\mu_1$ and connecting the branch points $a_2,b_2$.
 \item If $\alpha_c<\alpha<1/2$ then none of the measures is zero, and there exists a value $a_*\in (a_1,b_1)$, determined by the condition 
\begin{equation}\label{definition_a_star_supercritical}
\im\int_{b_2}^{a_*}(\xi_1(s)-\xi_3(s))ds=0,
\end{equation}
for which $\supp\mu_1=[a_*,b_1]$, $\supp\mu_3=[a_1,a_*]$ and $\supp\mu_*=\gamma_1\cup\gamma_2$, where $\gamma_1$ is an analytic arc on the upper half plane connecting $b_2$ to 
$a_*$, and $\gamma_2$ is its complex conjugate arc.
\end{itemize}
\end{thm}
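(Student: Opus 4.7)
The plan is to apply Theorem~\ref{thm_quadratic_differential} to identify each $\supp\mu_j$ as a union of arcs of trajectories of the quadratic differential $\varpi=-Q^2\,dz^2$ on $\mathcal R$, and then to describe the critical graph of $\varpi$ globally, exploiting the symmetries inherited from the cubic external field.

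First I would determine the branch points. For $\alpha\in[0,1/2)$, the branch points of $\xi$ are the zeros of the discriminant $4R^3-27D^2$ of the cubic \eqref{spectral_curve_general} with coefficients \eqref{coefficients_cubic}--\eqref{definition_c_intro}; by construction (the genus-zero hypothesis of Theorem~\ref{thm_genus_zero}) this discriminant is a perfect square, and a direct root count shows it has precisely two simple real zeros $a_1<b_1$ together with a complex conjugate pair $b_2,\bar b_2$. Since $R$ and $D$ are real polynomials, the algebraic function $\xi$ and hence $\varpi$ are invariant under $z\mapsto\bar z$, so the critical graph is symmetric about the real axis. From each simple branch point exactly three trajectories emanate at equal angles $2\pi/3$, with initial tangents computed from the leading term of $Q$; the behavior $Q(z)=O(z^2)$ as $z\to\infty$ prescribes the six asymptotic directions along which unbounded trajectories escape to infinity on the three sheets.

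Next I would treat the subcritical regime. The real symmetry forces the real segment $[a_1,b_1]$ to be itself an arc of trajectory of $\varpi$ on the appropriate sheet pair, giving the natural candidate for $\supp\mu_1$. A trajectory emanating from $b_2$ into the upper half plane is pursued forward: by the structure theorem for trajectories on a genus-zero Riemann surface with no finite critical points other than the branch points (note that $D$ in \eqref{coefficients_cubic} is a polynomial, so $\varpi$ has no finite poles), this trajectory either reaches its conjugate trajectory from $\bar b_2$, hits $[a_1,b_1]$, or escapes to $\infty$; recurrent trajectories are excluded in genus zero. For small $\alpha$ the first alternative holds and yields a single symmetric arc crossing the real axis, which is $\supp\mu_2$, while $\mu_3=0$. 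The critical value $\alpha_c$ is then defined as the first $\alpha$ at which the upper trajectory from $b_2$ just touches $[a_1,b_1]$ at its left endpoint $a_1$; since a trajectory from $b_2$ to any $z_0\in\R$ forces $\Im\int_{b_2}^{z_0}(\xi_1-\xi_3)\,ds=0$ (because $(\xi_1-\xi_3)\,dz$ is purely imaginary along a trajectory), this is exactly the defining condition \eqref{equation_definition_alphac}. A monotonicity analysis of this period as a function of $\alpha$ yields uniqueness and the inclusion $\alpha_c\in(0,1/2)$.

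For $\alpha_c<\alpha<1/2$, the same trajectory from $b_2$ crosses the real axis at an interior point $a_*\in(a_1,b_1)$, which by the same reasoning must satisfy \eqref{definition_a_star_supercritical}, and its complex conjugate from $\bar b_2$ meets it at $a_*$ by symmetry; the two arcs $\gamma_1,\gamma_2$ form $\supp\mu_2$. The real interval splits as $[a_1,a_*]\cup[a_*,b_1]$, with different pairs of sheets of $\mathcal R$ reconnected across the two subintervals, so that the right one carries $\mu_1$ and the left one carries $\mu_3$. On every identified arc, the density is $\tfrac{1}{2\pi i}(\xi_{j+}-\xi_{j-})$, whose positivity follows from the prescribed orientation combined with the sign of the local square root in $Q$ near each branch point; the total masses $|\mu_j|$ are read off from the Laurent expansions at $\infty$ of the $\xi_j$'s dictated by \eqref{definition_xi_functions} and \eqref{coefficients_cubic}, and match \eqref{massconstraint} by construction. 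The main obstacle is the global analysis of the critical graph: one must exclude pathological short trajectories between non-adjacent branch points and show that, as $\alpha$ varies, the family of critical graphs deforms continuously with exactly one topological phase transition at $\alpha_c$. The genus-zero assumption, the absence of finite poles of $\varpi$, and careful bookkeeping of the emanation angles at $a_1,b_1,b_2,\bar b_2$ together with the monotonicity of the period in \eqref{equation_definition_alphac} pin down the two topological pictures asserted.
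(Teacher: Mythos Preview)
Your overall strategy---identify the supports with short trajectories of $\varpi$ and track the critical graph as $\alpha$ varies---is the right one, and it is the paper's strategy too. But the proposal glosses over precisely the parts that constitute the bulk of the actual work, and one of your structural assumptions is wrong.

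First, you overlook the double point $b_*$ of the spectral curve (the zero of $q_2$ in \eqref{definition_q_2}). This point is a \emph{double zero} of $\varpi$, so four trajectories emanate from it, and it must be tracked throughout the analysis. Worse, $b_*$ lives on $\mathcal R_2$ for $\tau<1/12$ and on $\mathcal R_1$ for $\tau>1/12$ (it collides with $b_1$ at $\tau=1/12$), so the critical graph already undergoes a structural change well before $\tau_c$. Your sentence ``no finite critical points other than the branch points'' is therefore incorrect, and the bookkeeping you propose is incomplete.

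Second, you assume a single topological phase transition at $\alpha_c$. In fact the paper finds \emph{four} transitions, at $\tau=1/12,\tau_1,\tau_c,\tau_2$ (see \eqref{valuesfortau}), corresponding to the vanishing of different strip/ring-domain widths $\omega_j$. Only the one at $\tau_c$ changes the topology of $\supp\mu_j$, but the others must still be analyzed: the continuity argument (``the critical graph deforms continuously'') only propagates the picture across an interval on which no width vanishes, so each of these values forces you to stop, redetermine the critical graph, and restart. Proposition~\ref{lemma:zerosofD} (using the zeros of $D$ on $[a_1,b_1]$) is a key tool here for ruling out trajectories that cross $\Delta_1$ twice, and nothing in your outline plays that role.

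Third, and most seriously, you write ``a monotonicity analysis of this period as a function of $\alpha$ yields uniqueness and the inclusion $\alpha_c\in(0,1/2)$.'' The paper explicitly states that analytic computation of the $\omega_j$'s is ``a formidable task'' and resorts to numerical evaluation (Section~\ref{section_widths_cubic} and Appendix~\ref{appendix_numerics}) to locate their zeros and verify the ordering $0<1/12<\tau_1<\tau_c<\tau_2<1/4$. There is no monotonicity proof; if you have one, it would be a genuine improvement, but as stated this step is a placeholder, not an argument.

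Finally, your treatment is largely planar, whereas the analysis must take place on the three-sheeted $\mathcal R$: the critical graph has components on all three sheets simultaneously, trajectories pass between sheets through the cuts, and the domain decomposition (half-plane, strip, ring, circle domains in Theorem~\ref{theorem_global_dissection}) is on $\mathcal R$, not on $\C$. The paper anchors the whole argument by first solving the degenerate case $\tau=0$ exactly (where the curve is reducible and $\varpi$ splits into two simpler differentials), and then perturbs. Without that anchor, ``for small $\alpha$ the first alternative holds'' is an assertion, not a proof.
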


\begin{figure}
\begin{subfigure}{.5\textwidth}
\centering
\begin{overpic}[scale=1]{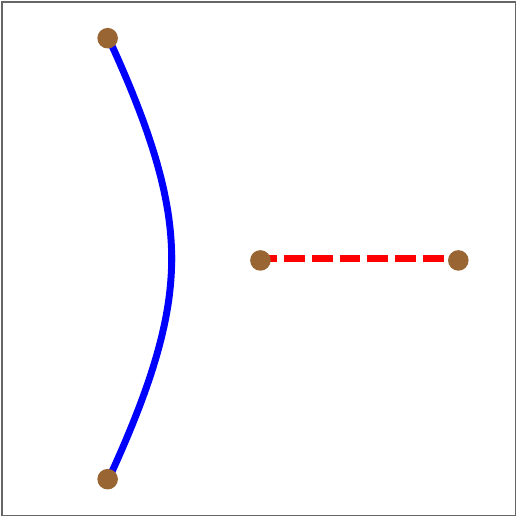}
\end{overpic}
\end{subfigure}%
\begin{subfigure}{.5\textwidth}
\centering
\begin{overpic}[scale=1]{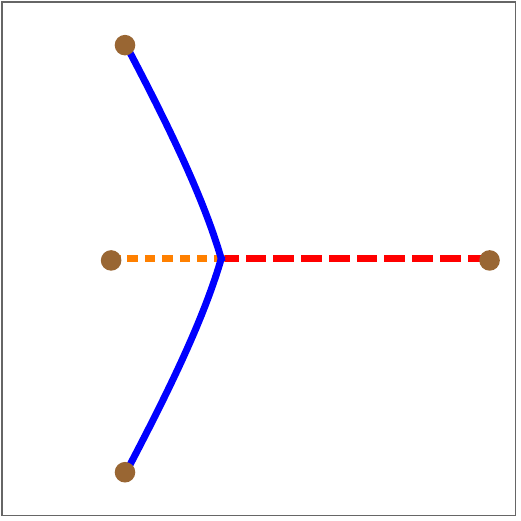}
\end{overpic}
\end{subfigure}
\caption{For $\tau\approx 0.126<\tau_c$ (left panel) and for $\tau\approx 0.227>\tau_c$ (right panel), numerical evaluation of the supports of the measures $\mu_1$ (long dashed 
line), $\mu_2$ (continuous line) and $\mu_3$ (short dashed line - only on the right panel). }\label{figure_supports}
\end{figure}

We refer to Figure~\ref{figure_supports} for an actual numerical evaluation of the supports of the measures $\mu_1,\mu_2,\mu_3$ in 
Theorem~\ref{thm_existence_critical_measures_cubic}.

\begin{remark}\label{restrictiononAlpha}
The restriction $\alpha<1/2$ arises somewhat naturally in our analysis. For $\alpha=1/2$, the algebraic equation \eqref{spectral_curve_general} with coefficients
\eqref{coefficients_cubic}--\eqref{definition_c_intro} still defines a vector-valued measure $\vec \mu_{1/2}$ in the sense of Theorem~\ref{thm_reciprocal_spectral_curve}, but the support 
of its third component is unbounded; hence, strictly speaking the measure $\vec \mu_{1/2}$ does not belong to $\mathcal M_{1/2}$. 
\end{remark}

In rough terms, Theorems~\ref{thm_spectral_curve} and \ref{thm_reciprocal_spectral_curve} induce a map 
$$
\{\mbox{critical vector-valued measures}\} \mapsto \{ \mbox{algebraic equations} \}
$$
and Theorems \ref{thm_genus_zero} and \ref{thm_existence_critical_measures_cubic} characterize this map completely for a subclass of algebraic equations and the cubic potential 
\eqref{cubic_choice_potentials}.

The existence of the critical measure $\vec \mu$ assured by Theorem~\ref{thm_genus_zero} and the topological description of its support given by 
Theorem~\ref{thm_existence_critical_measures_cubic} follow after a careful analysis of the associated quadratic differential $\varpi$ in \eqref{defQqd}. We describe the 
critical graph of $\varpi$ for $\alpha=0$ and indicate its subsets corresponding to the supports $\supp\mu_1,\supp\mu_2, \supp\mu_3$. In other words, we lift the supports to the 
associated Riemann 
surface, embedding them into the critical graph of $\varpi$. We then deform the parameter $\alpha$ in the interval $(0,1/2)$, observing the dynamics of the critical graph and 
keeping track of the supports of the measures.

During this deformation procedure, the critical graph of $\varpi$ displays several transitions, described in 
Sections~\ref{section_degenerate}--\ref{section_trajectories_tau2_1-4}, but only one of such transitions, governed by equation \eqref{equation_definition_alphac}, has direct 
impact 
on the topology of $\supp\mu_1,\supp\mu_2,\supp\mu_3$.

It should be stressed that there are not many tools for such an analysis (we have summarized the basic facts about quadratic differentials in 
the Appendix \ref{appendix_quadratic_differentials}), and that it can be extremely complicated, even on the Riemann sphere. The situation becomes even more involved when we 
consider trajectories on a more general compact Riemann surface. We believe that the methodology we have developed is of independent interest, and can be applied 
in other situations as well.

\subsection{Critical measures and max-min problems}\label{sec:max_min}

The critical measure given by Theorem~\ref{thm_genus_zero} is crucial in the study of multiple orthogonal polynomial
$P_{n,m}(z)=z^{N}+\hdots$ defined through
\begin{equation}\label{conditions_multiple_orthogonality}
\begin{aligned}
\int_{\Sigma_1}z^j P_{n,m}(z)e^{-Nz^3}dz=0, & \quad j=0,\hdots,n-1, \\
\int_{\Sigma_2}z^j P_{n,m}(z)e^{-Nz^3}dz=0, & \quad j=0,\hdots,m-1, \\
\end{aligned}
\end{equation}
where $N=n+m$ and $\Sigma_1,\Sigma_2$ are contours extending to $\infty$ with angles $-\frac{2\pi}{3}$, $0$ and $-\frac{2\pi}{3},\frac{2\pi}{3}$, respectively. It turns out that  
the asymptotic behavior of $P_{n,m}$ as $m,n\to \infty$ in such a way that $n/N\to \alpha\in (0,1/2)$, is governed by this critical vector-valued measures $\vec \mu\in \mathcal 
M_\alpha$: under this regime, the sequence of zero counting measures associated with $(P_{n,m})$ converges to the sum of the first two components of $\vec \mu$. 
The two extremal cases, $\alpha=0$ and $\alpha=1/2$, have been proved in \cite{deano_kuijlaars_huybrechs_complex_orthogonal_polynomials} and 
\cite{filipuk_vanassche_zhang}, respectively, and the general case $\alpha\in (0,1/2)$ will be addressed in a follow-up publication \cite{martinez_silva2}. 

As we already mentioned, zeros of orthogonal polynomials on the real line are characterized as solutions to logarithmic equilibrium problems. The measure $\vec\mu$ given by 
Theorem~\ref{thm_genus_zero}, and hence the zeros of the multiple orthogonal polynomials \eqref{conditions_multiple_orthogonality}, also admits a vector equilibrium problem 
characterization, as explained next.

Let $\mathcal T$ be the class of contours $\Sigma$ extending to $\infty$ with angles $-\frac{2\pi}{3},\frac{2\pi}{3}$ and intersecting the real axis exactly once, say 
at $a_*=a_*(\Sigma)$. Denote by $\mathcal M_\alpha(\Sigma)$ the subset of measures $\vec\nu=(\nu_1,\nu_2,\nu_3)\in \mathcal M_\alpha$ that further satisfy
$$
\supp\nu_1\subset [a_*,+\infty),\quad \supp\nu_2 \subset \Sigma,\quad \supp\nu_3\subset (-\infty,a_*].
$$

The $\alpha$-{\it equilibrium measure} $\vec\nu_\alpha=\vec\nu_{\alpha,\Sigma}$ of $\Sigma \in \mathcal T$ (for the energy functional $E(\cdot)$ given in \eqref{energy_functional} 
with \eqref{cubic_choice_potentials}--\eqref{external_fields_cubic_potential}) is the unique measure $\vec\nu_\alpha \in \mathcal M_\alpha(\Sigma)$ for which
$$
E(\vec\nu_\alpha)=\inf_{\vec\nu \in \mathcal M_\alpha(\Sigma)} E(\nu)=:E_\alpha(\Sigma).
$$

\begin{thm}\label{thm_equilibrium_measure_cubic}
For $\alpha\in (0,1/2)$, there exists a contour $\Gamma\in \mathcal T$ for which its $\alpha$-equilibrium measure is the $\alpha$-critical measure $\vec \mu$ given by 
Theorem~\ref{thm_genus_zero}.
\end{thm}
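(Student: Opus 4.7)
The plan is to construct a contour $\Gamma\in\mathcal T$ containing $\supp\mu_2$ and then to verify that $\vec\mu=\vec\mu_\alpha$ satisfies the Euler--Lagrange system characterizing the equilibrium measure of $\mathcal M_\alpha(\Gamma)$. Concretely, I would extend $\supp\mu_2$ past its endpoints $b_2,\overline{b_2}$ along unbounded arcs $\gamma^+,\gamma^-$ that are \emph{orthogonal} trajectories of the quadratic differential $\varpi$ from Theorem~\ref{thm_quadratic_differential} on the sheet $\mathcal R_2$ (that is, curves along which $Q\,dz\in\R$, with $Q=\xi_1-\xi_3$), choosing $\gamma^+$ to escape into the upper half-plane and $\gamma^-$ into the lower half-plane, and setting $\Gamma:=\supp\mu_2\cup\gamma^+\cup\gamma^-$.

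To verify $\Gamma\in\mathcal T$, I would analyze the asymptotic behavior of orthogonal trajectories of $\varpi$ on $\mathcal R_2$ near infinity. Using the explicit coefficients \eqref{coefficients_cubic} one has $Q\sim 3z^2$ at the infinity on $\mathcal R_2$, so $Q\,dz\in\R$ translates into $z^2\,dz\in\R$, forcing orthogonal trajectories to approach infinity along one of the directions $\theta=k\pi/3$, $k=0,\ldots,5$. The local structure at the simple zero $b_2$ of $\varpi$ on $\mathcal R_2$---a cross of three trajectories at $\arg(z-b_2)\in\{\pi/3,\pi,5\pi/3\}$, one carrying $\supp\mu_2$, interleaved with three orthogonal trajectories at $\arg(z-b_2)\in\{0,2\pi/3,4\pi/3\}$---then identifies the outgoing orthogonal trajectory heading upward; a standard finite-critical-point analysis (see Appendix~\ref{appendix_quadratic_differentials}) rules out recurrence and shows this arc escapes to infinity in the direction $2\pi/3$ without colliding with $\supp\mu_1\cup\supp\mu_3$. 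Complex-conjugation symmetry supplies $\gamma^-$, and $\Gamma$ meets $\R$ exactly once at the point $a_*$.

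The central step is checking the Euler--Lagrange system on $(\supp\mu_1,\Gamma,\supp\mu_3)$. A direct computation from \eqref{definition_xi_functions} combined with $\Phi_1'=\Phi_2'+\Phi_3'$ yields
\begin{equation*}
 2\partial_z\Bigl(\sum_{k=1}^3 a_{2,k}U^{\mu_k}+\tfrac12\phi_2\Bigr)=\tfrac12(\xi_1-\xi_3),
\end{equation*}
so that the weighted potential $U_2^w:=\sum_k a_{2,k}U^{\mu_k}+\phi_2/2$ satisfies $dU_2^w=\tfrac12\re[(\xi_1-\xi_3)\,dz]$. Along $\gamma^\pm$ the quantity $(\xi_1-\xi_3)\,dz$ is real, so $U_2^w$ is monotone; the asymptotic $(\xi_1-\xi_3)\,dz\sim 3r^2\,dr>0$ at infinity in the direction $2\pi/3$ guarantees increasing monotonicity, and hence $U_2^w\geq l_2$ on $\gamma^\pm$ with equality only at $b_2,\overline{b_2}$, where $l_2$ is the Lagrange constant produced by Theorem~\ref{thm_s_property}. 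Analogous identities $dU_1^w=\tfrac12\re[(\xi_1-\xi_2)\,dz]$ and $dU_3^w=-\tfrac12\re[(\xi_2-\xi_3)\,dz]$ combined with the $S$-property give the equilibrium equalities $U_j^w=l_j$ on each connected component of $\supp\mu_j$ for $j=1,3$, and the identity $U_1^w\equiv U_2^w+U_3^w$ (a consequence of \eqref{compatibility_condition_external_fields}) enforces the compatibility of the Lagrange constants required by the vector problem.

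It remains to verify the Euler--Lagrange inequalities $U_j^w\geq l_j$ on the real-line portions of $\Gamma$ allowed to $\nu_j$ but free of $\mu_j$: for $j=1$ these are $(b_1,+\infty)$ in both regimes and $[a_*,a_1)$ in the subcritical regime, with symmetric portions for $j=3$. On these intervals $\xi_1,\xi_2,\xi_3$ are all real, and the signs of $\xi_1-\xi_2$ and $\xi_2-\xi_3$ can be extracted from the asymptotics $\xi_1\sim 2z^2$, $\xi_2\sim -z^2+\alpha/z$, $\xi_3\sim -z^2+(1-\alpha)/z$ read off from \eqref{coefficients_cubic}, together with a boundary-value analysis of the real branches of the cubic at $a_1,b_1$. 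Once these sign inequalities are in place, $\vec\mu$ satisfies the full Euler--Lagrange system characterizing the unique equilibrium measure of $\mathcal M_\alpha(\Gamma)$, cf.~\cite[Theorem~1.8]{beckermann_et_al_equilibrium_problems}, proving $\vec\mu_\alpha=\vec\nu_{\alpha,\Gamma}$. The main obstacle I foresee is controlling these sign conditions uniformly in $\alpha\in(0,1/2)$ across the phase transition at $\alpha_c$, where the topology of the supports rearranges and the orthogonal trajectory $\gamma^+$ may change its qualitative behavior; a careful continuity argument in $\alpha$ anchored at the extremal values $\alpha\to 0$ and $\alpha\to 1/2$ should bridge this difficulty.
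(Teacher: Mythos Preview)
Your proposal is correct and follows essentially the same approach as the paper: the contour $\Gamma$ is built by extending $\Delta_2=\supp\mu_2$ along orthogonal trajectories of $\varpi$ on $\mathcal R_2$ emanating from $b_2^{(2)}$ and $a_2^{(2)}$ (this is exactly Remark~\ref{remark_orthogonal_trajectories}), and the Euler--Lagrange equalities and inequalities are verified by expressing the weighted potentials as real parts of primitives of $\xi_j-\xi_k$ and then reading off the required signs (this is the content of Section~\ref{section_spectral_curve_to_variational_equations}, culminating in Theorem~\ref{thm_equilibrium_conditions_cubic} and Remark~\ref{rem_equilibrium_conditions_cubic}). The only point where the paper is substantially more detailed than your sketch is the sign analysis you flag as the ``main obstacle'': rather than a continuity argument anchored at the endpoints, the paper establishes the needed inequalities \eqref{inequality_xi_1}--\eqref{inequality_xi_5} and \eqref{conditions_positive_solutions}--\eqref{condition_positive_solution_2} directly via Proposition~\ref{proposition_distribution_branchpoints}, and the fact that the orthogonal trajectory escapes to $\infty^{(2)}$ in the direction $2\pi/3$ without hitting other critical points is a consequence of the full critical-graph analysis of Section~\ref{section:dynamics} (the half-plane domain bounded by $\gamma_1(b_2^{(2)})$ and $\gamma_2(b_2^{(2)})$).
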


We conjecture that the contour $\Gamma$ is characterized by the max-min property
$$
E_\alpha(\Gamma)=\sup_{\Sigma\in \mathcal T}E_\alpha(\Sigma).
$$

This characterization would be analogous to the max-min property for non-hermitian orthogonal polynomials 
\cite{rakhmanov_orthogonal_s_curves,kuijlaars_silva,stahl_orthogonal_polynomials_complex_measures,stahl_orthogonal_polynomials_complex_weight_function}. Furthermore, an adaptation 
of the methods in \cite{martinez_rakhmanov,kuijlaars_silva} shows that the $\alpha$-equilibrium measure of any contour solving this max-min problem is in fact an $\alpha$-critical 
measure.

\subsection{Structure of the rest of the paper}

Theorems~\ref{thm_spectral_curve}, \ref{thm_s_property}, 
\ref{thm_quadratic_differential}, and a simplified version of Theorem~\ref{thm_reciprocal_spectral_curve} are proven in Section~\ref{section_general_variational} (the complete proof of Theorem~\ref{thm_reciprocal_spectral_curve} is rather technical and thus deferred to Appendix~\ref{appendix1}). In Section~\ref{section_cubic_spectral_curve} we derive the coefficients 
\eqref{coefficients_cubic}--\eqref{definition_c_intro} and prove the second part of Theorem~\ref{thm_genus_zero}. Assuming the existence of certain short trajectories (fact proved later, in Section~\ref{section:dynamics}), we derive the first part of Theorem~\ref{thm_genus_zero} and 
Theorem~\ref{thm_equilibrium_conditions_cubic} in Section~\ref{section_spectral_curve_to_variational_equations}, which yields Theorem~\ref{thm_equilibrium_measure_cubic}. Section~\ref{section:dynamics} is devoted to the 
analysis of the global structure of the quadratic differential \eqref{defQqd} for the cubic potentials \eqref{cubic_choice_potentials}, which also provides a proof of 
Theorem~\ref{thm_existence_critical_measures_cubic} (or equivalently, of Theorem~\ref{thm:criticaltraj}, where the structure of 
the support of the critical vector-valued  measure is rephrased in the terminology of quadratic differentials). Finally, two more appendices contain the minimal background on quadratic differentials (Appendix~\ref{appendix_quadratic_differentials}) and the description of the numerical experiments and procedures used in this paper (Appendix~\ref{appendix_numerics}).

\section{Critical vector-valued measures} \label{section_general_variational}

Critical measures were defined in the previous section in terms of the vanishing of the total energy \eqref{variations_critical_measure}. Here we use an alternative 
characterization, more convenient for calculations. When $\vec\mu=\mu$ is a scalar measure, the following 
Proposition~\ref{proposition_local_variation_energy} coincides with \cite[Lemma~3.1]{martinez_rakhmanov}. The proof extends trivially to vectors of measures $\vec\mu$ and 
polynomial external fields as considered here. We skip the details.

\begin{prop}\label{proposition_local_variation_energy}
 For any vector of measures $\vec\mu\in \mathcal M_\alpha$, it is valid
 \begin{equation*}
 E(\vec\mu^t)=E(\vec\mu)+t\re \mathcal D_h(\vec\mu)+\Boh(t^2), \quad t\to 0,
 \end{equation*}
 where
 \begin{equation}\label{variations_energy}
 \mathcal D_h(\vec\mu)=-\sum_{j,k=1}^3 a_{j,k}\iint \frac{h(x)-h(y)}{x-y}d\mu_j(x)d\mu_k(y)+\sum_{j=1}^3\int \Phi_j'(x) h(x)d\mu_j(x).
 \end{equation}
\end{prop}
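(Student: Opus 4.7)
The plan is to expand $E(\vec\mu^t)$ term by term to first order in $t$, using the explicit form of the variation $h_t(z)=z+th(z)$ and the change-of-variables formula $\int f\,d\mu_j^t=\int f\circ h_t\,d\mu_j$. Since the total energy is additive in the contributions from each pair $(\mu_j,\mu_k)$ and from each external field $\phi_j$, the argument reduces to performing the scalar computation of \cite[Lemma~3.1]{martinez_rakhmanov} componentwise, as the excerpt indicates.

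For the mutual energies I would use the factorization
\[
h_t(x)-h_t(y)=(x-y)\bigl(1+t\eta(x,y)\bigr),\qquad \eta(x,y):=\frac{h(x)-h(y)}{x-y},
\]
with $\eta$ extended continuously to the diagonal by $\eta(x,x):=h'(x)$. Taking $-\log|\cdot|$ and expanding gives
\[
-\log|h_t(x)-h_t(y)|=-\log|x-y|-t\,\re\eta(x,y)+\mathcal O(t^2)
\]
pointwise, and (as discussed below) uniformly on $\supp\mu_j\times\supp\mu_k$. Integrating, multiplying by $a_{j,k}$ and summing over $j,k$ produces the double-integral part of $\mathcal D_h(\vec\mu)$. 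For the external-field terms, I would use that each $\Phi_j$ is entire, whence
\[
\Phi_j\bigl(h_t(x)\bigr)=\Phi_j(x)+t\,\Phi_j'(x)h(x)+\mathcal O(t^2)
\]
uniformly on $\supp\mu_j$; taking real parts, integrating against $\mu_j$ and summing over $j$ yields the single-integral part of $\mathcal D_h(\vec\mu)$. Combining the two contributions gives the claimed expansion.

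The only genuine obstacle is justifying the uniformity of the $\mathcal O(t^2)$ remainders so that the pointwise expansions can be integrated. For the interaction term this reduces to showing that $\eta(x,y)$ is bounded on $\supp\mu_j\times\supp\mu_k$; this follows from the mean-value inequality applied along the segment $[x,y]$, using $\sup_K|h'|<\infty$ on a compact neighborhood $K$ of the supports, after which $\bigl|\log|1+t\eta|-t\,\re\eta\bigr|\le Ct^2$ uniformly for $t$ small enough (so that also $1+t\eta$ stays bounded away from $0$). For the external-field term the analogous uniform bound follows at once from the continuity of $\Phi_j''$ on a compact neighborhood of $\supp\mu_j$ via Lagrange's form of the remainder. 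These ingredients turn the pointwise Taylor expansions into expansions of the integrals, completing the proof.
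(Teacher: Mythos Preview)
Your proposal is correct and matches the paper's approach exactly: the paper omits all details, simply citing the scalar case \cite[Lemma~3.1]{martinez_rakhmanov} and noting that the vector version follows by treating each $I(\mu_j,\mu_k)$ and each $\int\phi_j\,d\mu_j$ separately. One small slip: for general $h\in C^2(\C)$ the complex derivative $h'$ need not exist, so $\eta$ does not extend continuously to the diagonal---but you only need the Lipschitz bound $|\eta(x,y)|\le\sup_K|\nabla h|$ off the diagonal (which is $\mu_j\times\mu_k$-null for finite-energy measures), and your argument goes through unchanged.
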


Although the definition of critical measures only requires to evaluate $\mathcal D_h(\vec \mu)$ for test functions $h\in C^2(\C)$, the quantity $\mathcal D_h(\vec \mu)$ as 
defined above will be used for any function defined on $\supp\mu_1\cup\supp\mu_2\cup\supp\mu_3$, as long as the integrals on the right-hand side of 
\eqref{variations_energy} exist.

In particular, considering $h$ and $-ih$, we get from Proposition~\ref{proposition_local_variation_energy} 

\begin{cor}\label{corollary_critical_measure}
A measure $\vec\mu\in \mathcal M_\alpha$ is critical if, and only if, 
$$
\mathcal  D_h(\vec\mu)=0,
$$
for every function $h\in C^2(\C)$.
\end{cor}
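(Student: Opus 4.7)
The plan is to derive this corollary directly from Proposition~\ref{proposition_local_variation_energy}. The forward direction of the equivalence is essentially a reformulation of the definition, with the only nontrivial observation being that the vanishing of a \emph{real part} can be bootstrapped into the vanishing of the full complex quantity by exploiting the freedom to choose complex-valued test functions.

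More concretely, given a critical measure $\vec \mu \in \mathcal M_\alpha$, I would apply Proposition~\ref{proposition_local_variation_energy} to an arbitrary $h \in C^2(\C)$ and divide by $t$; passing to the limit $t \to 0$ as dictated by \eqref{variations_critical_measure} forces $\re \mathcal D_h(\vec\mu) = 0$. Since $h_t(z) = z + t h(z)$ parametrizes variations by a complex-valued $h$, the test function $ih$ is also admissible, and applying the same argument to $ih$ gives $\re \mathcal D_{ih}(\vec\mu) = 0$. I then note that the right-hand side of \eqref{variations_energy} depends $\C$-linearly on $h$ (the measures and the coefficients are fixed, and $h$ enters only through its values integrated against complex quantities), so $\mathcal D_{ih}(\vec\mu) = i \, \mathcal D_h(\vec\mu)$, and therefore $\re(i \mathcal D_h(\vec\mu)) = -\im \mathcal D_h(\vec\mu) = 0$. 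Combining the two identities gives $\mathcal D_h(\vec\mu) = 0$ for every $h \in C^2(\C)$, which is the forward implication.

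The converse is immediate: if $\mathcal D_h(\vec\mu) = 0$ for every $h \in C^2(\C)$, then in particular $\re \mathcal D_h(\vec\mu) = 0$, and Proposition~\ref{proposition_local_variation_energy} gives $E(\vec\mu^t) - E(\vec\mu) = \Boh(t^2)$, so the limit in \eqref{variations_critical_measure} vanishes.

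There is no real obstacle in this proof; the only point worth flagging for the reader is the $\C$-linearity of $h \mapsto \mathcal D_h(\vec\mu)$, which is what allows the ``$\re = 0$'' conclusion of Proposition~\ref{proposition_local_variation_energy} to be promoted to ``$\mathcal D_h = 0$'' by testing against both $h$ and $ih$. I would present the whole argument in a few lines.
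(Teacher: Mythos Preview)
Your proposal is correct and matches the paper's own approach: the paper simply remarks that the corollary follows from Proposition~\ref{proposition_local_variation_energy} by considering $h$ and $-ih$, which is exactly your argument via the $\C$-linearity of $h\mapsto \mathcal D_h(\vec\mu)$.
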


The following lemma is inspired by \cite{martinez_rakhmanov}, where a similar situation (but involving only one measure) was analyzed, and borrowed in the following 
form from \cite[Lemma~3.5]{kuijlaars_silva}, where a simplified proof is given. The proof from \cite{kuijlaars_silva} is easily extended to our situation by simply choosing 
$g\equiv 1$ therein and mimicking the arguments.

\begin{lem}\label{lemma_extension_test_functions}
 Suppose $\vec\mu$ is a critical vector-valued measure. If $z\in \C$ satisfies
 \begin{equation}\label{absolute_convergence_cauchy_transform}
 \int\frac{d\mu_j(x)}{|x-z|}<\infty,\quad j=1,2,3,
 \end{equation}
 then $\mathcal D_{h_z}(\vec \mu)=0$ for the meromorphic function
 \begin{equation}\label{shiffer_variation}
 h_z(x)=\frac{1}{x-z},\quad x\in \C.
 \end{equation}
\end{lem}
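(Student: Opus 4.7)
The plan is to prove this by approximation: $h_z(x) = 1/(x-z)$ fails to lie in $C^2(\C)$ only because of its pole at $z$, and can be approximated by genuine $C^2$ test functions that agree with $h_z$ outside a small disk around $z$. Fix a smooth cutoff $\chi\colon [0,\infty)\to[0,1]$ with $\chi\equiv 0$ on $[0,1/2]$ and $\chi\equiv 1$ on $[1,\infty)$, and for $\epsilon>0$ set $h_{z,\epsilon}(x) := \chi(|x-z|/\epsilon)/(x-z)$. Then $h_{z,\epsilon}\in C^2(\C)$, it coincides with $h_z$ on $\{|x-z|\geq\epsilon\}$, and it vanishes on $\{|x-z|\leq\epsilon/2\}$. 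Corollary~\ref{corollary_critical_measure} yields $\mathcal D_{h_{z,\epsilon}}(\vec\mu)=0$ for every $\epsilon>0$, so the task reduces to verifying, termwise in the definition \eqref{variations_energy}, that $\lim_{\epsilon\to 0}\mathcal D_{h_{z,\epsilon}}(\vec\mu) = \mathcal D_{h_z}(\vec\mu)$.

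The linear terms are routine: the uniform bound $|h_{z,\epsilon}(x)|\leq 1/|x-z|$ together with boundedness of the polynomial $\Phi_j'$ on the compact set $\supp\mu_j$ furnishes an integrable majorant $C/|x-z|$ (integrable thanks to the hypothesis \eqref{absolute_convergence_cauchy_transform}), so dominated convergence yields $\int\Phi_j'(x) h_{z,\epsilon}(x)\,d\mu_j(x)\to\int\Phi_j'(x) h_z(x)\,d\mu_j(x)$. For the quadratic (Schiffer) terms, a direct computation yields the holomorphic identity
\begin{equation*}
\frac{h_z(x)-h_z(y)}{x-y}=-\frac{1}{(x-z)(y-z)},
\end{equation*}
whose right-hand side is absolutely integrable against $d\mu_j(x)\,d\mu_k(y)$ because the double integral factorizes as a product of Cauchy-type integrals, each finite at $z$ by hypothesis. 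I split the integration domain into the ``good'' set $E_\epsilon=\{(x,y):|x-z|>\epsilon\text{ and }|y-z|>\epsilon\}$, on which the kernel $K_\epsilon(x,y):=(h_{z,\epsilon}(x)-h_{z,\epsilon}(y))/(x-y)$ coincides with the Schiffer kernel above, and its complement $E_\epsilon^c$. On $E_\epsilon$ the contribution converges to the full Schiffer integral by dominated convergence (with majorant $1/(|x-z||y-z|)$).

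The core of the argument, and the main obstacle, is to show that the contribution from $E_\epsilon^c$ vanishes in the limit. The key quantitative estimate, extracted directly from the hypothesis \eqref{absolute_convergence_cauchy_transform}, is
\begin{equation*}
\mu_j(B(z,\epsilon)) \leq \epsilon\int_{B(z,\epsilon)}\frac{d\mu_j(x)}{|x-z|} = o(\epsilon) \qquad (\epsilon\to 0).
\end{equation*}
Combined with the elementary bounds $|h_{z,\epsilon}|\leq 2/\epsilon$ on $B(z,\epsilon)$ and $\|\nabla h_{z,\epsilon}\|_\infty\leq C/\epsilon^2$ coming from the scaling of the cutoff, one controls $|K_\epsilon|$ in two regimes: on $B(z,\epsilon)\times B(z,2\epsilon)$ the mean value theorem gives $|K_\epsilon|\leq C/\epsilon^2$, so the contribution is $\lesssim \epsilon^{-2}\mu_j(B(z,\epsilon))\mu_k(B(z,2\epsilon)) = o(1)$; on the mixed region where $x\in B(z,\epsilon)$ but $|y-z|>2\epsilon$, one has $|K_\epsilon(x,y)|\lesssim 1/(\epsilon|y-z|)+1/|y-z|^2$, whose integral against $d\mu_j\otimes d\mu_k$ is also $o(1)$ after using $\int d\mu_k/|y-z|<\infty$ together with $\mu_j(B(z,\epsilon))/\epsilon\to 0$. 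A symmetric argument disposes of the other mixed region. The gradient blow-up $\|\nabla h_{z,\epsilon}\|_\infty=O(\epsilon^{-2})$ is thus exactly compensated by the mass decay $\mu_j(B(z,\epsilon))=o(\epsilon)$, and passing to the limit in $\mathcal D_{h_{z,\epsilon}}(\vec\mu)=0$ completes the proof.
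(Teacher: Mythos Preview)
Your proof is correct and follows essentially the same approach as the one the paper cites from \cite{kuijlaars_silva}: mollify $h_z$ by a smooth radial cutoff, invoke Corollary~\ref{corollary_critical_measure} for the regularized test function, and pass to the limit using the mass decay $\mu_j(B(z,\epsilon))=o(\epsilon)$ that the hypothesis \eqref{absolute_convergence_cauchy_transform} forces. The paper does not spell out the argument but simply remarks that the proof in \cite{kuijlaars_silva} carries over with $g\equiv 1$; what you wrote is exactly that extension.
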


The next result is straightforward, and we skip its proof:
\begin{lem}\label{lemma_integrability_cauchy_transform}
 Let $f$ be holomorphic in a punctured neighborhood of $p\in \C$. If $|f(z)|^r$ is $\lebesgue_2$-locally integrable at $p$ for some $r>0$, then $p$ is either a removable 
singularity or a 
pole of $f$ of order $<2/r$.
\end{lem}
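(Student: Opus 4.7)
The strategy is to first rule out an essential singularity at $p$ by means of a sub-mean value inequality for a suitable subharmonic function, thereby reducing to the case where $p$ is either removable or a pole, and then read off the admissible pole order from the integrability condition by a direct polar-coordinates computation.

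First, I would recall that for $f$ holomorphic (and not identically zero) in a punctured disc $U=B(p,\delta_0)\setminus\{p\}$, the function $\log|f|$ is subharmonic there, and hence so is $|f|^r=\exp(r\log|f|)$ for any $r>0$. For any $z_0\in U$, let $\rho=|z_0-p|/2$, so that $B(z_0,\rho)\subset U$. The sub-mean value property gives
\begin{equation*}
|f(z_0)|^r \;\le\; \frac{1}{\pi\rho^2}\int_{B(z_0,\rho)}|f|^r\,d\lebesgue_2 \;\le\; \frac{4}{\pi\,|z_0-p|^2}\int_{B(p,\,3|z_0-p|/2)}|f|^r\,d\lebesgue_2.
\end{equation*}
Since by hypothesis $|f|^r\in L^1_{\mathrm{loc}}$ near $p$, the last integral is bounded by a constant $C$ for all $z_0$ sufficiently close to $p$. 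Therefore
\begin{equation*}
|f(z_0)|\;\le\;C'\,|z_0-p|^{-2/r}\qquad \text{as }z_0\to p.
\end{equation*}

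Next, pick any integer $k\ge 2/r$. Then $(z-p)^k f(z)$ is bounded in a punctured neighborhood of $p$, and Riemann's removable singularity theorem implies that it extends holomorphically across $p$. This shows that $p$ is at worst a pole of $f$.

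Finally, if $p$ is a pole of order $n\ge 1$, write $f(z)=a_{-n}(z-p)^{-n}(1+o(1))$ with $a_{-n}\ne 0$. Then $|f(z)|^r\ge \tfrac{1}{2}|a_{-n}|^r|z-p|^{-nr}$ on some small punctured disc around $p$, so in polar coordinates
\begin{equation*}
\int_{B(p,\epsilon)\setminus\{p\}}|f|^r\,d\lebesgue_2 \;\ge\; \pi\,|a_{-n}|^r \int_0^{\epsilon}\rho^{\,1-nr}\,d\rho.
\end{equation*}
Local integrability forces $1-nr>-1$, i.e.\ $n<2/r$, which is the desired bound.

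The only place that needs genuine care is the first step: one must verify that the sub-mean value inequality is applicable (which requires choosing $\rho$ so that $B(z_0,\rho)$ stays inside the domain of holomorphy), and that the resulting pointwise bound is strong enough to apply Riemann's theorem. Once that polynomial bound is in place, the classification of isolated singularities together with the elementary integrability computation in polar coordinates finishes the argument.
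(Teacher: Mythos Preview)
Your proof is correct. The paper itself omits the proof entirely (stating only ``The next result is straightforward, and we skip its proof''), so there is nothing to compare against; your argument via the sub-mean value property for the subharmonic function $|f|^r$, followed by Riemann's removable singularity theorem and the polar-coordinates integrability check, is a clean and complete justification of the lemma.
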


For any finite Borel measure $\mu$, it follows from Tonelli's theorem that the function
$$
z\mapsto \int \frac{d\mu(x)}{|x-z|}
$$
is locally $\lebesgue_2$-integrable and in particular finite $\lebesgue_2$-a.e.; that is, \eqref{absolute_convergence_cauchy_transform} is satisfied for $\lebesgue_2$-a.e. $z\in 
\C$. Hence the Cauchy transform $C^{\mu}$ of the measure $\mu$ is finite $\lebesgue_2$-a.e. 
In particular, the functions $\xi_1,\xi_2,\xi_3$ in \eqref{definition_xi_functions} are finite $\lebesgue_2$-a.e. and locally integrable with respect to $\lebesgue_2$, so that
\begin{equation}\label{first_coefficient_polynomial}
\sum_{j=1}^3 \xi_j^2 (z)
\end{equation}
is well-defined $\lebesgue_2$-a.e. and is analytic on open subsets of $\C\setminus(\supp\mu_1\cup\supp\mu_2\cup\supp\mu_3)$.

Recall the quantities $R$, $\mathcal D_h(\vec \mu)$ and $h_z$ defined in \eqref{expression_R}, \eqref{variations_energy} and \eqref{shiffer_variation}, respectively. The next 
lemma is crucial for what comes later:
\begin{lem}\label{lemma_polynomial_coefficient}
If $\vec \mu\in \mathcal M_\alpha$, then the functions $\xi_j$ defined in \eqref{definition_xi_functions} satisfy the identity
\begin{equation}\label{identity_energy_polynomials}
	\frac{1}{2}\left(\xi_1^2 + \xi_2^2 +\xi_3^2 \right)(z) = R(z) + \mathcal D_{h_z}(\vec \mu) \quad \text{$\lebesgue_2$-a.e.}
	\end{equation}
Moreover, the statements
\begin{enumerate}
	\item[(i)] $\sum_{j=1}^3 \xi_j^2 (z)$ is a polynomial,
	\item[(ii)] $\sum_{j=1}^3 \xi_j^2 (z) = 2 R(z)$ $\lebesgue_2$-a.e.
	\item[(iii)] $\mathcal  D_{h_z}(\vec \mu)=0$ for every $z$ satisfying \eqref{absolute_convergence_cauchy_transform},
\end{enumerate}
are equivalent. 
\end{lem}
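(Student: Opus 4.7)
The plan is to establish the identity \eqref{identity_energy_polynomials} by a direct algebraic computation, and then to read off the three equivalences from elementary analytic properties of the quantities involved. The key observation is that the compatibility condition \eqref{compatibility_condition_external_fields} is precisely the statement that $\vec\Phi'$ is an eigenvector of $A$ for the eigenvalue $3/2$.

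\textbf{Identity.} Starting from the compact form $\vec\xi=B(\tfrac{1}{3}\vec\Phi'+\vec C)$ in \eqref{definitionofXivec} and using $2A=B^TB$ from \eqref{identitiesforA} together with the symmetry of $A$, I would expand
\begin{equation*}
\tfrac{1}{2}\sum_{j=1}^3\xi_j^2 \;=\; \tfrac{1}{9}(\vec\Phi')^T A\vec\Phi' \;+\; \tfrac{2}{3}(\vec\Phi')^T A\vec C \;+\; \vec C^T A\vec C.
\end{equation*}
The eigenvector property $A\vec\Phi'=\tfrac{3}{2}\vec\Phi'$ collapses the cross term to $(\vec\Phi')^T\vec C=\sum_j\Phi_j' C^{\mu_j}$. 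For the remaining piece, the elementary identity
\begin{equation*}
\frac{h_z(x)-h_z(y)}{x-y} \;=\; -\frac{1}{(x-z)(y-z)}, \qquad h_z(t):=\frac{1}{t-z},
\end{equation*}
combined with Fubini (legitimate at every $z$ satisfying \eqref{absolute_convergence_cauchy_transform}, i.e.\ at $\lebesgue_2$-a.e.\ $z$ by Tonelli) matches $\vec C^T A\vec C$ with the first double sum in $\mathcal D_{h_z}(\vec\mu)$, while the trivial decomposition $\Phi_j'(x)h_z(x) = (\Phi_j'(x)-\Phi_j'(z))/(x-z) + \Phi_j'(z)h_z(x)$ in the second sum of \eqref{variations_energy} yields precisely the cancellation needed against the integral term in $R(z)$ from \eqref{expression_R}. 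After bookkeeping, one arrives at $R(z)+\mathcal D_{h_z}(\vec\mu)=\tfrac{1}{2}\sum\xi_j^2(z)$ $\lebesgue_2$-a.e.

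\textbf{Equivalences.} The equivalences (ii)$\Leftrightarrow$(iii) are immediate from \eqref{identity_energy_polynomials}, since the right-hand side equals $R(z)$ if and only if $\mathcal D_{h_z}(\vec\mu)=0$. The implication (ii)$\Rightarrow$(i) is trivial as $2R$ is a polynomial. For the remaining direction (i)$\Rightarrow$(ii), the identity shows that $z\mapsto\mathcal D_{h_z}(\vec\mu)=\tfrac{1}{2}\sum\xi_j^2(z)-R(z)$ agrees $\lebesgue_2$-a.e.\ with a polynomial. A direct inspection of \eqref{variations_energy} shows that $\mathcal D_{h_z}(\vec\mu)\to 0$ as $|z|\to\infty$: both $h_z(x)$ and the divided difference $-1/((x-z)(y-z))$ tend to zero uniformly in $x,y$ on the compact supports of the $\mu_j$, while $\Phi_j'$ is bounded there. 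A polynomial that vanishes at infinity must be identically zero, establishing (ii).

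The lemma is a purely algebraic/analytic identity, and in particular does \emph{not} invoke the criticality of $\vec\mu$ or Lemma~\ref{lemma_extension_test_functions}. The only technical care needed is the $\lebesgue_2$-a.e.\ justification of the Fubini steps in the identity computation, for which Lemma~\ref{lemma_integrability_cauchy_transform} and local integrability of Cauchy transforms of finite measures suffice; beyond that, the main ``obstacle'' is simply keeping track of signs and the symmetry used at each step.
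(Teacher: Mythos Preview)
Your proof is correct and follows essentially the same route as the paper's: expand $\tfrac{1}{2}\vec\xi^{\,T}\vec\xi$ via $2A=B^TB$, identify the double sum in $\mathcal D_{h_z}$ with $\vec C^TA\vec C$ through the divided-difference identity, and split $\Phi_j'(x)h_z(x)$ to recover the integral term of $R$; the equivalences are then read off exactly as you describe, using $\mathcal D_{h_z}(\vec\mu)\to 0$ at infinity. The only cosmetic difference is that the paper simplifies the cross term $\tfrac{2}{3}(\vec\Phi')^TA\vec C$ by invoking the second identity in \eqref{identitiesforA}, namely $2A=3I_3-bb^T$ together with $b^T\vec\Phi'=0$, whereas you use the equivalent eigenvector relation $A\vec\Phi'=\tfrac{3}{2}\vec\Phi'$ directly; both reductions yield $(\vec\Phi')^T\vec C$ in one step.
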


\begin{proof}
For $h=h_z$,
\begin{equation}\label{h_variation_step_1}
\begin{aligned}
\iint \frac{h_z(x)-h_z(y)}{x-y}d\mu_j(x)d\mu_k(y)& =-\iint \frac{1}{(x-z)(y-z)}d\mu_j(x)d\mu_k(y) \\
					     & =-C^{\mu_j}(z)C^{\mu_k}(z),
\end{aligned}
\end{equation}
and
\begin{equation}\label{identity_single_integral}
	\int h_z(x)\Phi_j'(x)d\mu_j(x)=\Phi'_j(z)C^{\mu_j}(z)-Q_j(z),
\end{equation}
where
\begin{equation}\label{h_variation_step_2}
	Q_j(z)=-\int \frac{\Phi'_j(x)-\Phi'_j(z)}{x-z}d\mu_j(x).
\end{equation}
Notice that the integrand in the right-hand side of \eqref{h_variation_step_2} is a polynomial both in $x$ and in $z$, and thus (since $\mu_j$ is finite and compactly supported) it 
is also $d\mu_j(x)$-integrable for all $z$. Hence, $Q_j$ is a bona fide polynomial. Additionally, 
\eqref{h_variation_step_1} and \eqref{identity_single_integral} also show that $D_{h_z}(\vec\mu)$ is finite whenever \eqref{absolute_convergence_cauchy_transform} holds true.

Using \eqref{h_variation_step_1} and \eqref{identity_single_integral} in \eqref{variations_energy}, we get that
\begin{equation*}
\mathcal D_{h_z}(\vec \mu)=\sum_{j,k=1}^3 a_{j,k} C^{\mu_j}(z)C^{\mu_k}(z)+\sum_{j=1}^3 \Phi_j'(z)C^{\mu_j}(z)-\sum_{j=1}^3 Q_j(z),
\end{equation*}
or equivalently,
\begin{equation}\label{aux_equation_1}
\mathcal D_{h_z}(\vec \mu)=\vec C(z)^T A  \vec C(z) +  (\vec\Phi'(z))^T \vec C(z)-\sum_{j=1}^3 Q_j(z),
\end{equation}
with the previously used notation $\vec C = \left(C^{\mu_1}, C^{\mu_2}, C^{\mu_3} \right)^T$ and $\vec \Phi'=\left(\Phi'_1,  \Phi'_2, \Phi'_3 \right)^T$. 

On the other hand, by \eqref{definitionofXivec} (and taking also into account both identities from \eqref{identitiesforA}) we get
\begin{align*}
	\frac{1}{2}\left(\xi_1^2+\xi_2^2+\xi_3^2 \right) &= \frac{1}{2} \vec \xi^T \vec \xi = \left( \frac{1}{3}\vec \Phi' + \vec C \right)^T A \left( \frac{1}{3}\vec \Phi' + \vec 
C \right) \\
	&= \frac{1}{9}(\vec \Phi')^T A \vec \Phi' +    \vec C^T A  \vec C  +\frac{2}{3} \left(  \vec \Phi'   \right)^T A \vec C \\
	&= \frac{1}{9}(\vec \Phi')^T A \vec \Phi' +    \vec C^T A  \vec C  +\frac{1}{3} \left(  \vec \Phi'   \right)^T \left(3I_3 - b b^T \right) \vec C\\
	&= \left\{\vec C^T A  \vec C  + \left(  \vec \Phi'   \right)^T \vec C \right\} + \frac{1}{9}(\vec \Phi')^T A \vec \Phi' -     \frac{1}{3} \left( b^T \vec \Phi'   \right)^T 
  \left( b^T   \vec C\right).
\end{align*}
Notice that the compatibility condition \eqref{compatibility_condition_external_fields} means precisely that $b^T \vec \Phi' =\vec 0$, so that 
$$
\frac{1}{2}\left(\xi_1^2+\xi_2^2+\xi_3^2 \right) =   \frac{1}{9}(\vec \Phi')^T A \vec \Phi' + \left\{\vec C^T A  \vec C  + \left(  \vec \Phi'   \right)^T \vec C \right\}, 
$$
and using \eqref{aux_equation_1} we can simplify the expression between brackets above to conclude that 
$$
\frac{1}{2}\left(\xi_1^2+\xi_2^2+\xi_3^2 \right)  = \mathcal D_{h_z}(\vec \mu) + \frac{1}{9}(\vec \Phi')^T A \vec \Phi' + \sum_{j=1}^3 Q_j=\mathcal D_{h_z}(\vec \mu) 
+R,
$$
where $R$ was defined in \eqref{expression_R}. This identity  is valid for values 
of $z$ satisfying \eqref{absolute_convergence_cauchy_transform}, which proves \eqref{identity_energy_polynomials}, as well as the equivalence of \emph{(ii)} and \emph{(iii)}.

Clearly, \emph{(ii)} implies \emph{(i)}. On the other hand, by \eqref{h_variation_step_1},
\begin{equation*}
\lim_{z\to \infty} \iint \frac{h_z(x)-h_z(y)}{x-y}d\mu_j(x)d\mu_k(y)=0,
\end{equation*}
and since function $h_z(\cdot)$ converges uniformly to $0$ as $z\to \infty$, by the Dominated Convergence Theorem we conclude that 
$$
\lim_{z\to \infty}\int h_{z}(x)\Phi'_j(x)d\mu_j(x)=0.
$$
Thus,
\begin{equation}\label{limit_variational_energy}
\lim_{z\to\infty} \mathcal D_{h_z}(\vec\mu)=0.
\end{equation}
Measures $\mu_1,\mu_2$ and $\mu_3$ are compactly 
supported, so that \eqref{identity_energy_polynomials} holds for all $z$ sufficiently large, and with \eqref{limit_variational_energy},
$$
\lim_{z\to \infty}	\left(\xi_1^2 + \xi_2^2 +\xi_3^2 - 2R \right)(z) = 0.
$$
This establishes that \emph{(i)} implies \emph{(ii)}.

\end{proof}


\begin{proof}[Proof of Theorem~\ref{thm_spectral_curve}]

A combination of Corollary~\ref{corollary_critical_measure} and Lemma~\ref{lemma_polynomial_coefficient} shows that if $\vec\mu$ is critical, then the polynomial $R$ in 
\eqref{expression_R} is alternatively expressed as
$$
R(z)=\frac{1}{2}(\xi_1(z)^2+\xi_2(z)^2+\xi_3(z)^2).
$$

From the expressions \eqref{definition_xi_functions} it follows that
\begin{equation}\label{sum0}
\xi_1+\xi_2+\xi_3=0,
\end{equation}
and as a consequence
\begin{equation}\label{aux_equation_3}
R=\frac{1}{2}\left(\xi_1^2+\xi_2^2+\xi_3^2 \right) = \xi_1^2+\xi_1\xi_2+\xi_2^2=\xi_2^2+\xi_2\xi_3+\xi_3^2=\xi_3^2+\xi_3\xi_1+\xi_1^2.
\end{equation}

Multiplying the identities in \eqref{aux_equation_3} by appropriate factors $(\xi_j-\xi_k)$, it follows that
\begin{equation}\label{aux_equation_4}
\xi_1(z)^3-\xi_1(z)R(z)=\xi_2(z)^3-\xi_2(z)R(z)=\xi_3(z)^3-\xi_3(z)R(z).
\end{equation}

Define
\begin{equation}\label{definition_D_xi_j}
D(z)=-\xi_j(z)^3+\xi_j(z)R(z).
\end{equation}

Due to \eqref{aux_equation_4}, $D$ does not depend on the choice of $j\in \{1,2,3\}$. When we choose $j=1$, we see that 
$D$ is analytic on $\C\setminus (\supp\mu_1\cup\supp\mu_2)$. Similarly, when we choose $j=2,3$, we see that $D$ should be analytic on $\C\setminus (\supp\mu_1\cup\supp\mu_3)$ and 
$\C\setminus(\supp\mu_2\cup\supp\mu_3)$, respectively. Consequently, $D$ can only have singularities at $S_\alpha$, defined 
in \eqref{defSetS}, so from 
\eqref{assumption_intersection_supports} these singularities are all isolated and in a finite number.

Moreover, 
\begin{align*}
|D(z)|^{1/3} & \leq (|\xi_j(z)|^3+|\xi_j(z)||R(z)|)^{1/3} \\
	     & \leq |\xi_j(z)| +|\xi_j(z)|^{1/3}|R(z)|^{1/3},
\end{align*}
where for the last inequality we used $(x+y)^{1/3}\leq x^{1/3}+y^{1/3}$, which is valid for any non negative numbers $x,y$.

The function $\xi_j$ is locally $\lebesgue_2$-integrable (see the comments after Lemma~\ref{lemma_integrability_cauchy_transform}). Hence the function $D$ satisfies the 
conditions of 
Lemma~\ref{lemma_integrability_cauchy_transform} for $r=1/3$ and any choice of $p\in S_\alpha$, so the points in $S_\alpha$ are not essential singularities of $D$. Moreover, from 
the behavior 
of the $\xi_j$'s and $R$ when $z\to\infty$, it follows that $D$ has polynomial growth at $\infty$. We conclude that $D$ is rational (although it could be identically zero).

In summary, we have shown that under the assumptions of Theorem~\ref{thm_spectral_curve}, there exist a polynomial $R$, defined in \eqref{expression_R}, and a rational function $D$ with possible poles in $S_\alpha$, such that the functions $\xi_1$, $\xi_2$, $\xi_3$ satisfy
\begin{equation*}
\xi_j(z)^3-R(z)\xi_j(z)+D(z)=0,\quad \lebesgue_2-\mbox{a.e.}
\end{equation*}
The fact that all the components $\mu_j$ are supported on a finite union of analytic arcs, and that they are absolutely continuous with respect to the arclength measure of their 
supports is then a direct consequence of \cite[Theorem 2]{borcea_et_al_algebraic_cauchy_transform}. 

Let $p$ be a pole of $D$. The functions $\xi_1,\xi_2,\xi_3$ satisfy the cubic equation 
\eqref{spectral_curve_general}, so their local behavior near $p$ is of the form
\begin{equation}\label{local_behavior_xi}
\xi_j(z)=(z-p)^{-\nu_j}(\kappa_j+\boh(1)),\quad \mbox{ as } z\to p, \quad j=1,2,3,
\end{equation}
where $\nu_1,\nu_2,\nu_3\in \Q$ and $\kappa_1,\kappa_2,\kappa_3$ are nonzero constants. Since $p$ is a pole of $D$ and $R$ is a polynomial, from \eqref{definition_D_xi_j}
we learn the $\nu_j$'s are all equal, say to $\nu$, and $3\nu$ should be equal to the order of $p$ as a pole of $D$. Moreover, we further get that none of the functions 
$\xi_1,\xi_2$ and $\xi_3$ is analytic near $p$, so $p$ has to belong to the support of at least two of the measures $\mu_1,\mu_2$ and $\mu_3$.
On the other hand, since $\xi_j$ is a linear combination of polynomials and Cauchy transforms of finite measures, we must also have 
$\nu<1$, so we conclude
$$
\nu\in \left\{\frac{1}{3}, \frac{2}{3}\right\}.
$$
The behavior \eqref{local_behavior_blow_up} then follows from Plemelj's formula and \eqref{local_behavior_xi}, concluding the proof.

\end{proof}

\begin{remark}
The fact that (scalar) finite measures, whose Cauchy transform satisfies a quadratic equation $\lebesgue_2$-q.e., live on a finite set of analytic curves was established in 
\cite{martinez_rakhmanov} without any additional constraint on the measure (the proof was inspired by \cite{MR1909635}). This was extended to an arbitrary algebraic equation in 
\cite{borcea_et_al_algebraic_cauchy_transform}, but with a crucial assumption on the measure that its support has a zero plane Lebesgue measure (this assumption is embedded in the 
definition of the class $\mathcal M_\alpha$ in Section~\ref{sec:mainresults}). Nevertheless, we believe that the assertion about the support is still true even if we drop this a 
priori restriction on $\vec \mu$.
\end{remark}

Our next goal is to prove Theorem~\ref{thm_reciprocal_spectral_curve}. In other words, for a vector of measures $\vec\mu\in\mathcal M_\alpha$ we assume that the components of $\vec\mu$ are supported on a finite union of analytic arcs, and the associated functions $\xi_j$ in \eqref{shifted_resolvents_pols_V} satisfy an algebraic equation of the form \eqref{spectral_curve_general} for some polynomial $R$ and some rational function $D$. As 
we already pointed out after the statement of Theorem~\ref{thm_reciprocal_spectral_curve}, these assumptions allow us to write $\vec\xi$ as in \eqref{definition_xi_functions} for the vector of 
external fields $\vec\Phi=(\Phi_1,\Phi_2,\Phi_3)$ given by \eqref{definition_new_external_fields}.
In particular, it follows that $\xi_j$'s satisfy \eqref{sum0} and consequently 
$$
R=\xi_1\xi_2+\xi_1\xi_3+\xi_2\xi_3=\frac{1}{2}(\xi_1^2+\xi_2^2+\xi_3^2).
$$
From Lemma~\ref{lemma_polynomial_coefficient} we get immediately the following result:
\begin{cor}\label{corollary_converse_critical_measure}
 Under the assumptions of Theorem~\ref{thm_reciprocal_spectral_curve}, the vector of measures $\vec \mu$ satisfies
 $$
 \mathcal D_{h_z}(\vec \mu)=0
 $$
 for $z\in \C\setminus(\supp\mu_1\cup\supp\mu_2\cup\supp\mu_3)$, where $h_z$ is as in \eqref{shiffer_variation}.
\end{cor}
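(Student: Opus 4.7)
The key observation is that this corollary is meant to follow almost immediately from Lemma~\ref{lemma_polynomial_coefficient}, so the plan is essentially to check that the hypotheses of item (i) of that lemma are satisfied under the assumptions of Theorem~\ref{thm_reciprocal_spectral_curve}, and then invoke the equivalence (i)$\Leftrightarrow$(iii).

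First I would verify that, with $\Phi_1,\Phi_2,\Phi_3$ given by \eqref{definition_new_external_fields}, the compatibility condition \eqref{compatibility_condition_external_fields} holds, and moreover the functions $\xi_j$ built from $V_1,V_2,V_3$ via \eqref{shifted_resolvents_pols_V} coincide with those defined by \eqref{definition_xi_functions}. Both are short algebraic checks using the constraint $V_1'+V_2'+V_3'=0$: for instance, $\Phi_1'-\Phi_2'-\Phi_3'=0$ is immediate, and $\frac{1}{3}(\Phi_1'+\Phi_2')=\frac{1}{3}(2V_1'-V_2'-V_3')=V_1'$ by \eqref{compatibility_condition_pols_V}, with analogous identities for the other two components. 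Consequently Lemma~\ref{lemma_polynomial_coefficient} applies to $\vec\mu$ with this choice of external field.

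Next, since by assumption the $\xi_j$'s are the three roots of the cubic \eqref{spectral_curve_general}, Vieta's formulas give $\xi_1+\xi_2+\xi_3=0$ and $\xi_1\xi_2+\xi_1\xi_3+\xi_2\xi_3=-R$. Squaring the first relation,
\begin{equation*}
\xi_1^2+\xi_2^2+\xi_3^2 \;=\; (\xi_1+\xi_2+\xi_3)^2 - 2(\xi_1\xi_2+\xi_1\xi_3+\xi_2\xi_3) \;=\; 2R,
\end{equation*}
which is a polynomial. Thus statement (i) of Lemma~\ref{lemma_polynomial_coefficient} is satisfied, hence so is statement (iii): $\mathcal D_{h_z}(\vec\mu)=0$ for every $z$ satisfying \eqref{absolute_convergence_cauchy_transform}. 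Finally, for any $z\in\C\setminus(\supp\mu_1\cup\supp\mu_2\cup\supp\mu_3)$, the distance from $z$ to each support is positive, so $\int d\mu_j(x)/|x-z|<\infty$ trivially for $j=1,2,3$. This gives the desired conclusion.

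The only potential subtlety, and really the only thing that deserves care, is making sure the polynomial $R$ appearing in the algebraic equation \eqref{spectral_curve_general} is \emph{the same} polynomial as the one featuring in Lemma~\ref{lemma_polynomial_coefficient}. But this is not actually required: the lemma's statement (i) only asks that $\sum\xi_j^2$ be \emph{some} polynomial, and identity \eqref{identity_energy_polynomials} combined with (i) forces $\mathcal D_{h_z}(\vec\mu)$ to extend analytically to an entire function vanishing at infinity (by \eqref{limit_variational_energy}), hence to vanish identically. So no matching of the two $R$'s is needed, and the argument goes through without any further work.
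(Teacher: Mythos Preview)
Your proof is correct and follows essentially the same approach as the paper. The paper's argument, given in the paragraph immediately preceding the corollary, is exactly your Vieta computation showing $\frac{1}{2}\sum\xi_j^2 = R$ is a polynomial, followed by the invocation of Lemma~\ref{lemma_polynomial_coefficient}; you have simply been a bit more explicit about verifying the compatibility condition and the identification of the $\xi_j$'s with \eqref{definition_xi_functions}, and your closing remark about not needing to match the two $R$'s is a nice clarification that the paper leaves implicit.
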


Hence, the proof of Theorem~\ref{thm_reciprocal_spectral_curve} is reduced basically to extending the conclusion of Corollary~\ref{corollary_converse_critical_measure} to the whole 
class of functions $h\in C^2(\C)$. Although this is relatively straightforward for $h$ analytic in a neighborhood of the union of $\supp \mu_j$, it takes more effort beyond the 
analyticity due to the integrand $(h(x)-h(y))/(x-y)$ in \eqref{variations_energy}. In this case we need to appeal to a ``refined'' version of the celebrated Mergelyan's theorem 
that would allow for an approximation of a given sufficiently smooth function by a sequence of analytic ones, and in such a way that the sequence of derivatives is uniformly 
bounded. The complete proof we were able to find is rather technical, so for the sake of readability we postpone it to Appendix~\ref{appendix1}, and present here the arguments 
valid for the case when $h$ vanishes in a neighborhood of the set of double poles of $D$ (or when this set is empty). We formulate it as an independent proposition:

\begin{prop}\label{lemma_variations_without_double_poles}
	Let $\vec\mu\in \mathcal M_\alpha$ be a vector of measures satisfying the conditions of Theorem~\ref{thm_reciprocal_spectral_curve}. Suppose that $h\in C^2(\C)$ satisfies
	$$
	\supp h\cap \widehat S_\alpha =\emptyset,
	$$
	where $\widehat S_\alpha$ is the set of double poles of $D$. Then $\mathcal D_h(\vec\mu)=0$.
\end{prop}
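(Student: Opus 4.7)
The plan is to reduce the general variation $\mathcal D_h(\vec\mu)$ to the ``Schiffer'' variations $\mathcal D_{h_w}(\vec\mu)$ already controlled by Corollary~\ref{corollary_converse_critical_measure}, by representing $h$ as a superposition of Cauchy kernels via the Cauchy--Pompeiu formula. Since only the restriction of $h$ to $\bigcup_j \supp\mu_j$ enters $\mathcal D_h(\vec\mu)$, we may (after multiplying by a cutoff that still vanishes near the double poles of $D$) assume $h\in C^2_c(\C)$ and write
\begin{equation*}
h(x)=\frac{1}{\pi}\iint \frac{\partial h}{\partial \bar w}(w)\,h_w(x)\,d\lebesgue_2(w),\qquad x\in\C,
\end{equation*}
so that the telescoping identity
\begin{equation*}
\frac{h(x)-h(y)}{x-y}= -\frac{1}{\pi}\iint \frac{\partial h}{\partial \bar w}(w)\,\frac{d\lebesgue_2(w)}{(x-w)(y-w)}
\end{equation*}
holds pointwise for $x\neq y$.

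Next, I would substitute these representations into the definition \eqref{variations_energy} of $\mathcal D_h(\vec\mu)$ and interchange the order of integration. Formally this yields
\begin{equation*}
\mathcal D_h(\vec\mu)=\frac{1}{\pi}\iint \frac{\partial h}{\partial \bar w}(w)\left(\sum_{j,k}a_{j,k}C^{\mu_j}(w)C^{\mu_k}(w)+\sum_j \Phi_j'(w)C^{\mu_j}(w)-\sum_j Q_j(w)\right)d\lebesgue_2(w),
\end{equation*}
which, by \eqref{aux_equation_1}, is exactly $\frac{1}{\pi}\iint \bar\partial h(w)\,\mathcal D_{h_w}(\vec\mu)\,d\lebesgue_2(w)$. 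Since $\lebesgue_2(\supp\mu_j)=0$ for each $j$, Corollary~\ref{corollary_converse_critical_measure} asserts $\mathcal D_{h_w}(\vec\mu)=0$ for $\lebesgue_2$-a.e. $w\in\C$, so the integral vanishes and $\mathcal D_h(\vec\mu)=0$, as claimed.

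The main technical obstacle is to justify the Fubini interchange. Writing $K=\supp(\bar\partial h)$, which is compact and disjoint from the set $\widehat S_\alpha$ of double poles of $D$ by assumption, I would need to show that the iterated integrals are absolutely convergent on $K$. For the term involving $\Phi_j'(x)h(x)\,d\mu_j$ this is straightforward, as $Q_j$ is a polynomial and $C^{\mu_j}$ is locally integrable. The delicate piece is the double sum, where the swap reduces to verifying
\begin{equation*}
\int_K |C^{\mu_j}(w)\,C^{\mu_k}(w)|\,d\lebesgue_2(w)<\infty.
\end{equation*}
Here I would invoke Theorem~\ref{thm_spectral_curve} (and its proof): near any pole $p\in K$ of $D$, which is necessarily simple since $K\cap\widehat S_\alpha=\emptyset$, each $\xi_j$ behaves like $|w-p|^{-1/3}$, hence so does each $C^{\mu_j}$; consequently $|C^{\mu_j}C^{\mu_k}|\lesssim |w-p|^{-2/3}$, which is $\lebesgue_2$-integrable in a neighborhood of $p$. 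Away from these simple poles $C^{\mu_j}$ is locally bounded on $K\setminus \bigcup\supp\mu_i$, and a standard Cauchy--transform estimate controls its $L^1_{\mathrm{loc}}$-norm across the supports. This is exactly where the hypothesis $\supp h\cap \widehat S_\alpha=\emptyset$ is used: at a double pole the analogous bound would read $|w-p|^{-4/3}$, which is not integrable, and the method would break down. Once the integrability is in place, Fubini and the a.e.~vanishing of $\mathcal D_{h_w}(\vec\mu)$ close the argument.
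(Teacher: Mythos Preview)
Your Cauchy--Pompeiu reduction is genuinely different from the paper's proof, which works directly on the arcs of $\supp\mu_j$: it expresses $2A\vec H$ (with $H_j(z)=\int\frac{h(x)-h(z)}{x-z}\,d\mu_j(x)$) through boundary values of $B\vec g$ and $B\vec C$, exploits the jump relations \eqref{standard_orientation1} inherited from the cubic equation, and then integrates against $d\vec\mu$ along the arcs, invoking a Poincar\'e--Bertrand-type interchange of iterated principal-value integrals; the restriction $\nu\le 1/3$ on the blow-up of the densities away from $\widehat S_\alpha$ is precisely what makes that interchange legitimate. Your route is more direct: once Fubini is in place, Corollary~\ref{corollary_converse_critical_measure} finishes in one line.

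The gap is in the Fubini justification. Absolute convergence of the triple integral means finiteness of $\int_K F_j(w)\,F_k(w)\,d\lebesgue_2(w)$ with $F_j(w):=\int|x-w|^{-1}\,d\mu_j(x)$ the \emph{absolute} Riesz potential---not of $\int_K|C^{\mu_j}C^{\mu_k}|\,d\lebesgue_2$. These differ (cancellation can make $|C^{\mu_j}|$ much smaller than $F_j$), and in any case \eqref{definition_xi_functions} does not determine each $C^{\mu_j}$ from the $\xi_k$'s since the matrix $B$ has rank $2$, so the step ``$\xi_j=O(|w-p|^{-1/3})$ hence $C^{\mu_j}$ likewise'' is not justified as written. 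A clean fix is to observe that $\int_K|x-w|^{-1}|y-w|^{-1}\,d\lebesgue_2(w)=O\bigl(1+\log\frac{1}{|x-y|}\bigr)$ uniformly for $x,y$ in a fixed compact, so by Tonelli the triple integral is controlled by the mutual logarithmic energy $I(\mu_j,\mu_k)$, which is finite under the density bounds furnished by the cubic equation. As a side remark, $|w-p|^{-4/3}$ \emph{is} locally $\lebesgue_2$-integrable in the plane (since $4/3<2$), so this is not where the double-pole hypothesis would bite; with the corrected Fubini argument your method appears to extend to all $h\in C^2(\C)$, which would shortcut the approximation argument of Appendix~\ref{appendix1}.
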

\begin{proof}
	Recall that  the union of the support of the components of  $\vec\mu$  is a finite union of analytic 
	arcs, whose points of intersection are denoted by $S_\alpha$ (see \eqref{defSetS}), and  the poles of the 
	coefficient $D$ in \eqref{spectral_curve_general} belong to $S_\alpha$, as it follows from the arguments in \eqref{local_behavior_xi} {\it et seq}. Furthermore, if $p\in S_\alpha\setminus \widehat S_\alpha$, then
	\begin{equation}\label{local_behavior_density_outside_double_poles}
	\frac{d\mu_j}{ds}(z)=\Boh\left(|z-p|^{-\nu}\right),\quad \mbox{ as } z\to p, \quad j=1,2,3,
	\end{equation}
	for some $\nu\leq 1/3$ (possibly $\nu<0$).	
	
	Given $h$ as above, let us define 
	$$
	\vec g(z)=(g_1(z), g_2(z), g_3(z))^T \quad \text{and} \quad \vec H(z)=(H_1(z), H_2(z), H_3(z))^T,
	$$ 
	with
	$$
	g_j(z)   =\int \frac{h(x)}{x-z}d\mu_j(x),\quad H_j(z)   =\int \frac{h(x)-h(z)}{x-z}d\mu_j(x),\quad  j=1,2,3,
	$$
	where as usual we understand $g_j(z)$ in terms of its principal value (as in \eqref{principalvalue}), for $z\in\supp \mu_j$. With this convention and recalling 
	\eqref{definitionXialpha}, on any open analytic arc 
	$\Sigma$ from $\Xi_\alpha$, not containing any branch point of \eqref{spectral_curve_general}, by the Sokhotsky--Plemelj's formula we have
	\begin{equation}\label{boundaryvaluesg}
	g_{j\pm}(z)=\pm \pi i h(z) \mu_j'(z) + g_j(z),\quad z\in \Sigma,
	\end{equation}
	where $ \mu_j'=d\mu_j/ds$ is the Radon-Nikodym derivative of $\mu_j$ with respect to the line element on $\Sigma$, so that if $\Sigma\cap   \supp\mu_j=\emptyset$, we 
	have $\mu_j'=0$.
	
	Let $\Sigma$ be an open analytic arc from $\supp \mu_1 \cup \supp \mu_2 \cup \supp \mu_3$, not containing any branch point of \eqref{spectral_curve_general}. Observe that the 
	matrix $B$ in \eqref{identitiesforA} satisfies the identity $B^T=M_1-M_2$, where
	$$
	M_1=\begin{pmatrix}
	1 & 0 & 0 \\
	0 & 0 & -1 \\
	0 & -1 & 0 
	\end{pmatrix}, \quad M_2=\begin{pmatrix}
	0 & 1 & 0 \\
	-1 & 0 & 0 \\
	0 & 0 & -1
	\end{pmatrix}.
	$$
	Since $\vec H$ is continuous across $\Sigma$,  by \eqref{identitiesforA},
	\begin{equation}\label{identityforvecH}
	2A\vec H(z)= \left(M_1 -M_2 \right)B \vec H(z)=M_1 B \vec H_-(z)- M_2 B \vec H_+(z), \quad z\in \Sigma.
	\end{equation}
	Furthermore, the identity 
	$$
	\vec H(z)=\vec g(z) -h(z) \vec C(z)
	$$
	holds for all $z\in \C\setminus (\supp \mu_1 \cup \supp \mu_2 \cup \supp \mu_3)$, and we obtain that
	\begin{equation}\label{identityAH}
	2A\vec H(z) = \left(M_1 B \vec g_-- M_2 B \vec g_+ \right)(z)- h(z)\left(M_1 B \vec C_-- M_2 B \vec C_+ \right)(z).
	\end{equation}
	
	Recall that Equation~\eqref{compatibility_condition_pols_V} allows us to rewrite \eqref{shifted_resolvents_pols_V} as \eqref{definition_xi_functions} with the external fields 
	given in \eqref{definition_new_external_fields}. From  \eqref{definitionofXivec},
	\begin{equation*}
	B \vec C  = \vec \xi - \frac{1}{3} B   \vec \Phi'    ,
	\end{equation*}
	so that
	\begin{align}
	M_1 B \vec C_-(z)- M_2 B \vec C_+  (z) &=  \left(M_1 ( \vec \xi_- - \frac{1}{3} B   \vec \Phi')- M_2   ( \vec \xi_+ - \frac{1}{3} B   \vec \Phi') \right)(z) \nonumber\\
	& =    M_1   \vec \xi_- (z) - M_2  \vec \xi_+ (z)     - \frac{2}{3} A \vec \Phi'(z). \label{aux_equation_9}
	\end{align}
	
	Because the functions $\xi_j$'s satisfy a cubic equation, the boundary conditions \eqref{standard_orientation1} are valid and imply the equality 
	\begin{equation}\label{aux_equation_11}
	X M_1   \vec \xi_- (z) = X M_2  \vec \xi_+(z), \quad z\in \Sigma,
	\end{equation}
	where
	$$
	X=X(z)=
	\begin{pmatrix}
	\chi_1(z) & 0 & 0 \\
	0 & \chi_2(z) & 0 \\
	0 & 0 & \chi_3(z)
	\end{pmatrix},
	$$
	and $\chi_j$ is the characteristic function of $\supp\mu_j$, $j=1,2,3$. The matrix $X$ is piecewise constant; for ease of notation we suppress its $z$-dependence in the 
	following 
	computations.
	
	Since $\vec \Phi'$ is an eigenvector of $A$ corresponding to the eigenvalue $3/2$ (see Section~\ref{sec:mainresults}), we learn from \eqref{aux_equation_9} and 
	\eqref{aux_equation_11}
	\begin{equation}\label{identityforvecC}
	X M_1 B \vec C_-(z)- X M_2 B \vec C_+  (z) =         -  X\vec \Phi'(z), \quad z\in \Sigma.
	\end{equation}
	An immediate consequence of this identity, combined with Sokhotsky--Plemelj's formula $\vec C_+=2\pi i \vec \mu' + \vec C_-$, is
	\begin{equation}\label{identityforvecCswap}
	X M_1 B \vec C_+(z)-X M_2 B \vec C_-  (z) =    -  X \vec \Phi'(z)+2\pi i X(M_1+M_2) B \vec \mu'(z), \quad z\in \Sigma,
	\end{equation}
	where as usual, $ \vec \mu = (\mu_1, \mu_2, \mu_3)^T$, etcetera. 
	
	Multiplying \eqref{identityAH} by $X$ and using \eqref{identityforvecC},  we conclude that
	\begin{equation*}
	2XA\vec H(z) = X\left(M_1 B \vec g_-- M_2 B \vec g_+ \right)(z) +  h(z) X\vec \Phi'(z), \quad z\in \Sigma.
	\end{equation*}
	Since
	$$
	\left(M_1 B\right)^T = - M_2 B,\quad X^T=X,
	$$
	we can write it equivalently as
	\begin{equation}\label{identityAH2}
	2\left(\left(A\vec H\right)^T X\right)(z) = \left( \vec g_+ ^T  M_1 BX - \vec g_-^T M_2 BX  \right)(z) +  h(z)  \left( \vec  \Phi'(z)\right)^TX , \quad z\in \Sigma.
	\end{equation}
	
	Recall that  $\Sigma\subset \Xi_\alpha$ was arbitrary, so we can integrate this formula along $\supp \mu_1 \cup \supp \mu_2 \cup \supp \mu_3$.
	
	By \eqref{boundaryvaluesg}, for $j\in \{ 1, 2, 3\}$ we get
	\begin{equation}\label{thm_reciprocal_spectral_curve_eq_1}
	\int g_{j\pm}(y)d\mu_k(y)=\pm \pi i \int h(y) \mu_j'(y) d\mu_k(y)+   \int g_j(y)d\mu_k(y),
	\end{equation}
	where we recall that if $g_j$ is discontinuous across $\supp \mu_k$, we understand the 
	second integral in the right hand side above in terms of its principal value. By assumption, $\supp h$ does not contain double 
	poles of the coefficient $D$, and we use the condition \eqref{local_behavior_density_outside_double_poles} to get the behavior
	\begin{equation}\label{behavior_h_mu}
	h(y)\mu_j'(y)=\Boh(|y-p|^{-\nu}),\quad \mbox{ as } y\to p
	\end{equation}
	for some $\nu=\nu(p)\leq 1/3$ and any endpoint $p$ of the support of $\mu_k$. In particular, this implies that $h\mu_j'$ and $g_j$ are $\mu_k$-integrable, so the 
	integrals in the right-hand side of \eqref{thm_reciprocal_spectral_curve_eq_1} are convergent. Moreover, the behavior
	\eqref{behavior_h_mu} allows us to interchange the order of integration for the second integral in the right-hand side of \eqref{thm_reciprocal_spectral_curve_eq_1} (see  
	\cite[Equation~(20), page~25]{monakhov_book}) in order to get
	\begin{equation*}
	\int g_j(y)d\mu_k(y) =  -\int h(y)  C^{\mu_k}(y)d\mu_j(y).
	\end{equation*}
	Again by Sokhotsky--Plemelj's formula, 
	\begin{align*}
	\int h(y)  C^{\mu_k}(y)d\mu_j(y)& = \mp \pi i \int h(y) \mu'_k(y) d\mu_j(x)+\int h(y)C^{\mu_k}_\pm (y)d\mu_j(y),
	\end{align*}
	and using it in  \eqref{thm_reciprocal_spectral_curve_eq_1}, we finally conclude that
	\begin{equation*}
	\int g_{j\pm}(y)d\mu_k(y) =\pm 2\pi i \int \mu_j'(y)\mu_k'(y) h(y) dy-\int h(y)\mu_j'(y)C_\pm^{\mu_k}(y)dy.
	\end{equation*}
	
	A direct consequence of this formula is that for any $3\times 3$ constant matrix $\mathcal M$, 
	\begin{equation*}
	\int \vec g_{\pm}^T  \mathcal M d\vec \mu  =\pm 2\pi i \int h(y) (\vec \mu' (y))^T\mathcal M \vec \mu'  (y)  dy-\int h(y)(\vec \mu' (y))^T\mathcal M \vec C_\pm(y) dy.
	\end{equation*}
	In particular, we have
	\begin{align*}
	\int   \left( \vec g_+ ^T  M_1 B - \vec g_-^T M_2 B  \right)& d\vec \mu  =    2\pi i \int h(y) (\vec \mu' (y))^T \left( M_1   + M_2  \right)B\vec \mu'  (y)  dy \\
	& -  \int h(y)(\vec \mu' (y))^T \left( M_1 B\vec C_+(y)- M_2 B\vec C_-(y) \right) dy \\
	& = 2\pi i \int h(y) (\vec \mu' (y))^TX(y) \left( M_1   + M_2  \right)B\vec \mu'  (y)  dy\\
	& \quad -  \int h(y)(\vec \mu' (y))^T X(y)\left( M_1 B\vec C_+(y)- M_2 B\vec C_-(y) \right) dy \\
	& =  \int h(y)(\vec \mu' (y))^T    \vec \Phi'(y)    dy,
	\end{align*} 
	where we have used the trivial identity $\vec \mu'^TX=\vec \mu'^T$ for the last two equalities, and also \eqref{identityforvecCswap} for the last equality. Thus, integrating 
	\eqref{identityAH2} with respect to $d\vec \mu$, observing that $Xd\vec \mu=d\vec \mu$ and applying this last identity, we arrive at 
	$$
	\int \left(A\vec H\right)^T(z) d\vec \mu(z)=\int h(z)  \left( \vec  \Phi'(z)\right)^Td\vec \mu(z),
	$$
	which is a compact form of writing the condition $\mathcal D_h(\vec \mu)=0$. 
\end{proof}


\begin{proof}[Proof of Theorem~\ref{thm_s_property}]

For a vector-valued function $\vec f$  we understand by
$$
\int \vec f(z)dz
$$ 
the term by term integration, and we denote by $\vec \mu' = (\mu_1',\mu_2',\mu_3')$ the vector whose components are the densities of $\mu_1,\mu_2,\mu_3$ (with respect to the 
complex line element) along the arcs of $\Xi_\alpha$. Because the supports of the components of $\vec \mu$ are made of analytic arcs, and $\vec \mu'$ has analytic components, we 
conclude 
that $\vec U=\left(U^{\mu_1}, U^{\mu_2}, U^{\mu_3}\right)^T$ is continuous across the arcs of $\Xi_\alpha$.

Integrating \eqref{definitionofXivec} and using \eqref{derivative_potential} we get 
\begin{equation*}
\Re	\int^z \vec \xi(y)dy  =\Re \left( \frac{1}{3} B \vec \Phi (z)\right)+ B \vec U  (z)- \vec d,
\end{equation*}
where $\vec d$ is a real constant vector which only depends on the connected component of 
$\C\setminus\supp\mu_1\cup\supp\mu_2\cup\supp\mu_3$. 

Let $\Sigma$ be an open analytic arc from $\Xi_\alpha$, not containing any branch point of \eqref{spectral_curve_general}. 
%
Reasoning as for \eqref{identityforvecH}, we have
\begin{equation*}
2A\vec U(z)= \left(M_1 -M_2 \right)B \vec U(z)=M_1 B \vec U_-(z)- M_2 B \vec U_+(z), \quad z\in \Sigma.
\end{equation*}
Thus, for $z\in \Sigma$ and $\vec d_\pm$ the vector of constants for the component on the $\pm$-side of $\Sigma$,
\begin{align*}
2A\vec U(z) &=M_1 \left(\Re	\int^z \vec \xi_-(y)dy  - \Re \left( \frac{1}{3} B \vec \Phi (z)\right) + \vec d_- \right) \\
& - M_2 \left(\Re	\int^z \vec \xi_+(y)dy  - \Re \left( \frac{1}{3} B \vec \Phi (z)\right) + \vec d_+ \right)\\
& = \Re	\int^z \left(M_1 \vec \xi_-(y)- M_2 \vec \xi_+(y)\right) dy  - \frac{2}{3} \Re A \vec \Phi(z) + M_1\vec d_--M_2\vec d_+.
\end{align*}
Recalling that $A\vec \Phi' =(3/2)\vec \Phi'$ this last equation implies
$$
2A\vec U(z)=\Re\int^z \left(M_1 \vec \xi_-(y)- M_2 \vec \xi_+(y)\right) dy  - \Re\vec \Phi(z) + M_1\vec d_--M_2\vec d_+,\quad z\in \Sigma.
$$

If $\Sigma\subset \supp\mu_{4-j}$, then $\xi_j$ continuous across $\Sigma$, and
\eqref{standard_orientation1} tells us that the $(4-j)$-th entry of $M_1\xi_--M_2\xi_+$ vanishes, which in turn yields \eqref{variational_equations_critical_measures} (with 
$j$ replaced by $4-j$).

Note that due to \eqref{variational_equations_critical_measures} the $S$-property on $\supp\mu_{4-j}$ is equivalent to
\begin{equation}\label{s_property_critical_measures_disjoint_supports}
\left[\frac{\partial}{\partial z}\left(\sum_{k=1}^3a_{4-j,k}U^{\mu_k}+\frac{\phi_{4-j}}{2}\right)\right]_+ \\ =-\left[\frac{\partial}{\partial 
z}\left(\sum_{k=1}^3a_{4-j,k}U^{\mu_k}+\frac{\phi_{4-j}}{2}\right)\right]_-, 
\end{equation}

Equation \eqref{definitionofXivec} means that for $z\notin \Xi_\alpha$, 
$$
2\frac{\partial}{\partial   z} \left(A\vec U(z) +\frac{1}{2}\vec \phi(z)\right) =A \vec C(z)+ \frac{1}{2}\vec \Phi'(z)=\frac{1}{2}B^T \vec \xi(z),
$$
where we also used the identity involving $A$ and $B$ in \eqref{identitiesforA} and $A\vec \Phi' =(3/2)\vec \Phi'$.
As before, the boundary conditions \eqref{standard_orientation1} 
imply that the entry $(4-j)$ of  $B^T (\vec \xi_-  + \vec \xi_+)$ vanishes, and the equation above then implies \eqref{s_property_critical_measures_disjoint_supports}.

\end{proof}

\begin{proof}[Proof of Theorem~\ref{thm_quadratic_differential}]

We denote the restriction of $Q$ to the sheet $\mathcal R_j$ by $Q_j$. In cyclic notation mod $3$, $Q_j^2$ can be expressed as
$$
Q_j^2(z)=(\xi_{j-1}(z)-\xi_{j+1}(z))^2,\quad z\in \mathcal R_j,\quad j=1,2,3.
$$

Clearly $Q_j^2$ is meromorphic in $\overline\C\setminus (\supp\mu_1\cup\supp\mu_2\cup\supp\mu_3)$. We first prove that each $Q_j^2$ is meromorphic on the sheet $\mathcal R_j$.  If 
$\Gamma\subset \supp\mu_3$, then due to \eqref{assumption_intersection_supports} the function $\xi_1$ is analytic across $\Gamma$, hence 
$$
\xi_{2\pm}(s)=\xi_{3\mp}(s),\quad s\in \Gamma,
$$
implying that
$$
Q_{1+}^2(s)=(\xi_{2+}(s)-\xi_{3+}(s))^2=(\xi_{3-}(s)-\xi_{2-}(s))^2=Q_{1-}^2(s),\quad s\in \Gamma,
$$
so $Q_1^2$ is analytic across $\Gamma$ and, as a consequence, it follows that $Q_1^2$ is meromorphic on the whole sheet $\mathcal R_1$. Similarly we prove $Q_j^2$ is meromorphic 
on $\mathcal R_j$, $j=2,3$.

We now show that $Q^2$ is globally defined, that is, the function $Q_{j+1}^2$ is the analytic continuation of $Q_j^2$ from $\mathcal R_j$ to $\mathcal R_{j+1}$, $j=1,2,3$.

Consider an arc $\Gamma\subset \Xi_\alpha$. For the construction of $\mathcal R$ we know that $\Gamma$ connects exactly two sheets, which we assume to be $\mathcal R_1$ and 
$\mathcal R_2$, the remaining cases are analogous. Then $\Gamma\subset \supp\mu_1\setminus (\supp\mu_2\cup\supp\mu_3)$, so the function $\xi_3$ is analytic across $\Gamma$, hence 
$\xi_{1\pm}=\xi_{2\mp}$ on $\Gamma$ and
$$
Q_{1\pm}^2(s)=(\xi_{2\pm}(s)-\xi_{3}(s))^2=(\xi_{1\mp}(s)-\xi_{3}(s))^2=Q_{2\mp}^2(s),
$$
so $Q_2^2$ is the analytic continuation of $Q_1^2$ to $\mathcal R_2$ across $\Gamma$.

Thus, $Q^2$ is meromorphic along any arc connecting the sheets. 
It is clear that the functions $\xi_1,\xi_2,\xi_3$, and hence $Q^2$, can only be unbounded at the points at $\infty$ and also at the poles of the coefficient $D$ in 
\eqref{spectral_curve_general}. If $p$ is such a pole, then the three functions $\xi_1,\xi_2,\xi_3$ are branched at $p$, and the local 
coordinate of $\mathcal R$ at $z=p$ is of the form $z=p+u^3$, $u\in \C$, and hence
\begin{equation}\label{local_coordinate}
dz^2= 9u^4du^2. 
\end{equation}

Since $p$ is at most a double pole of $D$, it follows from the identities
$$
D(z)=-\xi_j(z)^3+R(z)\xi_j(z),\quad j=1,2,3,
$$
that the functions $\xi_1,\xi_2,\xi_3$ all blow up with the same order $\nu/3$, $\nu\in \{1,2\}$, that is
$$
\xi_j(z)=\frac{\kappa_j}{z^{\nu/3}}(1+\boh(1)),\quad \mbox{ as } z\to p,\quad j=1,2,3,
$$
for some nonzero constants $\kappa_1,\kappa_2,\kappa_3$, so
$$
Q^2(z)=\frac{c}{z^{2\nu/3}}(1+\boh(1))=\frac{c}{u^{2\nu}}(1+\boh(1)),\quad \mbox{ as } z\to p,
$$
for some constant $c$. Combining with \eqref{local_coordinate}, we get
$$
-Q(z)^2dz^2= \tilde c u^{4-2\nu}(1+\boh(1))du^2,
$$
for some constant $\tilde c$. Since $\nu\leq 2$, we get that $\varpi$ has to remain bounded near $p$, and hence $\varpi$ can only have poles at the points at $\infty$.
%

Let $\Sigma\subset \Xi_\alpha\cap  \supp \mu_1$ be an open arc. Due to assumption \eqref{assumption_intersection_supports}, function $\xi_3$ is analytic across $\Sigma$. Now the definition of  function $\xi_1$ in \eqref{definition_xi_functions}, the Sokhotsky-Plemelj's formula and \eqref{standard_orientation1} yield that for $s\in \Sigma$,  
$$
d\mu_1(s)  = d\mu_1(s)+d\mu_2(s) =  \frac{1}{2\pi i}\left(\xi_{1+}(s)-\xi_{1-}(s)\right)ds=  \frac{1}{2\pi i}\left(\xi_{1+}(s)-\xi_{2+}(s)\right)ds.
$$
Analogously, if $\Sigma\subset \Xi_\alpha\cap  \supp \mu_2$, then $\xi_2$ is analytic across $\Sigma$, and for $s\in \Sigma$, 
$$
d\mu_2(s)  = d\mu_1(s)+d\mu_2(s) =  \frac{1}{2\pi i}\left(\xi_{1+}(s)-\xi_{1-}(s)\right)ds=  \frac{1}{2\pi i}\left(\xi_{1+}(s)-\xi_{3+}(s)\right)ds.
$$
Finally, if $\Sigma\subset \Xi_\alpha\cap  \supp \mu_3$, then $\xi_1$ is analytic across $\Sigma$, and using again the definition of  $\xi_3$ in \eqref{definition_xi_functions} and the Sokhotsky-Plemelj's formula, we get that for $s\in \Sigma$, 
$$
d\mu_3(s)  = d\mu_3(s)- d\mu_1(s) =  \frac{1}{2\pi i}\left(\xi_{3+}(s)-\xi_{3-}(s)\right)ds=  \frac{1}{2\pi i}\left(\xi_{3+}(s)-\xi_{2+}(s)\right)ds.
$$
This shows that the measures $\mu_j$ must be supported on arcs of trajectories of $\varpi$.
\end{proof}

%

\section{The cubic case}\label{section_cubic_main}

In the following two sections we deal with the cubic case, and describe a one-parameter family of critical vector-valued measures for the energy functional 
\eqref{interactionmatrix}--\eqref{energy_functional} and for the choice \eqref{cubic_choice_potentials}, so that the external fields are given by 
\eqref{external_fields_cubic_potential}. Although the $\alpha$-critical measures were defined for $\alpha\in [0,1]$, here we restrict our attention to  $\alpha\in 
[0,1/2]$. As it will follow from our analysis below, as $\alpha \nearrow 1/2$, the support of the component $\mu_3$ of the $\alpha$-critical measure becomes unbounded. Furthermore, 
our original motivation was the asymptotic analysis of the multiple orthogonal polynomials \eqref{conditions_multiple_orthogonality}, for which the case of $\alpha\in [1/2,1]$ can 
be easily reduced to $\alpha\in [0,1/2]$ by an appropriate rotation of the plane. Thus, the selection of this interval for $\alpha$ is natural in the present situation, although 
our method carries over without any special difficulty to the whole range of $\alpha$.

\subsection{The spectral curve}\label{section_cubic_spectral_curve}

Recall that by Theorem~\ref{thm_spectral_curve} the shifted resolvents, defined in \eqref{definition_xi_functions}, of any critical vector-valued measure $\vec\mu=(\mu_1,\mu_2,\mu_3)$ 
satisfy the algebraic equation (``spectral curve'') \eqref{spectral_curve_general}. As a first step, we deduce the expressions 
\eqref{coefficients_cubic}--\eqref{definition_c_intro} for its coefficients.

For the potentials as in \eqref{external_fields_cubic_potential}, the coefficient $R$ in \eqref{expression_R} reduces to
\begin{align}\label{R_for_cubic}
 R(z) = 3z^4-3z-c,
\end{align}
where at this moment the constant $c$ is given in the form
$$
c=\int x(d\mu_1(x)+d\mu_2(x)).
$$

Since $D(z)= -\xi_1^3 + R(z)\xi_1=\xi_1 \xi_2 (\xi_1 + \xi_2)$, comparing the expansions of both expressions at $\infty$ and using \eqref{massconstraint}, we further get 
that
$$
C^{\mu_1}(z)+C^{\mu_2}(z)=-\frac{1}{z}-\frac{c}{3z^2}+\frac{1-\alpha (1-\alpha)}{3z^4}+\mathcal O\left( \frac{1}{z^5}\right), \quad z\to \infty,
$$
and
\begin{equation}\label{D_for_cubic}
D(z)=-2z^6+3z^3+cz^2-3\tau,\quad \tau :=\alpha(1-\alpha).
\end{equation}
Observe that $\tau=\alpha(1-\alpha)$ is an equivalent parametrization that gives a bijection between the interval $\alpha \in [0,\frac{1}{2}]$ and $\tau\in [0,\frac{1}{4}]$. As it 
was mentioned in Section~\ref{sec:max_min}, the extremal cases, $\tau=0$ and $\tau =\frac{1}{4}$, were studied in \cite{deano_kuijlaars_huybrechs_complex_orthogonal_polynomials} 
and \cite{filipuk_vanassche_zhang}, respectively, in their connection to the multiple orthogonal polynomials   \eqref{conditions_multiple_orthogonality}, for the choices $n=0$ and 
$n=m$, respectively, and the family of critical measures depending on $\alpha\in (0,1/2)$ are also relevant to the asymptotic analysis of these polynomials for general $n,m$. From 
a different perspective, we are studying a continuous deformation of the critical measures, interpolating the extremal cases of $\tau=0$ and $\tau =\frac{1}{4}$.

The discriminant of \eqref{spectral_curve_general} with respect to the variable $\xi$ is
\begin{align*}
\discr(z) & = 4R^3(z)-27D^2(z) \\
	  & = \begin{aligned}[t]
	  (81-324\tau)z^6+54c z^5 +9c^2 z^4 +54(9\tau-2)z^3 \\ 
	    + \, 54c(3\tau-2)z^2-36c^2z-4c^3-243\tau^2.
	   \end{aligned}
\end{align*}

Since $\discr(z)$ is a polynomial in $z$ of degree $6$, if the Riemann surface of \eqref{spectral_curve_general} has genus $0$ then, by the Riemann-Hurwitz theorem, $\discr(z)$ 
must have  a multiple root. In particular, the discriminant $\discr_1$ of $\discr(z)$ must vanish. A cumbersome but straightforward calculation (that can 
be carried out with the aid of a symbolic algebra software such as Mathematica) shows that
$$
\discr_1=a \tau p_1(c)p_2(c)^3
$$
for a nonzero real constant $a$ and
\begin{equation}\label{polynomials_genus}
\begin{aligned}
p_1(c) & = 64c^3+243(1-4\tau)^2,\\
p_2(c) & = c^6-486c^3\tau (1+\tau)+2187\tau (3\tau-1)^3,
\end{aligned}
\end{equation}
so we expect $c$ to be a root of either $p_1$ or $p_2$. For the case $\tau=\frac{1}{4}$ studied in \cite{filipuk_vanassche_zhang}, the algebraic equation
\eqref{spectral_curve_general}  reduces to
$$
\xi^3-(3z^4-3z)\xi-2z^6+3z^3-\frac{3}{4}=0,
$$
showing that $c=0$, which is a root of $p_1$. By continuity, we expect $c$ to be a root of $p_1$ for every choice of $\tau\in (0,\frac{1}{4})$,
concluding that
$$
c^3=-\frac{243}{64}(1-4\tau)^2,
$$
or
\begin{equation}\label{definition_c}
c=-\left(\frac{243}{64}(1-4\tau)^2\right)^{\frac{1}{3}}.
\end{equation}
which is the same as \eqref{definition_c_intro}.

Obviously, $c$ defined by \eqref{definition_c} can take three possible values; for the rest of the paper we choose $c$ to be real (and thus, negative), 
so that the algebraic equation \eqref{spectral_curve_general} is also real. It should be pointed out that a different choice of $c$ would lead
to an algebraic equation corresponding to another triplet of critical measures $\vec \mu$, that can be obtained from the original one by rotation by $\pm 2\pi/3$.

For the choice of $c$ in \eqref{definition_c}, we can rewrite $\discr(z)$ as
\begin{equation}
\label{defDiscriminant}
\discr(z)=\frac{243}{256}q_1(z)q_2(z)^2,
\end{equation}
where $q_1,q_2$ are given by
\begin{align}
\nonumber
q_1(z) = & \frac{256}{3^{5/3}} (1-4 \tau )^{{1/3}} z^4  +\frac{128}{9}z^3+\frac{16}{3^{1/3}} (1-4 \tau )^{2/3} z^2 \\
	      & - 3^{1/3}32(1-4 \tau )^{1/3} z + 16 (1-8 \tau )
	    , \label{definition_q_1} \\
 q_2(z)  = & 3^{1/3} (1-4 \tau )^{1/3}z-1\label{definition_q_2}.
\end{align}
For $\tau<\frac{1}{4}$ the discriminant of $q_1$ is  
$$
\const \times \tau(1-\tau)^2(27-100\tau)^3,
$$
which never vanishes, and we conclude that $q_1$ has always four distinct roots. For the choice $\tau=\frac{1}{8}$, $q_1$ has 
two complex conjugate roots and two real roots, so this also holds for any $\tau$.

The resultant of $ q_1$ and  $q_2$ is  
$$
\const \times (1-12 \tau )^4 (1-4 \tau)^{2/3},
$$
so that in the interval $0<\tau<1/4$, the polynomials $q_1$, $q_2$ share a root only when  $\tau=1/12$. It then follows that for $\tau\neq 1/12$ the roots of $q_1$ are  
branch points of multiplicity $2$ of \eqref{spectral_curve_general}.

If the double zero $b_*$ of $\discr$ (that is, the zero of $q_2$) is a branch point of \eqref{spectral_curve_general}, then its multiplicity (as a branch point) has to be 
three. This means that the three solutions to \eqref{spectral_curve_general} should coincide for $z=b_*$, so \eqref{spectral_curve_general} has to share a root with its second 
$\xi$-derivative, and hence $\xi_j(b_*)=0$, $j=,1,2,3$. Plugging this back into \eqref{spectral_curve_general} we see that $D(b_*)=0$. But
$$
D(b_*)=-\frac{(12 \tau -1)^3}{36 (1-4 \tau )^2},
$$
so $D(b_*)=0$ only for $\tau=1/12$. Hence for $\tau\neq 1/12$ the point $b_*$ is a regular point of \eqref{spectral_curve}, that is, the algebraic equation 
\eqref{spectral_curve_general} is not branched at $z=b_*$. We already observed that the simple zeros of $\discr$ are always branch points of multiplicity $2$, so the 
Riemann-Hurwitz formula says that for $\tau\neq 1/12$ the associated algebraic equation has genus $0$. Continuity with respect to $\tau$ assures that the 
genus cannot increase for $\tau=1/12$, and hence the genus is also zero for this value.

The discussion above can be summarized in the following proposition:
\begin{prop} \label{prop:algebraicCubic}
The algebraic equation \eqref{spectral_curve_general}  with coefficients given by \eqref{R_for_cubic}, \eqref{D_for_cubic} and \eqref{definition_c}, has four branch points (the 
zeros of the polynomial $q_1$ in \eqref{definition_q_1}) and a double point (the zero of $q_2$ in \eqref{definition_q_2}). Two of the branch points are 
real and the other two form a complex conjugate pair. For $\tau \neq 1/12$ all these points are distinct, while when
$\tau= 1/12$, the double point and one of the real branch points of \eqref{spectral_curve_general} coalesce. The associated Riemann surface has always genus $0$.
\end{prop}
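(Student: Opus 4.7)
The plan is to establish the structural statements about the algebraic equation \eqref{spectral_curve_general} by explicitly computing and factoring the $\xi$-discriminant, then applying Riemann--Hurwitz. Everything is essentially already visible in the derivation of $c$ preceding the statement; the proof amounts to collecting these computations.

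First I would recall that with the specific choice of $c$ dictated by \eqref{definition_c}, the $\xi$-discriminant $\operatorname{Discr}(z)=4R(z)^3-27D(z)^2$ is a degree $6$ polynomial in $z$ that factors precisely as in \eqref{defDiscriminant}, namely as $\frac{243}{256}\,q_1(z)\,q_2(z)^2$, where $q_1,q_2$ are given by \eqref{definition_q_1} and \eqref{definition_q_2}. This identifies the candidate branch locus: the four zeros of $q_1$ and the double zero $b_\ast$ of $q_2$, the latter being suspect because it occurs with multiplicity $2$ in $\operatorname{Discr}(z)$.

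Next I would study the four zeros of $q_1$. The discriminant of the quartic $q_1(z)$ equals a non-zero constant times $\tau(1-\tau)^2(27-100\tau)^3$, which does not vanish for $\tau\in(0,1/4)$; hence $q_1$ has four simple roots throughout this parameter range. For a single convenient value, say $\tau=1/8$, a direct computation shows that $q_1$ has two real and two complex-conjugate roots. Since the discriminant of $q_1$ has constant sign on $(0,1/4)$, no pair of roots can cross, so the real/complex signature persists. The resultant of $q_1$ and $q_2$ is a non-zero constant times $(1-12\tau)^4(1-4\tau)^{2/3}$, so $q_1$ and $q_2$ share a zero exactly at $\tau=1/12$. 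Consequently, for $\tau\neq 1/12$ every zero of $q_1$ is a simple zero of $\operatorname{Discr}$, and hence a branch point of multiplicity $2$ of \eqref{spectral_curve_general}.

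The main delicate step is deciding whether the double zero $b_\ast$ of $\operatorname{Discr}$ is a genuine branch point. A double zero of the $\xi$-discriminant can be either a ``ghost'' double root producing no branching at all, or a point where all three branches collapse, which would force $b_\ast$ to be a triple branch point. The latter requires the cubic $\xi^3-R(b_\ast)\xi+D(b_\ast)$ to have a triple zero, which together with the absence of the quadratic term forces $\xi=0$ at $b_\ast$ and thus $D(b_\ast)=0$. A direct substitution of $b_\ast=(3^{1/3}(1-4\tau)^{1/3})^{-1}$ into $D$ gives
\begin{equation*}
D(b_\ast)=-\frac{(12\tau-1)^3}{36(1-4\tau)^2},
\end{equation*}
which vanishes only at $\tau=1/12$. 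Hence for $\tau\neq 1/12$, the point $b_\ast$ is a regular (non-branched) point of the cover, and \eqref{spectral_curve_general} has exactly four branch points, each of index $2$.

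Finally, Riemann--Hurwitz for the degree $3$ cover $\mathcal{R}\to\overline{\C}$ with four simple branchings gives $2g-2=3\cdot(-2)+4\cdot 1=-2$, so $g=0$ for every $\tau\in[0,1/4)\setminus\{1/12\}$. For $\tau=1/12$ the real branch point and the double point coalesce; since the genus is upper semicontinuous in families of smooth covers and the total ramification index cannot increase under such a coalescence (the four simple branch points of $q_1$ persist as distinct points of $q_1$), the genus remains $0$ at this value as well. The main obstacle is bookkeeping in the explicit algebra — factoring $\operatorname{Discr}$, computing the auxiliary discriminant and resultant, and evaluating $D(b_\ast)$ — rather than any conceptual difficulty; these are routine but need to be carried out symbolically.
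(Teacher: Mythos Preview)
Your proposal is correct and follows essentially the same approach as the paper: factor the $\xi$-discriminant as $\tfrac{243}{256}q_1 q_2^2$, use the discriminant of $q_1$ and a sample value $\tau=1/8$ to locate its four simple roots (two real, two complex conjugate), compute the resultant of $q_1,q_2$ to isolate $\tau=1/12$, rule out branching at $b_\ast$ via $D(b_\ast)\neq 0$ for $\tau\neq 1/12$, and conclude genus $0$ by Riemann--Hurwitz, extending to $\tau=1/12$ by continuity. The only cosmetic difference is that you phrase the last step as upper semicontinuity of the genus, which is the same content as the paper's continuity argument.
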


Although the choice of $c$ in \eqref{definition_c}, as a root of $p_1$ instead of $p_2$, was mostly motivated by the construction of a continuous one-parameter family 
of critical measures $\vec \mu$ interpolating the extremal cases $\alpha=0$ and $\alpha=1/2$ studied in 
\cite{filipuk_vanassche_zhang,deano_kuijlaars_huybrechs_complex_orthogonal_polynomials}, formulas 
\eqref{R_for_cubic}, \eqref{D_for_cubic} and \eqref{definition_c} can also be explained in terms of the genus $0$ ansatz stated in Theorem~\ref{thm_genus_zero} that we prove next.

\begin{proof}[Proof of the second part of Theorem~\ref{thm_genus_zero}]
We already noted that if the Riemann surface has genus $0$, then $c$ must be a root of at least one of the polynomials $p_1$ and $p_2$ in \eqref{polynomials_genus}, and 
Proposition~\ref{prop:algebraicCubic} assures that the roots of $p_1$ give rise to genus $0$ Riemann surfaces. It thus suffices to show that zeros of $p_2$ give rise to a surface 
of genus $1$.

Consider $c$ to be a root of the polynomial $p_2$. As before, $c$ can be assumed to be real, the remaining nonreal choices of $c$ as a root of $p_2$ can be reduced 
to the real ones with suitable change of variables. Hence,
\begin{equation}\label{choice_c_genus_1}
c=3(9\tau +9\tau^2+\sqrt{3}(1+9\tau)\tau^{1/2})^{1/3} \quad \mbox{or} \quad c=3(9\tau +9\tau^2-\sqrt{3}(1+9\tau)\tau^{1/2})^{1/3}.
\end{equation}

Making the change of variables $\tau=u^6/3$, this can be written as
$$
c=c(u)=3u(u+1)(u^2-u+1), \quad 0<|u| \leq \left(\frac{3}{4}\right)^{1/6},
$$
where $u>0$ corresponds to the first value of $c$ in \eqref{choice_c_genus_1} and $u<0$ corresponds to the second choice of $c$ in \eqref{choice_c_genus_1}. 

With this identification, the discriminant $\discr(z)$ simplifies to
$$
\discr(z)=-27(u+z)^2q(z),
$$
where
\begin{multline*}
q(z) =  z^4(4u^6-3)-z^3(8u^7+6u^4)+z^2(9u^8+6u^5)\\
	  -z(10u^9+12u^6-4)+5u^{10}+12u^7+12u^4+4u.
\end{multline*}

The discriminant of $q$ is given by
$$
-6912 (u^3+1)^6 \left(10 u^3+9\right) \left(3 u^6+1\right)^3\neq 0 \quad \text{for } |u|\in (0,(3/4)^{1/6}).
$$
Moreover,
$$
q(-u)=9 u^4 \left(2 u^3+1\right)^2,
$$
hence $q(-u)\neq 0$, unless $u=-1/2^{1/3}$, which corresponds to $\tau=1/12$. But for this latter choice, the corresponding value of $c$ in \eqref{choice_c_genus_1} coincides with 
the value of $c$ in \eqref{definition_c}, hence the coefficients $R$ and $D$ are given by \eqref{coefficients_cubic}.

Thus, let $u\neq-1/2^{1/3}$. Then $\discr(z)$ has four simple roots, which have to be branch points of the equation, and one double root $z=-u$. At this double root, the 
algebraic equation \eqref{spectral_curve_general} simply reduces to $\xi^3=0$, so its three solutions coincide. This is only compatible with the 
fact that $\discr(z)$ has a double root at $z=-u$ if the three solutions are branched at this point. Hence we have four branch points of multiplicity $2$ and one branch point of 
multiplicity $3$, and the Riemann-Hurwitz formula gives us that the Riemann surface has genus $1$.
\end{proof}

The first part of Theorem~\ref{thm_genus_zero}, claiming existence of $\alpha$-critical measures, will be given by Corollary~\ref{corol_existence_critical_measure}.

\begin{remark} \label{rem:qdforDifferentC}
Obviously the quadratic differential \eqref{defQqd} still makes sense if $c$ is a root of $p_2$ as in the proof above. 
 Numerical experiments performed to compute its critical 
graph indicate that the corresponding algebraic equation should also give rise to $\alpha$-critical measures as in 
Theorem~\ref{thm_reciprocal_spectral_curve}.
\end{remark}

\subsection{Equilibrium problem from the spectral curve}\label{section_spectral_curve_to_variational_equations}

In this section, starting from the algebraic equation
\begin{equation}\label{spectral_curve}
\xi^3-R(z)\xi+D(z)=0,
\end{equation}
where it is assumed that the coefficients $R$ and $D$ are given by \eqref{R_for_cubic}--\eqref{definition_c}, we find a vector of measures $\vec{\mu}=(\mu_1,\mu_2,\mu_3)\in 
\mathcal M_\alpha$ for which the respective $\xi$-functions in \eqref{definition_xi_functions} satisfy \eqref{spectral_curve}, and consequently we prove 
Theorems~\ref{thm_existence_critical_measures_cubic} and \ref{thm_equilibrium_measure_cubic}. A central object for this analysis is the associated 
quadratic differential \eqref{defQqd} (see also \eqref{defQqd_cubic} {\it et seq.} below) and its critical graph, whose description is postponed to Section~\ref{section:dynamics}.

According to Proposition \ref{prop:algebraicCubic}, the spectral curve \eqref{spectral_curve} has two real branch points $a_1<b_1$, two non real branch points 
$b_2=\overline a_2$, 
with $\re b_2>0$, and a double point 
$$
b_*=\frac{1}{(3(1-4
   \tau) )^{1/3}}>0  \quad \text{for } \tau\in (0,1/4).
$$
Since
$$
 q_1(b_*)=\frac{32 (1-12 \tau )^2}{9(1-4 \tau)
   }\geq 0 \quad \text{for } \tau\in (0,1/4),
$$
we easily conclude that $b_*\geq b_1$ for $\tau\in (0,1/4)$, with $b_*= b_1$ if and only if $\tau=1/12$.

Moreover, equation \eqref{spectral_curve} defines a three-sheeted Riemann surface $\mathcal{R}$ of genus $0$ for every value of the parameter $\tau\in (0, 
1/4)$. In contrast to \eqref{construction_riemann_surface_general}, where the cuts for the Riemann surface are defined in terms of the supports of the critical measures, here 
the cuts that split $\mathcal{R}$ into the sheets $\mathcal R_1,\mathcal R_2,\mathcal R_3$ can be chosen in a somewhat arbitrary 
way, as long as they connect the branch points. In our case, the sheet structure will be given by two cuts: the interval of the real line connecting the real branch points $a_1, 
b_1$ of $\mathcal R$ and a Jordan arc (to be defined precisely below) that joins the complex-conjugate branch points $a_2,b_2$. This arc intersects $\R$ in a unique point $a_*$, 
whose location depends on the value of the parameter $\tau$. Namely, there is a certain critical value $\tau_c\approx 0.19$, defined later, such that $a_*\in(a_1,b_1)$ if and only 
if $\tau_c<\tau<1/4$ (what we call the \emph{supercritical regime}), see Figure~\ref{figure_sheet_structure}. It is important to stress here that $\tau_c>1/12$. 

Obviously, it ultimately follows from our arguments that this cut structure coincides with \eqref{construction_riemann_surface_general} for the critical measure given by 
Theorem~\ref{thm_existence_critical_measures_cubic}.

Using these cuts we define three oriented sets, $\Delta_1$, $\Delta_2$ and $\Delta_3$ on $\C$. Namely,
$\Delta_2$ is always the projection onto $\C$ of the cut joining the complex-conjugate branch points $a_2,b_2$ and oriented from $a_2$ to $b_2$. In the subcritical regime ($0<\tau 
<\tau_c$) we denote by $\Delta_1=[a_1, b_1]\subset \R$ oriented from $a_1$ to $b_1$, and set $\Delta_3=\emptyset$. In the supercritical regime ($\tau_c<\tau<1/4$), 
$\Delta_1=[a_*,b_1]\subset \R$ and $\Delta_3=[a_1,a_*]\subset \R$, both with the natural orientation. 
The orientation induces the left (denoted by the subscript ``$+$'') and right (with the subscript ``$-$'') boundary values of functions defined on $\C$ or $\mathcal{R}$.

We define
\begin{equation*}
\mathcal R_1=\overline \C\setminus (\Delta_1\cup \Delta_2),\quad \mathcal R_2=\overline \C\setminus (\Delta_1\cup \Delta_3),\quad \mathcal R_3=\overline\C\setminus (\Delta_2\cup 
\Delta_3),
 \end{equation*}
and build the Riemann surface $\mathcal R=\mathcal R_1 \cup \mathcal R_2 \cup \mathcal R_3$, associated to the algebraic equation \eqref{spectral_curve}, connecting each pair of 
sheets $\mathcal R_j$ crosswise across the cuts as follows: $\Delta_1$ connects $\mathcal R_1$ with $\mathcal R_{2}$, $\Delta_2$ connects $\mathcal R_1$ with $\mathcal R_3$, and 
$\Delta_3$ connects $\mathcal R_2$ with $\mathcal R_3$ (this last condition is clearly non-trivial only in the supercritical regime, when $\tau_c<\tau<1/4$), see again 
Figure~\ref{figure_sheet_structure}. 

As we  will see soon, the cuts $\Delta_1,\Delta_2,\Delta_3$ can be specified in such a way that we will be able to  define positive measures living on $\Delta_j$'s, and the 
construction of the sheet 
structure will coincide with \eqref{construction_riemann_surface_general}.

The three solutions $\xi_1,\xi_2,\xi_3$ of \eqref{spectral_curve} are enumerated according to their asymptotic expansion at infinity:
\begin{equation}\label{asymptotics_xi_functions}
\begin{split}
 & \xi_1(z)=2z^2-\frac{1}{z}+\Boh(z^{-2}),\\
 & \xi_2(z)=-z^2+\frac{\alpha}{z}+\Boh(z^{-2}),\\
 & \xi_3(z)=-z^2+\frac{1-\alpha}{z}+\Boh(z^{-2}).
\end{split}
\end{equation}

As before, the functions $\xi_1,\xi_2,\xi_3$ are regarded as branches of the same meromorphic function $\xi:\mathcal R\to \overline \C$, defined by \eqref{spectral_curve}. As it 
is rigorously given by Proposition~\ref{proposition_distribution_branchpoints} below, the function $\xi_j$ is the restriction of $\xi$ to the sheet $\mathcal R_j$ of the Riemann 
surface $\mathcal R$. 

 \begin{figure}[t]
\begin{center}
\begin{minipage}[c]{0.5\textwidth}
\begin{overpic}[scale=0.4,grid=false]{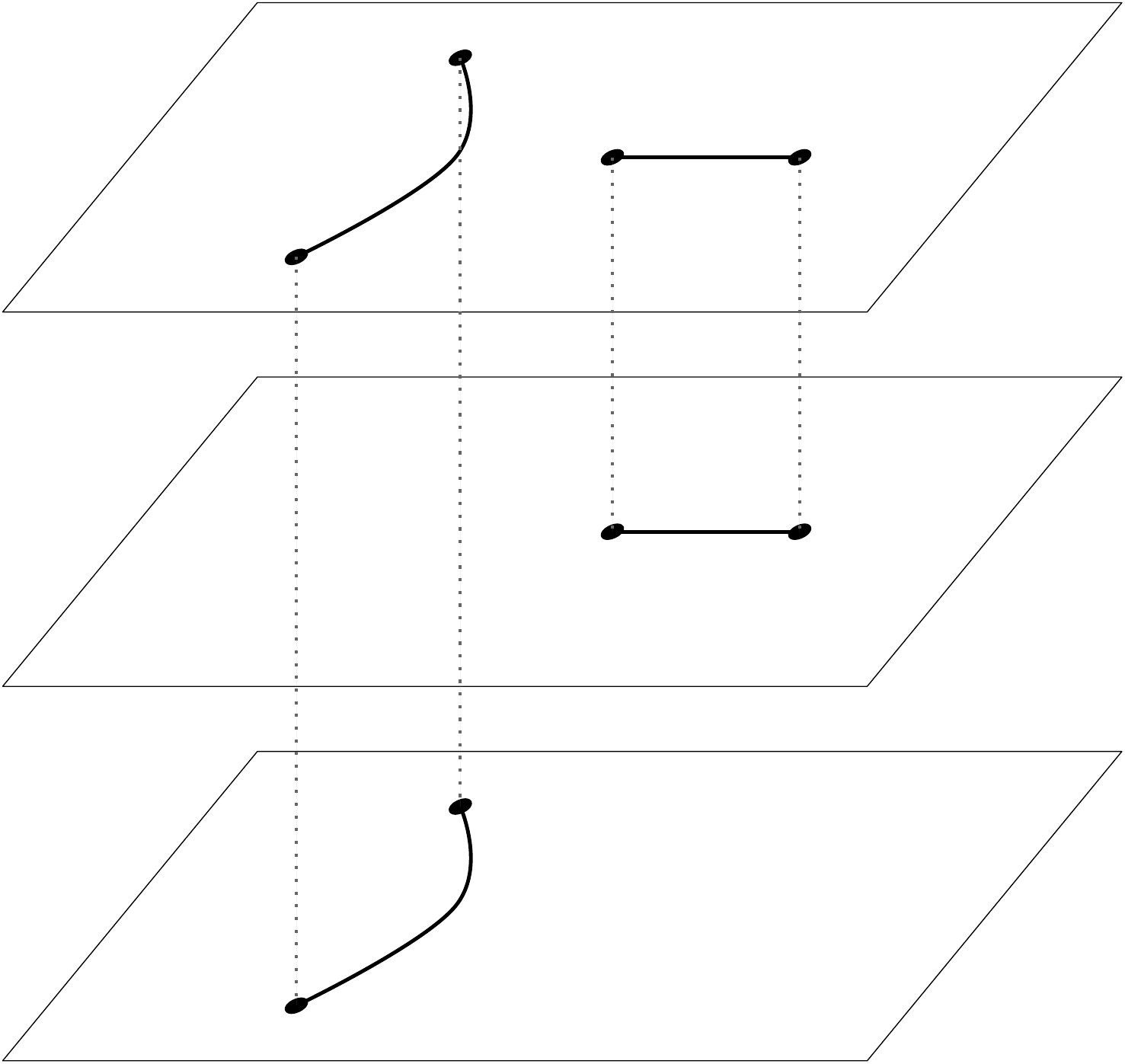}
\put(170,140){$ \mathcal R_1$}
\put(170,80){$ \mathcal R_2$}
\put(170,20){$ \mathcal R_3$}
\put(103,141){$ \Delta_1$}
\put(55,119){$ \Delta_2$}
\put(80,78){$ a_1$}
\put(126,78){$ b_1$}
\put(56,38){$ b_2$}
\put(31,7){$ a_2$}
\end{overpic}
\end{minipage}%
\begin{minipage}[c]{0.5\textwidth}
\begin{overpic}[scale=0.4]{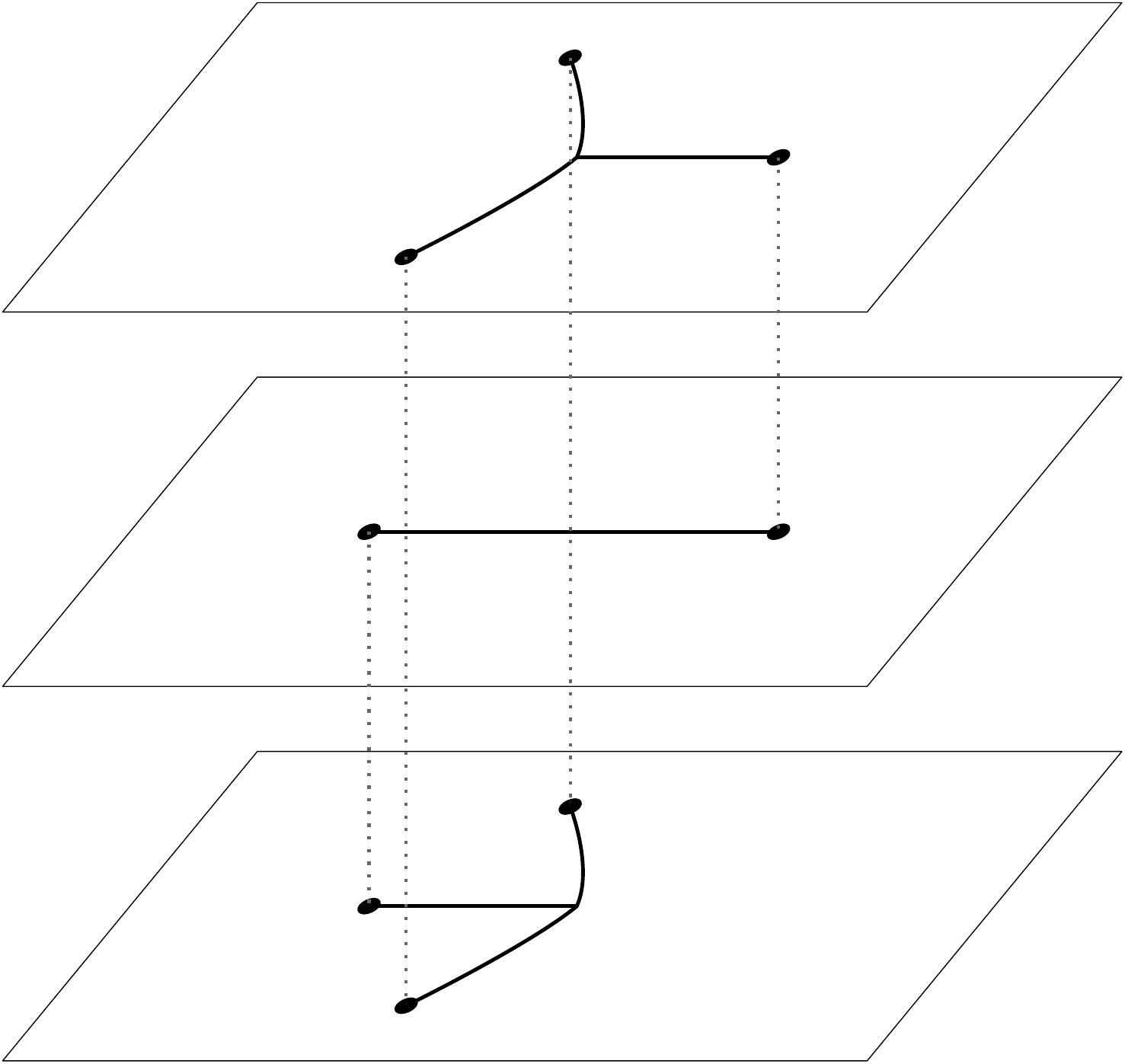}
\put(100,141){$ \Delta_1$}
\put(70,119){$ \Delta_2$}
\put(65,27){$ \Delta_3$}
\put(44,78){$ a_1$}
\put(123,78){$ b_1$}
\put(75,38){$ b_2$}
\put(49,7){$ a_2$}
\put(90,21){$ a_*$}
\end{overpic}
\end{minipage}%
\caption{Sheet structure in the \emph{precritical} $0<\tau<\tau_c$ (left) and \emph{supercritical} $\tau_c<\tau<1/4$ (right) regimes.}\label{figure_sheet_structure}
\end{center}
\end{figure}

Due to the explicit geometric description of the sets $\Delta_j$'s and sheets $\mathcal R_j$'s, it is straightforward to 
check that with
\begin{equation}
\label{defQqd_cubic}
Q(z)=
\begin{cases}
\xi_2(z)-\xi_3(z)  , & \text{ on } \mathcal R_1, \\
\xi_1(z)-\xi_3(z) , & \text{ on } \mathcal R_2, \\
\xi_1(z)-\xi_2(z) , & \text{ on } \mathcal R_3,
\end{cases}
\end{equation}
$Q^2$ extends as a single-valued meromorphic function on $\mathcal R$, and 
\begin{equation}
\label{qd}
\varpi=
-Q^2(z)\, dz^2
\end{equation}
is the corresponding rational quadratic differential on $\mathcal R$. The details are also carried out in the general setting of Theorem~\ref{thm_quadratic_differential} in its 
proof.

One of the main outcomes of the discussion in Section \ref{section:dynamics} (see Theorem~\ref{thm:criticaltraj} and Remark~\ref{remark_orthogonal_trajectories})
is that we can choose the cut $\Delta_2$ connecting $a_2,b_2$ to coincide with the trajectory along which
\begin{equation}\label{condition_trajectory_1}
\re \int^z(\xi_{1+}(s)-\xi_{3+}(s))ds = \const , \quad z\in \Delta_2.
\end{equation}
Furthermore, $\Delta_2$ can be extended to an analytic arc $\Gamma$ from $e^{-\frac{2\pi i}{3}}\infty$ to $e^{\frac{2\pi i}{3}}\infty$ in such a way that
\begin{equation}\label{extension_support_mu_2_property_1}
 \im \int^z(\xi_{1+}(s)-\xi_{3+}(s))ds= \const , \quad z\in\Gamma\setminus\Delta_2,
\end{equation}
and
\begin{equation}\label{extension_support_mu_2_property_2}
\xi_1(z)-\xi_3(z)\neq 0,\quad z\in \Gamma\setminus \{a_2,b_2\},
\end{equation}
see Figure~\ref{figure_geometry_support}. 

\begin{figure}[t]
 \centering
\begin{overpic}[scale=0.6]
{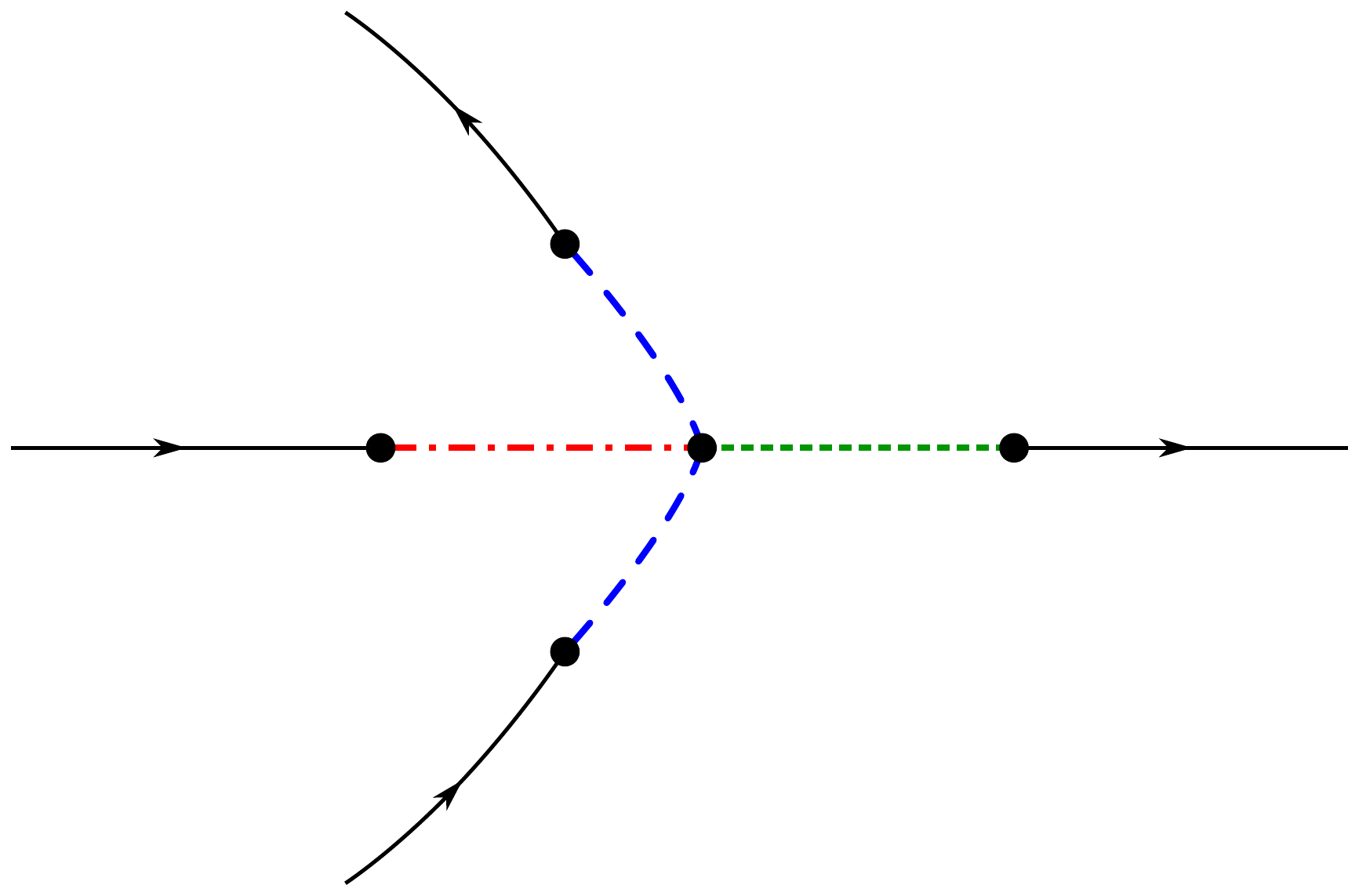}
\put(160,104){$a_*$}
\put(85,104){$a_1$}
\put(225,104){$b_1$}
\put(130,150){$b_2$}
\put(130,45){$a_2$}
\put(145,125){\textcolor{blue}{$\Delta_2$}}
\put(192.5,104){\textcolor{green}{$\Delta_1$}}
\put(122.5,104){\textcolor{red}{$\Delta_3$}}
\put(110,170){$\Gamma$}
\end{overpic}
\caption{Pictorial representation of the sets $\Delta_1$, $\Delta_2$, $\Delta_3$ and $\Gamma$ in the supercritical regime.}\label{figure_geometry_support}
\end{figure}

Further,
\begin{equation}\label{condition_trajectory_2}
 \re \int^z(\xi_{1+}(s)-\xi_{2+}(s))ds= \const,\quad z\in  \Delta_1,
\end{equation}
and
\begin{equation}\label{condition_trajectory_2_nonvanishing}
 \xi_{1+}(z)-\xi_{2+}(z)\neq 0,\quad z\in \Delta_1\setminus \{a_1,b_1\},
\end{equation}
and in the supercritical regime $\tau_c<\tau<1/4$, also
\begin{equation}\label{condition_trajectory_2bis}
 \re \int^z(\xi_{2+}(s)-\xi_{3+}(s))ds= \const,\quad z\in  \Delta_3,
\end{equation}
and 
$$
\xi_{2+}(s)-\xi_{3+}(s)\neq 0,\quad z\in \Delta_3\setminus \{a_1,a_*\},
$$
for which we refer to Proposition~\ref{proposition_distribution_branchpoints}.

Finally, it also holds
\begin{equation}\label{conditions_positive_solutions}
\begin{aligned}
\xi_{1}(z)-\xi_{2}(z)>0, & \quad z>b_1,\\ 
\xi_{2}(z)-\xi_{3}(z)>0, & \quad z<\min\{a_1,a_*\},
\end{aligned}
\end{equation}
and in the precritical regime $\tau<\tau_c$,
\begin{equation}\label{condition_positive_solution_2}
\xi_{1}(z)-\xi_{2}(z)<0, \quad z\in (a_*,a_1),
\end{equation}
for which again we refer to Proposition~\ref{proposition_distribution_branchpoints}.

We define measures $\mu_1$, $\mu_2$, $\mu_3$ on $\Delta_1$, $\Delta_2$, $\Delta_3$, respectively, through the formulas
\begin{equation}\label{definition_measures_mu}
\begin{split}
 d\mu_1(s) &=\frac{1}{2\pi i}(\xi_{1+}(s)-\xi_{2+}(s))ds, \quad s\in \Delta_1,\\
 d\mu_2(s) & =\frac{1}{2\pi i}(\xi_{1+}(s)-\xi_{3+}(s))ds, \quad s\in \Delta_2,\\[2mm]
\text{and } & \\
 d\mu_3(s) & =\begin{cases}
0, & \text{if } 0<\tau<\tau_c, \\[1mm]
\dfrac{1}{2\pi i}(\xi_{3+}(s)-\xi_{2+}(s))ds, \quad s\in \Delta_3, & \text{if } \tau_c<\tau<1/4, \\
 \end{cases}
\end{split}
\end{equation}
where $ds$ denotes the complex line element on the respective arc. Due to their construction, the supports of the measures $\mu_1,\mu_2,\mu_3$ satisfy the claims of 
Theorem~\ref{thm_existence_critical_measures_cubic} (see also Theorem~\ref{thm:criticaltraj}).

\begin{remark}
	Using the definition in \eqref{defQqd_cubic} we can describe formulas \eqref{definition_measures_mu} saying that we build $\mu_j$ from the values of $Q$ on the sheet 
$\mathcal R_{4-j}$, $j=1, 2, 3$.
\end{remark}

\begin{prop}
Expressions \eqref{definition_measures_mu} define positive measures $\mu_1$, $\mu_2$, $\mu_3$.
\end{prop}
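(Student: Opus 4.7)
The plan is to establish, for each $j\in\{1,2,3\}$, three properties of the density in \eqref{definition_measures_mu}: realness, non-vanishing on the relative interior of its arc, and positivity at a single point. The first two force a constant sign along each arc, and the third upgrades this to positivity. Realness is immediate from the trajectory conditions \eqref{condition_trajectory_2}, \eqref{condition_trajectory_1}, \eqref{condition_trajectory_2bis}, each of which states that $\re\int^z(\xi_{j+}-\xi_{k+})(s)\,ds$ is constant along the corresponding cut; hence $(\xi_{j+}-\xi_{k+})\,ds\in i\R$, and dividing by $2\pi i$ produces a real density. Non-vanishing follows from \eqref{condition_trajectory_2_nonvanishing}, \eqref{extension_support_mu_2_property_2}, and the counterpart for $\Delta_3$; together with the continuity of the boundary values $\xi_{j\pm}$ on the interior of each cut, each density is real, continuous, and non-zero on its open arc, hence of constant sign.

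For positivity of $\mu_1$ and $\mu_3$ I would use the local expansion at the real branch points. At $b_1$, where sheets $\mathcal R_1,\mathcal R_2$ meet crosswise, one has $\xi_1(z)-\xi_2(z)=c\sqrt{z-b_1}(1+o(1))$ with $c\in\C\setminus\{0\}$; choosing the principal branch of the square root (cut on $(-\infty,b_1]$), the first inequality in \eqref{conditions_positive_solutions} forces $c>0$. For $z=b_1-\epsilon$ on the $+$-side of $\Delta_1$ one has $\sqrt{z-b_1}=i\sqrt{\epsilon}$, so $\xi_{1+}(z)-\xi_{2+}(z)\sim ic\sqrt{\epsilon}$ is purely imaginary with positive imaginary part; dividing by $2\pi i$ and using $ds>0$ in the natural orientation of $\Delta_1$ yields a positive density near $b_1$, and hence throughout $\Delta_1$ by constant sign. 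An analogous branch-point analysis at $a_1$, combined with the second inequality in \eqref{conditions_positive_solutions}, establishes positivity of $\mu_3$ in the supercritical regime ($\mu_3\equiv 0$ in the subcritical case is trivially positive).

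For $\mu_2$ the arc $\Delta_2$ is a generic curve in $\C$, so a direct local sign argument at $a_2,b_2$ would require identifying the precise direction along which the trajectory approaches those points, which is the main obstacle; I would instead pin down the sign globally via a mass computation. The branch $\xi_3$ is meromorphic on $\overline{\C}\setminus(\Delta_2\cup\Delta_3)$ with $\xi_3(z)=-z^2+(1-\alpha)/z+O(z^{-2})$ at $\infty$. Integrating $\xi_3(z)\,dz$ over a large positively oriented circle and deforming inward onto the two cuts yields
\begin{equation*}
2\pi i(1-\alpha)=\int_{\Delta_2}(\xi_{3-}-\xi_{3+})\,ds+\int_{\Delta_3}(\xi_{3-}-\xi_{3+})\,ds.
\end{equation*}
The crosswise sheet identifications $\xi_{3-}=\xi_{1+}$ on $\Delta_2$ and $\xi_{3-}=\xi_{2+}$ on $\Delta_3$ convert the right-hand side into $2\pi i(m_2-m_3)$, where $m_j:=\int d\mu_j\in\R$, recovering the constraint $m_2-m_3=1-\alpha$ of \eqref{massconstraint}. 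Combined with $m_3\geq 0$, already established above, this forces $m_2\geq 1-\alpha>0$; since the density of $\mu_2$ is real and of constant sign with positive total integral, it must be positive on $\Delta_2$.
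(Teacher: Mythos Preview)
Your treatment of $\mu_1$ and $\mu_3$ is essentially the paper's argument in different clothing: the paper integrates to form the primitive $\psi_1(z)=\int_{b_1}^z(\xi_1-\xi_2)\,ds$, uses $\psi_1>0$ on $(b_1,\infty)$ together with the vanishing order $3/2$ at $b_1$ to conclude $\psi_{1+}\in i\R_-$ on $\Delta_1$, which is the integrated version of your local square-root expansion. (For $\mu_3$ the paper takes a shortcut you did not: it observes that $d\mu_3/ds$ is the analytic continuation of $d\mu_1/ds$ across $\Delta_2$, so positivity is inherited directly without a separate endpoint analysis at $a_1$.)

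For $\mu_2$ your route is genuinely different. The paper argues exactly as for $\mu_1$, but now using the orthogonal-trajectory extension $\Gamma\supset\Delta_2$ to $\infty$ furnished by \eqref{extension_support_mu_2_property_1}--\eqref{extension_support_mu_2_property_2}: the primitive $\psi_2$ is real and monotone on $\Gamma\setminus\Delta_2$, tends to $+\infty$ along $\Gamma$, vanishes to order $3/2$ at $a_2$, hence $\psi_{2+}\in i\R_+$ on $\Delta_2\cap\C_-$. Your residue computation of $m_2-m_3=1-\alpha$ (which is exactly the contour-deformation argument behind Proposition~\ref{prop_cauchy_transforms_cubic}) sidesteps the need for $\Gamma$ altogether and is arguably more economical.

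There is one point you glossed over that the paper is explicit about. In the supercritical regime the arc $\Delta_2$ is not smooth: it has a corner at $a_*\in(a_1,b_1)$, where it meets $\Delta_1$ and $\Delta_3$, and the paper notes that the density of $\mu_2$ with respect to $ds$ ``is continuous when restricted to either the upper or the lower half plane, but not across $\R$''. So your assertion that the density is ``continuous and non-zero on its open arc, hence of constant sign'' only yields constant sign on each of $\Delta_2\cap\C_+$ and $\Delta_2\cap\C_-$ separately, and $m_2>0$ alone does not rule out opposite signs on the two halves. The fix is immediate---complex-conjugate symmetry of the spectral curve forces the restrictions of $\mu_2$ to the two half-arcs to be mirror images, hence to carry the same (real) mass $m_2/2>0$---but it should be stated.
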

\begin{proof}
We prove the statement for the supercritical regime $\tau_c<\tau<1/4$; the precritical regime, somewhat simpler, can be analyzed similarly.

 From \eqref{condition_trajectory_1} and \eqref{condition_trajectory_2} it follows that the measures $\mu_1$, $\mu_2$, $\mu_3$ are real. Moreover, the densities of $\mu_1$, 
$\mu_3$ with respect to the complex line element $ds$ are continuous and non vanishing in the interior of $\Delta_1$, $\Delta_3$, so the respective measures do not change sign. As 
for $\mu_2$, its density with respect to $ds$ is continuous when restricted to either the upper or the lower half plane, but not across $\R$. 

We start now with $\mu_1$. For $x\in\Delta_1$, we can write
\begin{equation}\label{aux_equation_5}
\mu_1([x,b_1])=-\frac{1}{2\pi i}\psi_{1+}(x),
\end{equation}
where
$$
\psi_1(z)=\int_{b_1}^z(\xi_1(s)-\xi_2(s))ds,\quad z\in \C\setminus\left((-\infty,b_1]\cup \Delta_2\right).
$$

By the asymptotic expansion  \eqref{asymptotics_xi_functions}, we have that
$$
\psi_1(z)=z^3+\Boh(z),\quad z\to \infty,
$$
and since $\xi_1-\xi_2$ does not change sign on $[b_1,+\infty)$, see \eqref{condition_trajectory_2_nonvanishing}, we conclude that
$$
\psi_1(x)>0,\quad x>b_1.
$$

The function $\psi_1$ vanishes at the point $z=b_1$ with order $3/2$. Since we already know that $\psi_1$ should map $\Delta_1$ to $i\R$, from its order of vanishing we further 
get that the imaginary part of $\psi_{1+}(x)$ is negative when $x\in \Delta_1$ is sufficiently close to the endpoint $b_1$. From 
\eqref{condition_trajectory_2_nonvanishing}, we conclude
$$
\psi_{1+}(x)\in i\R_-,\quad x\in\Delta_1.
$$
By \eqref{aux_equation_5}, this establishes the positivity of $\mu_1$.

Furthermore, the density $d\mu_3/ds$ is the analytic continuation of $ d\mu_1/ds$ across $\Delta_2$, hence $ d\mu_3 $ is also positive.

Finally, in order to establish the sign of $\mu_2$, consider first $x\in \Delta_2\cap\C_-$, $\C_-=\{z\in \C:\, \Im(z)<0 \}$. Denote by $\Delta_2[a_2,x]$ the subarc of $\Delta_2$ 
from $a_2$ to $x$. Then
\begin{equation}\label{aux_equation_6}
\mu_2(\Delta_2[a_2,x])=\frac{1}{2\pi i}\psi_{2+}(x),
\end{equation}
where
$$
\psi_2(z)=\int_{a_2}^{z}(\xi_{1+}(s)-\xi_{3+}(s))ds,\quad z\in \C\setminus((-\infty,b_1]\cup \Delta_2).
$$

From \eqref{extension_support_mu_2_property_1} and \eqref{extension_support_mu_2_property_2} we learn that $\Re( \psi_2)$ is monotone on $\Gamma(e^{-\frac{3\pi i}{2}}\infty,a_2)$.
The asymptotics
$$
\psi_2(z)=z^3+\Boh(z),\quad z\to \infty,
$$
shows that
$$
\re\psi_2(z)\to +\infty,\quad z\to \infty \mbox{ along } \Gamma,
$$
hence $\Re (\psi_2)$ is strictly decreasing on $\Gamma(^{-\frac{3\pi i}{2}}\infty,a_2)$, $\psi_2(a_2)=0$. Since $\psi_2$ vanishes with order $3/2$ on $a_2$, we conclude that
$$
\psi_{2+}(x)\in i\R_+,\quad x\in \Delta_2\cap \C_-.
$$
Thus, by \eqref{aux_equation_6} measure $\mu_2$ is positive on $\Delta_2\cap \C_-$. 

The positivity of $\mu_2$ on $\Delta_2\cap \C_+$ can be obtained by similar arguments or using the real symmetry of the density of $\mu_2$; we leave the details to the interested 
reader.

The proposition is proved.
\end{proof}

Our next goal is an expression for the Cauchy transforms of combinations of measures $\mu_1$, $\mu_2$, $\mu_3$:
\begin{prop}\label{prop_cauchy_transforms_cubic}
 The Cauchy transform of the measures $\mu_1$, $\mu_2$, $\mu_3$ defined in \eqref{definition_measures_mu} are related to the $\xi$-functions in 
\eqref{asymptotics_xi_functions} through
 \begin{align}
  & C^{\mu_1}(z)+C^{\mu_2}(z)+2z^2  =\xi_1(z),\quad z\in \C\setminus (\Delta_1\cup \Delta_2),  \label{equation_cauchy_transform_delta_1} \\
  & C^{\mu_1}(z)+C^{\mu_3}(z)+z^2   =-\xi_2(z),\quad z\in \C\setminus (\Delta_1\cup \Delta_3), \label{equation_cauchy_transform_delta_2} \\
  & C^{\mu_2}(z)-C^{\mu_3}(z)+z^2   =-\xi_3(z),\quad z\in \C\setminus (\Delta_2\cup \Delta_3). \label{equation_cauchy_transform_delta_3}
 \end{align}
In particular, the total masses of the measures $\mu_1$, $\mu_2$, $\mu_3$ satisfy \eqref{massconstraint}, namely
$$
|\mu_1|+|\mu_2|=1,\quad |\mu_1|+|\mu_3|=\alpha,\quad |\mu_2|-|\mu_3|=1-\alpha.
$$
\end{prop}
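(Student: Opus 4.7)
The plan is to prove each of the three identities \eqref{equation_cauchy_transform_delta_1}--\eqref{equation_cauchy_transform_delta_3} by the same Liouville-type argument: I introduce an auxiliary function $F_j$ as the difference of the two sides, show that it extends to an entire function using the Sokhotsky--Plemelj jump relations together with the crosswise gluing of the sheets of $\mathcal R$, and conclude $F_j\equiv 0$ from its behavior at infinity. The mass constraints \eqref{massconstraint} will then drop out as the vanishing of the $1/z$ coefficient in each $F_j$ at infinity.

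Let me illustrate the strategy with \eqref{equation_cauchy_transform_delta_1}; the other two are completely analogous. Define
\[
F_1(z):=\xi_1(z)-C^{\mu_1}(z)-C^{\mu_2}(z)-2z^2,
\]
which is analytic on $\C\setminus(\Delta_1\cup\Delta_2)$. Using the asymptotics \eqref{asymptotics_xi_functions} and the standard expansion $C^{\mu_j}(z)=-|\mu_j|/z+\Boh(z^{-2})$, I read off
\[
F_1(z)=\frac{|\mu_1|+|\mu_2|-1}{z}+\Boh(z^{-2}),\qquad z\to\infty.
\]
The main computation is the vanishing of the jump of $F_1$ across each cut. On $\Delta_1$ the sheets $\mathcal R_1$ and $\mathcal R_2$ are glued crosswise, so $\xi_{1+}=\xi_{2-}$ and $\xi_{1-}=\xi_{2+}$; Sokhotsky--Plemelj applied to the density $(2\pi i)^{-1}(\xi_{1+}-\xi_{2+})$ of $\mu_1$ gives $C^{\mu_1}_+-C^{\mu_1}_-=\xi_{1+}-\xi_{2+}$, and $C^{\mu_2}$ is continuous there, so
\[
F_{1+}-F_{1-}=(\xi_{1+}-\xi_{1-})-(\xi_{1+}-\xi_{2+})=\xi_{2+}-\xi_{1-}=0.
\]
On $\Delta_2$ the sheets $\mathcal R_1$ and $\mathcal R_3$ are glued, so $\xi_{1\pm}=\xi_{3\mp}$; the analogous Plemelj computation for $\mu_2$ yields again $F_{1+}-F_{1-}=\xi_{3+}-\xi_{1-}=0$. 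There is no jump across $\Delta_3$ because none of the terms entering $F_1$ is singular there.

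It remains to check that the endpoints $\{a_1,b_1,a_2,b_2\}$ (and $a_*$ in the supercritical regime) are removable singularities: since the polynomial $D$ in \eqref{spectral_curve} has no finite pole, each branch of $\xi$ is bounded at every branch point, and the Cauchy transforms of $\mu_1,\mu_2$ are bounded there as well (the densities vanish like a square root by the local structure of the quadratic differential \eqref{qd}). Hence $F_1$ is entire, and from the expansion above $F_1(z)\to 0$ at infinity, so Liouville forces $F_1\equiv 0$, which is exactly \eqref{equation_cauchy_transform_delta_1}; matching the $1/z$ coefficient gives $|\mu_1|+|\mu_2|=1$. Repeating the same scheme for $F_2(z):=-\xi_2-C^{\mu_1}-C^{\mu_3}-z^2$ and $F_3(z):=-\xi_3-C^{\mu_2}+C^{\mu_3}-z^2$---using the gluings across $\Delta_1, \Delta_3$ (for $F_2$) and $\Delta_2, \Delta_3$ (for $F_3$)---yields the remaining identities, together with $|\mu_1|+|\mu_3|=\alpha$ and $|\mu_2|-|\mu_3|=1-\alpha$.

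The only nontrivial bookkeeping in the plan is keeping track of signs and orientations in the three different jump relations; the main obstacle is a purely notational one---making sure that the sheet-gluing rules $\xi_{j\pm}=\xi_{k\mp}$ line up correctly with the Plemelj jump of the Cauchy transform of the measure supported on the same cut so that the cancellation is exact. In the precritical regime $\Delta_3=\emptyset$ and $\mu_3=0$, so the arguments simplify accordingly and the same conclusions hold verbatim.
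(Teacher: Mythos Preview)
Your argument is correct, but the paper takes a different (and somewhat shorter) route. Rather than introducing the auxiliary functions $F_j$, checking all jumps, and invoking Liouville, the paper observes that on $\Delta_1$ one has $\xi_{1+}-\xi_{1-}=\xi_{1+}-\xi_{2+}$ and on $\Delta_2$ one has $\xi_{1+}-\xi_{1-}=\xi_{1+}-\xi_{3+}$, so the sum $C^{\mu_1}+C^{\mu_2}$ is precisely the Cauchy integral of the jump of $\xi_1$ along $\Delta_1\cup\Delta_2$. Deforming to a large clockwise loop and computing the residues at $s=z$ and $s=\infty$ (using \eqref{asymptotics_xi_functions}) gives $\xi_1(z)-2z^2$ in one line; the other two identities are obtained the same way for $\xi_2$, and by subtraction for $\xi_3$.

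The two approaches are of course equivalent---your Morera/Liouville scheme is the ``differential'' version of the paper's residue computation---and both are entirely standard. The paper's method is a bit slicker here because it bypasses the endpoint analysis: there is no need to discuss removability at $a_1,b_1,a_2,b_2$ or at $a_*$, since the contour deformation never sees those points. In your version the only place to be slightly more careful is $a_*$ in the supercritical regime: the densities of $\mu_1,\mu_2,\mu_3$ do \emph{not} vanish there (it is not a branch point of $\mathcal R$), so your ``square-root vanishing'' remark does not apply; nevertheless the Cauchy transforms are at worst logarithmically singular and $\xi_1$ is analytic at $a_*$, so the isolated singularity of $F_1$ is still removable and the conclusion stands.
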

\begin{proof}
This is a straightforward consequence of residues calculations. For instance,
\begin{align*}
C^{\mu_1}(z)+C^{\mu_2}(z) & = \frac{1}{2\pi i} \int_{\Delta_1}\frac{\xi_{1+}(s)-\xi_{1-}(s)}{s-z}ds + \frac{1}{2\pi i} \int_{\Delta_2}\frac{\xi_{1+}(s)-\xi_{1-}(s)}{s-z}ds \\
			  & = \frac{1}{2\pi i} \varointclockwise \frac{\xi_1(s)}{s-z}ds \\
			  & =\res\left(\frac{\xi_1(s)}{s-z},s=z\right)+\res\left(\frac{\xi_1(s)}{s-z},s=\infty\right)\\
			  & = -2z^2+\xi_1(z).
\end{align*}
where the closed integral is computed along a contour oriented clockwise which encircles $\Delta_1\cup\Delta_2$ and does not encircle $z$.

Analogously,
\begin{align*}
C^{\mu_1}(z)+C^{\mu_3}(z) & =-\res\left(\frac{\xi_2(s)}{s-z},s=z\right)-\res\left(\frac{\xi_2(s)}{s-z},s=\infty\right)\\
			  & =-z^2-\xi_2(z),
\end{align*}
and taking the difference of these last two equations,
$$
C^{\mu_2}(z)-C^{\mu_3}(z)=-z^2+\xi_1(z)+\xi_2(z)=-z^2-\xi_3(z).
$$

Finally, \eqref{massconstraint} is a direct consequence of the asymptotic expansion \eqref{asymptotics_xi_functions}.
\end{proof}

A combination of Proposition~\ref{prop_cauchy_transforms_cubic} with Theorem~\ref{thm_reciprocal_spectral_curve} yields the first part of 
Theorem~\ref{thm_genus_zero}, namely
\begin{cor}\label{corol_existence_critical_measure}
 The vector of measures $\vec\mu =(\mu_1,\mu_2,\mu_3)\in \mathcal M_\alpha$ defined through \eqref{definition_measures_mu} is $\alpha$-critical for the potentials 
\eqref{external_fields_cubic_potential}.
\end{cor}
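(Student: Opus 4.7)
The plan is to apply Theorem~\ref{thm_reciprocal_spectral_curve} directly. All the structural pieces are already in place: Proposition~\ref{prop_cauchy_transforms_cubic} expresses the three branches $\xi_1,\xi_2,\xi_3$ of the spectral curve \eqref{spectral_curve} as Cauchy transforms of the components of $\vec\mu$ plus explicit polynomial corrections, and by construction these branches satisfy the cubic equation \eqref{spectral_curve_general} with $R,D$ given by \eqref{R_for_cubic}--\eqref{definition_c}. All that is left is to package this information in the form required by Theorem~\ref{thm_reciprocal_spectral_curve}.

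First I would identify the polynomials $V_1,V_2,V_3$ demanded by that theorem. Rewriting the three identities of Proposition~\ref{prop_cauchy_transforms_cubic} in the shape \eqref{shifted_resolvents_pols_V} yields immediately
\begin{equation*}
V_1'(z)=2z^2,\qquad V_2'(z)=-z^2,\qquad V_3'(z)=-z^2,
\end{equation*}
and the compatibility condition \eqref{compatibility_condition_pols_V} holds by inspection. Substituting $V_1(z)=\tfrac{2}{3}z^3$, $V_2(z)=V_3(z)=-\tfrac{1}{3}z^3$ into \eqref{definition_new_external_fields} recovers precisely $\Phi_1(z)=\Phi_2(z)=z^3$ and $\Phi_3(z)=0$, which matches \eqref{cubic_choice_potentials}.

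Next I would verify $\vec\mu\in\mathcal M_\alpha$. The mass constraints \eqref{massconstraint} are exactly the content of the last assertion of Proposition~\ref{prop_cauchy_transforms_cubic}. Each $\mu_j$ is by construction supported on a finite union of analytic arcs contained in $\Delta_j$, hence compactly supported with zero plane Lebesgue measure and absolutely continuous with respect to arclength (with real-analytic density away from the branch points). The set $S_\alpha$ from \eqref{defSetS} is contained in the finite set $\{a_1,b_1,a_2,b_2,a_*\}$, so \eqref{assumption_intersection_supports} is satisfied. Finally, the energy $E(\vec\mu)$ is finite since the densities have at worst endpoint singularities of the form $|z-p|^{-\nu}$ with $\nu<1$, compatible with the local behaviour described by Theorem~\ref{thm_spectral_curve} and thus integrable against the logarithmic kernel.

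With these verifications, Theorem~\ref{thm_reciprocal_spectral_curve} applies and delivers the $\alpha$-criticality of $\vec\mu$ for the external field $\vec\phi=(\re\Phi_1,\re\Phi_2,\re\Phi_3)$ given by \eqref{external_fields_cubic_potential}. The only point that requires a touch of care is that the functions $\xi_1,\xi_2,\xi_3$ furnished by Proposition~\ref{prop_cauchy_transforms_cubic} on the respective slit planes must genuinely be the three branches of the single cubic \eqref{spectral_curve}; this is where I would anticipate the one real subtlety. However it is already settled by Proposition~\ref{prop:algebraicCubic} together with the enumeration \eqref{asymptotics_xi_functions}, which fix the three branches uniquely via their asymptotics at infinity and ensure that the cut structure chosen in \eqref{construction_riemann_surface_general} is consistent with the one dictated by the supports of $\mu_1,\mu_2,\mu_3$.
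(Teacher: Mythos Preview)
Your proposal is correct and follows exactly the route the paper takes: the paper states this corollary as an immediate consequence of combining Proposition~\ref{prop_cauchy_transforms_cubic} with Theorem~\ref{thm_reciprocal_spectral_curve}, and you have simply (and correctly) unpacked the verifications that make this combination work. The identification of $V_1,V_2,V_3$, the check of \eqref{compatibility_condition_pols_V}, and the recovery of \eqref{cubic_choice_potentials} via \eqref{definition_new_external_fields} are precisely the details implicit in the paper's one-line justification.
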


Recalling \eqref{derivative_potential}, the potential $U^\mu$ of a compactly supported signed measure $\mu$ for which $\C\setminus \supp\mu$ is connected is the real part of a 
primitive 
of $C^{\mu}$, that is
\begin{equation}\label{equation_recovery_potential}
U^{\mu}(z)=\re\int^z C^{\mu}(s)ds+c,\quad z\in \C\setminus \supp\mu,
\end{equation}
where the constant is chosen so as to have
$$
\lim_{z\to\infty} \left(\re\int^z C^{\mu}(s)ds+c\right)=0.
$$

Let us apply this to the measures $\mu_1$, $\mu_2$, $\mu_3$ given in \eqref{definition_measures_mu}. From \eqref{equation_cauchy_transform_delta_1},
$$
\int_{b_1}^z (C^{\mu_1}(s)+C^{\mu_2}(s))ds + \frac{2}{3} z^3-\frac{2}{3}b_1^3=\int_{b_1}^z \xi_1(s)ds,\quad z\in \C\setminus (\Delta_1\cup\Delta_2),
$$
and from \eqref{equation_cauchy_transform_delta_2},
$$
\int_{b_1}^z (C^{\mu_1}(s)+C^{\mu_3}(s))ds + \frac{1}{3} z^3-\frac{1}{3}b_1^3=-\int_{b_1}^z \xi_2(s)ds,\quad z\in \C\setminus (\Delta_1\cup\Delta_3).
$$

Summarizing,
\begin{multline*}
\int_{b_1}^z (2C^{\mu_1}(s)+C^{\mu_2}(s)+C^{\mu_3}(s))ds + z^3-b_1^3 \\ =\int_{b_1}^z (\xi_1(s)-\xi_2(s))ds,\quad z\in \C\setminus (\Delta_1\cup\Delta_2\cup \Delta_3),
\end{multline*}
where we must use the same paths of integration in the left and in the right hand sides. By  \eqref{equation_recovery_potential} we then conclude
\begin{equation}\label{aux_equation_7}
2U^{\mu_1}(z)+U^{\mu_2}(z)+U^{\mu_3}(z)+\phi(z)-l_1=\re \int_{b_1}^z(\xi_1(s)-\xi_2(s))ds, \quad z\in\C\setminus \Delta,
\end{equation}
for some constant $l_1$, the external field $\phi$ given in \eqref{external_fields_cubic_potential} and $\Delta=\Delta_1\cup\Delta_2\cup\Delta_3$. Since the $\xi_j$'s have purely 
imaginary periods, the right hand side above is well defined regardless the path chosen.

In a completely analogous way, we get
\begin{equation}\label{aux_equation_8}
\begin{aligned}
 & 2U^{\mu_2}(z)+U^{\mu_1}(z)-U^{\mu_3}(z)+\phi(z)-l_2=\re\int_{a_2}^z(\xi_1(s)-\xi_3(s))ds,\quad z\in \C\setminus\Delta,\\
 & 2U^{\mu_3}(z)+U^{\mu_1}(z)-U^{\mu_2}(z)-l_3=\re\int_{\min\{a_1,a_*\}}^z(\xi_3(s)-\xi_2(s))ds,\quad z\in \C\setminus\Delta. 
\end{aligned}
\end{equation}

Combining \eqref{aux_equation_7} with \eqref{condition_trajectory_2}, we conclude that
\begin{equation}\label{variational_equality_1}
2U^{\mu_1}(z)+U^{\mu_2}(z)+U^{\mu_3}(z)+\phi(z)-l_1=0,\quad z\in \Delta_1.
\end{equation}
Moreover, from the first equation in \eqref{conditions_positive_solutions} we also get 
\begin{equation}\label{variational_inequality_1}
2U^{\mu_1}(z)+U^{\mu_2}(z)+U^{\mu_3}(z)+\phi(z)-l_1>0,\quad z\in (b_1,+\infty).
\end{equation}
Furthermore, in the precritical case $\tau<\tau_c$ and $z\in [a_*,a_1)$,
\begin{align}
2U^{\mu_1}(z)+U^{\mu_2}(z)+U^{\mu_3}(z)+\phi(z)-l_1 & = \re \int_{b_1}^{a_1}(\xi_{1+}(s)-\xi_{2+}(s))ds \nonumber \\ 
						    & \qquad + \re \int_{a_1}^z(\xi_1(s)-\xi_2(s))ds \nonumber \\
						    & = -\int^{a_1}_z(\xi_1(s)-\xi_2(s))ds >0, \label{variational_inequality_precritical_2}
\end{align}
where for the second equality we used \eqref{condition_trajectory_2} and for the final inequality we used \eqref{condition_positive_solution_2}.

Analogously,
\begin{align}
 & 2U^{\mu_2}(z)+U^{\mu_1}(z)-U^{\mu_3}(z)+\phi(z)-l_2=0,\quad z\in \Delta_2, \label{variational_equality_2} \\
 & 2U^{\mu_3}(z)+U^{\mu_1}(z)-U^{\mu_2}(z)-\tilde l_3=0,\quad z\in \Delta_3, \label{variational_equality_3} \\
 & 2U^{\mu_3}(z)+U^{\mu_1}(z)-U^{\mu_2}(z)-\tilde l_3>0,\quad z <\min\{a_*,a_1\}, \label{variational_inequality_3} \\
 & 2U^{\mu_3}(a_*)+U^{\mu_1}(a_*)-U^{\mu_2}(a_*)=\tilde l_3 \label{variational_inequality_precritical}.
\end{align}

If $\tau\geq \tau_c$, we evaluate \eqref{variational_equality_1},\eqref{variational_equality_2}, \eqref{variational_equality_3} at the common point 
$a_*\in\Delta_1\cap\Delta_2\cap\Delta_3$ and take differences in order to get
$$
l_1-l_2-\tilde l_3=0.
$$

On the other hand, if $\tau< \tau_c$, we use \eqref{variational_inequality_precritical_2} to get
$$
2U^{\mu_1}(a_*)+U^{\mu_2}(a_*)+U^{\mu_3}(a_*)+\phi(a_*)>l_1.
$$
We now combine this inequality with \eqref{variational_equality_2} evaluated at $z=a_*\in \Delta_2$ and \eqref{variational_inequality_precritical}, and conclude
$$
\tilde l_3>l_1-l_2.
$$

Hence, for both the precritical and supercritical cases we can define
$$
l_3:=l_1-l_2,
$$
and with this definition, it follows that
\begin{align*}
 2U^{\mu_3}(z)+U^{\mu_1}(z)-U^{\mu_2}(z)- l_3>0, & \quad z <\min\{a_*,a_1\},  \\
 2U^{\mu_3}(z)+U^{\mu_1}(z)-U^{\mu_2}(z)- l_3=0, & \quad z \in \Delta_3. 
\end{align*}

One more variational inequality is based on the fact that we deal with a critical trajectory of our quadratic differential. Recall that by \eqref{aux_equation_8},
$$
2U^{\mu_2}(z)+U^{\mu_1}(z)-U^{\mu_3}(z)+\phi(z)-l_2=\re\psi_2(z),\quad z\in \C\setminus\Delta,
$$
where
$$
\psi_2(z)=\int^z_{a_2}(\xi_1(s)-\xi_3(s))ds.
$$
From \eqref{extension_support_mu_2_property_1}, we know that $\psi_2$ is real-valued on $\Gamma\setminus\Delta_2$. Hence,
$$
2U^{\mu_2}(z)+U^{\mu_1}(z)-U^{\mu_3}(z)+\phi(z)-l_2=\psi_2(z),\quad z\in \Gamma\setminus\Delta_2.
$$
Moreover, by \eqref{extension_support_mu_2_property_2}, the real-valued function $\psi_2$ is monotone on each connected component of $\Gamma\setminus\Delta_2$. Analyzing at 
$\infty$, we 
see that
$$
\psi_2(z)=z^3+\Boh(z),
$$
hence $\psi_2$ tends to $+\infty$ along $\Gamma$. Since it is zero at the endpoints $a_2$, $b_2$ of the connected components of $\Gamma\setminus\Delta_2$, we conclude that $\Psi_2$ 
is 
always positive on $\Gamma\setminus\Delta_2$, and hence
$$
2U^{\mu_2}(z)+U^{\mu_1}(z)-U^{\mu_3}(z)+\phi(z)-l_2>0,\quad z\in \Gamma\setminus \Delta_2.
$$

We summarize our findings in the following theorem:
\begin{thm}\label{thm_equilibrium_conditions_cubic}
Let measures $\mu_1$, $\mu_2$, $\mu_3$ be defined in \eqref{definition_measures_mu}. Then there exist real constants $l_1$, $l_2$ and 
$$
l_3:=l_1-l_2,
$$
such that the following variational conditions are satisfied:
 \begin{equation}\label{eq_equilibrium_conditions_cubic}
 \begin{aligned}
 & 2U^{\mu_1}(z)+U^{\mu_2}(z)+U^{\mu_3}(z)+\phi(z)-l_1=0,\quad z\in \Delta_1,\\
 & 2U^{\mu_1}(z)+U^{\mu_2}(z)+U^{\mu_3}(z)+\phi(z)-l_1>0,\quad z\in \Gamma_1\setminus\Delta_1, \\
 & 2U^{\mu_2}(z)+U^{\mu_1}(z)-U^{\mu_3}(z)+\phi(z)-l_2=0,\quad z\in \Delta_2, \\
 & 2U^{\mu_2}(z)+U^{\mu_1}(z)-U^{\mu_3}(z)+\phi(z)-l_2>0,\quad z\in \Gamma_2\setminus \Delta_2 \\
 & 2U^{\mu_3}(z)+U^{\mu_1}(z)-U^{\mu_2}(z)-l_3=0,\quad z\in \Delta_3, \\
 & 2U^{\mu_3}(z)+U^{\mu_1}(z)-U^{\mu_2}(z)-l_3>0,\quad z\in \Gamma_3\setminus \Delta_3.
\end{aligned}
\end{equation}
where $\Gamma_1=[a_*,+\infty)$, $\Gamma_2=\Gamma$, $\Gamma_3=(-\infty,a_*]$.
\end{thm}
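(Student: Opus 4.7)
The plan is to derive all six variational relations by integrating the three Cauchy-transform identities of Proposition~\ref{prop_cauchy_transforms_cubic}, passing from $C^{\mu_j}$ to $U^{\mu_j}$ via \eqref{equation_recovery_potential}, and then reading off the equality on each $\Delta_j$ from the critical-trajectory condition on $\Delta_j$ and the strict inequality on $\Gamma_j \setminus \Delta_j$ from a sign analysis of the corresponding $\xi_j - \xi_k$.

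Concretely, I would first combine \eqref{equation_cauchy_transform_delta_1} and \eqref{equation_cauchy_transform_delta_2} to obtain
$$
\int_{b_1}^{z}(2C^{\mu_1}+C^{\mu_2}+C^{\mu_3})(s)\,ds + z^3 - b_1^3 \;=\; \int_{b_1}^{z}(\xi_1(s)-\xi_2(s))\,ds,
$$
take real parts, and absorb the constant into an $l_1$, producing \eqref{aux_equation_7}. The equality on $\Delta_1$ is then immediate from \eqref{condition_trajectory_2}. For $z \in (b_1,+\infty)$ the strict inequality follows from the first line of \eqref{conditions_positive_solutions}; for $z \in [a_*,a_1)$ in the precritical regime, I would use \eqref{condition_positive_solution_2} together with \eqref{condition_trajectory_2}, as in \eqref{variational_inequality_precritical_2}. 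Analogous manipulations with the remaining pair of equations in Proposition~\ref{prop_cauchy_transforms_cubic} yield \eqref{aux_equation_8}, giving the second and third equalities with constants $l_2$ and $\tilde l_3$. The inequality on $\Gamma_2 \setminus \Delta_2$ I would obtain by examining $\psi_2(z):=\int_{a_2}^z(\xi_1-\xi_3)\,ds$: it is real on $\Gamma \setminus \Delta_2$ by \eqref{extension_support_mu_2_property_1}, has no interior zero by \eqref{extension_support_mu_2_property_2}, vanishes at the endpoints $a_2, b_2$ to order $3/2$, and grows as $z^3$ at infinity, which forces $\psi_2>0$ there. The inequality on $\Gamma_3 \setminus \Delta_3$ is handled symmetrically using the second line of \eqref{conditions_positive_solutions}.

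The one nontrivial identification left is that the constant appearing in the third equality equals $l_1 - l_2$ rather than an independent $\tilde l_3$. In the supercritical regime $\tau_c < \tau < 1/4$ this is immediate: the three arcs $\Delta_1, \Delta_2, \Delta_3$ share the endpoint $a_*$, so evaluating \eqref{variational_equality_1}, \eqref{variational_equality_2}, \eqref{variational_equality_3} at $a_*$ and taking the alternating combination yields $l_1 - l_2 - \tilde l_3 = 0$. In the precritical regime $\Delta_3 = \emptyset$, the third equality is vacuous, and I would \emph{define} $l_3 := l_1 - l_2$; the strict inequality on $\Gamma_3 = (-\infty,a_1]$ must then be verified by evaluating $2U^{\mu_3}+U^{\mu_1}-U^{\mu_2} - l_3$ at $a_*$ via \eqref{variational_inequality_precritical_2} and \eqref{variational_equality_2}, and then propagating the strict positivity to the whole ray using the second line of \eqref{conditions_positive_solutions}.

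The main obstacle I anticipate is the disciplined bookkeeping of branches, orientations, and signs across the two regimes: each identity above requires choosing a consistent base point of integration, verifying that the relevant paths avoid the cuts, and using the appropriate critical-trajectory normalization from Section~\ref{section:dynamics}. The analytic content is modest, but the purely organizational task of showing that the \emph{same} constant $l_1 - l_2$ works for the third line in both regimes, and that all six relations hold simultaneously with these specific constants, is where mistakes are most likely to slip in.
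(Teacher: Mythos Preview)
Your proposal is correct and follows essentially the same route as the paper's own derivation: integrate the Cauchy-transform identities of Proposition~\ref{prop_cauchy_transforms_cubic} to obtain \eqref{aux_equation_7}--\eqref{aux_equation_8}, read off the equalities from the critical-trajectory conditions \eqref{condition_trajectory_1}, \eqref{condition_trajectory_2}, \eqref{condition_trajectory_2bis}, extract the strict inequalities from the sign information in \eqref{conditions_positive_solutions}--\eqref{condition_positive_solution_2} and the monotonicity argument for $\psi_2$, and finally reconcile $\tilde l_3$ with $l_1-l_2$ by evaluating at $a_*$ in the supercritical case and by the inequality $\tilde l_3>l_1-l_2$ in the precritical case. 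One small slip: in the precritical regime $\Gamma_3=(-\infty,a_*]$, not $(-\infty,a_1]$, since $a_*<a_1$ there; otherwise your outline matches the paper's argument step for step.
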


\begin{remark} \label{rem_equilibrium_conditions_cubic}
In the pre-critical case ($\tau<\tau_c$), $\Delta_3=\emptyset$, thus the equality on $\Delta_3$ in \eqref{eq_equilibrium_conditions_cubic}  is void.
	
 The equalities in \eqref{eq_equilibrium_conditions_cubic} are the same as those in \eqref{variational_equations_critical_measures}. The extra information in 
Theorem~\ref{thm_equilibrium_conditions_cubic} is coming from the remaining equations, which assure that the triplet $(\mu_1,\mu_2,\mu_3)$ is the (unique) minimizer of the energy 
functional \eqref{energy_functional} over measures $(\nu_1,\nu_2,\nu_3)$ satisfying \eqref{massconstraint} (with $\mu_j$ replaced by $\nu_j$) and $\supp\mu_j\subset \Gamma_j$, 
$j=1,2,3$, see \cite[Theorem~1.8]{beckermann_et_al_equilibrium_problems}. This is equivalent to Theorem~\ref{thm_equilibrium_measure_cubic} with $\Gamma=\Gamma_2$.
\end{remark}

\section{Global structure of the trajectories in the cubic case} \label{section:dynamics}

\subsection{Dynamics of the singularities}

A natural first step in the study of the structure of the trajectories of a quadratic differential is to clarify the position and the character of its singular points. In the case 
of the quadratic differential \eqref{qd} we have to analyze the location and the dynamics of the branch points and the double point of the Riemann surface $\mathcal{R}$ 
(corresponding to the spectral curve \eqref{spectral_curve}) as functions of the parameter $\tau$.

Recall that in Section \ref{section_spectral_curve_to_variational_equations} we have denoted the branch points of \eqref{spectral_curve}  by $a_1,a_2,b_1$ and $b_2$, with the 
conventions
$$
a_1,b_1\in \R,\quad a_1<b_1, \quad a_2=\overline b_2,\quad \im a_2<0, 
$$
and the double point of \eqref{spectral_curve} by $b_*$.

\begin{prop}\label{proposition_description_branchpoints}
The main parameters of the Riemann surface $\mathcal R$ corresponding to the algebraic equation \eqref{spectral_curve} exhibit the following behavior: as $\tau$ grows from $0$ to 
$1/4$,
 \begin{enumerate}[\rm (i)]
  \item the coefficient $c$ in \eqref{definition_c}  increases monotonically from $-3^{5/3}/4$ to $0$;
  \item \begin{itemize}
  \item the branch point $a_1$  decreases monotonically from $3^{2/3}/4$ to $-\infty$;
  \item the branch point $b_1$  increases monotonically from $3^{2/3}/4$ to $3^{2/3}/2$;
  \item the double point $b_*$ increases monotonically from $3^{-1/3}$ to $+\infty$.
  \end{itemize}
  \item Always
  $$
  b_1\leq b_*,
  $$
  and the equality is attained only for $\tau=1/12$.
 \end{enumerate}
\end{prop}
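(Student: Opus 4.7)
\emph{Plan.} Part (i) and the monotonicity of $b_*$ are immediate from the explicit formulas. Writing $c(\tau) = -(243/64)^{1/3}(1-4\tau)^{2/3}$, substitution gives $c(0) = -3^{5/3}/4$ and $c(1/4) = 0$, and the derivative $c'(\tau) = (8/3)(243/64)^{1/3}(1-4\tau)^{-1/3}$ is positive on $(0,1/4)$. For $b_*(\tau) = (3(1-4\tau))^{-1/3}$, both endpoint values and strict monotone increase are obvious.

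For $a_1(\tau)$ and $b_1(\tau)$ I would work with the quartic $q_1(z,\tau)$ in \eqref{definition_q_1}. Its discriminant in $z$ equals a nonzero constant times $\tau(1-\tau)^2(27-100\tau)^3$, which is nonzero on $(0,1/4)$; hence the four roots of $q_1$ are simple and depend smoothly on $\tau$. Direct substitution verifies $q_1(3^{2/3}/4,0) = \partial_z q_1(3^{2/3}/4,0) = 0$, identifying $a_1(0) = b_1(0) = 3^{2/3}/4$ as a double real root; a local expansion $z = 3^{2/3}/4 + \epsilon$ reduces $q_1 = 0$ to $\epsilon^2 \approx -2\tau\, \partial_\tau q_1/\partial_z^2 q_1$ at $(3^{2/3}/4,0)$, fixing the direction of the initial splitting (with $b_1$ going up and $a_1$ going down) once the relevant sign is computed. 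At $\tau = 1/4$ the leading coefficient of $q_1$ vanishes, leaving the cubic $(128/9)z^3 - 16$ with unique real root $3^{2/3}/2$; dominant-balance of the two leading terms of $q_1$ as $\tau \nearrow 1/4$ shows that the fourth root escapes to $-\infty$, giving $b_1(1/4) = 3^{2/3}/2$ and $a_1(\tau) \to -\infty$. Global monotonicity on $(0,1/4)$ then reduces to excluding turning points, i.e., to showing $q_1$ and $\partial_\tau q_1$ have no common real root in the physical range. The plan is to settle this via the resultant $\mathrm{Res}_z(q_1,\partial_\tau q_1)$ in $\tau$, using the reduction of $\partial_\tau q_1$ modulo $q_1$ to eliminate the $z^4$-term and keep the resultant tractable. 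Since the leading coefficient of $q_1$ is positive and $a_1 < b_1$ are its extreme real roots, $\partial_z q_1(b_1) > 0$ and $\partial_z q_1(a_1) < 0$; combined with the sign of $\partial_\tau q_1$ pinned down at the endpoints, this yields the stated strict monotonicity.

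Part (iii) follows cheaply. The paper already records $q_1(b_*) = 32(1-12\tau)^2/(9(1-4\tau))$, which is nonnegative on $(0,1/4)$ and vanishes iff $\tau = 1/12$. Because $q_1$ has positive leading coefficient and only two real roots $a_1 < b_1$, one has $q_1 < 0$ on $(a_1,b_1)$ and $q_1 > 0$ on its complement in $\R$; hence $b_*$ lies outside $(a_1,b_1)$ for every $\tau \in (0,1/4)$. The inequality $b_*(0) = 3^{-1/3} > 3^{2/3}/4 = b_1(0)$ puts $b_*$ above $b_1$ at $\tau = 0$, and since $b_*$ is bounded below by $3^{-1/3}$ while $a_1 \to -\infty$, continuity forbids the branch $b_* < a_1$. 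Thus $b_* \geq b_1$ throughout, with equality exactly when $q_1(b_*) = 0$, namely at $\tau = 1/12$.

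The main obstacle is the no-turning-point argument for $a_1, b_1$: while the direction of monotonicity is easy to pin down at the endpoints, excluding intermediate critical points requires the resultant computation sketched above, which is elementary in principle but algebraically heavy.
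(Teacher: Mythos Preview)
Your proposal is correct and follows essentially the same route as the paper: direct formulas for $c$ and $b_*$, the resultant $\mathrm{Res}_z(q_1,\partial_\tau q_1)=\mathrm{const}\cdot(1-4\tau)^{-5/3}(27-100\tau)^3\neq 0$ to exclude turning points, the sign of $\partial_z q_1$ at the extreme real roots, and the evaluation $q_1(b_*)=32(1-12\tau)^2/(9(1-4\tau))$ for (iii). The only minor difference is that the paper pins down the sign of $\partial_\tau q_1$ at the roots by evaluating at the single interior value $\tau=1/8$, where one checks directly that $\partial_\tau q_1(z)\leq -64$ for all real $z$, which is a bit cleaner than extracting the sign from the local expansion at $\tau=0$.
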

\begin{proof}

Recall that we chose $c$ in \eqref{definition_c} to be real, thus (i) follows directly from the definition of $c$.
 
Any branch point $z=\zeta$ of $\mathcal R$ is a zero of the polynomial $q_1$ of degree $4$ and real coefficients, defined in \eqref{definition_q_1}, so that 
\begin{equation}
\label{derivativea1}
 \partial_\tau \zeta=-\frac{\partial_\tau q_1(\zeta)}{\partial_z  q_1(\zeta)}.
\end{equation}
It is easy to check that the resultant of $ q_1$ and $\partial_\tau q_1$ (with respect to the variable $z$) is 
 $$
 \const \times (1-4 \tau )^{-\frac{5}{3}} (27-100 \tau)^3 \neq 0 \quad \text{for } \tau \in (0,1/4),
 $$ 
which implies that for any root $\zeta$ of $q_1$, $\partial_\tau  q_1(\zeta)\neq 0$, and thus preserves its sign in the whole range of values of $\tau \in (0,1/4)$. 

On the other hand, since  $a_1$ (resp., $b_1$) is the smallest (resp., largest) real root of $q_1$, and the leading coefficient of $q_1$ is positive, we know that 
 $$
 \partial_z  q_1( a_1)<0, \quad  \partial_z  q_1( b_1)>0.
 $$

Straightforward calculations show that for $\tau=1/8$, $a_1=0<b_1$ and
 $$
 \partial_\tau  q_1(z)= 
 -256\, u^4  -64\left( (u-1)^2+1 \right), \quad u=\left(\frac{2}{3}\right)^{2/3} z.
 $$
In particular, $\partial_\tau  q_1(z)\leq -64$, and we conclude that in this case,
 $$
\partial_\tau  q_1(a_1)=-128,   \quad \partial_\tau  q_1(b_1)<0.
 $$
Hence, $\partial_\tau  q_1(a_1), \partial_\tau  q_1(b_1)<0$ for all $\tau \in (0,1/4)$, and by \eqref{derivativea1}, $a_1$ is a decreasing and $b_1$ is an increasing function of 
$\tau$.

From the expression \eqref{definition_q_2} it follows that
 $$
 b_*=\frac{1}{3^{1/3}(1-4\tau )^{1/3}}.
 $$
Replacing it in \eqref{definition_q_1} we get
 $$
q_1(b_*)= \frac{32 (1-12 t)^2}{9 (1-4 t)}>0 \text{ for } \tau\in (0,1/4), \quad \tau \neq 1/12.
 $$
Since for $\tau=0$,
\begin{equation}
\label{abfortau0}
a_1=b_1 = \frac{3^{2/3}}{4}< b_*=3^{-1/3},
\end{equation}
this concludes the proof of (iii). Finally, it remains to observe that for $\tau=1/4$,
$$
q_1(z)=\frac{128}{9}\left(z^3-\frac{9}{8} \right),
$$
which has one real positive root ($3^{2/3}/2$) and two complex conjugate ones. This shows that 
$$
\lim_{\tau\to 1/4-} a_1=- \infty, \quad \lim_{\tau\to 1/4-} b_1=\frac{3^{2/3}}{2}. 
$$
\end{proof}

We finish this section with a technical lemma that will be used later.

\begin{prop}\label{prop_zeros_D}
For $0<\tau<1/12$, the polynomial $D$ in \eqref{D_for_cubic} does not have zeros on $(a_1,b_1)$, whereas for $1/12<\tau <1/4$, $D$ has exactly one zero on $(a_1,b_1)$. Moreover, 
for $0<\tau<1/12$, $D(b_1)=0$ only for $\tau=1/12$, and $D(a_1)$ is never zero. 
\end{prop}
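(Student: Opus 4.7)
The plan is to combine Descartes' rule of signs with an analysis of $D$ at three distinguished real points---the branch points $a_{1}$, $b_{1}$ and the double point $b_{*}$---in order to localize the positive real roots of $D$ and then compare them with $(a_{1},b_{1})$. First, since $c<0$ (Proposition~\ref{proposition_description_branchpoints}(i)) and $\tau>0$, the coefficient sequence $(-2,0,0,3,c,0,-3\tau)$ of $D(z)=-2z^{6}+3z^{3}+cz^{2}-3\tau$ exhibits exactly two sign changes, while that of $D(-z)$ exhibits none. Thus $D$ has no negative real roots and has either $0$ or $2$ positive real roots.

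Next I establish three sign facts that constitute the backbone of the argument. At any real double branch point $\zeta$ of \eqref{spectral_curve}, the cubic $\xi^{3}-R(\zeta)\xi+D(\zeta)$ admits a double root, which forces the relation $4R(\zeta)^{3}=27D(\zeta)^{2}$. Hence $D(\zeta)=0$ iff $R(\zeta)=0$, and when this happens the cubic reduces to $\xi^{3}=0$, so that $\zeta$ must be a triple branch point. By Proposition~\ref{prop:algebraicCubic} this occurs only at $\tau=1/12$ and only at $\zeta=b_{1}=b_{*}$; consequently $D(a_{1})\neq 0$ for every $\tau\in(0,1/4)$, while $D(b_{1})=0$ precisely when $\tau=1/12$. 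A direct substitution using $b_{*}=(3(1-4\tau))^{-1/3}$ and the identity $c\,b_{*}^{2}=-3/4$ yields the closed form
\[
D(b_{*})=-\frac{(12\tau-1)^{3}}{36(1-4\tau)^{2}},
\]
positive on $(0,1/12)$ and negative on $(1/12,1/4)$. Since $D(a_{1})$ and $D(b_{1})$ depend continuously on $\tau$ and vanish only as described, their signs are constant on each connected component of their nonvanishing locus; a single test value then fixes them. At $\tau=1/8$, $q_{1}(0)=16(1-8\tau)=0$ forces $a_{1}=0$, whence $D(a_{1})=-3/8<0$. As $\tau\to 0^{+}$, $b_{1}\to 3^{2/3}/4$ and $D(b_{1})\to -81/2048<0$; as $\tau\to 1/4^{-}$, $b_{1}\to 3^{2/3}/2$ and $D(b_{1})\to 3/32>0$. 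Therefore $D(a_{1})<0$ on the whole interval $(0,1/4)$, $D(b_{1})<0$ on $(0,1/12)$, and $D(b_{1})>0$ on $(1/12,1/4)$.

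The counting of zeros of $D$ in $(a_{1},b_{1})$ is then immediate. For $\tau\in(0,1/12)$ the positivity of $D(b_{*})$ together with Descartes forces $D$ to have exactly two positive real roots $p_{1}<p_{2}$, with $D>0$ only on $(p_{1},p_{2})\ni b_{*}$. Since $b_{1}<b_{*}$ by Proposition~\ref{proposition_description_branchpoints}(iii) and $D(b_{1})<0$, necessarily $b_{1}<p_{1}$, so $(a_{1},b_{1})\subset(-\infty,p_{1})$ is disjoint from the zero set of $D$. For $\tau\in(1/12,1/4)$, the positivity of $D(b_{1})$ places $b_{1}\in(p_{1},p_{2})$, while $D(a_{1})<0$ together with $a_{1}<b_{1}<p_{2}$ forces $a_{1}<p_{1}$; hence $p_{1}$ is the unique zero of $D$ inside $(a_{1},b_{1})$, and $p_{2}>b_{1}$ lies outside.

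The one genuinely delicate point is ruling out the Descartes alternative of $0$ positive roots in the precritical regime---this is exactly what the explicit value of $D(b_{*})$ provides, and it is the reason for introducing $b_{*}$ rather than working only with the endpoints $a_{1}$, $b_{1}$. Everything else is a straightforward bookkeeping with the continuity of $a_{1}(\tau)$, $b_{1}(\tau)$, $c(\tau)$ in $\tau$ and the non-vanishing of $D$ at the real branch points.
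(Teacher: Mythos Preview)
Your proof is correct and takes a genuinely different route from the paper's. The paper proceeds by computing two auxiliary quantities symbolically (with computer algebra): the discriminant of $D$ with respect to $z$, which shows $D$ has only simple roots for $\tau\neq 1/12$, and the resultant of $D$ with $q_1$, which shows that $D$ does not vanish at $a_1,b_1$ for $\tau\neq 1/12$. Together these guarantee that the number of zeros of $D$ in $[a_1,b_1]$ is a locally constant integer on each of $(0,1/12)$ and $(1/12,1/4)$; the count is then read off at a single test value in each subinterval ($\tau\to 0^{+}$, where the interval degenerates to a point with $D<0$, and $\tau\to 1/4^{-}$, where $D$ becomes a quadratic in $z^{3}$ with explicit real roots straddling $b_1$). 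Your argument replaces both symbolic computations by structural facts: Descartes' rule (using $c<0$, $\tau>0$) caps the positive real roots of $D$ at two and rules out negative ones, while the observation that $D(\zeta)=0$ at a multiplicity-two branch point would force the triple confluence $\xi_{1}=\xi_{2}=\xi_{3}$ does the work of the resultant. The closed form for $D(b_{*})$---which the paper also derives, but only later, in the proof of Proposition~\ref{proposition_distribution_branchpoints}---then excludes the ``zero positive roots'' Descartes alternative in the precritical regime and localizes $p_{1},p_{2}$ relative to $a_{1},b_{1},b_{*}$. Your approach is more elementary and in fact yields sharper information (for instance the ordering $b_{1}<p_{1}<b_{*}<p_{2}$ when $\tau<1/12$); the paper's approach is more mechanical and would transfer unchanged to settings where neither Descartes' rule nor a convenient test point like $b_{*}$ is available.
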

\begin{proof}
On one hand, the discriminant of the polynomial $D$ with respect to $z$ is
$$
f(\tau)=\const \tau (1-12 \tau)^2 \left(128 \tau^2-32 \tau-1\right)
$$
which shows that for $\tau\in (0,1/4)$, polynomial $D$ has no multiple roots as long as $\tau\neq 1/12$. 

On the other hand, we have seen in Section~\ref{section_cubic_spectral_curve} that the branch points of $\mathcal R$ are the zeros of the polynomial $q_1$ defined in 
\eqref{definition_q_1}. The resultant (also w.r.t. $z$) of $D$ and $q_1$ is
$$
g(\tau)=\const (1-12\tau)^3 (27-100\tau)^3,
$$
so again, for $\tau\in (0,1/4)$, polynomials $D$ and $q_1$ have no common roots as long as $\tau\neq 1/12$, in particular implying that $D$ does not vanish at $a_1,b_1$ for 
$\tau\neq 1/12$. For $\tau=1/12$, we compute $b_1=b_*=2^{-1/3}$ and 
factor
$$
D(z)=-(z-b_*)^2(2 z^4+ 2^{5/3} z^3+3 \ 2^{1/3} z^2+z+2^{-4/3}),
$$
so for $\tau=1/12$ we have $D(b_1)=0$ and $D(a_1)<0$.

It is worth pointing out that both $f$ and $g$ can be easily found by means of any computer algebra software. 

Having in mind that $D$ and $q_1$ do not share roots for $\tau\neq 1/12$, it is sufficient to establish the assertion concerning the zeros of $D$ for a 
single value of $\tau$ in $(0, 1/12)$, and for a single value of $\tau$ in $( 1/12, 1/4)$.
	
For $\tau=1/4$, $D$ is a quadratic polynomial in $z^3$ so its roots can be explicitly computed; we get that its only real roots are 
$$
z_1=\left(\frac{3-\sqrt{3}}{4}\right)^{1/3},\quad z_2=\left(\frac{3+\sqrt{3}}{4}\right)^{1/3}.
$$
Since
$$
\left(\frac{3-\sqrt{3}}{4}\right)^{1/3}< \frac{3^{2/3}}{2} <\left(\frac{3+\sqrt{3}}{4}\right)^{1/3},
$$
from Proposition~\ref{proposition_description_branchpoints} {\rm (ii)} we see that for $\tau=1/4-\epsilon$, for a certain small value of $\epsilon>0$,   
$$
a_1<z_1 <b_1<z_2.
$$

On the other hand, it is easy to check that for $\tau=0$,
$$
D(a_1)=D(b_1)=D(3^{2/3}/4)<0,
$$
so that for $\tau= \epsilon$, for a certain small value of $\epsilon>0$, $D$ does not vanish on $(a_1, b_1)$.

\end{proof}

\subsection{The Riemann surface associated to the algebraic equation} \label{sec:RSassociatedalgcurve}

In Section~\ref{section_spectral_curve_to_variational_equations} we described the construction of the three-sheeted Riemann surface $\mathcal R=\mathcal R_1 \cup \mathcal R_2 \cup 
\mathcal R_3$ and of the branch cuts in such a way that the three solutions $\xi_j$ of \eqref{spectral_curve}, specified by the asymptotic conditions 
\eqref{asymptotics_xi_functions}, become meromorphic on the respective sheet $\mathcal R_j$, with poles only at $z=\infty$. They are also pairwise distinct as long as $\discr(z)$, 
defined in \eqref{defDiscriminant}, does not vanish, i.e.~for $z\notin \{a_1, b_1, a_2, b_2, b_*\}$. 

The arc $\Delta_2$ intersects $\R$ in a unique point $a_*$; there is a critical value $\tau_c\in (1/12,1/4)$, to be specified later, such that $a_*<a_1$ for $0<\tau<\tau_c$ (what 
we called the precritical regime), and
$a_*\in(a_1,b_1)$ for $\tau_c<\tau<1/4$ (the supercritical regime).

As a first result we establish some relations between the solutions $\xi_j(z)$. 
\begin{prop}\label{proposition_distribution_branchpoints} 
Let $\tau \in (0,1/4)$, $\tau \neq \tau_c$. Then
\begin{enumerate}
\item[(i)] for $x\in \R\setminus (\Delta_1\cup \Delta_2\cup \Delta_3)$,
\begin{align}
& \xi_3(x)<\xi_2(x)<\xi_1(x), && x<\min(a_*,a_1),  \label{inequality_xi_1}\\
& \xi_2(x)<\xi_3(x)<\xi_1(x), && x>b_*,  \label{inequality_xi_2} \\
& \xi_1(x)<\xi_2(x)<\xi_3(x), && a_* < x< a_1, \quad \text{if } \tau <\tau_c,  \label{inequality_xi_3} \\
& \xi_2(x)  <\xi_1(x)<\xi_3(x), && b_1<x<b_*, \quad \text{if} \quad  0<\tau<1/12, \label{inequality_xi_4} \\
& \xi_3(x)  <\xi_2(x)<\xi_1(x), &&  b_1<x<b_*, \quad \text{if} \quad  1/12<\tau<1/4. \label{inequality_xi_5}
\end{align}
Additionally,
\begin{equation}\label{coinciding_nodes}
\begin{split}
\xi_2(b_*)<\xi_3(b_*)& =\xi_1(b_*), \quad \text{for} \quad  0<\tau<1/12,\\
\xi_2(b_*)=\xi_3(b_*) & <\xi_1(b_*), \quad \text{for} \quad  1/12<\tau<1/4.
\end{split}
\end{equation}
\item[(ii)] on $  \Delta_1\cup \Delta_2\cup \Delta_3$,
\begin{itemize}
\item  for  $x \in \overset{\circ}{\Delta}_1:=\Delta_1\setminus \{\max(a_1,a_*), b_1 \}$, 
\begin{equation}
  \xi_2(x)=\overline{\xi_1(x)}\in \C\setminus\R, \quad \xi_3(x)\in \R,   \label{equality_xi_1}
\end{equation}
and
\begin{equation}
  \xi_{1\pm}(x)  = \xi_{2\mp}(x), \quad  \xi_{3+}(x)  = \xi_{3-}(x).  \label{equality_xi_2}
\end{equation}
\item for  $z \in \overset{\circ}{\Delta}_2:=\Delta_2\setminus \{a_2, b_2 ,\max(a_1,a_*)\}$,
\begin{equation}
 \xi_{1\pm}(z)= \xi_{3\mp}(z),\quad  \xi_{2+}(z) = \xi_{2-}(z).  \label{equality_xi_3}
\end{equation}
\item for  $x \in \overset{\circ}{\Delta}_3:=\Delta_3\setminus \{a_1, a_* \}$ (when $\tau >\tau_c$),
\begin{equation}
  \xi_2(x)=\overline{\xi_3(x)}\in \C\setminus\R, \quad \xi_1(x)\in \R,   \label{equality_xi_4}
\end{equation}
and
\begin{equation}
 \xi_{2\pm}(z)= \xi_{3\mp}(z),\quad  \xi_{1+}(z) = \xi_{1-}(z).  \label{equality_xi_5}
\end{equation}

Moreover,
\begin{align}
\xi_1(a_1)  &= \xi_2( a_1 ),\quad \text{if } \tau < \tau_c, \label{xi_at_branchpoints1} \\
\xi_3(a_1)  &= \xi_2( a_1 ),\quad \text{if } \tau > \tau_c, \label{xi_at_branchpoints2} \\
\xi_1(b_1)  &= \xi_2( b_1 ),  \label{xi_at_branchpoints3} \\
\xi_1(a_2) & = \xi_3(a_2) \quad \text{and} \quad \xi_1(b_2)  =\xi_3(b_2). \label{xi_at_branchpoints4}
\end{align}

\end{itemize}

\end{enumerate}

\end{prop}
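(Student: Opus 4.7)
The plan is to organize the proof around four ingredients: (1) using the discriminant $\discr(z)=\frac{243}{256}q_1(z)q_2(z)^2$ to classify where the three branches are all real versus where two form a complex conjugate pair; (2) reading the orderings at $z=\pm\infty$ from \eqref{asymptotics_xi_functions}; (3) propagating these orderings along $\R$ by continuity, tracking swaps at each cut crossing, branch point, and the double point $b_*$; and (4) identifying coincidences at endpoints from the sheet-gluing rules described in Section~\ref{sec:RSassociatedalgcurve}.

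For step (1), since $q_2(z)^2\geq 0$ on $\R$, the sign of $\discr(z)$ equals that of $q_1(z)$ off the zero of $q_2$. By the discussion in Section~\ref{section_cubic_spectral_curve}, $q_1$ has exactly two real roots $a_1<b_1$ and two complex conjugate ones, with positive leading coefficient, so $\discr>0$ on $\R\setminus[a_1,b_1]$ (three real branches) and $\discr<0$ on $(a_1,b_1)$ (one real, two complex conjugate). The swap rules \eqref{equality_xi_2}, \eqref{equality_xi_3}, \eqref{equality_xi_5} are then immediate from the construction of $\mathcal R$ in Section~\ref{sec:RSassociatedalgcurve}: crossing $\Delta_j$ swaps the two branches living on the sheets glued along that cut. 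Combined with the discriminant classification this forces \eqref{equality_xi_1}: on $\overset{\circ}{\Delta}_1\subset(a_1,b_1)$ the complex conjugate pair must be precisely the one swapped across $\Delta_1$, namely $\xi_1,\xi_2$, with $\xi_3$ real; analogously \eqref{equality_xi_4} holds on $\overset{\circ}{\Delta}_3$ in the supercritical regime.

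For (2)--(3), expanding \eqref{asymptotics_xi_functions} and using $\alpha\in(0,1/2)$ gives $\xi_3<\xi_2<\xi_1$ as $x\to-\infty$ and $\xi_2<\xi_3<\xi_1$ as $x\to+\infty$. These orderings persist by continuity on each component of the real-valued region and can change only at (a) a cut crossing, (b) a branch point, or (c) the double point $b_*$. At $b_*$, since $\discr$ has a zero of even order and the algebraic equation itself is not branched there, the two coinciding branches are real analytic near $b_*$ and cross transversally (from $\discr=\prod_{j<k}(r_j-r_k)^2$, only one factor can vanish, and it must do so to first order to account for the double zero of $\discr$). To identify which pair coincides, use the algebra of the depressed cubic: a double root $r$ at $b_*$ satisfies $r^3=D(b_*)/2$ and the simple root is $s=-2r$. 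Since $D(b_*)=-\frac{(12\tau-1)^3}{36(1-4\tau)^2}$ (computed in the proof of Proposition~\ref{prop_zeros_D}) is positive for $0<\tau<1/12$ and negative for $1/12<\tau<1/4$, the sign of $r$ is fixed, and matching with the ordering $\xi_2<\xi_3<\xi_1$ at $+\infty$ yields \eqref{coinciding_nodes}.

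The inequalities \eqref{inequality_xi_1}--\eqref{inequality_xi_5} then follow by propagation: on $(-\infty,\min(a_*,a_1))$ the ordering is the one at $-\infty$; on $(a_*,a_1)$ in the precritical regime, apply the $\xi_1\leftrightarrow\xi_3$ swap across $\Delta_2$ to recover \eqref{inequality_xi_3}; on $(b_*,+\infty)$ the ordering is the one at $+\infty$; and on $(b_1,b_*)$ apply the swap at $b_*$ determined above, producing \eqref{inequality_xi_4} or \eqref{inequality_xi_5} according to the regime. Finally, the branch-point equalities \eqref{xi_at_branchpoints1}--\eqref{xi_at_branchpoints4} read off from the cut structure: each branch point is an endpoint of exactly one cut, and the two branches swapping across that cut coincide there. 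Thus $b_1$, always an endpoint of $\Delta_1$, gives \eqref{xi_at_branchpoints3}; $a_2,b_2$, as endpoints of $\Delta_2$, give \eqref{xi_at_branchpoints4}; while $a_1$ is an endpoint of $\Delta_1$ in the precritical regime (yielding \eqref{xi_at_branchpoints1}) but of $\Delta_3$ in the supercritical regime (yielding \eqref{xi_at_branchpoints2}). The main technical obstacle is the careful bookkeeping across the two regimes, in particular the localization of $a_*$ relative to $a_1,b_1,b_*$, which is justified by the detailed description of the critical graph of $\varpi$ developed later in Section~\ref{section:dynamics}.
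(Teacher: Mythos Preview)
Your argument has a circularity problem. The functions $\xi_1,\xi_2,\xi_3$ are defined solely by their asymptotics \eqref{asymptotics_xi_functions}; the sheet structure of $\mathcal R$ (which pairs of sheets are glued along which cuts) is an \emph{ansatz} that Proposition~\ref{proposition_distribution_branchpoints} is meant to validate. The paper says so explicitly: just before the proposition it states that ``the function $\xi_j$ is the restriction of $\xi$ to the sheet $\mathcal R_j$'' is what is ``rigorously given by Proposition~\ref{proposition_distribution_branchpoints}'', and just after it that the proposition ``gives the formal proof of the fact that the Riemann surface $\mathcal R$\dots is actually the Riemann surface of the cubic equation''. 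Hence you cannot take the swap rules \eqref{equality_xi_2}, \eqref{equality_xi_3}, \eqref{equality_xi_5} or the branch-point coincidences \eqref{xi_at_branchpoints1}--\eqref{xi_at_branchpoints4} as ``immediate from the construction of $\mathcal R$'': those are exactly the statements to be proved. In particular, your derivation of \eqref{inequality_xi_3} by applying the ``$\xi_1\leftrightarrow\xi_3$ swap across $\Delta_2$'' presupposes \eqref{equality_xi_3}, which in the paper is the \emph{last} thing established.

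The missing ingredient is the sign of $D$ at the real branch points. At any simple zero $p$ of the discriminant the double root $r$ and the simple root $-2r$ satisfy $r^3=D(p)/2$, so the sign of $D(p)$ decides whether the two \emph{smallest} or the two \emph{largest} real branches merge at $p$. The paper invokes Proposition~\ref{prop_zeros_D} to get $D(a_1),D(b_1)\neq 0$ for $\tau\neq 1/12$, then pins down their signs across the whole range of $\tau$ by evaluating at $\tau=0$ and $\tau\to 1/4$. Combining this with the orderings on $(b_1,b_*)$ and $(-\infty,a_1)$ (which you do obtain correctly) identifies the coinciding pair at $b_1$ and at $a_1$, yielding \eqref{xi_at_branchpoints1}--\eqref{xi_at_branchpoints3} directly. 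Only then are \eqref{equality_xi_1}--\eqref{equality_xi_2} and \eqref{equality_xi_4}--\eqref{equality_xi_5} forced; and \eqref{equality_xi_3} is obtained last, by checking that $\xi_1\leftrightarrow\xi_3$ is the unique transposition compatible with the already-established orderings on both sides of $\Delta_2$, which in turn gives \eqref{inequality_xi_3} and \eqref{xi_at_branchpoints4}. Your $b_*$ analysis is fine and matches the paper's, but without the analogous computation at $a_1,b_1$ the rest of the logic does not stand on its own.
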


\begin{proof}
Recall that we deal with the case $\alpha\in (0,1/2)$, so that $0<\alpha<1-\alpha<1$. The behavior at infinity in \eqref{asymptotics_xi_functions} gives \eqref{inequality_xi_1} and 
also \eqref{inequality_xi_2}. Furthermore, we have established in Proposition \ref{proposition_description_branchpoints} that if $\tau \neq 1/12$ then $b_*\notin \{a_1, b_1, a_2, 
b_2\}$, and in this case $b_*$ is a double zero of
$\discr(z)$. 
This means that when $\tau \neq 1/12$, only two of the three values $\xi_1(b_*)$, $\xi_2(b_*)$, $\xi_3(b_*)$ coincide, and since
$$
\xi_1(b_*)+ \xi_2(b_*)+ \xi_3(b_*)=0,
$$
the coincident two differ in sign from the third one. Furthermore, since also
$$
(\xi_1   \xi_2   \xi_3)(b_*)=-D(b_*),
$$
we see that the sign of this third one is opposite to the sign of $D(b_*)$. But direct calculations show that
$$
D(b_*)=-\frac{(12 \tau -1)^3}{36 (1-4 \tau )^2}
\begin{cases}
>0, & \text{for } 0<\tau<1/12,\\
<0, & \text{for } 1/12<\tau<1/4,
\end{cases}
$$
which means that for $0<\tau<1/12$ the two coincident values of $\xi_j(b_*)$ are the largest two, and the other way around if $1/12<\tau<1/4$.
It remains to use  \eqref{inequality_xi_2} in order to establish \eqref{coinciding_nodes}.

Inequalities \eqref{inequality_xi_4}--\eqref{inequality_xi_5} follow by noticing that $b_*$ is a double zero of $\discr(z)$, so it is a simple zero of $\xi_1-\xi_3$ (for $ 
0<\tau<1/12$) and of $\xi_2-\xi_3$ (for $ 1/12<\tau<1/4$).

From Proposition~\ref{prop_zeros_D} we know that $D(a_1)\neq 0$, $D(b_1)\neq 0$ for $\tau \neq 1/12$. In particular, since for $\tau=0$ (see \eqref{abfortau0}), 
$a_1=b_1=3^{2/3}/4$ and $D(a_1)=D(b_1)=-81/2048<0$, we conclude that $D(a_1)<0$ and $D(b_1)<0$ for $0\leq\tau <1/12$. In other words, the smallest two of the three solutions 
$\xi_j$ come together at $a_1$ and at $b_1$. Now \eqref{inequality_xi_4} implies \eqref{xi_at_branchpoints3} for $0\leq\tau <1/12$. 

On the other hand, recall that by Proposition~\ref{proposition_description_branchpoints}, $D(a_1)\to -\infty$ as $\tau\to 1/4$, and that for $\tau=1/4$, 
$$
D(z)=-2 z^6+3 z^3-\frac{3}{4},\quad b_1=3^{2/3}/2,
$$
so that $D(b_1)=3/32>0$. Using the same arguments we conclude that for $1/12<\tau <1/4$, the largest two of the three solutions $\xi_j$ come together at $b_1$ (and  
\eqref{inequality_xi_5} yields \eqref{xi_at_branchpoints3}), and the smallest two become confluent at $a_1$ (and \eqref{xi_at_branchpoints2} follows from \eqref{inequality_xi_1}).

 The discriminant $\discr(x)$ is negative for $a_1<x<b_1$, so just one of the solutions $\xi_j$ is real on $a_1<x<b_1$, and the other two are complex-conjugates of each other. 
Since we have ruled out the coincidence of the three branches, equality \eqref{xi_at_branchpoints3} implies that $\xi_1,
\xi_2$ are non-real on $\overset{\circ}{\Delta}_1$, which yields \eqref{equality_xi_1}--\eqref{equality_xi_2}. By the same argument we also have \eqref{xi_at_branchpoints1}, as 
well as (now using \eqref{inequality_xi_1}) the identities \eqref{equality_xi_4}--\eqref{equality_xi_5}.

Recall that we already established that the smallest two of the three solutions $\xi_j$ come together at $a_1$, and \eqref{xi_at_branchpoints1} shows that for $\tau<\tau_c$,
\begin{equation} \label{inequ_intermediate}
\xi_1(x)<\xi_3(x), \quad \xi_2(x)<\xi_3(x),\quad \text{for } a_*<x<a_1.
\end{equation}
For $\tau<\tau_c$, when crossing $\Delta_2$ two of the three solutions $\xi_j$ are swapped, and the third one remains invariant. But the only option compatible both with 
\eqref{inequality_xi_1} and \eqref{inequ_intermediate} is \eqref{inequality_xi_3}.

Finally, for $\tau<\tau_c$, the inequalities \eqref{inequality_xi_1} and \eqref{inequality_xi_3} show that $\xi_2$ is continuous across $\Delta_2$, and as a consequence  
\eqref{equality_xi_3} has to hold, obviously for the full range of $\tau\in (0,1/4)$. This, in turn, implies the last identity \eqref{xi_at_branchpoints4}.
 \end{proof}

Proposition \ref{proposition_description_branchpoints} gives the formal proof of the fact that the Riemann surface $\mathcal R$, described in Section
\ref{section_spectral_curve_to_variational_equations}, is actually the Riemann surface of the cubic equation \eqref{spectral_curve}, and with this construction the function 
$\xi_j$ is meromorphic on $\mathcal R_j$ and satisfies the asymptotic expansion \eqref{asymptotics_xi_functions}. Moreover, $\xi:\mathcal R\to \C$, given by $\xi\equiv \xi_j$ 
on $\mathcal R_j$, is meromorphic on 
$\mathcal R$,  giving the global solution to the algebraic equation
\eqref{spectral_curve}. From Theorem~\ref{thm_genus_zero}, or alternatively Proposition~\ref{prop:algebraicCubic}, the Riemann surface $\mathcal R$ has genus $0$.

Let $\pi:\mathcal R\to \overline \C$ be the canonical projection on the Riemann surface $\mathcal R$. For a point $p\in \overline \C\setminus (\Delta_1\cup \Delta_2\cup 
\Delta_3)$, we denote by $p^{(j)}$, $j=1,2,3$, its preimage by $\pi$ on $\mathcal R_j$, that is,
$$
\{p^{(j)}\}=\pi^{-1}(\{p \})\cap \mathcal R_j,\quad j=1,2,3.
$$
This notation is trivially extended to $p\in \Delta_1\cup\Delta_2\cup \Delta_3$ by taking boundary values.

Notice that for the branch points it is valid
$$
b_1^{(1)}=b_1^{(2)},\quad a_2^{(2)}=a_2^{(3)}, \quad b_2^{(2)}=b_2^{(3)},
$$
and
$$
a_1^{(1)}=a_1^{(2)} \ (\tau\leq \tau_c),\quad a_1^{(2)}=a_1^{(3)} \ (\tau>\tau_c),
$$
whereas if $p$ belongs to a cut connecting exactly two sheets, say $\mathcal R_j$ and $\mathcal R_k$, then
$$
p^{(j)}_\pm=p^{(k)}_\mp.
$$

We insist that the construction of $\mathcal R$ is independent of the concrete choice of the cut $\Delta_2$; this freedom will be used latter to specify an appropriate $\Delta_2$. 
Namely, in the 
next section we will show that $\Delta_2$ can be made coincident with a critical trajectory of the quadratic differential $\varpi$, defined in \eqref{qd},which connects $a_2$ and 
$b_2$, see Definition~\ref{def:thebranchcut} below.

\subsection{Computation of width parameters}\label{section_widths_cubic}

Certain integrals of the function $Q$ defined in \eqref{defQqd_cubic} will play a crucial role in the upcoming analysis of the dynamics of the trajectories of the quadratic 
differential $\varpi=
-Q^2(z)\, dz^2$ on $\mathcal{R}$. They can be also formulated in terms of certain Abelian integrals on $\mathcal{R}$.

Namely, we are interested in 
\begin{equation}\label{width_parameter_cubic}
\begin{split}
\omega_1 = \omega_1(\tau) &=\re \int_{b_2^{(1)}}^{a_1^{(1)}}Q(s) ds =\re \int_{b_2}^{a_1}(\xi_2(s)-\xi_3(s))ds,\\
\omega_2 = \omega_2(\tau) &=\re \int_{b_2^{(2)}}^{a_1^{(2)}}Q(s) ds =\re \int_{b_2}^{a_1}(\xi_1(s)-\xi_3(s))ds, \\
\omega_3 = \omega_3(\tau) & =\re \int_{b_2^{(3)}}^{a_1^{(3)}}Q(s) ds =\re \int_{b_2}^{a_1}(\xi_1(s)-\xi_2(s))ds, \\
\omega_4 = \omega_4(\tau) &=\re \int_{b_2^{(1)}}^{b_*^{(1)}}Q(s) ds =\re \int_{b_2}^{b_*}(\xi_{2}(s)-\xi_3(s))ds, 
\end{split}
\end{equation}
with $\omega_4(\tau)$  defined for $\tau\geq 1/12$. In this definition we understand that we integrate between two points on a sheet $\mathcal{R}_j$ along a path that stays 
entirely in $\mathcal{R}_j$.

The values $\omega_j$ are correctly defined regardless of the precise choice of the integration paths. Indeed, the residues of the functions 
$\xi_1,\xi_2,\xi_3$ at $\infty$ are real (and independent of the value of $\tau$), see \eqref{asymptotics_xi_functions}, so the integral of $Q$ along a big loop on either 
$\mathcal{R}_j$ encircling $\infty^{(j)}$ is purely imaginary. It remains to notice that the genus of $\mathcal{R}$ is zero, so that any closed contour  around either branch cut 
$\Delta_j$ can be deformed to such a  big loop.

Analytic computation of $\omega_j$'s is a formidable task. Instead, we have computed them numerically, see
Figure~\ref{figure_width_parameters} for the result. Since the integrands in \eqref{width_parameter_cubic} are multivalued functions, the numerical integration requires to 
implement an analytic continuation of each branch of $\xi$. We give further details in the Appendix~\ref{appendix_numerics}.

\begin{figure}[htb]
	\centering \begin{overpic}[scale=1]{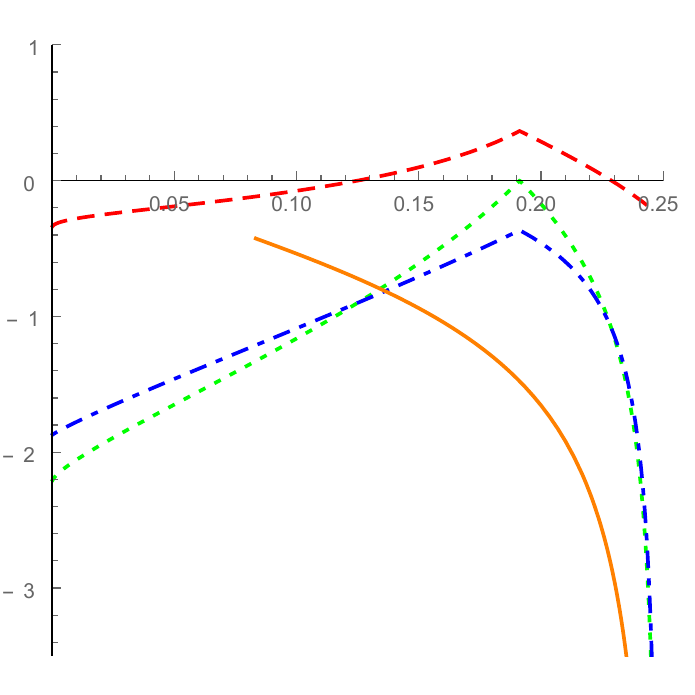}%
		\put(20,130){\small $\omega_1 $}
		\put(20,60){\small $\omega_2 $}
			\put(20,85){\small $\omega_3 $}
				\put(157,60){\small $\omega_4 $}
				\put(200,150){\small $\tau$}
				\put(100,155){\small $\downarrow $}
				\put(99,165){\small $\tau_1 $}
				\put(148,167){\small $\downarrow $}
				\put(146,177){\small $\tau_c $}
				\put(175,155){\small $\downarrow $}
				\put(173,165){\small $\tau_2 $}
				\put(72,155){\small $\downarrow $}
				\put(65,165){\small $1/12 $}
	\end{overpic}
	\caption{The graphics of the functions $\omega_1$ (upper dashed line), $\omega_2$ (short dashed line), $\omega_3$ (dashed line with dots) and $\omega_4$ (continuous line). 
}
	\label{figure_width_parameters}
\end{figure}

The functions $\omega_1$, $\omega_2$, have two and one zeros on $(0,1/4)$, respectively, whereas $\omega_3$ and $\omega_4$ do not vanish on the intervals $(0,1/4)$ and 
$(1/12,1/4)$, respectively. 

We denote the 
zeros of $\omega_1$ by $\tau_1,\tau_2$ and of $\omega_2$ 
by $\tau_c$. We have
$$
\tau_1\approx 0.12487351,\quad \tau_c\approx 0.1913565,\quad \tau_2\approx 0.2289555,
$$
so that they satisfy
\begin{equation}\label{valuesfortau}
0<\frac{1}{12}<\tau_1<\tau_c<\tau_2<1/4.
\end{equation}
We point out that the value $\tau_c$ is the one used in Section~\ref{section_spectral_curve_to_variational_equations} in the construction of the Riemann surface $\mathcal R$, and 
it is equivalently determined by \eqref{equation_definition_alphac}.

It is also clear from Figure~\ref{figure_width_parameters} that $\omega_1\neq \omega_4$ for $\tau>1/12$.

\subsection{Critical points of the quadratic differential}\label{section_critical_points}

We follow the construction \eqref{defQqd}, \eqref{qd} and define the meromorphic quadratic differential $\varpi=
-Q^2(z)\, dz^2$ on the Riemann surface $\mathcal R$.

We start by analyzing the character of the critical points of $\varpi$. For instance, the local parameter at $z=a_1^{(1)}$ is
$$
z=a_1^{(1)}+u^2,
$$
so that $dz^2 = 4 u^2 du^2$. In consequence, $(\xi_j-\xi_k)^2(z) dz^2$ has a double zero at $z=a_1^{(1)}$ if and only if $\xi_j(a_1^{(1)})\neq \xi_k(a_1^{(1)})$. A similar analysis 
at the rest of the points of $\mathcal{R}$ yields the following classification of the critical points of $\varpi$ on the Riemann surface (see Figure~\ref{figure_critical_points}):
\begin{enumerate}[(a)]

\item For $0<\tau<\tau_c$:
\begin{enumerate}[(i)]
 \item Simple zeros at $a_1^{(3)}$, $b_1^{(3)}$, $a_2^{(2)}$, $b_2^{(2)}$;
 \item Double zeros at $a_1^{(1)}$, $b_1^{(1)}$ (only for $\tau \neq 1/12$), $a_2^{(1)}$, $b_2^{(1)}$;
 \item Double zero at $b_*^{(2)}$ if $\tau <1/12$ and a double zero at $b_*^{(1)}$ if $\tau>1/12$;
 \item Zero of order $4$ at $b_1^{(1)}=b_*^{(1)}$ if $\tau =1/12$;
 \item Double pole at $\infty^{(1)}$ with a real residue;
 \item Poles of order $8$ at $\infty^{(2)}$, $\infty^{(3)}$.
 \end{enumerate}
 
 \item For $\tau_c\leq \tau<1/4$:
\begin{enumerate}[(i)]
 \item Simple zeros at $a_1^{(1)}$, $b_1^{(3)}$, $a_2^{(2)}$, $b_2^{(2)}$;
 \item Double zeros at $a_1^{(2)}$, $b_1^{(1)}$, $a_2^{(1)}$, $b_2^{(1)}$;
 \item Double zero at $b_*^{(1)}$;
 \item Double pole at $\infty^{(1)}$ with a real residue;
 \item Poles of order $8$ at $\infty^{(2)}$, $\infty^{(3)}$.
\end{enumerate}
\end{enumerate}

\begin{figure}
\begin{subfigure}{0.5\textwidth}
\centering
\begin{overpic}[scale=0.35]{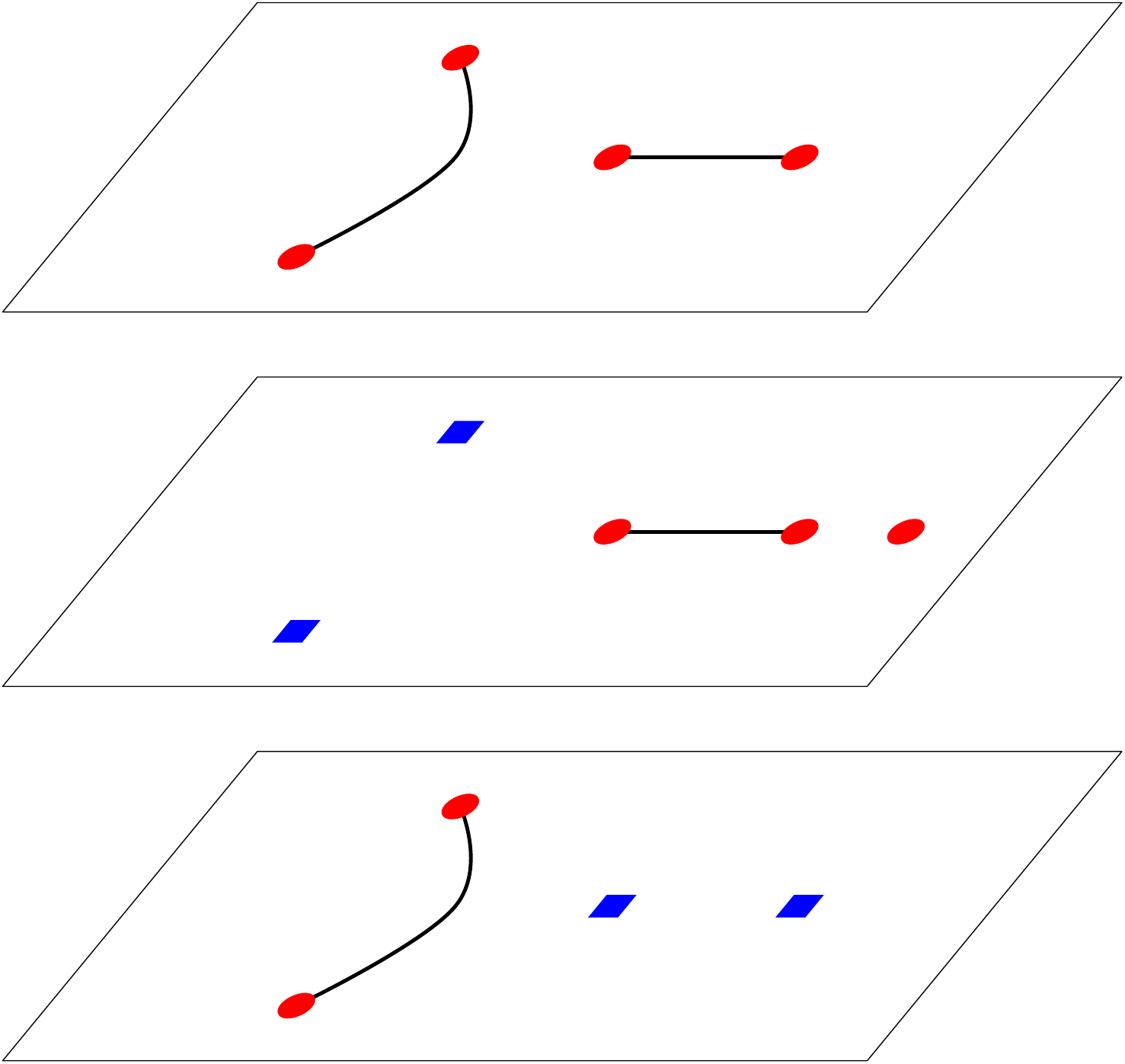}
\end{overpic}
\caption*{$0<\tau<1/12$}
\end{subfigure}%
\begin{subfigure}{0.5\textwidth}
\centering
\begin{overpic}[scale=0.35]{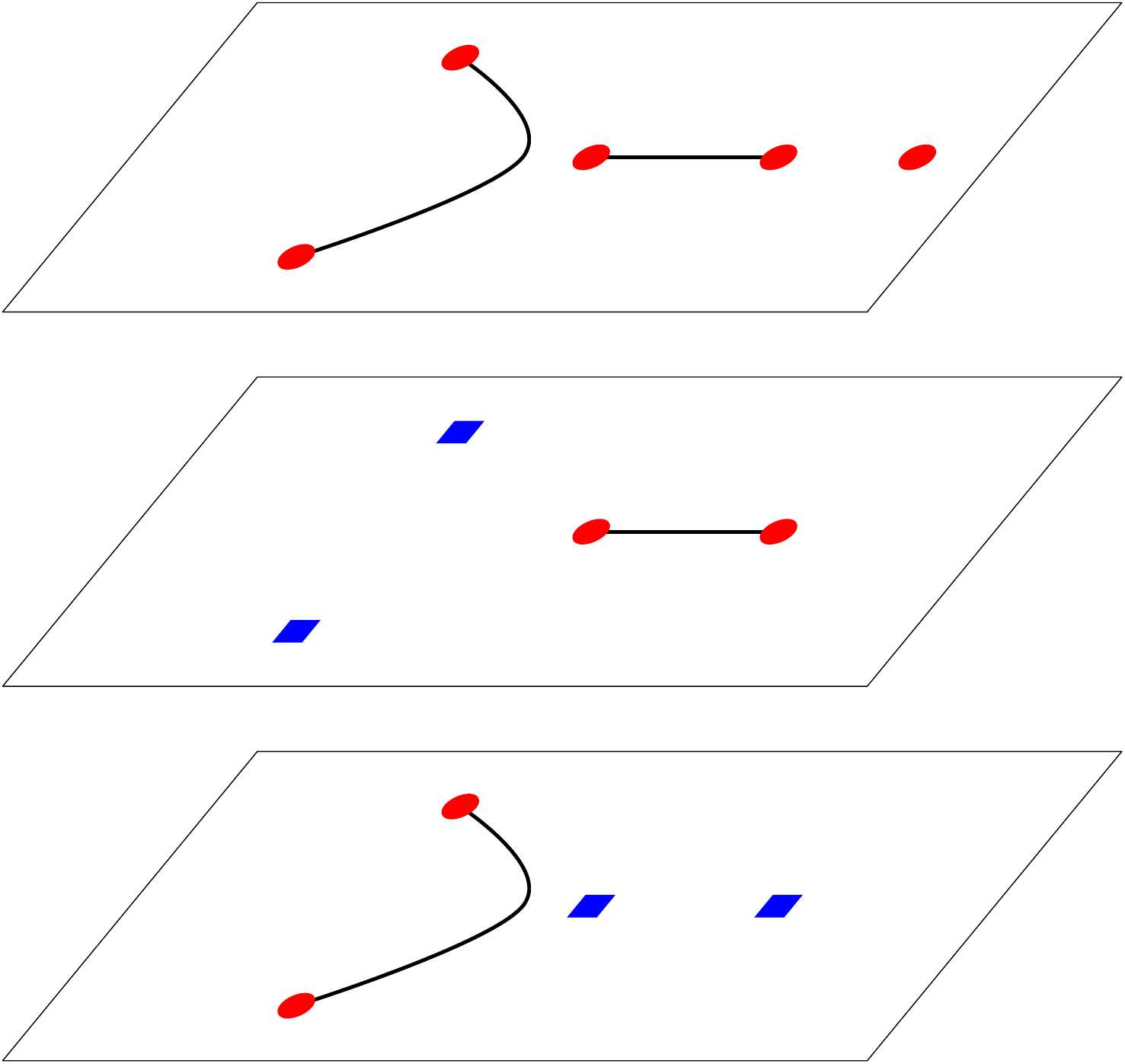}
\end{overpic}
\caption*{$1/12<\tau<\tau_c$}
\end{subfigure}
\begin{subfigure}{1\textwidth}
\vspace{0.3cm}
\centering
\begin{overpic}[scale=0.35]{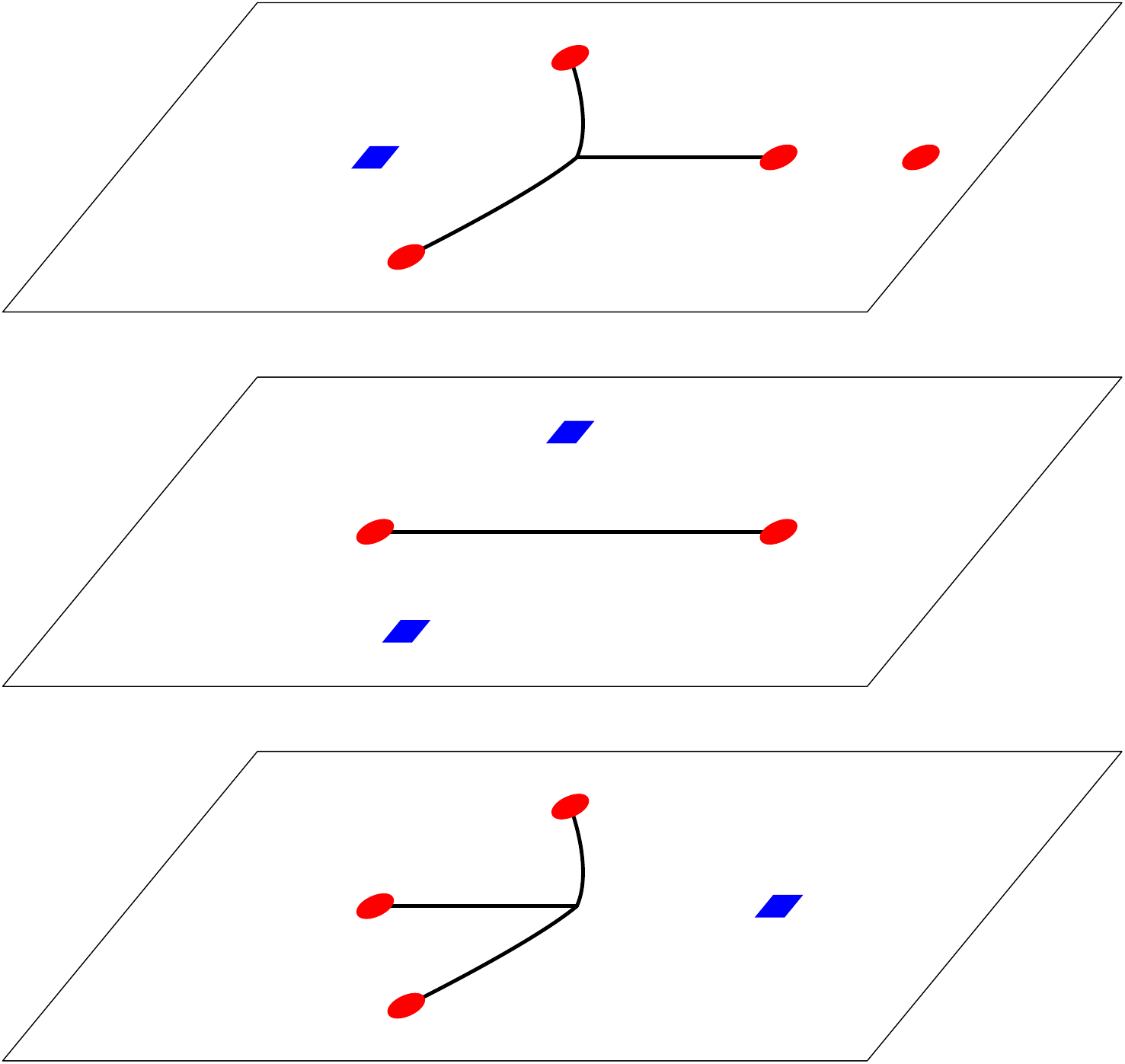}
\end{overpic}
\caption*{$\tau_c<\tau<1/4$}
\end{subfigure}
\caption{The finite critical points of $\varpi$ in each case. Simple and double zeros are represented, respectively, with squares and dots.}\label{figure_critical_points}
\end{figure}

It is instructive to think of the critical points as evolving dynamically with $\tau$. Under this perspective, Propositions~\ref{proposition_description_branchpoints} and 
\ref{proposition_distribution_branchpoints}  show that for $\tau$ small the double point (node) $b_*$ corresponds to the double zero 
$b_*^{(2)}$ on the second sheet. When $\tau=1/12$, the points $b_*$, $b_1$ coalesce, giving rise to a higher order zero at the branch point $b_1^{(1)}=b_1^{(2)}$. For larger values 
of $\tau$, the double point emerges on the first sheet: now $b_*^{(2)}$ is a regular point whereas $b_*^{(1)}$ is a double zero.

In the same spirit, the simple zero $a_1$ of the discriminant of \eqref{spectral_curve} carries two critical points of $\varpi$. For $\tau <\tau_c$ these are the simple zero 
$a_1^{(3)}$ and the double zero $a_1^{(1)}=a_1^{(2)}$. For values of $\tau$ larger than $\tau_c$, these points interchange their roles: $a_1^{(1)}$ is a simple zero and 
$a_1^{(3)}=a_1^{(2)}$ is a double zero.

\subsection{Analyzing the global structure of trajectories}

\subsubsection{General principles} \label{sec:generalprinciples}

The rest of this section is devoted to the description of the critical graph of the quadratic differential \eqref{qd} for the whole range $0\leq \tau<1/4$. One of the outcomes of 
our analysis is the following theorem:
\begin{thm}\label{thm:criticaltraj}
	For  the quadratic differential $\varpi=
	-Q^2(z)\, dz^2$ and for all values of the parameter $0\leq \tau\leq \tau_c$ there exists  a critical trajectory of $\varpi$ joining $a_2^{(2)}$ and $b_2^{(2)}$ on $\mathcal 
R_2$, whose projection by $\pi$ on $\C$ is a real-symmetric analytic arc $\Delta_2$ joining $a_2$ and $b_2$.
	
	For $\tau_c \leq \tau\leq 1/4$, there is an arc of critical trajectory of $\varpi$ joining $a_2^{(2)}$ with a point $a_*^{(2)}$ on the interval $[a_1^{(2)},b_1^{(2)}]$
which is determined by \eqref{definition_a_star_supercritical}, and the conjugate symmetric arc of trajectory joining $b_2^{(2)}$ with the same point. The projection by $\pi$ on 
$\C$ of the union of these two arcs of trajectories is also 
denoted by $\Delta_2$.
\end{thm}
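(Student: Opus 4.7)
The plan is to analyze the trajectory structure of $\varpi=-Q^2(z)\,dz^2$ on the compact Riemann surface $\mathcal R$ by combining three ingredients: the real symmetry of $\varpi$ under the antiholomorphic involution $z\mapsto \bar z$ lifted to $\mathcal R$; the classification of critical points on each sheet carried out in Section~\ref{section_critical_points}; and the quantitative information encoded in the width parameter $\omega_2(\tau)$ from \eqref{width_parameter_cubic}. Since $\varpi$ is meromorphic on a compact surface with finitely many critical points and poles of finite order, a Jenkins--Strebel type theorem rules out recurrent trajectories: every trajectory is either a saddle connection between critical points of $\varpi$ or accumulates at a pole.

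Since $a_2^{(2)}$ and $b_2^{(2)}$ are simple zeros of $\varpi$ on $\mathcal R_2$, exactly three trajectories emanate from each, at mutual angles $2\pi/3$. The involution $z\mapsto\bar z$ preserves $\mathcal R_2$ and swaps $a_2^{(2)}\leftrightarrow b_2^{(2)}$, so the local trajectory bouquets at these two points are conjugate images of each other. I would isolate the trajectory $\gamma_\tau$ emanating from $b_2^{(2)}$ whose initial direction points, roughly speaking, downward towards the real axis, and track its fate as $\tau$ varies. Any arc of trajectory on $\mathcal R_2$ joining $b_2^{(2)}$ to a point $w$ must satisfy the closing condition
\begin{equation*}
\im \int_{b_2^{(2)}}^{w} Q(s)\,ds=0,
\end{equation*}
and hence $\gamma_\tau$ can land precisely at $a_1^{(2)}$ only at the isolated parameter value where $\omega_2(\tau)=0$, i.e., at $\tau=\tau_c$.

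For $\tau<\tau_c$ one has $\omega_2(\tau)>0$; hence $\gamma_\tau$ cannot terminate at $a_1^{(2)}$, and by continuity of the critical graph and real symmetry it must cross the real axis transversally at a regular point $a_*(\tau)<a_1$ of $\mathcal R_2$. Its continuation past $a_*(\tau)$ is, under the involution, the conjugate of the trajectory emanating from $a_2^{(2)}$ in the reflected direction, and together the two arcs form the real-symmetric analytic curve $\Delta_2$ joining $a_2$ and $b_2$ claimed in the theorem. For $\tau>\tau_c$ the sign $\omega_2(\tau)<0$ forces, by the same analysis, $\gamma_\tau$ to terminate inside the interval $[a_1^{(2)}, b_1^{(2)}]$ at the unique point $a_*^{(2)}$ singled out by \eqref{definition_a_star_supercritical}, and by reflection symmetry the conjugate arc from $a_2^{(2)}$ terminates at the same point. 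At the boundary $\tau=\tau_c$ the two regimes meet, with $a_*=a_1$ and the two arcs merging into a single saddle connection through $a_1^{(2)}$.

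The main obstacle is rigorously excluding the remaining possible fates for $\gamma_\tau$: escaping to a pole at $\infty^{(2)}$ or $\infty^{(3)}$, or crossing a branch cut onto another sheet and landing at some off-axis zero such as $b_1^{(3)}$ or the double zero $b_*^{(1)}$. To handle this I would use a continuous-deformation argument anchored at the extremal values of the parameter, where the critical graph is already known: at $\tau=0$ from \cite{deano_kuijlaars_huybrechs_complex_orthogonal_polynomials} and at $\tau\nearrow 1/4$ from \cite{filipuk_vanassche_zhang}. Continuity of the critical graph in $\tau$ forces the homotopy class of $\gamma_\tau$ to remain constant except when it undergoes a saddle connection with another critical trajectory; such transitions can only occur at isolated parameter values detected by the vanishing of the remaining width parameters $\omega_1,\omega_3,\omega_4$ in \eqref{width_parameter_cubic}, and the numerical evidence displayed in Figure~\ref{figure_width_parameters} isolates those transitions at $\tau\in\{\tfrac{1}{12},\tau_1,\tau_2\}$, none of which disturbs the configuration described inside $(0,\tau_c)$ or $(\tau_c,1/4)$. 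Carrying this out rigorously requires the detailed phase-by-phase description of the critical graph that constitutes the body of Section~\ref{section:dynamics}.
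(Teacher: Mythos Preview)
Your outline is essentially the paper's own strategy: anchor the critical graph at $\tau=0$ (where it is known from \cite{deano_kuijlaars_huybrechs_complex_orthogonal_polynomials}), deform continuously in $\tau$, and use the width parameters $\omega_j$ together with real symmetry to detect and control the transitions; you correctly identify $\omega_2(\tau_c)=0$ as the mechanism selecting $\tau_c$, and you honestly flag that excluding the alternative fates of the trajectory requires the interval-by-interval analysis that constitutes Sections~\ref{section_degenerate}--\ref{section_trajectories_tau2_1-4}.

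Two small corrections. First, the closing condition along an arc of trajectory is $\re\int Q\,ds=\text{const}$, not $\im$, since $\sqrt{-\varpi}=Q\,dz$ and trajectories are level curves of $\re\Upsilon$; this is exactly why $\omega_2=\re\int_{b_2}^{a_1}(\xi_1-\xi_3)\,ds$ is the quantity whose vanishing marks $\tau_c$. Second, the paper does not anchor at $\tau\nearrow 1/4$; the deformation proceeds in one direction from $\tau=0$ only, and the structure for large $\tau$ is obtained as the output of the phase-by-phase argument rather than as an input. Also, the absence of recurrent trajectories is not a Jenkins--Strebel statement but Jenkins's Three Poles Theorem (genus $0$, at most three poles), invoked as principle \textbf{P.1}.

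Your heuristic that the sign of $\omega_2$ alone forces the landing point of $\gamma_\tau$ to lie left of $a_1$ versus inside $[a_1,b_1]$ is not by itself a proof: the paper needs the additional tools of Proposition~\ref{lemma:zerosofD} (which limits how often a trajectory can recross $\Delta_1\cup\Delta_3$) and repeated applications of Theorem~\ref{teichmuller_lemma} to eliminate configurations you list under ``remaining possible fates.'' Since you already concede this, the proposal is a faithful high-level summary rather than an independent argument.
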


Recall that $\tau_c$ was formally introduced in Section~\ref{section_widths_cubic}. In virtue of the results in Section~\ref{section_spectral_curve_to_variational_equations}, in 
particular Corollary~\ref{corol_existence_critical_measure}, Theorem~\ref{thm:criticaltraj} implies Theorem~\ref{thm_existence_critical_measures_cubic}.

We remind the reader that up to now the branch cut, separating the sheets $\mathcal R_1$ and $\mathcal R_3$, was free (see Figure~\ref{figure_sheet_structure}).  In what follows we 
agree in the following:
\begin{definition}\label{def:thebranchcut}
The curve, connecting $a_2$ and $b_2$ as part of the branch cut separating the sheets $\mathcal R_1$ and $\mathcal R_3$, is always  given by the lift of $\Delta_2$ (from 
Theorem~\ref{thm:criticaltraj}) to the sheets $\mathcal R_1$ and $\mathcal R_3$.
\end{definition}
In the next sections we will show that this definition is consistent with our construction of the Riemann surface $\mathcal R$.

One important fact is that the residues of $\varpi$ at the poles at infinity (and the local behavior of the trajectories there) are independent of $\tau$: at $\infty^{(1)}$ they 
are closed analytic curves (so that $\infty^{(1)}$ is the center of a circle domain, see Appendix \ref{appendix_quadratic_differentials}), while $\infty^{(2)}, \infty^{(3)}$ 
attract trajectories in 6  asymptotic directions, given by the angles 
\begin{equation}\label{asymptotic_directions_cubic}
\theta_j^{(\infty)}=\frac{2j-1}{6}\, \pi , \quad j=1,\hdots,6.
\end{equation}

Critical values \eqref{valuesfortau} split the interval $(0,1/4)$ into the subintervals $(0,1/12)$, $(1/12,\tau_1)$, $(\tau_1,\tau_c)$, $(\tau_c,\tau_2)$ and $(\tau_2,1/4)$. We will 
show that the topology of the critical graph remains invariant in each of these intervals.

The methodology we use can be summarized as follows:
\begin{enumerate}[\rm (i)]
\item Compute the critical graph for $\tau$ equal to one of the critical values \eqref{valuesfortau}.

\item Analyze the possible deformation of the trajectories for the values $\tau + \varepsilon$, with $\varepsilon>0$ small,  identifying the trajectories that display a phase 
transition.

\item Prove that the topology of the critical graph is invariant inside the subinterval of interest, by analyzing the behavior of the widths $\omega_j$'s and showing that the 
corresponding strip and ring domains can not disappear. 
\end{enumerate}

Along the way, will use some general \emph{principles} that we enumerate here:
\begin{enumerate}
	\item[\textbf{P.1}]  Quadratic differential $\varpi$ has no recurrent trajectory for any value of $\tau$, see Jenkin's Three Poles Theorem in Section 
\ref{sec:globalstructuretrajectoriesAppendix2}. 
	\item[\textbf{P.2}] If $\gamma$ is an arc of trajectory of $\varpi$, then $\overline \gamma$, corresponding to the lift of the complex conjugate of $\pi(\gamma)$ to the 
same sheet, is also an arc of trajectory.
	\item[\textbf{P.3}] The complement of the critical graph of $\varpi$ on $\mathcal R$ cannot have a simply connected component without poles on its boundary: that would 
contradict Corollary~\ref{corol_teichmuller_lemma} or the maximum principle for harmonic functions on a compact Riemann surface. 
	\item[\textbf{P.4}] The meromorphic function $Q^2$ depends analytically on the parameter $\tau$. Hence the critical graph of $\varpi$, and in particular all its critical 
trajectories, depend continuously (in any reasonable topology, for instance, in the Hausdorff distance) from $\tau$.
	\item[\textbf{P.5}] If for a certain value $\tau=A$, the point $p$ belongs to the half plane domain for $\infty^{(k)}$ determined by the angles 
$\theta^{(\infty)}_j,\theta^{(\infty)}_{j+1}$, then the same holds true for a small neighborhood of values $\tau\in (A-\varepsilon, A+\varepsilon)$, $\varepsilon>0$. The point $p$ 
is also allowed to depend continuously on $\tau$.
	\item[\textbf{P.6}] If for a certain value $\tau=A$, an arc of trajectory emanating from a given point $p$ intersects the real line at a {\it regular } point, then the 
same holds true for $\tau\in (A-\varepsilon, A+\varepsilon)$, $\varepsilon>0$. As before, the point $p$ is allowed to depend continuously on $\tau$.
\end{enumerate} 

There will be one more useful tool that we will employ several times in our analysis, formulated as Proposition \ref{lemma:zerosofD} below.

When describing the structure and the evolution of the trajectories of the quadratic differential $\varpi$ we face the dilemma of either a formalization of each statement, with a 
precise formulation of the behavior of every trajectory in every situation, or a much more visual description, with rigorous proofs but illustrated by a number of figures. We opted 
for the second choice\footnote{ We confess we might have been influenced by the famous quote of Vladimir Arnold \cite{arnoldinterview}: 
	\begin{quote}It is almost impossible for me to read contemporary mathematicians who, instead of saying ``Petya washed his hands,'' write simply: ``There is a $t_1<0$ such 
that the image of $t_1$ under the natural mapping $t_1 \mapsto {\rm Petya}(t_1)$ belongs to the set of dirty hands, and a $t_2$, $t_1<t_2 \leq 0$, such that the image of $t_2$ 
under the above-mentioned mapping belongs to the complement of the set defined in the preceding sentence.''\end{quote}}.

Next, we agree on some convention about trajectories. Let $p^{(j)}\in  \mathcal R_j$ be a zero of  $\varpi$. We denote by $\gamma_1(p^{(j)})$, $\gamma_2(p^{(j)})$, \dots, the 
trajectories of $\varpi$ \emph{emanating} from $p$ \emph{on} $\mathcal R_j$, in such a way that their canonical projections $\pi(\gamma_n(p^{(j)}))$, see 
Section~\ref{sec:RSassociatedalgcurve}, are enumerated in an anti-clockwise direction starting from the positive $OX$ semiaxis\footnote{ This notation is not correctly defined 
only 
if the direction of a trajectory coincides with a branch cut, situation that will be explicitly avoided in what follows.}.  
Notice that when $p^{(j)}$ is a branch point of $\mathcal R$, so that $p^{(j)}=p^{(k)}$ for some $j\neq k$, trajectories $\gamma_n(p^{(j)})$ and $\gamma_n(p^{(k)})$ are different 
because they emerge from $p^{(j)}=p^{(k)}$ on different sheets of $\mathcal R$. Otherwise, when $p^{(j)}$ belongs to a single sheet $\mathcal R_j$, we 
occasionally drop the superindex $(j)$ when it cannot lead us into confusion.

Given two points $p,q\in \mathcal \R$, the integral
$$
\int_p^q \sqrt{-\varpi}
$$
along a contour $\gamma$ connecting $p$ and $q$ is understood to be the integral of any analytic continuation of the meromorphic differential $\sqrt{-\varpi}$ along $\gamma$. This 
is well defined up to the branch of the square root, which will be clear in each context.

\subsubsection{Degenerate case \texorpdfstring{$\tau=0$}{}}  \label{section_degenerate}

For $\tau=0$, the algebraic equation \eqref{spectral_curve} reduces to
$$
\left(\xi +z^2\right) \left(4 \xi ^2-8 z^4-4 \xi  z^2+12 z - 3\times 3^{2/3}\right)=0,
$$
whose solutions, denoted in accordance with \eqref{asymptotics_xi_functions}, are 
\begin{equation*}
\xi_1(z)= \frac{z^2+\sqrt 3 \sqrt{h(z)}}{2}, \quad \xi_2(z)=-z^2,\quad \xi_3(z)= \frac{z^2-\sqrt 3 \sqrt{h(z)}}{2},
\end{equation*}
where the branch of the square root is chosen to be positive for large real values, 
$$
h(z)=3z^4-4z+3^{2/3}=3\left(z-a_2\right)\left(z-b_2\right)\left(z-b_*\right)^2,
$$ 
and the points $a_1,b_1,a_2,b_2,b_*$ are given explicitly by
\begin{equation*}
a_1=b_1=\frac{3^{2/3}}{4},\quad b_2=\overline{a_2}=\frac{1}{3^{1/3}}(-1+i\sqrt{2}),\quad b_*=\frac{1}{3^{1/3}}.
\end{equation*}

The cut $\Delta_1$ is reduced to a single point $a_1$, and the sheet $\mathcal R_2$ is detached from the others. Since \eqref{spectral_curve} is reducible, its Riemann surface is 
in fact the union of two Riemann surfaces, 
$$
\mathcal R_2=\overline\C \quad \text{and}\quad  \widetilde{\mathcal R}=\mathcal R_1 \cup \mathcal R_3.
$$
Here $\mathcal R_1=\mathcal R_3=\overline\C\setminus \Delta_2$, and $\Delta_2$ is a simple curve connecting the points $a_2, b_2$, to be precisely specified later. The 
quadratic differential \eqref{defQqd_cubic} degenerates into two quadratic differentials $\varpi_1$ on $\overline \C$ and $\varpi_2$ on $\widetilde{\mathcal R}$, namely
\begin{equation}\label{quadratic_differential_alpha0}
\begin{aligned}
\varpi_1 & = -3h(z)dz^2 \quad \mbox{on } \overline \C,  \\
\varpi_2 & = 
\begin{cases}
-\frac{1}{4}(3z^2-\sqrt{3}\sqrt{h(z)})^2dz^2 \quad \mbox{on } \mathcal R_1,  \\
-\frac{1}{4}(3z^2+\sqrt{3}\sqrt{h(z)})^2dz^2 \quad \mbox{on } \mathcal R_3.
\end{cases}
\end{aligned}
\end{equation}
We analyze the structure of their critical graphs next.
\medskip

\noindent \underline{Trajectories of $\varpi_1$}, whose only critical points are as follows:
\begin{itemize}
 \item Simple zeros at $z=a_2,b_2$;
 \item Double zero at $z=b_*$;
 \item Pole of order $8$ at $z=\infty$.
\end{itemize}

Under the change of variables $z\mapsto \frac{i}{\sqrt[3]{3}}z$, the quadratic differential $\varpi_1$ becomes $-q(z)dz^2$, where 
$$
q(z)=-(z^2-2iz-3)(z+i)^2
$$
is, up to a multiplicative factor $\frac{1}{4}$, the same polynomial obtained in \cite[eq.~(2.1)]{deano_kuijlaars_huybrechs_complex_orthogonal_polynomials}. Having in mind this 
identification, it was proven in \cite[Theorem~2.1]{deano_kuijlaars_huybrechs_complex_orthogonal_polynomials} that the trajectory $\gamma_3(b_2)$ of $\varpi_1$ connects
$b_2$ and $a_2$: in the notation introduced above, $\gamma_3(b_2)=\gamma_1(a_2)$, see Figure \ref{alpha0_3}.

From \cite{deano_kuijlaars_huybrechs_complex_orthogonal_polynomials} we also know that $\gamma_3(b_2)$ intersects the real axis at a point $a_*<b_*$, which can be calculated  
numerically: 
by
\begin{equation}\label{approximate_value_a_*}
a_*\approx -0.441782.
\end{equation}

The rest of the critical graph of $\varpi_1$  is as follows. Notice that due to the symmetry, we only need to describe the trajectories in the upper half plane.

\begin{figure}
	\begin{center}
		\begin{overpic}[scale=0.7]{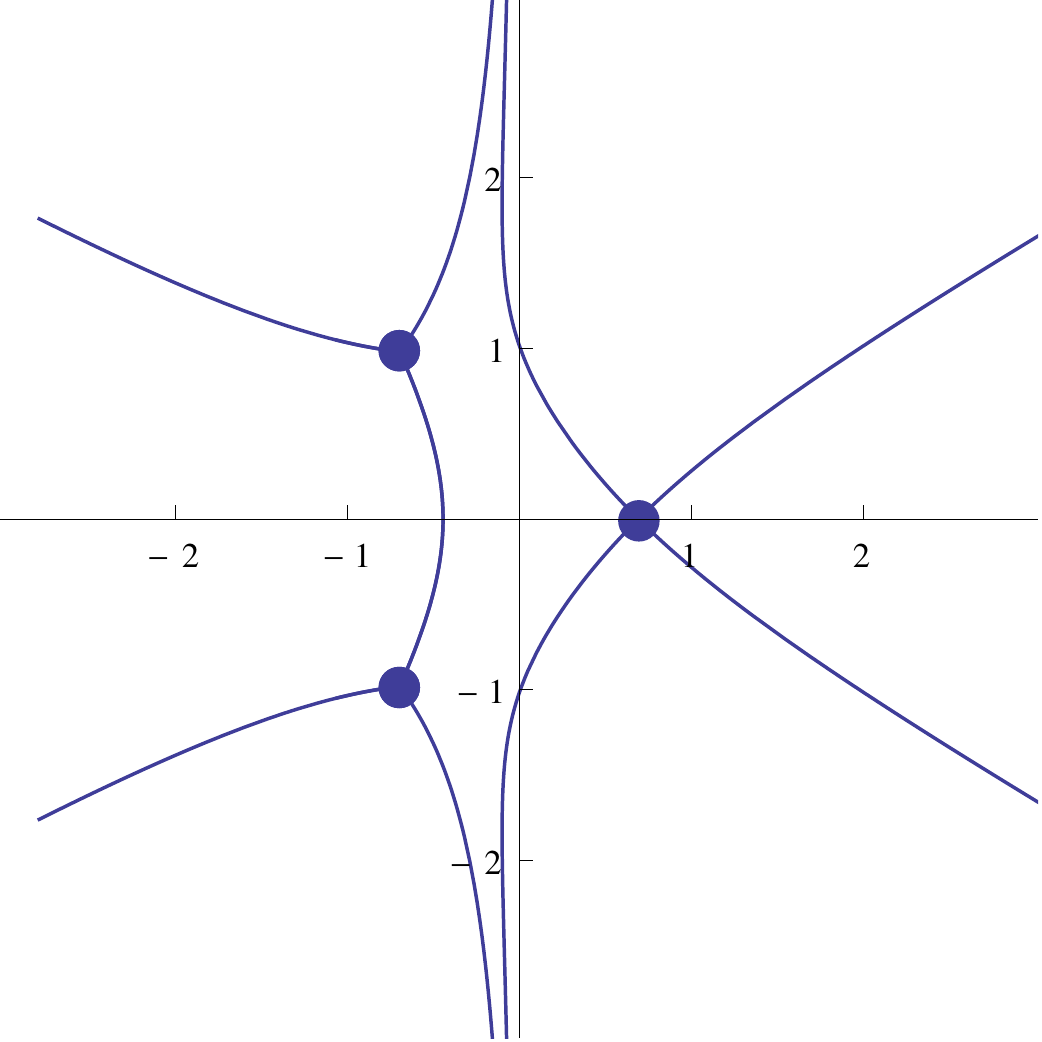}
			\put(175,132){\small $\gamma_1(b_*)$}
			\put(110,132){\small $\gamma_2(b_*)$}
			\put(175,73){\small $\gamma_4(b_*)$}
			\put(110,73){\small $\gamma_3(b_*)$}
			\put(70,180){\small $\gamma_1(b_2)$}
			\put(10,168){\small $\gamma_2(b_2)$}
			\put(70,25){\small $\gamma_3(a_2)$}
			\put(10,40){\small $\gamma_2(a_2)$}
			\put(61,115){\small $\gamma_3(b_2)$}
			\put(65,78){ $a_2$}
			\put(65,145){ $b_2$}
			\put(122,92){ $b_*$}
		\end{overpic}
		\caption{$\tau=0$: the critical graph of $\varpi_1$.}\label{alpha0_3}
	\end{center}
\end{figure}

The trajectories $\gamma_j(b_*)$, $\gamma_j(b_2)$, $j=1,2$, cannot be finite, see Principle \textbf{P.3} above. Hence, they all diverge to $\infty$ along the asymptotic directions 
\eqref{asymptotic_directions_cubic}, and according Theorem \ref{theorem_global_dissection}, all directions are represented. There are 3 asymptotic directions for 4 trajectories in 
the upper half plane, so necessarily the divergence angle for $\gamma_1(b_*)$ is $\theta_1^{(\infty)}$, while the divergence angle for $\gamma_2(b_2)$ is $\theta_3^{(\infty)}$. 
Since two consecutive trajectories emanating from a zero cannot diverge to $\infty$ in the same direction (this would contradict Theorem \ref{teichmuller_lemma}), we conclude that 
both $\gamma_2(b_*)$ and $\gamma_1(b_2)$ must diverge in the direction  $\theta_2^{(\infty)}$, see Figure~\ref{alpha0_3}. 

\medskip

\noindent \underline{Trajectories of $\varpi_2$}, whose only critical points are as follows:
\begin{itemize}
	\item Double zeros at $z=a_1^{(3)}$, $z=b_2^{(1)}$ and $z=a_2^{(1)}$;
	\item Double pole at $z=\infty^{(1)}$ with real residue;
	\item Pole of order $8$ at $\infty^{(3)}$.
\end{itemize}
The double zeros $b_2^{(1)}=b_2^{(3)}$ $a_2^{(1)}=a_2^{(3)}$ are also branch points of $\widetilde{\mathcal R}$, and the critical graph of $\varpi_2$ is made of trajectories 
$\gamma_j(a_2^{(1)})$, $\gamma_j(b_2^{(1)})$, $j=1, 2$, emanating on $\mathcal R_1$, and of trajectories $\gamma_j(a_2^{(3)})$, $\gamma_j(b_2^{(3)})$, $j=1, 2$, along with 
$\gamma_j(a_1^{(3)})$, $j=1, \dots, 4$, emanating on $\mathcal R_3$, see Figure~\ref{alpha0_4_5}.

The branch cut $\Delta_2$ connecting the sheets $\mathcal R_1$ 
and $\mathcal R_3$, so far arbitrary, is chosen as $\Delta_2=\gamma_3(b_2)$, where $\gamma_3(b_2)$ is the critical trajectory of $\varpi_1$ connecting $b_2$ 
to $a_2$, as described above. 

\begin{figure}
\begin{subfigure}{.5\textwidth}
  \centering
\begin{overpic}[scale=0.5]{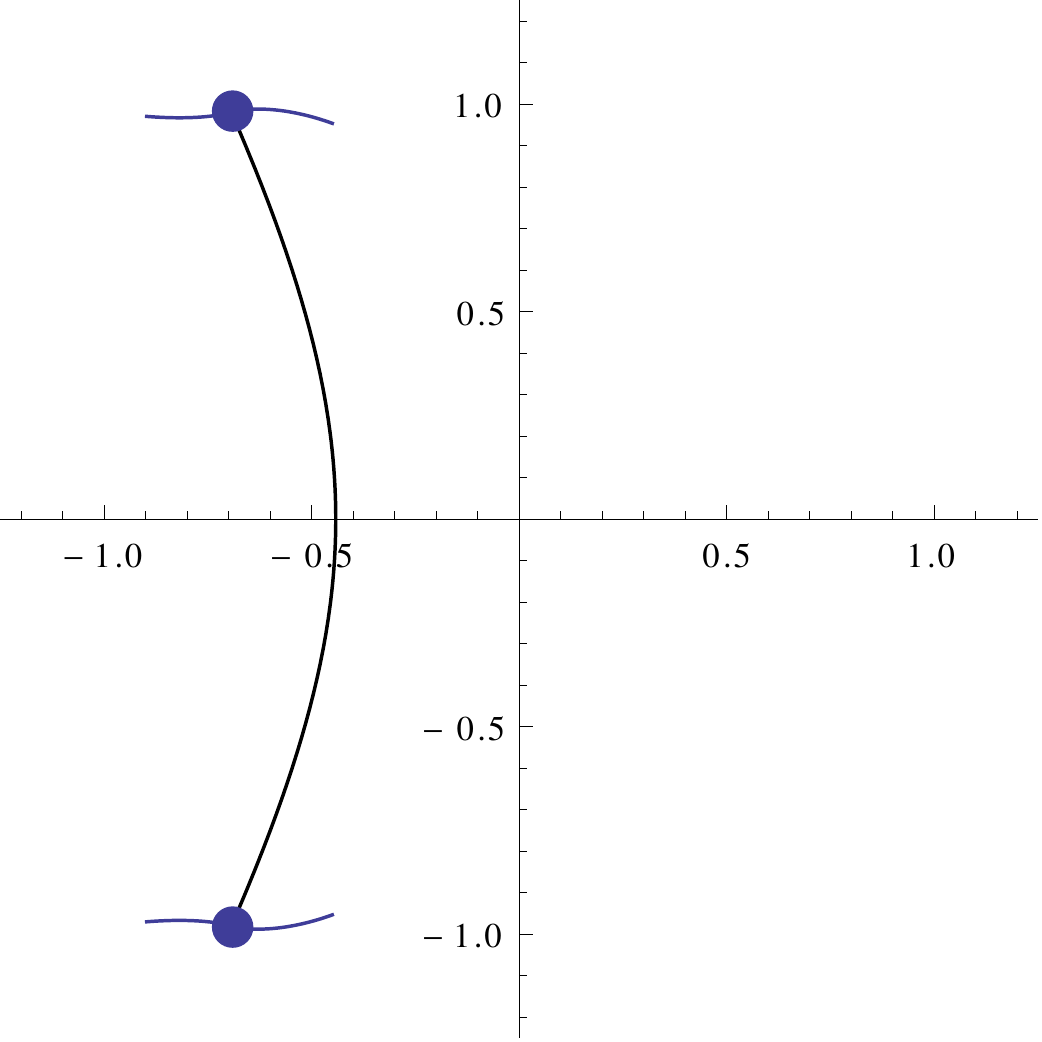}
\put(45,135){$\gamma_1(b_2^{(1)})$}
\put(-5,137){$\gamma_2(b_2^{(1)})$}
\put(45,10){$\gamma_1(a_2^{(1)})$}
\put(-5,8){$\gamma_2(a_2^{(1)})$}
\end{overpic}
\end{subfigure}%
\begin{subfigure}{.5\textwidth}
  \centering
\begin{overpic}[scale=0.5]{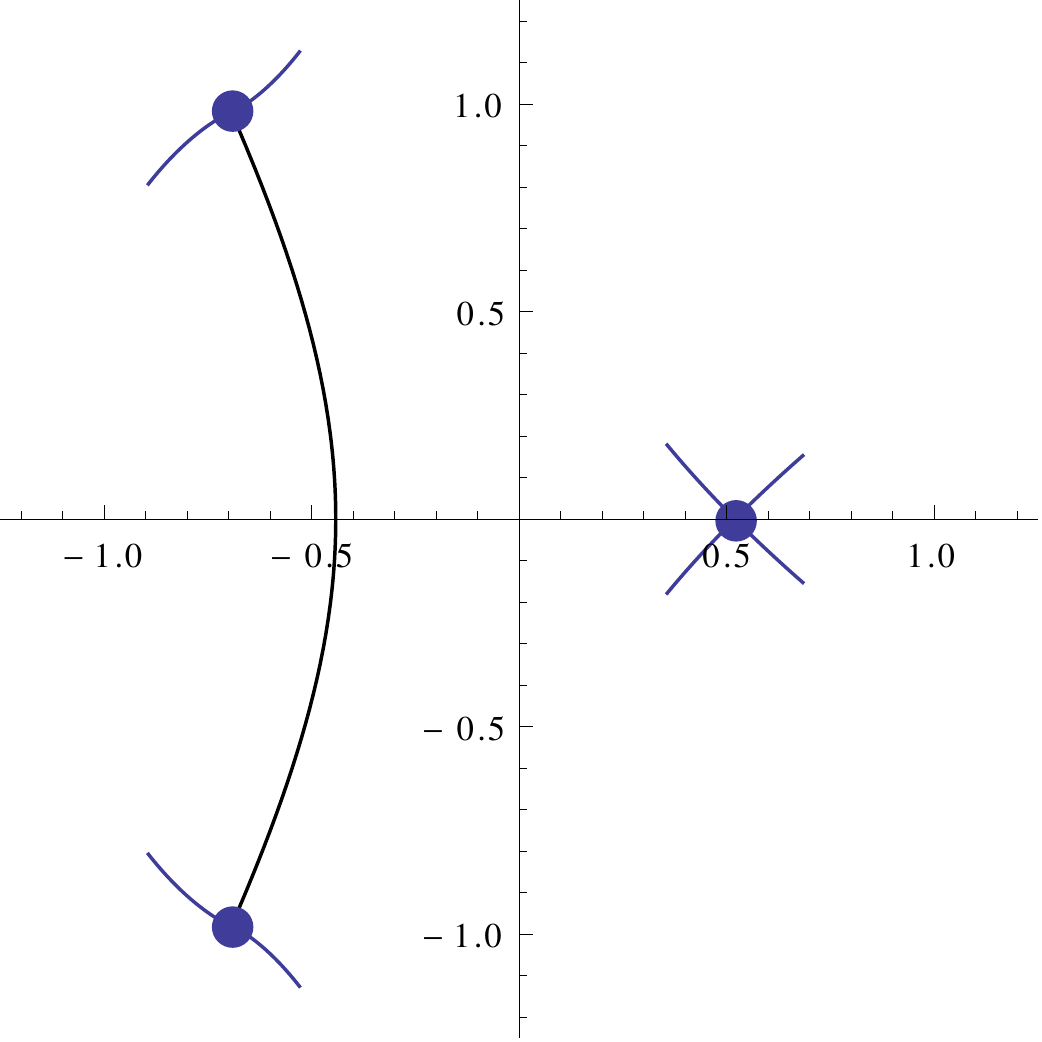}
\put(46,140){$\gamma_1(b_2^{(3)})$}
\put(-10,120){$\gamma_2(b_2^{(3)})$}
\put(120,85){$\gamma_1(a_1^{(3)})$}
\put(75,90){$\gamma_2(a_1^{(3)})$}
\end{overpic}
\end{subfigure}%
\caption{On the left it is represented the critical trajectories of $\varpi_2$ on $\mathcal R_1$ and on the right its critical trajectories on $\mathcal R_3$, all of them 
in blue and just locally at the critical points. The black curve is the cut $\Delta_2$ connecting $\mathcal R_1$ and $\mathcal R_3$.}\label{alpha0_4_5}
\end{figure}

\begin{lem}\label{lemma:tau=0}
	With the branch cut $\Delta_2$ specified above, the critical trajectories $\gamma_j(b_2^{(1)})$ and $\gamma_j(a_2^{(1)})$ of $\varpi_2$ belong entirely to the sheet 
$\mathcal R_1$. 
\end{lem}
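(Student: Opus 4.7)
The plan is to pin down the global structure of the critical graph of $\varpi_2$ on the two‑sheeted Riemann surface $\widetilde{\mathcal R}=\mathcal R_1\cup\mathcal R_3$ at $\tau=0$, combining the local catalog of critical points from Section~\ref{section_critical_points}, a width identity on $\mathcal R_1$ obtained from complex‑conjugation symmetry, the global structure theorem (Theorem~\ref{theorem_global_dissection}), and the general principles \textbf{P.1}--\textbf{P.3}.

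First I record the critical points of $\varpi_2$ on $\widetilde{\mathcal R}$: three double zeros at the branch points $a_1^{(3)}$, $a_2^{(1)}=a_2^{(3)}$ and $b_2^{(1)}=b_2^{(3)}$, a double pole at $\infty^{(1)}$ with real residue, and a pole of order $8$ at $\infty^{(3)}$. Since the residue at $\infty^{(1)}$ is real, $\infty^{(1)}$ is the center of a maximal circle domain $D$ of $\varpi_2$, and all closed trajectories sufficiently close to $\infty^{(1)}$ lie entirely on $\mathcal R_1$. On that sheet the only other critical points of $\varpi_2$ are $b_2^{(1)}$ and $a_2^{(1)}$, and no critical trajectory can terminate at $\infty^{(1)}$.

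The key technical step is the width identity
\[
\operatorname{Re}\int_{b_2^{(1)}}^{a_2^{(1)}} Q(s)\,ds=0 \quad\text{for any path on }\mathcal R_1.
\]
Path‑independence of the real part on $\mathcal R_1$ follows from the fact that $\operatorname{Res}_{\infty^{(1)}}Q\,dz$ is real (an immediate consequence of \eqref{asymptotics_xi_functions} at $\alpha=0$), so that the periods of $Q\,dz$ around $\infty^{(1)}$ are purely imaginary. To compute the value, I take a path from $b_2^{(1)}$ to $a_2^{(1)}$ that is symmetric under $z\mapsto\bar z$ and contained in $\mathcal R_1$ (possible because $\Delta_2$ lies in the left half‑plane, so one can route through the right half‑plane). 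Using the reality identity $Q(\bar z)=\overline{Q(z)}$ on $\mathcal R_1$ together with the symmetry of the path, a direct splitting of the integral as $\int_{-T}^{0}+\int_{0}^{T}$ and the substitution $t\mapsto -t$ on the first half shows that the integral equals $2i\operatorname{Im}\int_{0}^{T}F(z(t))z'(t)\,dt$, in particular is purely imaginary. Hence short critical trajectories of $\varpi_2$ joining $b_2^{(1)}$ and $a_2^{(1)}$ on $\mathcal R_1$ are compatible with the period constraint.

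With this in hand I invoke the global structure theorem. By \textbf{P.1} there are no recurrent trajectories, so by Theorem~\ref{theorem_global_dissection} the critical graph dissects $\widetilde{\mathcal R}$ into canonical domains; by \textbf{P.3} each one contains a pole, so with only two poles available the picture is rigid. Since closed trajectories near $\infty^{(1)}$ lie on $\mathcal R_1$, the boundary $\partial D$ is a connected closed critical curve beginning on $\mathcal R_1$, which must pass through the only finite zeros of $\varpi_2$ on $\mathcal R_1$, namely $b_2^{(1)}$ and $a_2^{(1)}$. Combined with the complex‑conjugation symmetry (which preserves $\mathcal R_1$ and interchanges $b_2^{(1)},a_2^{(1)}$) and the admissibility established above, $\partial D$ decomposes as the union of two short trajectories of $\varpi_2$ between $b_2^{(1)}$ and $a_2^{(1)}$ on $\mathcal R_1$, each the complex conjugate of the other. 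These are precisely $\gamma_1(b_2^{(1)})$ and $\gamma_2(b_2^{(1)})$ (equivalently $\gamma_1(a_2^{(1)})$ and $\gamma_2(a_2^{(1)})$ traversed in reverse), proving the lemma. The main obstacle is to rule out that $\partial D$ instead extends onto $\mathcal R_3$ through $\Delta_2$; I expect to settle this by counting degrees on $\mathcal R_3$ via Teichmüller's lemma (Corollary~\ref{corol_teichmuller_lemma}), showing that the $6$ end domains at $\infty^{(3)}$ together with the four rays emanating from the double zero $a_1^{(3)}$ already saturate the local structure on $\mathcal R_3$, leaving no room for $\partial D$ to pass through $\mathcal R_3$.
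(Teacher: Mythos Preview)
Your approach is genuinely different from the paper's, and the structural pieces you assemble (the circle domain at $\infty^{(1)}$, the conjugation symmetry, the width identity $\re\int_{b_2^{(1)}}^{a_2^{(1)}}Q\,ds=0$) are all correct and are in fact what the paper uses \emph{after} the lemma to conclude that $\gamma_j(b_2^{(1)})$ are closed. The problem is that you are trying to run the argument in the opposite direction, and the step you flag as ``the main obstacle'' is a genuine gap that your sketch does not close.

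The counting you propose on $\mathcal R_3$ does not work as stated. A pole of order $8$ has six asymptotic directions, but the number of critical trajectories diverging to it is not bounded by six: several trajectories may share a direction, and indeed in the final picture there are eight unbounded critical trajectories on $\mathcal R_3$ (four from $a_1^{(3)}$ and two each from $a_2^{(3)},b_2^{(3)}$). So the four rays from $a_1^{(3)}$ do not ``saturate'' anything, and nothing in Teichm\"uller's lemma alone prevents $\partial D$ from dipping onto $\mathcal R_3$ through the cut. The width identity is only a necessary compatibility condition for a short trajectory on $\mathcal R_1$; it does not by itself force the trajectory to stay there. Your argument therefore stops precisely at the point that constitutes the content of the lemma.

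The paper's proof is direct and exploits the factored form of $\varpi_2$ at $\tau=0$. One assumes a trajectory $\gamma_j(b_2^{(1)})$ first meets the cut $\Delta_2$ at a point $x$ in the upper half plane and integrates $\sqrt{-\varpi_2}$ from $b_2$ to $x$ along the trajectory. Because $\Delta_2$ was chosen to be a trajectory of $\varpi_1=-3h(z)\,dz^2$, the $\sqrt{h}$ contribution has vanishing real part, and the trajectory condition reduces to the algebraic equation $\re x^3=\re b_2^3=5/3$. One then uses the containment of $\Delta_2$ in the triangle with vertices $b_2$, $3^{-1/3}(\sqrt 2-1)$, $a_2$ (established in \cite{deano_kuijlaars_huybrechs_complex_orthogonal_polynomials}) together with the maximum principle for the harmonic function $\re z^3$ to conclude that $x=b_2$ is the only solution, a contradiction. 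This bypasses the global domain decomposition entirely.
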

\begin{proof}

Suppose that one of the trajectories $\gamma_j(b_2^{(1)})$ emanating from $b_2$ intersects the cut $\Delta_2$ for the fist time at a point $x$. Clearly, this 
point (actually, its canonical projection) must lie in the upper half plane: otherwise we readily get that $\gamma_j(b_2^{(1)})=\gamma_j(a_2^{(1)})$ and no 
intersection with $\Delta_2$ occur.

Integrating from $b_2$ to $x$ along $\gamma_j(b_2^{(1)})$ and using the definition of a trajectory we get
\begin{equation}\label{trajectories_zeta_eq_1}
 0 = \re \int_{b_2^{(1)}}^{x^{(1)}} \sqrt{-\varpi} =\frac{3}{4}\re\int_{b_2}^{x}s^2ds-\frac{\sqrt{3}}{4}\re\int_{b_2}^{x} \sqrt{h(s)}_\pm ds
\end{equation}
where the $\pm$ sign in the last integrand depends on the side of the cut $\Delta_2$ to which $x$ belongs. However, 
 $\Delta_2$ projects onto the trajectory $\gamma_3(b_2)$ of $\varpi_1$, so the second integral in the right-hand side of \eqref{trajectories_zeta_eq_1} is purely imaginary. Hence, 
this equation reduces to
\begin{equation}\label{point_intersection}
\re x^3=\re b_2^3 =\frac{5}{3}.
\end{equation}

It was proved in \cite{deano_kuijlaars_huybrechs_complex_orthogonal_polynomials} that the trajectory $\Delta_2$ is contained in the domain bounded by the triangle with vertices 
$b_2$, $3^{-1/3}(\sqrt 2 -1)$ and $a_2$. In particular, the part of $\Delta_2$ on the upper half plane, and hence $x$, is contained in the domain bounded by the triangle $T$ 
determined by the vertices $b_2$, $\re b_2$ and $3^{-1/3}(\sqrt 2 -1)$. The function $z\mapsto \re z^3$ has a unique maximum on $T$ at the point $z=b_2$. Since 
$\re z^3$ is harmonic, it cannot attain a maximum on the domain bounded by $T$, hence \eqref{point_intersection} can only occur if $x=b_2$, showing that 
$\gamma_j(b_2^{(1)})\setminus \{b_2^{(1)}\}$ does not intersect the cut $\Delta_2$.
\end{proof}

Recall that $\infty^{(1)}$ is the center of a circle domain, which means that all trajectories of $\varpi_2$ passing through sufficiently distant points on 
$\mathcal R_1$ are closed Jordan curves. In particular, no trajectory diverges to $\infty^{(1)}$, and every trajectory entirely contained in $\mathcal R_1$ has 
to be closed. Consequently, both trajectories  $\gamma_j(b_2^{(1)})$ are closed as well,   $\gamma_j(b_2^{(1)})=\gamma_j(a_2^{(1)})$, and $\gamma_1(b_2^{(1)})$ 
(respectively $\gamma_2(b_2^{(1)})$) intersects the real line, say at the point $c_*$ (respectively $d_*$). We claim that
\begin{equation}\label{tau_0_relative_positions}
d_*<a_*<c_*<a_1,
\end{equation}
with $a_*$ defined in \eqref{approximate_value_a_*}.

Indeed, both $c_*$ and $d_*$ cannot lie on the same side of the cut $\Delta_2$ without running into contradiction with the general principle \textbf{P.3} above. 
Hence, either $d_*<a_*<c_*$ or $c_*<a_*<d_*$. The latter is impossible without $\gamma_1(b_2^{(1)})$ and $\gamma_2(b_2^{(1)})$ intersecting somewhere in the 
upper half plane, which again contradicts \textbf{P.3}. We conclude that $d_*<a_*<c_*$.

Let us prove the inequality $c_*<a_1$. Using the definition of trajectory and \eqref{width_parameter_cubic} we get
\begin{align*}
0 & =\re \int_{b_2^{(1)}}^{c_*^{(1)}}Q(s) ds= \re \int_{b_2}^{c_*}(\xi_2(s)-\xi_3(s))ds = \omega_3+  \int_{a_1}^{c_*}(\xi_2(s)-\xi_3(s))ds.
\end{align*}
But $\omega_3=\omega_3(0)<0$, see Figure~\ref{figure_width_parameters}, so that
$$
\int_{a_1}^{c_*}(\xi_2(s)-\xi_3(s))ds>0.
$$
Function $\xi_2-\xi_3$ is continuous and non-vanishing  on $(a_*,+\infty)$, and by \eqref{asymptotics_xi_functions} it is negative for large real parameters, so it is  
negative on the whole interval $(a_*,+\infty)$. Since $c_*\in (a_*,+\infty)$, the equality above is only possible if $c_*<a_1$. This proves \eqref{tau_0_relative_positions}.

\begin{figure}[htb]
	\centering \begin{tabular}{ll} 
		\mbox{\begin{overpic}[scale=0.55]%
				{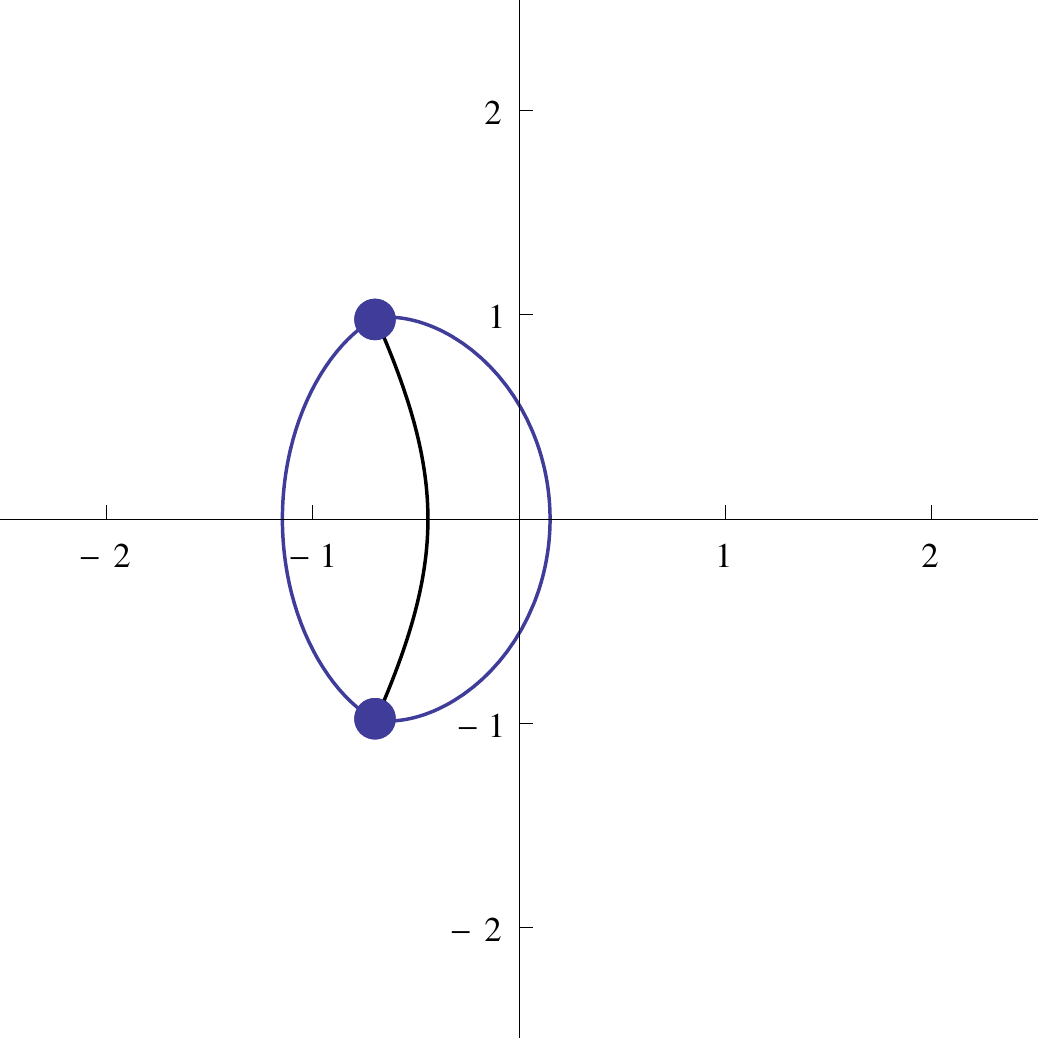}%
				\put(55,124){\small $b_2^{(1)} $}
				\put(55,37){\small $a_2^{(1)} $}
				\put(85,100){\small $\gamma_1(b_2^{(1)}) $}
				\put(18,100){\small $\gamma_2(b_2^{(1)}) $}
				\put(36,76){\small $d_* $}
				\put(90,76){\small $c_* $}
				\put(70,76){\small $a_* $}
				\put(69,90){\small $\Delta_2 $}
			\end{overpic}} &
			\mbox{\begin{overpic}[scale=0.55]%
					{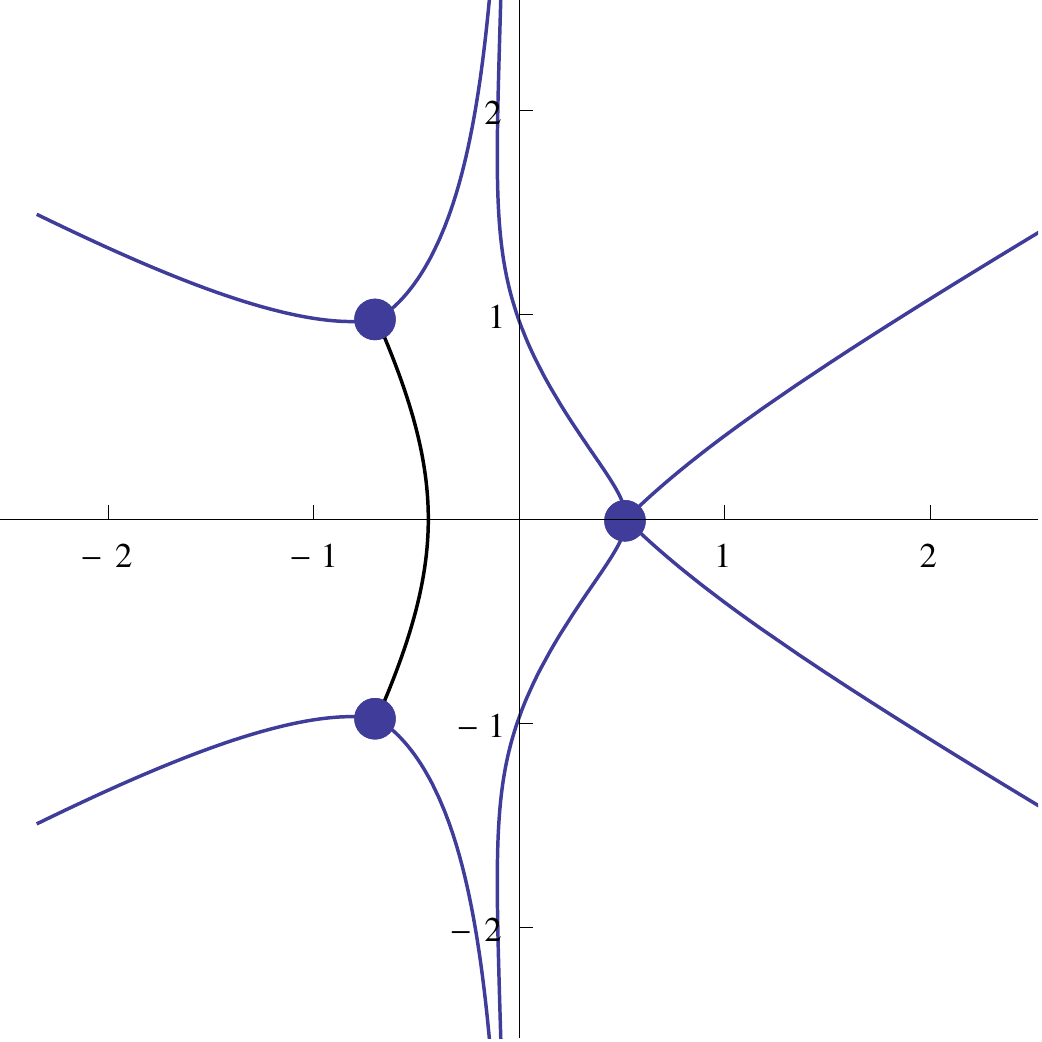}%
	\put(52,122){\small $b_2^{(3)} $}
	\put(52,37){\small $a_2^{(3)} $}
		\put(94,67){\small $a_1^{(3)} $}
		\put(45,150){\small $\gamma_1(b_2^{(3)}) $}
		\put(5,135){\small $\gamma_2(b_2^{(3)}) $}
		\put(125,128){\small $\gamma_1(a_1^{(3)}) $}
		\put(85,110){\small $\gamma_2(a_1^{(3)}) $}
		\put(85,50){\small $\gamma_3(a_1^{(3)}) $}
		\put(125,35){\small $\gamma_4(a_1^{(3)}) $}
		\put(55,90){\small $\Delta_2 $}
				\end{overpic}}
				\end{tabular}
				\caption{$\tau=0$: blue lines represent the critical graph of $\varpi_2$ on $\mathcal R_1$ (left) and $\mathcal R_3$. The black curve is the branch 
cut $\Delta_2$ connecting $\mathcal R_1$ and $\mathcal R_3$.}
				\label{alpha0_6_7}
			\end{figure}

The discussion  of the structure of the trajectories $\gamma_1(a_1^{(3)})$, $\gamma_2(a_1^{(3)})$, $\gamma_1(b_2^{(3)})$, $\gamma_2(b_2^{(3)})$ is identical to the analysis of the 
trajectories $\gamma_j(b_*)$, $\gamma_j(b_2)$, $j=1,2$, on $\mathcal R_2$ for $\varpi_1$ above, so we omit the details. 

The global structure of the critical graph on $\varpi_2$ on both sheets is presented in Figure~\ref{alpha0_6_7}. The basic conclusion is that with the branch cut $\Delta_2$ 
specified above, the critical graph splits into two sets: a closed Jordan curve on $\mathcal R_1$, containing $a_2^{(1)}$ and $ b_2^{(1)}$, and 4 analytic arcs on $\mathcal R_3$, 
starting and ending at $\infty^{(3)}$, each passing through one of the branch points $a_2^{(3)}$, $ b_2^{(3)}$ and $a_1^{(3)}$.

\subsubsection{Trajectories for \texorpdfstring{$0<\tau<\frac{1}{12}$}{}}\label{section_trajectories_0_tau_1}

A combination of the general principles \textbf{P.2} and \textbf{P.6} assures us that the finite critical trajectories for 
$\tau=0$ remain finite for small perturbations of $\tau$, and that the behavior of the trajectories described for $\tau=0$ is preserved for $\tau$ small.

Let $\varepsilon>0$ be sufficiently small.
The general principle \textbf{P.4} above tells us that if we consider the domains $\Omega_\varepsilon^{(1)}$, (respectively $\Omega_\varepsilon^{(2)}$ and 
$\Omega_\varepsilon^{(3)}$), swept by trajectories of \eqref{quadratic_differential_alpha0} passing through points in the  $\varepsilon$-neighborhood of $a_1^{(1)}$ (respectively 
$a_1^{(2)}$ and $a_1^{(3)}$), then there exists a $\delta>0$ such that the critical trajectories for $\varpi$ and $0<\tau <\delta$, passing through $a_1^{(j)}$, $j=1, 2, 3$, 
belong to $\Omega_\varepsilon=\Omega_\varepsilon^{(1)} \cup \Omega_\varepsilon^{(2)} \cup \Omega_\varepsilon^{(3)}$. These domains are depicted schematically on 
Figure~\ref{figure_traj_strip_0_1-12}.

\begin{figure}
\begin{subfigure}{.5\textwidth}
\centering
\begin{overpic}[scale=1]{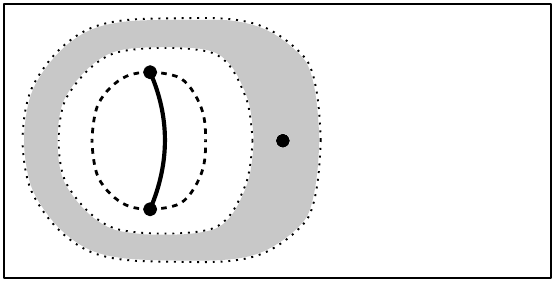}
	\put(70,60){\small $\Omega_\varepsilon^{(1)} $}
\end{overpic}
\caption*{$\mathcal R_1$}
\vspace{0.5cm}
\end{subfigure}%
\begin{subfigure}{.5\textwidth}
\centering
\begin{overpic}[scale=1]{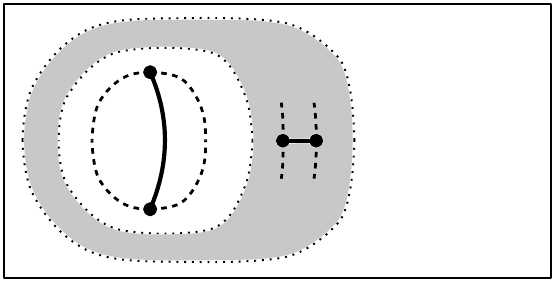}
	\put(70,60){\small $\Omega_\varepsilon^{(1)} $}
\end{overpic}
\caption*{$\mathcal R_1$}
\vspace{0.5cm}
\end{subfigure}
\begin{subfigure}{.5\textwidth}
\centering
\begin{overpic}[scale=1]{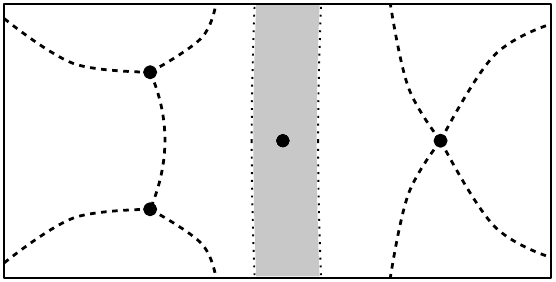}
	\put(75,60){\small $\Omega_\varepsilon^{(2)} $}
\end{overpic}
\caption*{$\mathcal R_2$}
\vspace{0.5cm}
\end{subfigure}%
\begin{subfigure}{.5\textwidth}
\centering
\begin{overpic}[scale=1]{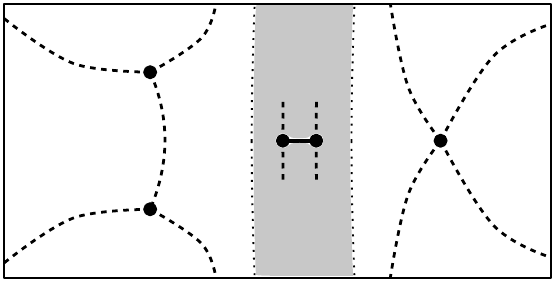}
	\put(80,60){\small $\Omega_\varepsilon^{(2)} $}
\end{overpic}
\caption*{$\mathcal R_2$}
\vspace{0.5cm}
\end{subfigure}
\begin{subfigure}{.5\textwidth}
\centering
\begin{overpic}[scale=1]{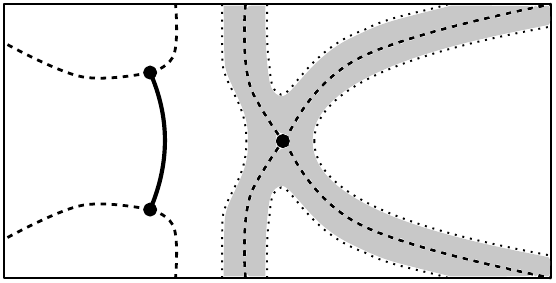}
	\put(120,70){\small $\Omega_\varepsilon^{(3)} $}
\end{overpic}
\caption*{$\mathcal R_3$}
\vspace{0.5cm}
\end{subfigure}%
\begin{subfigure}{.5\textwidth}
\centering
\begin{overpic}[scale=1]{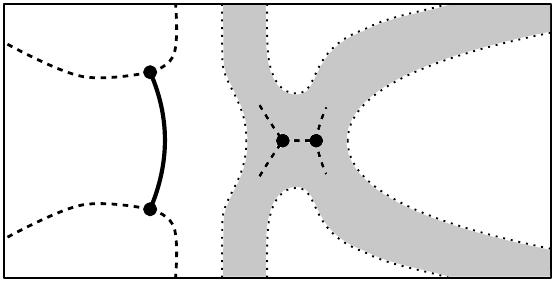}
	\put(120,68){\small $\Omega_\varepsilon^{(3)} $}
\end{overpic}
\caption*{$\mathcal R_3$}
\vspace{0.5cm}
\end{subfigure}
\caption{Left: critical graph of $\varpi$ for $\tau=0$, with the domains $\Omega_\varepsilon^{(j)}$, $j=1, 2, 3$, in gray. Right: local behavior or the critical trajectories for 
$\varpi$ and $\tau =\delta>0$, passing through $a_1^{(j)}$, $j=1, 2, 3$, with the same domains superimposed. }
\label{figure_traj_strip_0_1-12}
\end{figure}

We should keep in mind that the branch cut $\Delta_2$ now is completely specified by Definition~\ref{def:thebranchcut}, which is   consistent as long as the critical trajectory 
$\gamma_3(b_2^{(2)})$ joining $a_2^{(2)}$ and $b_2^{(2)}$, \emph{exists} and \emph{remains on $\mathcal R_2$} for the full range of the parameter $\tau$ under consideration. The 
forthcoming analysis shows that this is the case for $\tau\leq \tau_c$.

According to the general principle \textbf{P.4}, there exists a $\delta>0$ small enough such that for $0<\tau<\delta$, both $a_1^{(j)},b_1^{(j)}\in \Omega_\varepsilon^{(j)}$, 
$j=1, 2, 3$, and the critical trajectories emerging from  $a_1^{(j)},b_1^{(j)}$ stay in $\Omega_\varepsilon^{(j)}$. 

For instance, since for $\tau=0$ the critical trajectories  $\gamma_j(a_1^{(3)})$, $j=1, \dots, 4$, define three half-plane domains (bounded by  $\gamma_1(a_1^{(3)})\cup 
\gamma_2(a_1^{(3)})$, $\gamma_3(a_1^{(3)})\cup \gamma_4(a_1^{(3)})$ and $\gamma_1(a_1^{(3)})\cup \gamma_4(a_1^{(3)})$, see Figure~\ref{alpha0_6_7}, right), they must be persistent 
under small perturbation of $\tau$, and either $a_1^{(3)}$ or $b_1^{(3)}$, or both, must belong to their boundaries. Taking into account the structure of $\Omega_\varepsilon^{(3)}$ 
it is straightforward to conclude that for $0<\tau<\delta$, the trajectories of $\varpi$ through $a_1^{(3)}$ and $b_1^{(3)}$ are as shown in Figure~\ref{traj_final_0_tau_tau_1}.

We now examine the trajectories from $a_1^{(1)}$ and $b_1^{(1)}$. The following result comes in very handy: 
\begin{prop}\label{lemma:zerosofD}
Let 
\begin{equation}\label{eqRefirstsheet}
h(x,y):=	 \begin{cases} \displaystyle 
\int_{x }^y \re (\xi_{1+}(s)-\xi_3(s))\, ds=\int_{x }^y \re (\xi_{2+}(s)-\xi_3(s))\, ds, & \text{if } x,y \in \Delta_1, \\[3mm]
\displaystyle 
\int_{x }^y \re (\xi_1(s)- \xi_{2+}(s) )\, ds=\int_{x }^y \re (\xi_1(s) - \xi_{3+}(s))\, ds, & \text{if } x,y \in \Delta_3,
\end{cases}
\end{equation}
where we integrate along each interval. 

If $0<\tau <1/12$, then there exists no pair of values $x\neq y$, $x, y\in \Delta_1$,  such that $h(x,y)=0$.

If  $1/12<\tau <1/4$, and there exists a pair of values $x\neq y$, $x, y\in \Delta_1$ (resp., $x, y\in \Delta_3$) such that $h(x,y)=0$, then there exists no such pair of values on 
$\Delta_3$ (resp., on $\Delta_1$).

Furthermore, if  $x_1< y_1$, $x_2< y_2$ are two such pairs, $(x_1,y_1)\cap (x_2, y_2)\neq \emptyset$. 
\end{prop}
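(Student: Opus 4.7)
The plan is to reduce the problem to a statement about the sign of a single real branch $\xi_j$ on the relevant interval, which can then be read off directly from Proposition~\ref{prop_zeros_D}.

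First I would use Proposition~\ref{proposition_distribution_branchpoints} and the identity $\xi_1+\xi_2+\xi_3=0$ (the $\xi^2$ coefficient of \eqref{spectral_curve} is zero) to simplify the integrand in each case. On $\Delta_1$, $\xi_{1+}=\overline{\xi_{2+}}$ and $\xi_3\in\R$, so $\re\xi_{1+}=\re\xi_{2+}=-\xi_3/2$ and
\[
\re(\xi_{1+}(s)-\xi_3(s))=\re(\xi_{2+}(s)-\xi_3(s))=-\tfrac{3}{2}\,\xi_3(s),
\]
which incidentally confirms the equality of the two representations of $h(x,y)$ on $\Delta_1$. Hence $h(x,y)=-\tfrac{3}{2}\int_x^y \xi_3(s)\,ds$. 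The analogous computation on $\Delta_3$, using $\xi_1\in\R$ and $\xi_{2+}=\overline{\xi_{3+}}$, gives $h(x,y)=\tfrac{3}{2}\int_x^y \xi_1(s)\,ds$. Thus the question reduces entirely to whether the real-valued continuous function $\xi_3$ (resp.\ $\xi_1$) changes sign on $\Delta_1$ (resp.\ on $\Delta_3$), and how often.

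Next I would identify the zeros of $\xi_3$ on $\Delta_1$ and of $\xi_1$ on $\Delta_3$ in terms of zeros of $D$ via the Vi\`ete-type relation $D=-\xi_1\xi_2\xi_3$ (from \eqref{definition_D_xi_j}). On $\Delta_1$ the product $\xi_1\xi_2=|\xi_1|^2$ is strictly positive away from branch points, so $D(s)=0\iff \xi_3(s)=0$; similarly on $\Delta_3$, $D(s)=0\iff \xi_1(s)=0$. Since $(a_1,b_1)\supset \Delta_1^{\circ}\cup\Delta_3^{\circ}$ (with $\Delta_3=\emptyset$ in the subcritical regime), Proposition~\ref{prop_zeros_D} yields: for $0<\tau<1/12$ neither $\xi_3$ on $\Delta_1$ nor $\xi_1$ on $\Delta_3$ vanishes; for $1/12<\tau<1/4$ exactly one zero occurs, and it lies either in $\Delta_1^{\circ}$ or in $\Delta_3^{\circ}$ (since the two open intervals are disjoint, meeting only at the regular point $a_*$).

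From this, the first two assertions are immediate: if the relevant integrand has constant sign on an interval, its primitive $H$ (defined by $H'(s)=-\tfrac{3}{2}\xi_3$ or $H'(s)=\tfrac{3}{2}\xi_1$) is strictly monotone there, and $h(x,y)=H(y)-H(x)=0$ forces $x=y$. In the regime $1/12<\tau<1/4$, the interval containing the zero of $D$ is the only one on which nontrivial pairs can exist, proving the dichotomy. For the final assertion, on that exceptional interval $H'$ has a single simple zero at some $c$ (simplicity follows because Proposition~\ref{prop_zeros_D} gives a simple zero of $D$ via the discriminant computation in its proof), so $H$ is strictly monotone on each of $[\,\cdot,c]$ and $[c,\cdot\,]$, with opposite monotonicity. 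Thus any solution of $H(x)=H(y)$ with $x<y$ satisfies $x<c<y$, and consequently every pair $(x_i,y_i)$ with $h(x_i,y_i)=0$ has $c\in(x_i,y_i)$, which forces $(x_1,y_1)\cap(x_2,y_2)\ni c$.

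The only step requiring a small amount of care is checking the simplicity and strict-sign-change property of the single zero of $\xi_3$ (or $\xi_1$) on the relevant interval, ensuring that $H$ is of ``Morse type'' rather than having a horizontal inflection; I expect this to follow cleanly from the discriminant computation already done in the proof of Proposition~\ref{prop_zeros_D}, which shows that $D$ has only simple roots throughout $(0,1/4)\setminus\{1/12\}$.
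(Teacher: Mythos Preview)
Your argument is correct and follows essentially the same route as the paper: both reduce $h(x,y)=0$ to the vanishing of the real branch $\xi_3$ (resp.\ $\xi_1$) at an interior point, identify this via $D=-\xi_1\xi_2\xi_3$ with a zero of $D$, and invoke Proposition~\ref{prop_zeros_D}. The paper packages the first step as a mean value theorem application rather than your explicit simplification $\re(\xi_{1+}-\xi_3)=-\tfrac32\xi_3$, but the content is identical.

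One remark: your closing caveat about simplicity of the zero is unnecessary. If $\xi_3$ has a unique zero $c$ on the interval, then either it changes sign there (so $H$ is unimodal and every solution pair straddles $c$) or it does not (so $H$ is monotone throughout and there are no nontrivial pairs at all). Either alternative yields the stated conclusion, so you need not appeal to the discriminant computation for simplicity.
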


Notice that $h$ is well defined on $\Delta_1$ and $\Delta_3$ due to the symmetry relations \eqref{equality_xi_1} and \eqref{equality_xi_4}, and that $\Delta_3=\emptyset$ for 
$1/12<\tau<\tau_c$.
\begin{proof}
Assume that  there does exist a pair of values $x< y$, $x, y\in \Delta_1$,  such that $h(x,y)=0$ (same analysis is valid for $x, y \in \Delta_3$). By the mean value theorem, there 
exists a $u\in (x, y)$ such that 
$$
\re \xi_{1+}(u)=\re \xi_{2+}(u)= \xi_3(u),
$$
hence,
$$
0=\re \xi_{1+}(u)+\re \xi_{2+}(u)+\xi_{3}(u)=3\xi_3(u).
$$
But 
$$
D(u)=-\xi_{1+}(u)\xi_{2+}(u)\xi_3(u)=0,
$$
and the assertion follows from Proposition~\ref{prop_zeros_D}, keeping in mind that $D$ has exactly one zero on $\Delta_1\cup \Delta_3$ for  $1/12<\tau <1/4$.
\end{proof}

One of the consequences of Proposition~\ref{lemma:zerosofD} is that for $0<\tau<\delta$, 
the  trajectories emanating from $a_{1}^{(1)}$ and $b_{1}^{(1)}$ cannot cut $\Delta_1$ and must stay on the sheet $\mathcal R_1$. Thus, using again the general principle 
\textbf{P.4} we conclude that the trajectories $\gamma_k(a_1^{(1)})$, $\gamma_k(b_1^{(1)})$, $k=1,2$, are closed and encircle the cut $\Delta_2$, see 
Figure~\ref{traj_final_0_tau_tau_1}.

Similar considerations can be applied to get the behavior for the trajectories emanating from $a_1^{(2)}$, $b_1^{(2)}$, and the final result for $\tau$ small is seen in 
Figure~\ref{traj_final_0_tau_tau_1}. We skip the details. 

\begin{figure}
\begin{subfigure}{.5\textwidth}
\centering
\begin{overpic}[scale=1]{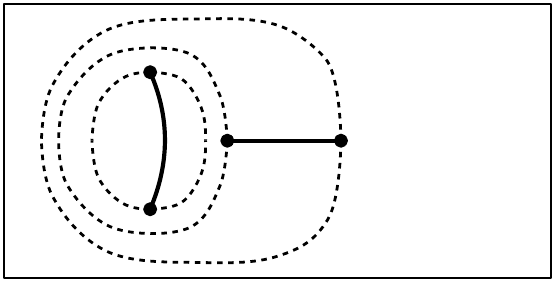}
\put(18,40){\scriptsize $S_1$}
\put(33,40){\scriptsize $S_3$}
\put(77,50){\scriptsize $S_4$}
\end{overpic}
\caption*{$\mathcal R_1$}
\vspace{0.5cm}
\end{subfigure}%
\begin{subfigure}{.5\textwidth}
\centering
\begin{overpic}[scale=1]{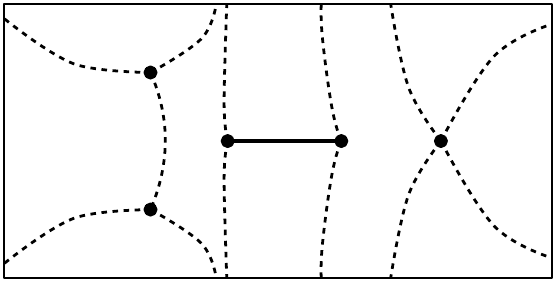}
\put(52,40){\scriptsize $S_2$}
\put(77,50){\scriptsize $S_4$}
\put(77,25){\scriptsize $S_4$}
\put(105,40){\scriptsize $S_5$}
\end{overpic}
\caption*{$\mathcal R_2$}
\vspace{0.5cm}
\end{subfigure}
\begin{subfigure}{1\textwidth}
\centering
\begin{overpic}[scale=1]{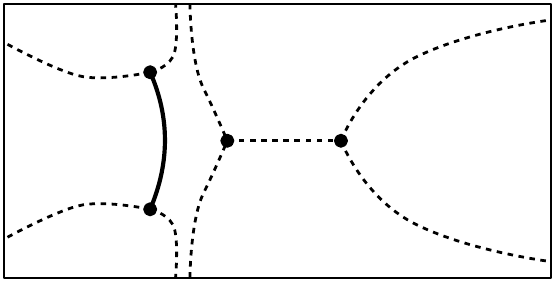}
\put(50,40){\scriptsize $S_3$}
\end{overpic} 
\caption*{$\mathcal R_3$}
\end{subfigure}
\caption{Critical graph of $\varpi$ for $0<\tau<1/12$, with the strip and ring domains labeled by $S_j$'s. Notice that some of these domains intersect more than one 
sheet.}\label{traj_final_0_tau_tau_1}
\end{figure}

The outcome of our analysis is that the critical graph of $\varpi$ has the structure showed in Figure~\ref{traj_final_0_tau_tau_1}, at least for $0<\tau<\delta$. Our next goal is 
to prove that this is actually valid for $\tau \in (0,1/12)$. The continuity principle \textbf{P.4} yields that this is the case as long as 
\begin{enumerate}[\rm (i)]
\item No collision of the critical points occur: this is true indeed for $\tau \in (0,1/12)$, see Section~\ref{section_critical_points}.

\item No new domains emerge, which amounts to say that the finite trajectories for $0<\tau<\delta$ remain critical for $0<\tau<1/12$: this is assured by a combination of 
\textbf{P.2} and \textbf{P.6}.

\item No connected components of the complement of the critical graph ``disappear''. More precisely, it means that no width of any strip or ring domain becomes zero. These domains 
for $0<\tau<\delta$ are identified on Figure~\ref{traj_final_0_tau_tau_1}: there is one ring domain $S_1$ and 4 strip domains, $S_2,\dots, 
S_5$. There widths $\sigma(S_j)$ (see the definition \eqref{definition_width} in Section~\ref{sec:globalstructuretrajectoriesAppendix2}) are given by:
\begin{itemize}
\item $\sigma(S_j)=|\omega_j|$, $j=1,2,3$, as defined in \eqref{width_parameter_cubic}. They do not vanish for $\tau \in (0,1/12)$, see Figure~\ref{figure_width_parameters}.
\item $\sigma(S_4)=|h(a_1, b_1)| $, with $h$ defined in \eqref{eqRefirstsheet}, which does not vanish for $\tau \in (0,1/12)$, see Proposition~\ref{lemma:zerosofD}.
\item $\sigma(S_5)=\left|\int_{b_1}^{b_*}(\xi_1(s)-\xi_3(s))ds \right|$, which does not vanish for $\tau \in (0,1/12)$, see \eqref{inequality_xi_4}.
\end{itemize}

\end{enumerate}

We conclude that the critical graph of $\varpi$, depicted in Figure~\ref{traj_final_0_tau_tau_1}, is valid for the whole range $0<\tau<1/12$.  In particular, the critical 
trajectory $\gamma_1(a_2^{(2)})$ connects the points $a_2^{(2)}$ and $b_2^{(2)}$, which proves Theorem~\ref{thm:criticaltraj} for $0<\tau<1/12$.

\subsubsection{Trajectories for \texorpdfstring{$\frac{1}{12}<\tau<\tau_1$}{}}\label{section_1-12_to_tau1}

When $\tau=1/12$, the double point $b_*$ coincides with $b_1$, and the strip domain $S_5$ disappears ($\sigma(S_5)\searrow 0$ as $\tau \nearrow 1/12$), see 
Figure~\ref{figure_traj_strip_1-12_tau1}, left. Clearly, this transition has no impact on the structure of trajectories on the third sheet. Moreover, again a combination of 
\textbf{P.2} and \textbf{P.6} assures the finite trajectories for $\tau=1/12$ remain finite for $1/12<\tau<1/12+\delta$.

Let $\varepsilon>0$ be sufficiently small. Similarly to what has been done in the previous interval, the general principle \textbf{P.4}  tells us that if we consider the domains
$\Omega_\varepsilon^{(1)}$ and $\Omega_\varepsilon^{(2)}$, swept by trajectories of $\varpi$ passing through points in the  $\varepsilon$-neighborhood of $b_1^{(1)}$ and  
$b_1^{(2)}=b_*^{(2)}$, then there exists a $\delta>0$ such that the critical trajectories for $1/12<\tau <1/12+\delta$, passing through $b_1^{(j)}$, $j=1, 2$, and $b_*^{(1)}$, 
belong to $\Omega_\varepsilon=\Omega_\varepsilon^{(1)} \cup \Omega_\varepsilon^{(2)}$. These domains are also depicted schematically on Figure~\ref{figure_traj_strip_1-12_tau1}, 
left.

\begin{figure}
\begin{subfigure}{.5\textwidth}
\centering
\begin{overpic}[scale=1]{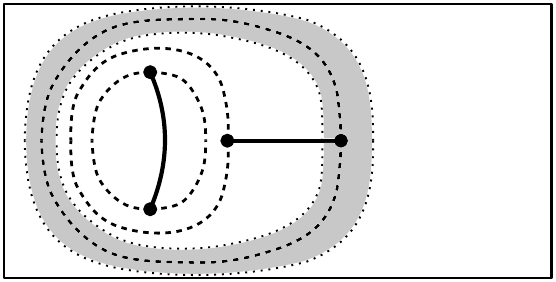}
\end{overpic}
\caption*{$\mathcal R_1$}
\vspace{0.5cm}
\end{subfigure}%
\begin{subfigure}{.5\textwidth}
\centering
\begin{overpic}[scale=1]{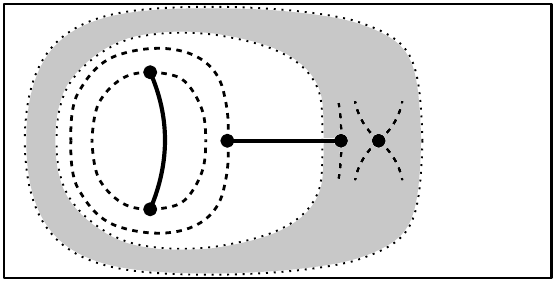}
\end{overpic}
\caption*{$\mathcal R_1$}
\vspace{0.5cm}
\end{subfigure}
\begin{subfigure}{.5\textwidth}
\centering
\begin{overpic}[scale=1]{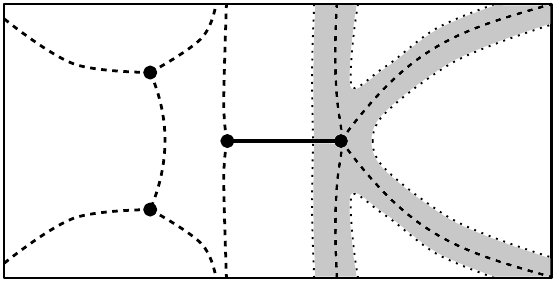}
\end{overpic}
\caption*{$\mathcal R_2$}
\vspace{0.5cm}
\end{subfigure}%
\begin{subfigure}{.5\textwidth}
\centering
\begin{overpic}[scale=1]{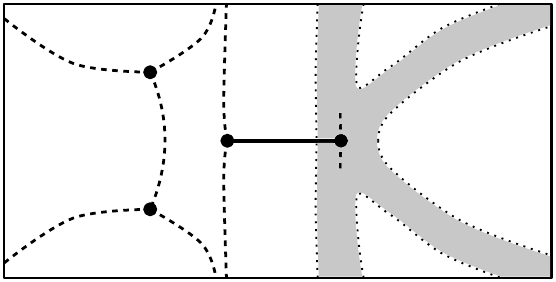}
\end{overpic}
\caption*{$\mathcal R_2$}
\vspace{0.5cm}
\end{subfigure}
\begin{subfigure}{.5\textwidth}
\centering
\begin{overpic}[scale=1]{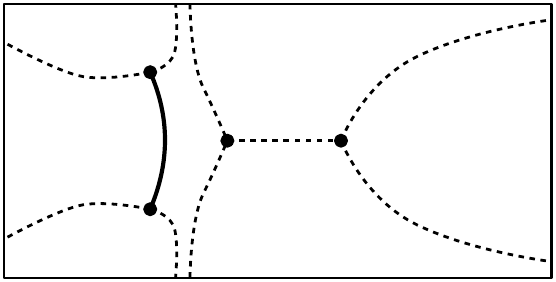}
\end{overpic}
\caption*{$\mathcal R_3$}
\vspace{0.5cm}
\end{subfigure}%
\begin{subfigure}{.5\textwidth}
\centering
\begin{overpic}[scale=1]{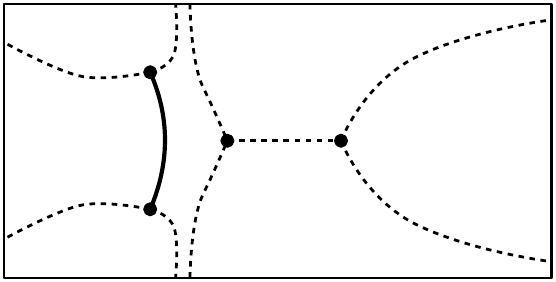}
\end{overpic}
\caption*{$\mathcal R_3$}
\vspace{0.5cm}
\end{subfigure}
\caption{Left: critical graph of $\varpi$ for $\tau=1/12$, with the domains $\Omega_\varepsilon^{(j)}$, $j=1, 2$, in gray. Right: local behavior or the critical trajectories for 
$\varpi$ and $\tau =1/12+\delta>0$, passing through $b_1^{(j)}$, $j=1, 2$, and $b_*^{(1)}$, with the same domains superimposed. }
\label{figure_traj_strip_1-12_tau1}
\end{figure}

For $1/12<\tau <1/12+\delta$ we consider the first sheet and the trajectories emanating from $b_1^{(1)}$ and $b_*^{(1)}$; thanks to principle {\bf P.2}, we concentrate on the 
upper half plane $\C_+$ (or to be precise, on its pre-image by $\pi$ on $\mathcal R_1$), namely $\gamma_1(b_*^{(1)})$, $\gamma_2(b_*^{(1)})$, $\gamma_1(b_1^{(1)})$, see 
Figure~\ref{figure_traj_strip_1-12_tau1}, right. These trajectories must stay in $\Omega_\varepsilon$, so they have to 
intersect $\pi^{-1}(\R)$ on $\mathcal R_1$. Let us denote the points of intersection of $\gamma_1(b_*^{(1)})$, $\gamma_2(b_*^{(1)})$, $\gamma_1(b_1^{(1)})$ by 
$x_1^{(1)}$, $x_2^{(1)}$, $x_3^{(1)}$, respectively. Using the general principles \textbf{P.2} and \textbf{P.3} we must immediately discard the following possibilities: (i) 
$x_j\geq b_1$ for some $j$,  (ii) $x_1\leq a_*^{(1)}$ and $x_2\leq a_*^{(1)}$ (recall that $a_*=\Delta_2 \cap \R$). Since trajectories cannot intersect, it holds $x_1< x_2< x_3$ 
and we conclude that necessarily $x_2\in (a_1,b_1)$, and consequently, $x_3\in (a_1,b_1)$ as well. In particular, $h(x_3, b_1)=0$, in the notation \eqref{eqRefirstsheet}.

Since $x_1$ and $x_2$ belong to trajectories with a common point $b_*^{(1)}$,  the assumption $x_1\in (a_1,b_1)$ yields that $h(x_1, x_2)=0$, and since $(x_1, x_2)\cap (x_3, 
b_1)=\emptyset$, this contradicts Proposition~\ref{lemma:zerosofD}.

From the considerations above, it follows that $\gamma_1(b_*^{(1)})$ is closed, stays on $\mathcal R_1$, and intersects $\pi^{-1}(\R)$ to the left of $\Delta_2$, and the 
trajectories 
$\gamma_2(b_*^{(1)})$, $\gamma_1(b_1^{(1)})$ intersect the cut $\Delta_1$ and move to the second sheet $\mathcal R_2$. We keep denoting these points of 
intersection by $x_2,x_3$ as before. Clearly, $x_3>x_2$, and Proposition~\ref{lemma:zerosofD} implies that these are the only points of intersection of these trajectories with the 
interval 
$(a_1,b_1)$.  

Let us turn to the second sheet, $\mathcal R_2$ and consider $\gamma_1(b_1^{(2)})$: from the structure of $\Omega_\varepsilon^{(2)}$ it is clear that it either diverges to 
$\infty^{(2)}$, or intersects the branch cut between $a_1$ and $b_1$ and moves to $\mathcal R_1$. If we assume the latter, $\Omega_\varepsilon^{(1)}$ shows that either it will 
return to $\mathcal R_2$ at a different point in $(a_1,b_1)$ (in contradiction with Proposition~\ref{lemma:zerosofD}), or it bounds a simply connected domain, which contradicts 
\textbf{P.3}. Hence, $\gamma_1(b_1^{(2)})$ must diverge to $\infty^{(2)}$, and thus $b_1^{(2)}$ lies on the boundary of the half plane domain bounded for $\tau=1/12$ by 
$\gamma_1(b_1^{(2)})$ and $\gamma_4(b_1^{(2)})$. In particular, by \textbf{P.5}, for $1/12<\tau<1/12+\delta$, $\gamma_1(b_1^{(2)})$ diverges to $\infty^{(2)}$ in the same 
asymptotic direction given by the angle $\theta_1^{(\infty)} $ from \eqref{asymptotic_directions_cubic}.

\begin{figure}
	\begin{subfigure}{.5\textwidth}
		\centering
		\begin{overpic}[scale=1]{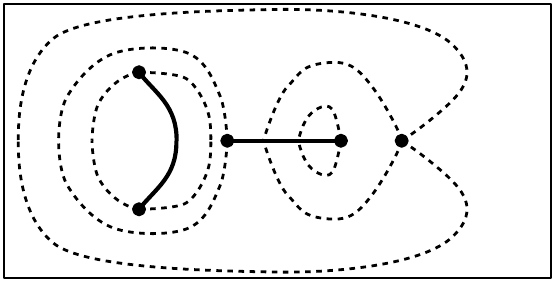}
			\put(18,40){\scriptsize $S_1$}
			\put(36,40){\scriptsize $S_3$}
			\put(68,50){\scriptsize $S_4$}
			\put(102,40){\scriptsize $S_5$}
		\end{overpic}
		\caption*{$\mathcal R_1$}
		\vspace{0.5cm}
	\end{subfigure}%
	\begin{subfigure}{.5\textwidth}
		\centering
		\begin{overpic}[scale=1]{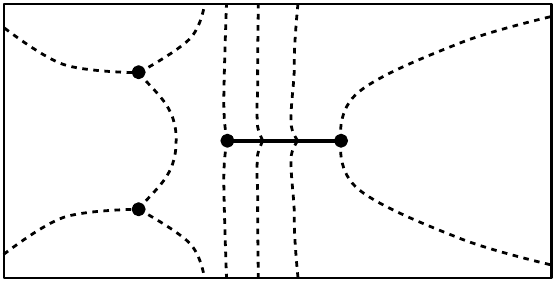}
			\put(52,40){\scriptsize $S_2$}
			\put(75,50){\scriptsize $S_5$}
			\put(75,25){\scriptsize $S_5$}
			\put(65,50){\scriptsize $S_4$}
			\put(65,25){\scriptsize $S_4$}
		\end{overpic}
		\caption*{$\mathcal R_2$}
		\vspace{0.5cm}
	\end{subfigure}
	\begin{subfigure}{1\textwidth}
		\centering
		\begin{overpic}[scale=1]{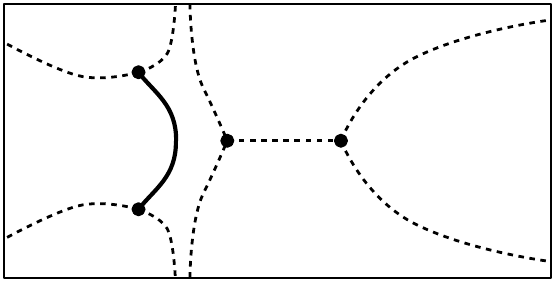}
			\put(53,40){\scriptsize $S_3$}
		\end{overpic} 
		\caption*{$\mathcal R_3$}
	\end{subfigure}
	\caption{Critical graph of $\varpi$ for $1/12<\tau<\tau_1$, with the strip and ring domains labeled by $S_j$'s. Notice that some of these domains intersect more than one 
		sheet.}\label{traj_final_1-12_to_tau1}
\end{figure}

Recall that we concluded that the trajectory $\gamma_1(b_1^{(1)})$ enters $\mathcal R_2$ through the cut $(a_1, b_1)$ at a point $x_3$. The only possibility left for it is to go 
to 
$\infty^{(2)}$. Applying Theorem~\ref{teichmuller_lemma} to the $\varpi$-polygon bounded by the trajectories $\gamma_1(b_1^{(1)})$ and $\gamma_2(b_1^{(2)})$, we get that 
$\gamma_1(b_1^{(1)})$ goes to $\infty^{(2)}$ in the asymptotic direction given by the angle $\theta_5^{(\infty)}$, and consequently $\gamma_1(b_*^{(1)})$ extends to $\infty^{(2)}$ 
with angle $\theta_5$ as well. 

The outcome of our analysis is that for $1/12<\tau<1/12+\delta$ the critical graph of $\varpi$ has the structure showed in Figure~\ref{traj_final_1-12_to_tau1}. We prove that this 
is actually valid for $\tau \in (1/12, \tau_1)$. Again, the continuity principle \textbf{P.4} yields that this is the case as long as (i) no collision of the critical points occur 
(this is true indeed for $\tau \in (1/12, 1/4)$, see Section~\ref{section_critical_points}), (ii) finite critical trajectories for $1/12<\tau<1/12+\delta$ remain finite for 
$1/12<\tau<\tau_1$ (assured by a combination of \textbf{P.2} and \textbf{P.6}); and (iii) no width of any strip and ring domains become zero. These domains for 
$1/12<\tau<1/12+\delta$ are identified on Figure~\ref{traj_final_1-12_to_tau1}: there is one ring domain $S_1$ and 4 strip domains, $S_2,\dots, S_5$. There widths $\sigma(S_j)$ 
are 
given by:
\begin{itemize}
	\item $\sigma(S_j)=|\omega_j|$, $j=1,2,3$, as defined in \eqref{width_parameter_cubic}. They do not vanish for $1/12<\tau<\tau_1$, see Figure~\ref{figure_width_parameters}, 
although $\sigma(S_1)$ does vanish for $\tau=\tau_1$.
	\item $\sigma(S_4) $ is given by the absolute value of
	\begin{align}
	\re \int_{a_1}^{b_*}(\xi_2(s)-\xi_3(s)) & =\re \int_{a_1}^{b_2}(\xi_2(s)-\xi_3(s))ds+\re \int_{b_2}^{b_*}(\xi_2(s)-\xi_3(s))ds \nonumber \\
	& =-\omega_1+\omega_4, \label{omega1omega4}
	\end{align}
	and $\omega_1\neq \omega_4$ for $\tau>1/12$, see Figure~\ref{figure_width_parameters} in Section~\ref{section_widths_cubic}.
	\item $\sigma(S_5)=\left|\int_{b_1}^{b_*}(\xi_{2}(s)-\xi_3(s))ds \right|$, which does not vanish for $\tau> 1/12$, see \eqref{inequality_xi_5}.
\end{itemize}

We conclude that the critical graph of $\varpi$, depicted in Figure~\ref{traj_final_1-12_to_tau1}, is valid for the whole range  $1/12<\tau<\tau_1$. In particular this yields
Theorem~\ref{thm:criticaltraj} in the mentioned range of $\tau$.

\subsubsection{Trajectories for \texorpdfstring{$\tau_1<\tau<\tau_c$}{}}\label{section_trajectories_tau1_tauc}

\begin{figure}
\begin{subfigure}{.5\textwidth}
\centering
\begin{overpic}[scale=1]{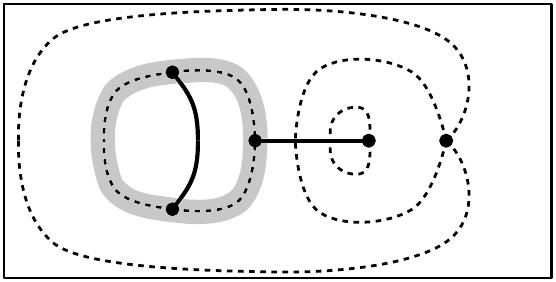}
\end{overpic}
\caption*{$\mathcal R_1$}
\vspace{0.5cm}
\end{subfigure}%
\begin{subfigure}{.5\textwidth}
\centering
\begin{overpic}[scale=1]{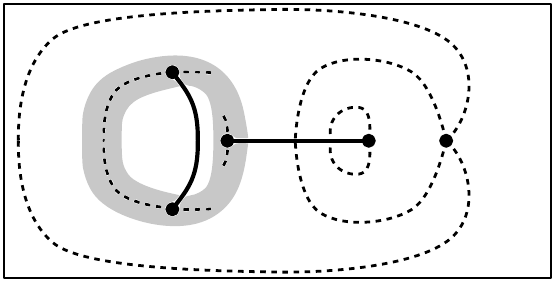}
\end{overpic}
\caption*{$\mathcal R_1$}
\vspace{0.5cm}
\end{subfigure}
\begin{subfigure}{.5\textwidth}
\centering
\begin{overpic}[scale=1]{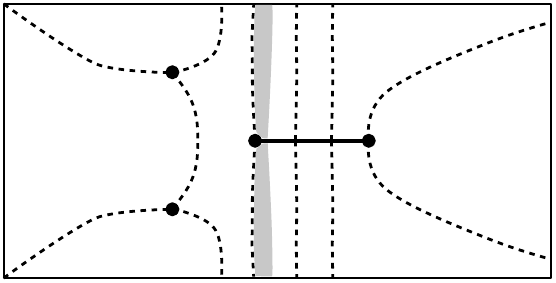}
\end{overpic}
\caption*{$\mathcal R_2$}
\vspace{0.5cm}
\end{subfigure}%
\begin{subfigure}{.5\textwidth}
\centering
\begin{overpic}[scale=1]{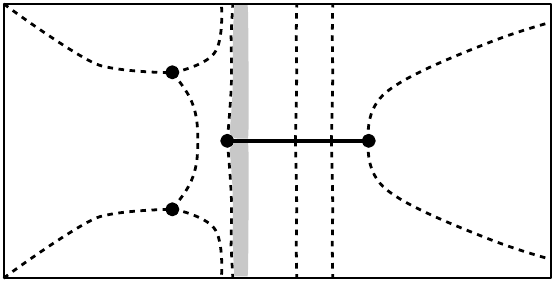}
\end{overpic}
\caption*{$\mathcal R_2$}
\vspace{0.5cm}
\end{subfigure}
\begin{subfigure}{.5\textwidth}
\centering
\begin{overpic}[scale=1]{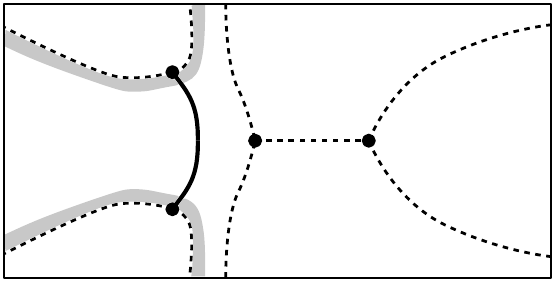}
\end{overpic}
\caption*{$\mathcal R_3$}
\vspace{0.5cm}
\end{subfigure}%
\begin{subfigure}{.5\textwidth}
\centering
\begin{overpic}[scale=1]{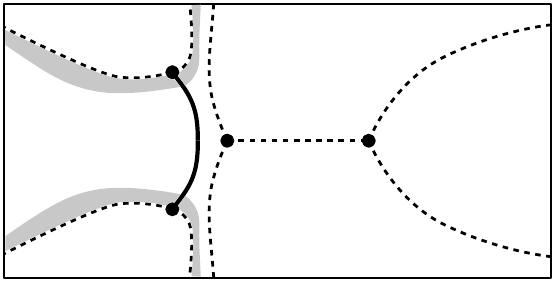}
\end{overpic}
\caption*{$\mathcal R_3$}
\vspace{0.5cm}
\end{subfigure}
\caption{Left: critical graph of $\varpi$ for $\tau=\tau_1$, with the domain $\Omega_\varepsilon$ in gray. Right: local behavior or the critical trajectories for $\varpi$ and $\tau 
=\tau_1+\delta>0$, passing through $b_2^{(1)}$ and  $a_1^{(1)}$, with the same domain superimposed.}\label{figure_traj_strip_tau1_tauc}
\end{figure}

At the value $\tau=\tau_1$ the critical trajectory $\gamma_1(b_2^{(1)})$ on the first sheet hits the branch point $a_1^{(1)}$, so that the ring domain $S_1$ disappears 
($\sigma(S_1)\searrow 0$ as $\tau \nearrow \tau_1$), see Figure~\ref{traj_final_1-12_to_tau1}. From the analysis of the behavior of the rest of the widths $\sigma(S_j)$ and the 
other finite critical trajectories, it follows that this fact does not affect the rest of the strip domains - note that there are no other ring domains. In particular, the 
trajectories emerging from $a_1$, $a_2$, and $b_2$ on the sheets $\mathcal R_2$, $\mathcal R_3$ do not display any phase transition.

The critical graph for $\tau=\tau_1$ is depicted in Figure~\ref{figure_traj_strip_tau1_tauc}. In accordance with the methodology we have followed so far, we fix an $\varepsilon>0$  
sufficiently small and consider the domain
$\Omega_\varepsilon$ swept by trajectories of $\varpi$ passing through points in the  $\varepsilon$-neighborhood of $b_2^{(1)}$ and  $a_1^{(1)}=a_*^{(1)}$. Notice that 
$\Omega_\varepsilon$ no longer lives on the single sheet, and its boundary now also contains critical trajectories, 
namely $\gamma_j(a_1^{(2)})$, $\gamma_j(a_2^{(3)})$, $\gamma_j(b_2^{(3)})$, $j=1,2$. This is so because, as already mentioned, there is no transition for these trajectories. 
Observe also that when $\tau\nearrow \tau_1$ the trajectory $\gamma_2(b_2^{(1)})=\gamma_2(a_2^{(1)})$ does not collide with any critical point other than its endpoints, hence its 
topology is unchanged under small perturbations of $\tau$ around $\tau_1$. In fact, there exists a $\delta>0$ such that the critical trajectories for $\tau_1<\tau <\tau_1+\delta$, 
passing through $b_1^{(1)}$, belong to $\Omega_\varepsilon$. This domain is  also depicted schematically on Figure~\ref{figure_traj_strip_tau1_tauc}, left.

We now consider the possible behavior of $\gamma_1(b_2^{(1)})$ for $\tau_1<\tau <\tau_1+\delta$, having in mind that it cannot leave the shaded region $\Omega_\varepsilon$, which 
shows that either $\gamma_1(b_2^{(1)})$ moves immediately to $\mathcal R_3$ through $\Delta_2$, or it intersects the (preimage of) the real line near $a_1$. 

In the first case, $\gamma_1(b_2^{(1)})$ extends to $\infty^{(3)}$ with angle $\theta_3^{(\infty)}$. Thus $\gamma_1(b_2^{(1)})\cup \gamma_2(b_2^{(3)})$  is 
the boundary of a $\varpi$-polygon for which $\kappa=1$ and $\lambda=0$, contradicting \eqref{teichmuller_formula}.

Assume otherwise, so that $\gamma_1(b_2^{(1)})$ intersects the (preimage of the) real line  at a point 
$x^{(1)}$, $x$ close to $a_1$. If $x\leq a_1$, we integrate from $b_2$ to $x$ over $\gamma_1(b_2^{(1)})$ and then from $x$ to $a_1$ over the real line to get
\begin{equation}\label{calculationOmega1}
\omega_1=\re\int_{b_2}^{x}(\xi_2(s)-\xi_3(s))ds+\int_{x}^{a_1}(\xi_2(s)-\xi_3(s))ds=\int_{x}^{a_1}(\xi_2(s)-\xi_3(s))ds.
\end{equation}
In the range of $\tau$ considered, $\omega_1>0$, while the last integral is $\leq 0$, see  \eqref{inequality_xi_3}, which leads us into a contradiction. Hence, $x \in (a_1,b_1)$, 
so that $\gamma_1(b_2^{(1)})$ moves to the second sheet. Recall that if $x_3^{(1)}$ is the point of intersection of $\gamma_2(b_1^{(1)})$ with $\Delta_1$, we already have 
$h(x_3,b_1)=0$, so by Proposition~\ref{lemma:zerosofD}, $\gamma_1(b_2^{(1)})$ cannot return to $\mathcal R_1$; in consequence, it stays on the second sheet and diverges to 
$\infty^{(2)}$ in the asymptotic direction corresponding to the angle $\theta_5^{(\infty)}$, see Figure~\ref{figure_traj_prel2_tau1_to_tau_2}.

\begin{figure}
	\begin{subfigure}{.5\textwidth}
		\centering
		\begin{overpic}[scale=1]{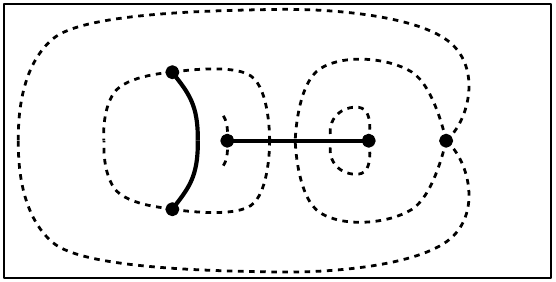}
		\end{overpic}
		\caption*{$\mathcal R_1$}
		\vspace{0.5cm}
	\end{subfigure}%
	\begin{subfigure}{.5\textwidth}
		\centering
		\begin{overpic}[scale=1]{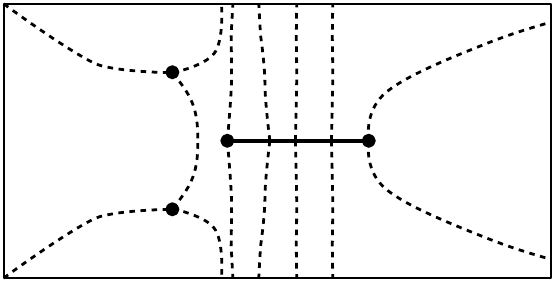}
		\end{overpic}
		\caption*{$\mathcal R_2$}
		\vspace{0.5cm}
	\end{subfigure}
	\caption{Some trajectories of $\varpi$ on the first two sheets, for $\tau_1<\tau<\tau_c$.}\label{figure_traj_prel2_tau1_to_tau_2}
\end{figure}

On the other hand, trajectories $\gamma_j(a_1^{(1)})$, $j=1,2$, are ``trapped'' between $\gamma_1(b_2^{(1)})$ and the branch cut $\Delta_2^{(2)}$, see 
Figure~\ref{figure_traj_prel2_tau1_to_tau_2},  left. Thus, they cannot stay on $\mathcal R_1$ without violating the general principle \textbf{P.3}, so they   move to the third 
sheet and diverge to $\infty^{(3)}$  in the asymptotic directions corresponding to the angles $\theta_3^{(\infty)}$ and $\theta_4^{(\infty)}$,  respectively.

The outcome of our analysis is that for $\tau_1<\tau<\tau_1+\delta$ the critical graph of $\varpi$ has the structure showed in Figure~\ref{traj_final_tau1_to_tauc}. Same analysis 
as in the previous section shows that  this is actually valid for the whole range $\tau_1<\tau<\tau_c$, in particular implying Theorem~\ref{thm:criticaltraj} for this range of 
$\tau$.

\begin{figure}
\begin{subfigure}{.5\textwidth}
\centering
\begin{overpic}[scale=1]{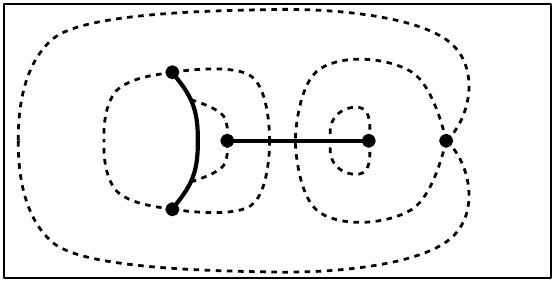}
\end{overpic}
\caption*{$\mathcal R_1$}
\vspace{0.5cm}
\end{subfigure}%
\begin{subfigure}{.5\textwidth}
\centering
\begin{overpic}[scale=1]{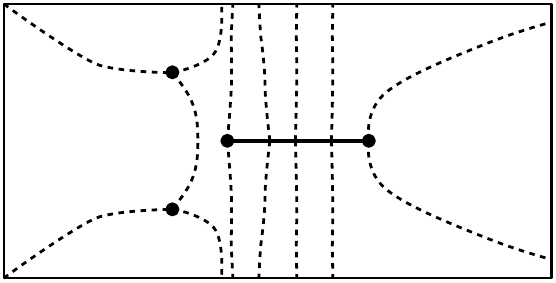}
\put(56,55){\scriptsize $S_2$}
\end{overpic}
\caption*{$\mathcal R_2$}
\vspace{0.5cm}
\end{subfigure}
\begin{subfigure}{1\textwidth}
\centering
\begin{overpic}[scale=1]{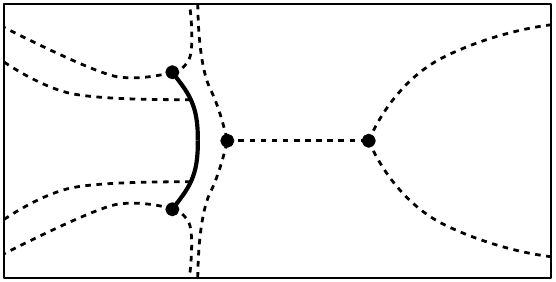}
\end{overpic} 
\caption*{$\mathcal R_3$}
\end{subfigure}
\caption{Global structure of trajectories for $\tau_1<\tau<\tau_c$.}\label{traj_final_tau1_to_tauc}
\end{figure}

\subsubsection{Trajectories for \texorpdfstring{$\tau_c<\tau<\tau_2$}{}}\label{section_trajectories_tauc_tau2}

We come to the topologically most important phase transition. According to Definition~\ref{def:thebranchcut}, we use the lift of the trajectory joining $a_2^{(2)}$ and $b_2^{(2)}$ 
on $\mathcal R_2$ as the branch cut connecting the sheets $\mathcal R_1$ and $\mathcal R_3$. When $\tau=\tau_c$, the strip domain $S_2$ disappears ($\sigma(S_2)\searrow 0$ as 
$\tau \nearrow \tau_c$), and this trajectory finally reaches the branch point $a_1^{(2)}$,  see Figure~\ref{traj_final_tau1_to_tauc}. Note that this transition corresponds to 
\eqref{equation_definition_alphac}. We fix an $\varepsilon>0$  sufficiently small and consider the domain $\Omega_\varepsilon$ swept by trajectories of $\varpi$ passing through 
points in the  $\varepsilon$-neighborhood of $a_1^{(2)}$, $a_2^{(2)}$, and $b_2^{(2)}$. The  critical graph for $\tau=\tau_c$ along with $\Omega_\varepsilon$ is displayed in 
Figure~\ref{traj_strip_tauc}.

\begin{figure}
\begin{subfigure}{.5\textwidth}
\centering
\begin{overpic}[scale=1]{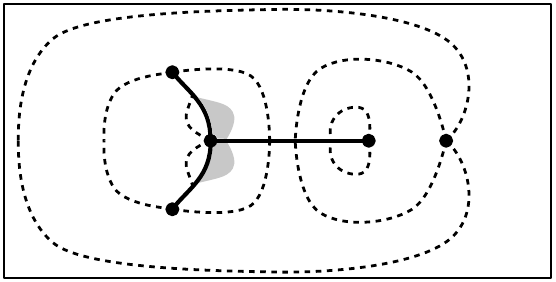}
\end{overpic}
\caption*{$\mathcal R_1$}
\vspace{0.5cm}
\end{subfigure}%
\begin{subfigure}{.5\textwidth}
\centering
\begin{overpic}[scale=1]{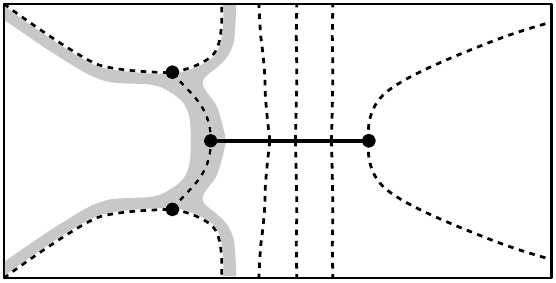}
\end{overpic}
\caption*{$\mathcal R_2$}
\vspace{0.5cm}
\end{subfigure}
\begin{subfigure}{1\textwidth}
\centering
\begin{overpic}[scale=1]{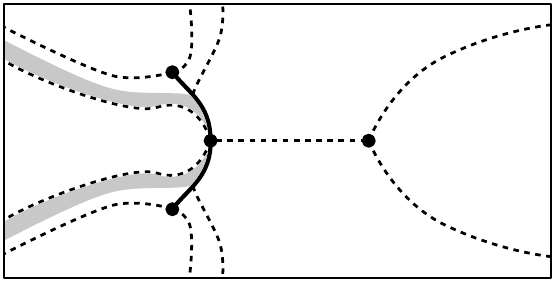}
\end{overpic} 
\caption*{$\mathcal R_3$}
\end{subfigure}
\caption{Critical graph of $\varpi$ for $\tau=\tau_c$, with the domain $\Omega_\varepsilon$ in gray, consisting of the trajectories for $\varpi$ passing through the  
$\varepsilon$-neighborhood of $a_1^{(2)}$, $a_2^{(2)}$, and $b_2^{(2)}$.}\label{traj_strip_tauc}
\end{figure}

For $\tau_c<\tau <\tau_c+\delta$ we examine first the second sheet and the trajectories emanating from $a_1^{(2)}$, $a_2^{(2)}$, and $b_2^{(2)}$. For a sufficiently small 
$\delta>0$, these trajectories must stay in $\Omega_\varepsilon$, see Figure~\ref{figure_traj_prel1_tauc_tau2}.

\begin{figure}
\centering
\begin{overpic}[scale=1]{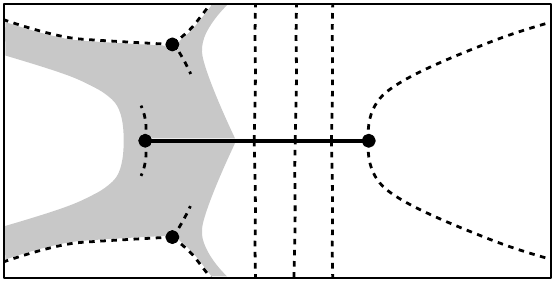}
\end{overpic}
\caption{The domain $\Omega_\varepsilon$ on $\mathcal R_2$ for $\tau_c<\tau<\tau_c+\delta$.}\label{figure_traj_prel1_tauc_tau2}
\end{figure}

Analyzing $\gamma_1(a_1^{(2)})$ we have to discard the following possibilities:
\begin{itemize}
 \item  $\gamma_1(a_1^{(2)})$ cannot intersect (the preimage by $\pi$ of) the real line to the left of $a_1$ without violating the general principle \textbf{P.3}.
 
 \item it cannot intersect (the preimage by $\pi$ of) the real line to  on the cut $(a_1,b_1)$ either:  otherwise the equation $h(x,a_1)=0$ has a solution in $x\in (a_1, b_1)$, 
along with the identity $h(x_3,b_1)=0$, where  $x_3^{(1)}$ is the point of intersection of $\gamma_2(b_1^{(1)})$ with $\Delta_1$, and since $(a_1,x)\cap (x_3, b_1)=\emptyset$, this 
would contradict Proposition~\ref{lemma:zerosofD}.
 
 \item $\gamma_1(a_1^{(2)})\neq \gamma_3(b_2^{(2)})$, because $\omega_2\neq 0$  for $\tau>\tau_c$, see Figure~\ref{figure_width_parameters}.
 
 \item $\gamma_1(a_1^{(2)})$ cannot diverge  to $\infty^{(2)}$ in the asymptotic direction $\theta_2^{(\infty)}$. Indeed,  otherwise either $\gamma_3(b_2^{(2)})$ also diverges to 
$\infty^{(2)}$ in the same direction, or it intersects the real axis to the left of $a_1^{(2)}$. In the former case, we get a $\varpi$-polygon for which $\kappa=1$ and $\lambda=0$, 
contradicting \eqref{teichmuller_formula}, and 
in the latter one we proceed as in \eqref{calculationOmega1}  (with $\omega_1$ replaced by $\omega_2$) to get a contradiction.
\end{itemize}

The only possibility left for $\gamma_1(a_1^{(2)})$ is to diverge  to $\infty^{(2)}$ in the asymptotic direction  $\theta_3^{(\infty)}$. Since it was already observed that   
$\gamma_3(b_1^{(2)})$ cannot diverge to $\infty^{(2)}$, it must intersect $\pi^{(-1)}(\R)$ to the right of $a_1$. 

The outcome of our analysis on the second sheet, as well as the region $\Omega_\varepsilon$ on the remaining sheets, is displayed in Figure~\ref{figure_traj_prel2_tauc_to_tau2}. 
The cut $\Delta_2$ is chosen in such 
a way that its projection on $\mathcal R_2$ coincides with $\left(\gamma_3(b_2^{(2)})\cup \gamma_1(a_2^{(2)})\right)\cap \mathcal R_2$.

\begin{figure}
\begin{subfigure}{.5\textwidth}
\centering
\begin{overpic}[scale=1]{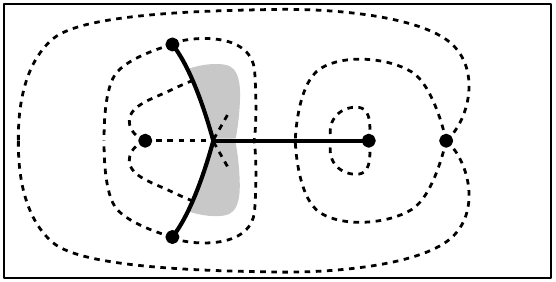}
\end{overpic}
\caption*{$\mathcal R_1$}
\vspace{0.5cm}
\end{subfigure}%
\begin{subfigure}{.5\textwidth}
\centering
\begin{overpic}[scale=1]{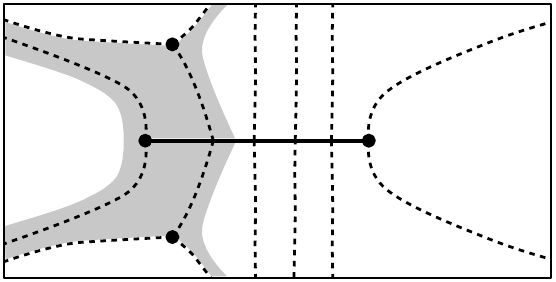}
\end{overpic}
\caption*{$\mathcal R_2$}
\vspace{0.5cm}
\end{subfigure}
\begin{subfigure}{1\textwidth}
\centering
\begin{overpic}[scale=1]{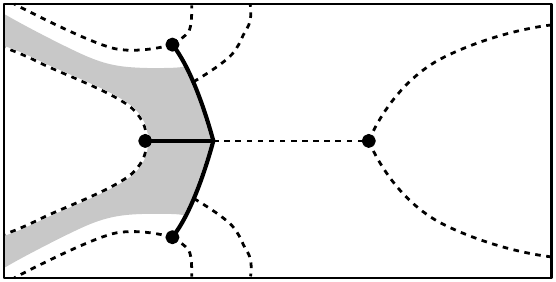}
\end{overpic} 
\caption*{$\mathcal R_3$}
\end{subfigure}
\caption{Part of the critical graph of $\varpi$ for $\tau_c<\tau<\tau_c+\delta$, with the domain $\Omega_\varepsilon$ in gray, consisting of the trajectories for $\varpi$ passing 
through the  $\varepsilon$-neighborhood of $a_1^{(2)}$, $a_2^{(2)}$ and $b_2^{(2)}$.}
\label{figure_traj_prel2_tauc_to_tau2}
\end{figure}

What is left is to describe the  behavior of $\gamma_3(b_2^{(2)})$ on the rest of the sheets. We already saw that this trajectory has to move to $\mathcal R_1$ through the cut 
$\Delta_1$. It cannot intersect $\Delta_1$ again (see Proposition~\ref{lemma:zerosofD}), and it must stay in  the region $\Omega_\varepsilon$ displayed in 
Figure~\ref{figure_traj_prel2_tauc_to_tau2}. Hence, the only possibility is that $\gamma_3(b_2^{(2)})$ intersects the cut $\Delta_2$, moves to the sheet $\mathcal R_3$ and 
diverges to $\infty^{(3)}$ in the asymptotic direction $\theta_4^{(\infty)}$. 

\begin{figure}
\begin{subfigure}{.5\textwidth}
\centering
\begin{overpic}[scale=1]{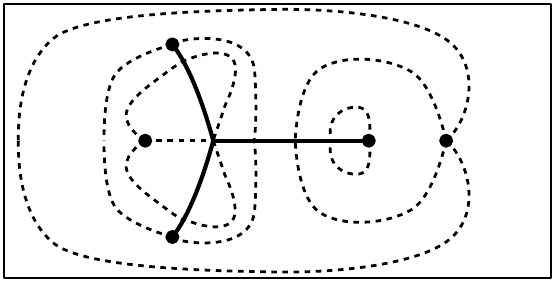}
\put(30,40){\scriptsize $S_1$}
\put(64.5,43){\scriptsize $S_3$}
\put(64.5,34.5){\scriptsize $S_2$}
\put(80,65){\scriptsize $S_4$}
\put(112,40){\scriptsize $S_5$}
\put(58,57){\scriptsize $S_7$}
\put(58,20){\scriptsize $S_6$}
\end{overpic}
\caption*{$\mathcal R_1$}
\vspace{0.5cm}
\end{subfigure}%
\begin{subfigure}{.5\textwidth}
\centering
\begin{overpic}[scale=1]{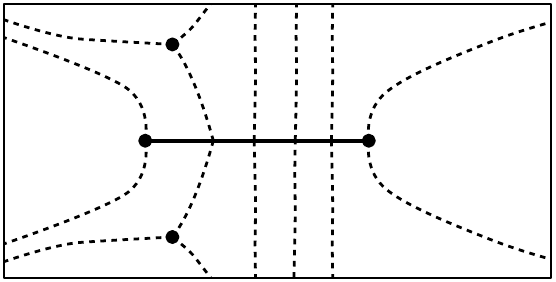}
\put(60,55){\scriptsize $S_2$}
\put(60,20){\scriptsize $S_3$}
\put(40,55){\scriptsize $S_6$}
\put(40,20){\scriptsize $S_7$}
\put(75,50){\scriptsize $S_4$}
\put(75,25){\scriptsize $S_4$}
\put(86,50){\scriptsize $S_5$}
\put(86,25){\scriptsize $S_5$}
\end{overpic}
\caption*{$\mathcal R_2$}
\vspace{0.5cm}
\end{subfigure}
\begin{subfigure}{1\textwidth}
\centering
\begin{overpic}[scale=1]{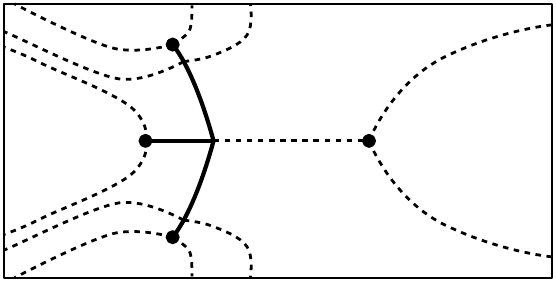}
\put(43,50){\scriptsize $S_7$}
\put(43,30){\scriptsize $S_6$}
\put(32,61){\scriptsize $S_3$}
\put(32,16){\scriptsize $S_2$}
\put(57,70){\scriptsize $S_1$}
\put(57,5){\scriptsize $S_1$}
\end{overpic} 
\caption*{$\mathcal R_3$}
\end{subfigure}
\caption{Critical graph of $\varpi$ for $\tau_c<\tau<\tau_2$, with the strip domains labeled by $S_j$'s.}\label{traj_final_tauc_to_tau2}
\end{figure}

The critical graph of $\varpi$ for $\tau_c<\tau<\tau_c+\delta$ has the structure showed in Figure~\ref{traj_final_tauc_to_tau2}, and we now prove that this is actually valid for 
$\tau \in (\tau_c, \tau_2)$. Again, the continuity principle \textbf{P.4} yields that this is the case as long as (i) no collision of the critical points occur (this is true 
indeed 
for $\tau \in (\tau_c, \tau_2)$, see Section~\ref{section_critical_points}); (ii) finite trajectories for $\tau_c<\tau<\tau_c+\delta$ remain finite for $\tau 
\in (\tau_c, \tau_2)$ (assured by a combination of {\bf P.2} and {\bf P.6}) and (iii) no width of any strip 
domains become zero. These domains for $\tau_c<\tau<\tau_c+\delta$ are identified on Figure~\ref{traj_final_tauc_to_tau2}; observe that there are now 7 strip domains  $S_j$, and 
no 
ring domains. Let us compute their widths $\sigma(S_j)$.

\begin{lem}
	\label{lemmaaboutwidhts}
	For $\tau_c<\tau<\tau_2$,
	\begin{equation*}
		\sigma(S_1)=\sigma(S_2)=\sigma(S_3)=|\omega_1(\tau)|,
	\end{equation*}
	with $\omega(\tau)$ defined in \eqref{width_parameter_cubic}.
\end{lem}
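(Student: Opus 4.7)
The plan is to use the definition \eqref{definition_width} of the width of a strip domain, according to which
$$
\sigma(S) = \left|\mathrm{Re}\int_{\gamma} Q\, dz\right|,
$$
where $\gamma$ is any cross-cut of $S$ joining its two boundary trajectories, and to identify each such integral, for $S_1, S_2, S_3$, with $\pm\omega_1(\tau)$ in \eqref{width_parameter_cubic}. Since $\mathrm{Re}\int Q\, dz$ is constant along every trajectory of $\varpi$, we may freely homotope the endpoints of $\gamma$ along the boundary trajectories, and in particular push them all the way to critical vertices on the closure of $S$.

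First I would read off Figure~\ref{traj_final_tauc_to_tau2} to identify, for each $j=1,2,3$, the finite critical points incident to $S_j$. From the construction in Section~\ref{section_trajectories_tauc_tau2}, the finite critical graph on the sheet $\mathcal R_1$ above the real axis splits as follows: the branch points $b_*$ and $b_1$ contribute only to the boundaries of $S_4$ and $S_5$ (whose widths involve $\omega_4$ and the integral between $b_1$ and $b_*$), while the boundary of each of $S_1, S_2, S_3$ consists of one trajectory with an endpoint at $a_1^{(1)}$ and one trajectory with an endpoint at $a_2^{(1)}$ or $b_2^{(1)}$, possibly after following the identification of sheets across $\Delta_1$ or $\Delta_2$.

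Second, for each strip I would write down an explicit cross-cut $\gamma_j\subset\overline{S_j}$ joining $a_1^{(1)}$ to $a_2^{(1)}$ or $b_2^{(1)}$. On $\mathcal R_1$ the formula $Q = \xi_2 - \xi_3$ combined with \eqref{width_parameter_cubic} gives
$$
\mathrm{Re}\int_{a_1^{(1)}}^{b_2^{(1)}} Q\, dz = -\omega_1(\tau).
$$
When a cross-cut passes from $\mathcal R_1$ into $\mathcal R_2$ or $\mathcal R_3$ through a branch cut, the permutation identities \eqref{equality_xi_2}, \eqref{equality_xi_3}, \eqref{equality_xi_5} in Proposition~\ref{proposition_distribution_branchpoints} show that the relevant boundary values of $Q$ on the two sides of the cut differ by swapping two of the $\xi_k$'s, so that the integrand, lifted back to the universal object on $\mathcal R$, continues to integrate the analytic continuation of the same meromorphic $1$-form $Q\, dz$. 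Since $\mathcal R$ has genus zero and the only periods of $Q\, dz$ come from loops around $\infty^{(j)}$, which are purely imaginary by \eqref{asymptotics_xi_functions}, the real part of the integral along any such cross-cut is independent of the chosen homotopy class and equals $\pm \omega_1(\tau)$.

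The routine step is plugging the endpoints into \eqref{width_parameter_cubic}; the real work is the combinatorial bookkeeping, namely verifying that each of $S_1, S_2, S_3$ admits a cross-cut whose two endpoints are precisely $\{a_1^{(1)}\}$ and one of $\{a_2^{(1)}, b_2^{(1)}\}$, and that the cross-cut may be chosen entirely within $\overline{S_j}$. This is the main obstacle: one must match the three strips in Figure~\ref{traj_final_tauc_to_tau2} with their sheet decompositions across $\Delta_1, \Delta_2$, and show that no cross-cut is forced to pass through $b_1, b_*$, which would introduce contributions from the other widths $\sigma(S_4), \sigma(S_5)$. Once this is in place, taking absolute values yields $\sigma(S_1)=\sigma(S_2)=\sigma(S_3)=|\omega_1(\tau)|$, as claimed.
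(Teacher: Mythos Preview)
Your plan is sound and matches the paper's strategy: compute each width as $|\re\int Q\,dz|$ along a cross-cut with endpoints at critical vertices on opposite boundary components, then identify the result with $|\omega_1|$. But where you flag the ``combinatorial bookkeeping'' as the main obstacle and leave it open, the paper simply carries it out, and with different cross-cuts than the ones you propose.

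Concretely: for $S_1$ the paper says $\sigma(S_1)=|\omega_1|$ is immediate from the definition of $\omega_1$, exactly as you indicate. For $S_2$ and $S_3$ the paper first invokes the conjugation symmetry of $\varpi$ to get $\sigma(S_2)=\sigma(S_3)$, reducing to a single computation. For $\sigma(S_2)$ it does \emph{not} use a cross-cut on $\mathcal R_1$ between $a_1^{(1)}$ and one of $a_2^{(1)},b_2^{(1)}$; instead it takes a cross-cut from $b_2^{(2)}$ to $a_2^{(1)}=a_2^{(3)}$ passing through the branch point $a_1^{(2)}=a_1^{(3)}$, so that it splits into $\int_{b_2}^{a_1}(\xi_1-\xi_3)\,ds$ on sheet~2 and $\int_{a_1}^{a_2}(\xi_1-\xi_2)\,ds$ on sheet~3. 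A second use of conjugation symmetry turns the latter into $\int_{a_1}^{b_2}(\xi_1-\xi_2)\,ds$, and the two pieces combine to $\re\int_{b_2}^{a_1}(\xi_2-\xi_3)\,ds=\omega_1$ (equivalently, $\omega_2-\omega_3=\omega_1$).

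So the gap in your write-up is exactly the one you name: you have not verified that the two boundary components of $S_2$ (resp.\ $S_3$) each contain one of your proposed endpoints. In fact the natural vertices on $\partial S_2$, as read off Figure~\ref{traj_final_tauc_to_tau2} and the construction in Section~\ref{section_trajectories_tauc_tau2}, are $b_2^{(2)}$ and $a_2^{(3)}$, which is why the paper's two-piece-plus-conjugation computation is the straightforward one. If your sheet-1 cross-cuts do exist they would give a slightly shorter argument, but establishing that still requires the figure-reading step you have deferred.
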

\begin{proof}
The fact that $\sigma(S_1)=|\omega_1|$ is the straightforward consequence of the definition of $\omega_1$. Also $\sigma(S_2)=\sigma(S_3)$ by the symmetry under conjugation. So, it 
remains to compute the width $\sigma(S_2)$, given by the absolute value of the real part of the integral
\begin{align*}
\int_{b_2^{(2)}}^{a_2^{(1)}=a_2^{(3)}}\sqrt{-\varpi}& = \int_{b_2^{(2)}}^{a_1^{(2)}=a_1^{(3)}}Q(s)ds+\int_{a_1^{(3)}}^{a_2^{(3)}}Q(s)ds \\ 
& = \int_{b_2}^{a_1}(\xi_1(s)-\xi_3(s))ds+\int_{a_1}^{a_2}(\xi_1(s)-\xi_2(s))ds.
\end{align*}

Symmetry under conjugation tells us
$$
\re \int_{a_1}^{a_2}(\xi_1(s)-\xi_2(s))ds=\re \int_{a_1}^{b_2}(\xi_1(s)-\xi_2(s))ds,
$$
and using it in the previous identity, we get
\begin{align*}
\sigma(S_2) & =\left| \re \int_{b_2}^{a_1}(\xi_1(s)-\xi_3(s))ds - \re \int_{b_2}^{a_1}(\xi_1(s)-\xi_2(s))ds\right| \\
& = \left| \re \int_{b_2}^{a_1}(\xi_3(s)-\xi_2(s))ds \right| = |\omega_1|.
\end{align*}
\end{proof}

Lemma~\ref{lemmaaboutwidhts} implies that $\sigma(S_j)\neq 0$, $j=1, 2, 3$, for $\tau_c\leq \tau <\tau_2$, although $\sigma(S_j)\searrow 0$ as $\tau \nearrow \tau_2$, $j=1, 2, 3$, 
see Figure~\ref{figure_width_parameters}.

Regarding the rest of the strip domains,
\begin{itemize}

\item $\sigma(S_4)=|\omega_4|\neq 0$  for $\tau_c\leq \tau\leq \tau_2$, see again Figure~\ref{figure_width_parameters}.

\item $\sigma(S_5)=\left|\int_{b_1}^{b_*}(\xi_{2}(s)-\xi_3(s))ds \right|$, which does not vanish for $\tau> 1/12$, see \eqref{inequality_xi_5}.

\item From the symmetry by complex conjugation, $\sigma(S_6)=\sigma(S_7)$, and the structure of $S_6$ on $\mathcal R_2$ shows that $\sigma(S_6)=|\omega_2|$, so that $\sigma(S_6)= 
\sigma(S_7)\neq 0$  for 
$\tau_c<\tau\leq \tau_2$.
\end{itemize}

Consequently, the critical graph displayed in Figure~\ref{traj_final_tauc_to_tau2} is valid for $\tau_c<\tau<\tau_2$. In particular it implies Theorem~\ref{thm:criticaltraj} for 
$\tau_c\in (\tau_c,\tau_2)$.

\subsubsection{Trajectories for \texorpdfstring{$\tau_2<\tau<1/4$}{}}\label{section_trajectories_tau2_1-4}

Lemma~\ref{lemmaaboutwidhts} shows that at  $\tau=\tau_2$, the strip domains $S_1$, $S_2$ and $S_3$ displayed in Figure~\ref{traj_final_tauc_to_tau2} disappear 
\emph{simultaneously}, which happens because at that moment the branch point $a_1^{(1)}$ hits the critical trajectory $\gamma_1(a_2^{(1)})=\gamma_2(b_2^{(1)})$ and the point $a_*$ 
of intersection of $\Delta_2$ with the real line collides with the critical trajectories $\gamma_2(a_2^{(1)})$ and $\gamma_1(b_2^{(1)})$. The resulting critical graph is shown in 
Figure~\ref{traj_strip_tau3}. 

As before, we fix an $\varepsilon>0$  sufficiently small and consider the domain
$\Omega_\varepsilon$ swept by trajectories of $\varpi$ passing through points in the  $\varepsilon$-neighborhood of $a_1^{(1)}$, $a_2^{(1)}$, $b_2^{(1)}$ and $a_*^{(j)}$. Observe 
that for a small perturbation of $\tau=\tau_2$,  $\gamma_3(b_2^{(2)})\cap \mathcal R_2$ does not coalesce with critical points other than the starting point $b_2^{(2)}$, hence this 
arc of critical trajectory 
displays the same structure as for $\tau=\tau_2$. In particular, the cut $\Delta_2$ is well defined as the projection of $(\gamma_3(b_2^{(2)})\cup \gamma_1(a_2^{(2)}))\cap 
\mathcal R_2$ on the other sheets. Thus, there exists a $\delta>0$ such that the critical trajectories for $\tau_2<\tau <\tau_2+\delta$, passing through the above mentioned 
critical points, belong to $\Omega_\varepsilon$. This domain is  also depicted schematically on Figure~\ref{traj_strip_tau3}, left.

\begin{figure}
\begin{subfigure}{.5\textwidth}
\centering
\begin{overpic}[scale=1]{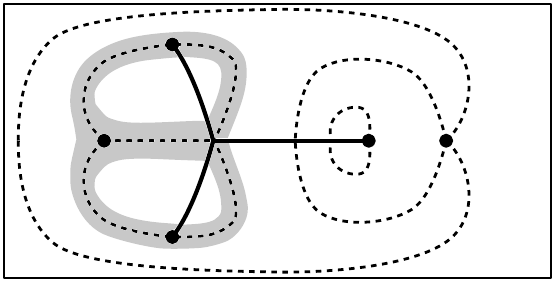}
\end{overpic}
\caption*{$\mathcal R_1$}
\vspace{0.5cm}
\end{subfigure}%
\begin{subfigure}{.5\textwidth}
\centering
\begin{overpic}[scale=1]{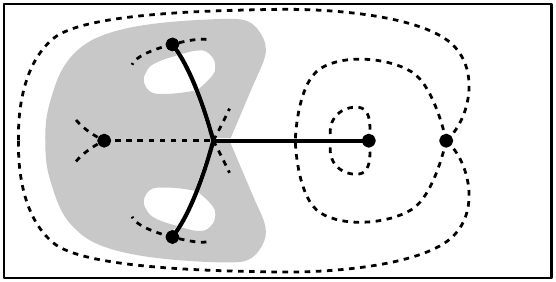}
\end{overpic}
\caption*{$\mathcal R_1$}
\vspace{0.5cm}
\end{subfigure}
\begin{subfigure}{.5\textwidth}
\centering
\begin{overpic}[scale=1]{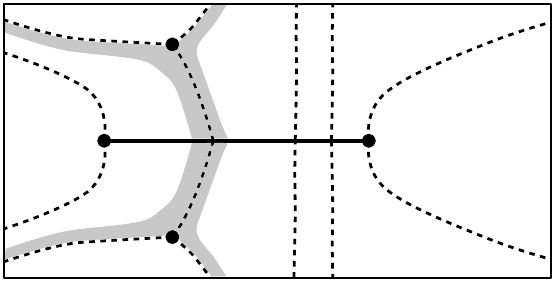}
\end{overpic}
\caption*{$\mathcal R_2$}
\vspace{0.5cm}
\end{subfigure}%
\begin{subfigure}{.5\textwidth}
\centering
\begin{overpic}[scale=1]{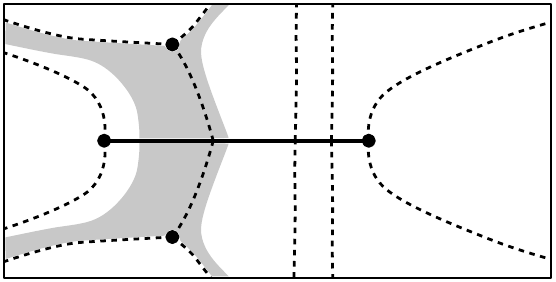}
\end{overpic}
\caption*{$\mathcal R_2$}
\vspace{0.5cm}
\end{subfigure}
\begin{subfigure}{.5\textwidth}
\centering
\begin{overpic}[scale=1]{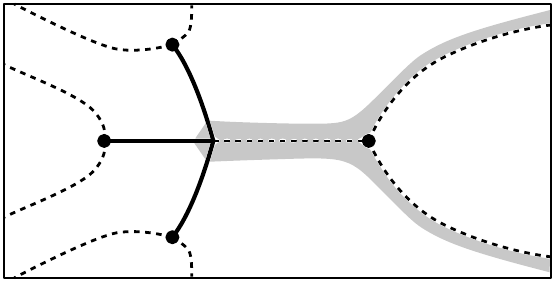}
\end{overpic}
\caption*{$\mathcal R_3$}
\vspace{0.5cm}
\end{subfigure}%
\begin{subfigure}{.5\textwidth}
\centering
\begin{overpic}[scale=1]{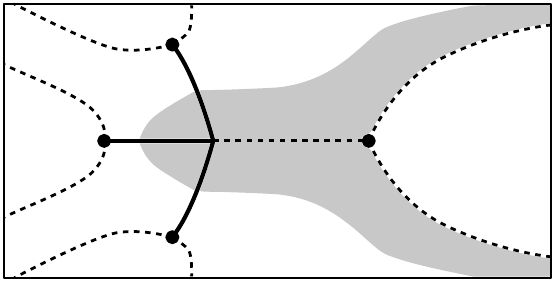}
\end{overpic}
\caption*{$\mathcal R_3$}
\vspace{0.5cm}
\end{subfigure}
\caption{Left: critical graph of $\varpi$ for $\tau=\tau_2$, with the domain $\Omega_\varepsilon$ in gray. Right: local behavior or the critical trajectories for $\varpi$ and $\tau 
=\tau_2+\delta>0$, passing through $a_1^{(1)}$, $a_2^{(1)}$, $b_2^{(1)}$ and $a_*$, with the same domain superimposed.}\label{traj_strip_tau3}
\end{figure}

\begin{lem}\label{lemma:zerocuvrelast}
	Let $t$ be a point on the part of the curve $\Delta_2^{(1)}$ joining $b_2^{(1)}$ with $a_*^{(1)}$, and $\gamma$ a Jordan curve on $\mathcal R_1$ connecting the boundary 
values $t_\pm$ on $\Delta_2^{(1)}$ and containing the only branch point $b_2^{(1)}$ inside. Then
	$$
	\re \int_{\gamma} Q(s)ds=\re \int_{\gamma} (\xi_{2+}(s)-\xi_{3+}(s))ds=0.
	$$
\end{lem}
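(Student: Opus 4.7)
The proof strategy is to deform $\gamma$ into a contour hugging both sides of the cut $\Delta_2^{(1)}$ from $t$ to $b_2^{(1)}$, and then exploit that $\Delta_2$ is a critical trajectory of $\varpi$ on $\mathcal R_2$.

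Since on $\mathcal R_1$ the function $Q=\xi_2-\xi_3$ is holomorphic away from $\Delta_1\cup\Delta_2\cup\Delta_3$, by Cauchy's theorem I may without loss of generality assume that $\gamma$ lies in a simply connected neighborhood of the arc of $\Delta_2^{(1)}$ joining $t$ to $b_2^{(1)}$, entirely contained in the upper half-plane, so that $\gamma$ avoids $\Delta_1\cup\Delta_3\subset\R$. Another application of Cauchy's theorem lets me deform $\gamma$ (with endpoints $t_\pm$ fixed on the two sides of the cut) to the contour $\widetilde\gamma$ that runs along the ``$+$''-side of $\Delta_2^{(1)}$ from $t_+$ to $b_2^{(1)}$, encircles $b_2^{(1)}$ along a circle of radius $\varepsilon>0$, and returns along the ``$-$''-side from $b_2^{(1)}$ back to $t_-$. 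Near $b_2^{(1)}$, $\xi_2$ is analytic (as $b_2$ joins only the sheets $\mathcal R_1$ and $\mathcal R_3$), while $\xi_3$ restricted to $\mathcal R_1$ extends continuously to $b_2^{(1)}$ with a square-root ramification, since $\xi_1(b_2)=\xi_3(b_2)$ by \eqref{xi_at_branchpoints4}. Hence $Q$ is bounded in a neighborhood of $b_2^{(1)}$, and the small-circle contribution to $\int_{\widetilde\gamma}Q\,ds$ vanishes as $\varepsilon\to 0$.

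Using the boundary identities \eqref{equality_xi_3} on $\Delta_2$, namely $\xi_{2+}=\xi_{2-}$ and $\xi_{1\pm}=\xi_{3\mp}$, a short computation yields the jump
\begin{equation*}
(Q_+^{(1)}-Q_-^{(1)})(s)=\xi_{3-}(s)-\xi_{3+}(s)=\xi_{1+}(s)-\xi_{3+}(s), \qquad s\in \Delta_2^{(1)}.
\end{equation*}
Letting $\varepsilon\to 0$ therefore gives
\begin{equation*}
\int_\gamma Q(s)\,ds=\int_t^{b_2}(\xi_{1+}(s)-\xi_{3+}(s))\,ds,
\end{equation*}
with the integration taken along the portion of $\Delta_2$ from $t$ to $b_2$. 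By Definition~\ref{def:thebranchcut}, this portion of $\Delta_2$ is the canonical projection of a critical trajectory of $\varpi$ on $\mathcal R_2$, so \eqref{condition_trajectory_1} gives $\re[(\xi_{1+}(s)-\xi_{3+}(s))\,ds]=0$ pointwise along $\Delta_2$. Taking real parts now yields $\re\int_\gamma Q\,ds=0$, as claimed.

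The only genuine technical obstacle is verifying the vanishing of the small-circle integral around $b_2^{(1)}$; this is handled by the local boundedness of $Q$ argued above. Beyond that, the proof is essentially bookkeeping with boundary values and the trajectory condition defining $\Delta_2$.
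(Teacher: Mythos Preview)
Your proof is correct and follows essentially the same approach as the paper: deform $\gamma$ onto the two sides of the cut $\Delta_2(t)$, use the jump relations \eqref{equality_xi_3} (namely $\xi_{2+}=\xi_{2-}$ and $\xi_{3-}=\xi_{1+}$) to identify the resulting integrand as $\xi_{1+}-\xi_{3+}$, and then invoke the trajectory condition \eqref{condition_trajectory_1} on $\Delta_2^{(2)}$ to conclude that the real part vanishes. The only difference is that you make explicit the vanishing of the small-circle contribution around $b_2^{(1)}$ via boundedness of $Q$, whereas the paper absorbs this into the phrase ``deform $\gamma$ to the cut $\Delta_2$''.
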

\begin{proof}
	We can deform $\gamma$ to the cut $\Delta_2$. If we denote by $\Delta_2(t)$ the arc of $\Delta_2$ from $b_2$ to $t$, we get 
	\begin{align*}
	\re \int_{\gamma} Q(s)ds & = \re \int_{\Delta_2(t)} (\xi_{2+}(s)-\xi_{3+}(s))ds - \re \int_{\Delta_2(t)}(\xi_{2-}(s)-\xi_{3-}(s))ds   \\
	& = \re \int_{\Delta_2(t)} (\xi_{3-}(s)-\xi_{3+}(s))ds    = \re \int_{\Delta_2(t)} (\xi_{1+}(s)-\xi_{3+}(s))ds \\
	& =  \re \int_{\Delta_2^{(2)}(t)} Q_{+} ds =0,
	\end{align*}
where we have used the jump condition \eqref{equality_xi_3} and the fact that $\Delta_2^{(2)}$ is an arc of trajectory of $\varpi$. 
\end{proof}

Now we describe the critical trajectories for $\tau_2<\tau <\tau_2+\delta$, starting with  $\gamma_2(a_1^{(1)})$. Notice first that in this case $\omega_1\neq 0$, so that  
$\gamma_2(a_1^{(1)})$ cannot contain $b_2^{(1)}$. Furthermore, if $\gamma_2(a_1^{(1)})$ intersects the cut $\Delta_2$ (and thus diverges to $\infty^{(2)}$ in the asymptotic 
direction  $\theta_1^{(\infty)}$), then $\gamma_1(a_1^{(1)})\cup\gamma_2(a_1^{(1)})\cup 
\gamma_1(b_1^{(3)})$ determines a $\varpi$-polygon for which $\kappa=1$ and $\lambda=0$, in a contradiction with \eqref{teichmuller_formula}.

Keeping in mind that $\gamma_2(a_1^{(1)})$ must belong to $\Omega_\varepsilon$ we conclude that it has to intersect the real axis in one of the 
sheets. Let us denote by $x$ the point of the first intersection of this trajectory with $\pi^{-1}(\R)$. Again, we discard some cases:
\begin{itemize}
	\item $x$ cannot be on $\mathcal R_1$ to the left of $a_1$: this yields (by the general principle \textbf{P.2}) that $\gamma_2(a_1^{(1)})=\gamma_3(a_1^{(1)})$, violating  
\textbf{P.3}. 
	\item $x$ cannot be on $\mathcal R_1$ to the right of $a_*$. 
	
	Indeed, if $x>a_*$, we form a curve $\gamma$ given by the union of three pieces: $\gamma_1$ is the arc of $\gamma_1(a_1^{(1)})$ from $a_*^{(1)}$ to $a_1^{(1)}$ , 
$\gamma_2$ is the arc of $\gamma_2(a_1^{(1)})$ from $a_1^{(1)}$ to $x$, and $\gamma_3$ is the interval from $x$ to $a_*^{(1)}$. This curve satisfies the assumptions of 
Lemma~\ref{lemma:zerocuvrelast}, so that 
	$$
	\re \int_{\gamma} (\xi_{2+}(s)-\xi_{3+}(s))ds=0.
	$$
	Since 
	$$
	\re \int_{\gamma_1\cup \gamma_2} (\xi_{2+}(s)-\xi_{3+}(s))ds=0
	$$
	by the definition of trajectories, we conclude that
	$$
	\re \int_{a_*}^x (\xi_{2+}(s)-\xi_{3+}(s))ds=0,
	$$
	that is, $h(a_*, x)=0$, in the notation \eqref{eqRefirstsheet}. 	
	But if $x_3^{(1)}$ is the point of intersection of $\gamma_2(b_1^{(1)})$ with $\Delta_1$, we also have $h(x_3,b_1)=0$, and since $(a_*,x)\cap (x_3, b_1)=\emptyset$, this 
would contradict Proposition~\ref{lemma:zerosofD}.
	
	\item $x$ cannot be on $\mathcal R_2$ to the left of $a_*$ (in other words, $x$ cannot belong to $\Delta_3$, which could occur if the trajectory $\gamma_2(a_1^{(1)})$ had 
slipped to the second sheet through the cut $\Delta_2$ before hitting the real line): this yields 
	$$
	\re \int_{x}^{a_*} (\xi_{1+}(s)-\xi_{3+}(s))ds=0,
	$$
	that is, $h(x, a_*)=0$, in the notation \eqref{eqRefirstsheet}, with $x, a_*\in \Delta_3$.  	
	But if $x_3^{(1)}$ is the point of intersection of $\gamma_2(b_1^{(1)})$ with $\Delta_1$, we also have $h(x_3,b_1)=0$, in contradiction with 
Proposition~\ref{lemma:zerosofD}.
\end{itemize}

The only possibility left is that $\gamma_2(a_1^{(1)})$ hits the real line \emph{precisely} at the point $a_*$. This means also that $\gamma_2(a_1^{(1)})=\gamma_1(a_2^{(2)})$, as 
shown in Figure~\ref{traj_final_tau2_1-4}.
 
Regarding the trajectories $\gamma_1(b_2^{(1)})$ and $\gamma_2(b_2^{(1)})$, they cannot satisfy $\gamma_1(b_2^{(1)})=\gamma_2(b_2^{(1)})$, and due to principle {\bf P.3} they must 
belong to $\Omega_\varepsilon$. Hence, they have to behave as shown in Figure~\ref{traj_final_tau2_1-4}. We skip the details.

\begin{figure}
\begin{subfigure}{.5\textwidth}
\centering
\begin{overpic}[scale=1]{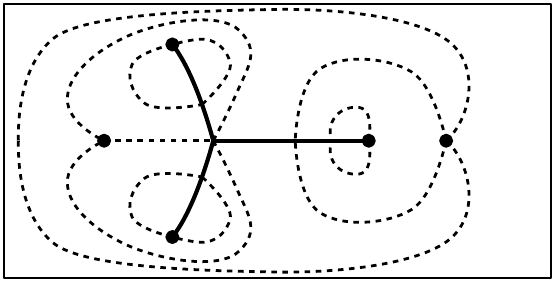}
\put(30,46){\scriptsize $S_1$}
\put(30,30){\scriptsize $S_2$}
\put(55,60){\scriptsize $S_3$}
\put(55,15){\scriptsize $S_4$}
\put(80,65){\scriptsize $S_6$}
\put(112,40){\scriptsize $S_5$}
\end{overpic}
\caption*{$\mathcal R_1$}
\vspace{0.5cm}
\end{subfigure}%
\begin{subfigure}{.5\textwidth}
\centering
\begin{overpic}[scale=1]{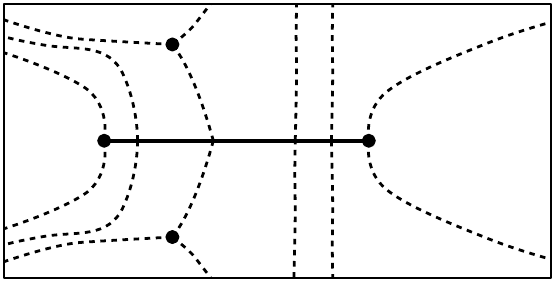}
\put(63,55){\scriptsize $S_6$}
\put(63,20){\scriptsize $S_6$}
\put(40,55){\scriptsize $S_2$}
\put(40,20){\scriptsize $S_1$}
\put(25,57){\scriptsize $S_4$}
\put(25,20){\scriptsize $S_3$}
\put(86,50){\scriptsize $S_5$}
\put(86,25){\scriptsize $S_5$}
\end{overpic}
\caption*{$\mathcal R_2$}
\vspace{0.5cm}
\end{subfigure}
\begin{subfigure}{1\textwidth}
\centering
\begin{overpic}[scale=1]{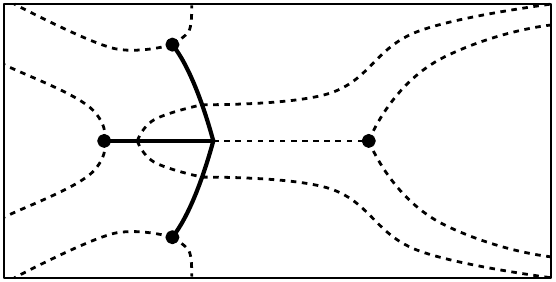}
\put(50,43){\scriptsize $S_1$}
\put(50,34){\scriptsize $S_2$}
\put(32,55){\scriptsize $S_3$}
\put(32,20){\scriptsize $S_4$}
\put(95,45){\scriptsize $S_1$}
\put(95,32){\scriptsize $S_2$}
\end{overpic} 
\caption*{$\mathcal R_3$}
\end{subfigure}
\caption{Critical graph of $\varpi$ for $\tau_2<\tau<1/4$, with the strip domains labeled by $S_j$'s.}\label{traj_final_tau2_1-4}
\end{figure}

The critical graph of $\varpi$ for $\tau_2<\tau<\tau_2+\delta$ has the structure showed in Figure~\ref{traj_final_tau2_1-4}, and we prove that this is actually valid 
for $\tau \in (\tau_2, 1/4)$. The continuity principle \textbf{P.4} yields that this is the case as long as (i) no collision of the critical points occur (this is true indeed for 
$\tau \in (\tau_2, 1/4)$, see Section~\ref{section_critical_points}), (ii) the finite critical trajectories for  $\tau_2<\tau<\tau_2+\delta$ remain finite in the range 
$\tau\in (\tau_2,1/4)$ (certainly true by a combination of {\bf P.2} and {\bf P.6}) and (iii) no width of any strip domains become zero. These domains for 
$\tau_2<\tau<\tau_2+\delta$ are 
identified on Figure~\ref{traj_final_tau2_1-4}; observe that there are now 6 strip domains  $S_j$, and no ring domains. Their widths $\sigma(S_j)$ are:
\begin{itemize}
 \item $\sigma(S_1)=\sigma(S_2)=|\omega_1|\neq 0$ for $\tau \in (\tau_2, 1/4)$.
 
 \item $\sigma(S_3)=\sigma(S_4)=|\omega_3|\neq 0$ for $\tau \in (\tau_2, 1/4)$.
 
 \item $\sigma(S_5)=\left|\int_{b_1}^{b_*}(\xi_{2}(s)-\xi_3(s))ds \right|$, which does not vanish for $\tau> 1/12$, see \eqref{inequality_xi_5}.
 
 \item As in \eqref{omega1omega4}, we get that   $\sigma(S_6)=|\omega_4-\omega_1|$, and $\omega_1\neq \omega_4$ for $\tau>1/12$, see Figure~\ref{figure_width_parameters} in 
Section~\ref{section_widths_cubic}.
 
\end{itemize}

We conclude that the structure of the critical graph of $\varpi$, depicted in Figure~\ref{traj_final_tau2_1-4}, is actually valid for the whole range $\tau \in (\tau_2, 1/4)$. This 
finishes the proof of  Theorem~\ref{thm:criticaltraj}.

\begin{remark}
 The attentive reader might notice that in Figure~\ref{traj_final_tauc_to_tau2}, for instance, the critical trajectories $\gamma_1(a_2^{(2)})$ and 
$\gamma_2(a_1^{(1)})$ intersect the cut $\Delta_2$ in $\mathcal R_1$ on pairs of opposite points $t_{\pm}$. This phenomenon, which also occurs in 
Figure~\ref{traj_final_tau2_1-4}, is easily explained by Lemma~\ref{lemma:zerocuvrelast}.
\end{remark}

\begin{remark}\label{remark_orthogonal_trajectories}
 It follows from the results of Sections~\ref{section_trajectories_0_tau_1}--\ref{section_trajectories_tau2_1-4} that the trajectories $\gamma_1(b^{(2)}_2)$ and 
$\gamma_2(b^{(2)}_2)$ determine a half plane domain $H$. In particular, this implies that there is an orthogonal critical trajectory $\gamma_1$ emerging from $b_2^{(2)}$ which is 
entirely contained in $H$ and extends to $\infty^{(2)}$ along the angle $2\pi/3$. Similarly, there is an orthogonal critical trajectory $\gamma_2$ (which is the complex conjugate 
of $\gamma_1$) emerging from $a_2^{(2)}$ and extending to $\infty^{(2)}$ along the angle $-2\pi/3$. Then the projected contour 
$$
\Gamma=\pi(\gamma_1\cup\gamma_2)\cup \Delta_2
$$
satisfies the conditions stated in \eqref{condition_trajectory_1}--\eqref{extension_support_mu_2_property_2}. 
\end{remark}

\appendix 

\section{Proof of Theorem~\ref{thm_reciprocal_spectral_curve} in the general case}
\label{appendix1}

The proof of Theorem~\ref{thm_reciprocal_spectral_curve} presented in Section~\ref{section_general_variational} is valid when the set $\widehat S_\alpha\subset S_\alpha$ of double 
poles of the coefficient $D$ in \eqref{spectral_curve_general} is empty (see Proposition~\ref{lemma_variations_without_double_poles}). Our goal now is to show that for a vector of 
measures $\vec\mu\in\mathcal M_\alpha$ whose components are supported on a finite union of analytic arcs, and such the associated functions $\xi_j$ in 
\eqref{shifted_resolvents_pols_V} satisfy   \eqref{spectral_curve_general} for a polynomial $R$ and a rational function $D$, the equality
\begin{equation}\label{varconditionAppendix}
\mathcal D_{h}(\vec \mu)=0
\end{equation}
is valid for every function $h\in C^2(\C)$, without any further restriction on the poles of $D$. Recall that this fact was established so far for the Cauchy kernels $h_z$ defined 
in \eqref{shiffer_variation}, and when $
\supp h\cap \widehat S_\alpha =\emptyset$, see Proposition~\ref{lemma_variations_without_double_poles}.

As a first step, we extend \eqref{varconditionAppendix} to polynomials:
\begin{lem}\label{lemma_variations_polynomial}
	Under the assumptions of Theorem~\ref{thm_reciprocal_spectral_curve},  $\mathcal D_q(\vec\mu)=0$ for every algebraic polynomial $q$.
\end{lem}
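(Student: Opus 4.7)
The plan is to reduce the polynomial case to the Cauchy kernel case already handled by Corollary~\ref{corollary_converse_critical_measure}, by exploiting the fact that every monomial $x^n$ appears as a Laurent coefficient (in $1/z$) of the kernel $h_z(x)=1/(x-z)$ when $|z|$ is large.

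More concretely, I would first fix $M>0$ large enough so that $\supp\mu_1\cup\supp\mu_2\cup\supp\mu_3\subset\{|x|\le M\}$. By Corollary~\ref{corollary_converse_critical_measure}, $\mathcal D_{h_z}(\vec\mu)=0$ for every $z$ with $|z|>M$. For such $z$ the geometric expansion $h_z(x)=-\sum_{n\ge 0}x^n/z^{n+1}$ converges absolutely and uniformly for $x$ in the supports, and so does the two-variable expansion
\begin{equation*}
\frac{h_z(x)-h_z(y)}{x-y}=-\sum_{n\ge 1}\frac{1}{z^{n+1}}\sum_{k=0}^{n-1}x^k y^{n-1-k},
\end{equation*}
since $|(x^n-y^n)/(x-y)|\le nM^{n-1}$ on the supports, and $\sum_n nM^{n-1}/|z|^{n+1}$ is convergent. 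These uniform bounds allow me to interchange summation and integration in the defining formula \eqref{variations_energy} of $\mathcal D_{h_z}(\vec\mu)$, yielding
\begin{equation*}
\mathcal D_{h_z}(\vec\mu)=-\sum_{n\ge 0}\frac{\mathcal D_{x^n}(\vec\mu)}{z^{n+1}}, \qquad |z|>M.
\end{equation*}

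Since the left-hand side vanishes identically on the annulus $\{|z|>M\}$, the uniqueness of the Laurent expansion at infinity forces $\mathcal D_{x^n}(\vec\mu)=0$ for every $n\ge 0$. By linearity of $h\mapsto \mathcal D_h(\vec\mu)$ in the defining expression \eqref{variations_energy}, this gives $\mathcal D_q(\vec\mu)=0$ for every polynomial $q$, as required.

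I do not anticipate any serious obstacle here; the only delicate point is the interchange of summation and double integration in the first term of \eqref{variations_energy}, which is settled by the uniform estimate $|(x^n-y^n)/(x-y)|\le nM^{n-1}$ and the absolute convergence of $\sum n M^{n-1}/|z|^{n+1}$ for $|z|>M$. The more substantial work—namely extending $\mathcal D_h(\vec\mu)=0$ from polynomial $h$ to an arbitrary $h\in C^2(\mathbb{C})$, where one must approximate $h$ locally by analytic/polynomial functions with controlled derivatives near $\widehat S_\alpha$ in the spirit of a refined Mergelyan-type theorem—is the truly technical step, and is what is deferred to later in this appendix.
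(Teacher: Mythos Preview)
Your proof is correct and rests on the same idea as the paper's: expand $h_z(x)=-\sum_{n\ge 0}x^n/z^{n+1}$ for $z$ outside a disc containing the supports and use Corollary~\ref{corollary_converse_critical_measure}. The paper, however, organizes the argument as an induction on the degree: for a polynomial $q$ of degree $N$ it writes $q(x)=-zq(z)h_z(x)+Q_z(x)+W_z(x)$, where $Q_z$ has degree $N-1$ in $x$ and $W_z,W_z'\to 0$ uniformly as $z\to\infty$, then uses $\mathcal D_{h_z}(\vec\mu)=0$ and the inductive hypothesis $\mathcal D_{Q_z}(\vec\mu)=0$ to conclude $\mathcal D_q(\vec\mu)=\lim_{z\to\infty}\mathcal D_{W_z}(\vec\mu)=0$. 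Your version is slicker: you expand $\mathcal D_{h_z}(\vec\mu)$ itself as a convergent Laurent series $-\sum_{n\ge0}\mathcal D_{x^n}(\vec\mu)/z^{n+1}$ and kill all coefficients at once by uniqueness, avoiding the induction and the remainder estimate. Both arguments need exactly the same uniform control (your bound $|(x^n-y^n)/(x-y)|\le nM^{n-1}$ plays the role of the paper's control of $W_z'$), so this is a presentational streamlining rather than a genuinely different route.
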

\begin{proof}

	Fix $\rho>0$ for which 
	$$
	\supp\mu_1\cup\supp\mu_2\cup\supp\mu_3\subset V_\rho:=\{x\in\C \; \mid \; |x|<\rho\}.
	$$
	
	For $x\in V_\rho$ and $z$ sufficiently large, we can expand
	$$
	h_{z}(x)=-\sum_{j=0}^\infty \frac{x^j}{z^{j+1}},
	$$
	which implies the identity
	\begin{equation}\label{aux_equation_10}
	p_{n-1}(x,z)-z^{n+1} h_{z}(x)= x^n+w_z(x),
	\end{equation}
	where the function
	$$
	w_z(x)=\sum_{j=n+1}^\infty \frac{x^j}{z^{j-n}}
	$$
	converges to $0$ as $z\to \infty$ uniformly for $x\in V_\rho$, and $p_{n-1}(x,z)$ is a polynomial of degree $n-1$ in $x$, given 
	explicitly by
	$$
	p_{n-1}(x,z)=-\sum_{j=0}^{n-1}x^j z^{n-j}.
	$$
	
	Set $p_{-1}\equiv 0$. If
	$$
	q(x)=\sum_{n=0}^N a_nx^n
	$$
	is a polynomial of degree $N$, then \eqref{aux_equation_10} shows that 
	\begin{equation}\label{identity_pols_rest}
	q(x)=-zq(z)h_z(x) + Q_z(x)+W_z(x),
	\end{equation}
	where
	$$
	Q_z(x)=\sum_{n=0}^N a_np_{n-1}(x,z),\quad W_z(x)=-\sum_{n=0}^{N} a_nw_z(x).
	$$
	Note that $Q$ is a polynomial of degree $N-1$ in $x$.
	
	Moreover, $W_z$ and $W_z'$ both converge to $0$ as $z\to\infty$ uniformly for $x\in V_\rho$. Hence, the convergences
	$$
	\frac{W_z(x)-W_z(y)}{x-y}\to 0, \quad   \Phi'(x)W_z(x)\to 0 \quad \mbox{ as }z\to\infty
	$$
	hold uniformly for $x,y\in V_\rho$, and from the definition of $\mathcal D_h$ in \eqref{variations_energy} we get
	$$
	\mathcal D_{W_z}(\vec\mu)\to 0 \quad \mbox{ as } z\to\infty.
	$$
	
	The quantity $\mathcal D_h(\vec \mu)$ is linear in $h$, so \eqref{aux_equation_10} implies
	$$
	\mathcal D_q(\vec \mu)=-zq(z)\mathcal D_{h_z}(\vec \mu)+ \mathcal D_{Q_z}(\vec\mu) +\mathcal D_{W_z}(\vec \mu) =\mathcal D_{Q_z}(\vec\mu) +\mathcal D_{W_z}(\vec \mu),
	$$
	where for the last equality we used Corollary~\ref{corollary_converse_critical_measure}, and hence from the previous limit we get
	$$
	\mathcal D_q(\vec \mu)=\lim_{z\to \infty} \mathcal D_{Q_z}(\vec\mu).
	$$
	
	The result now follows easily by induction on the degree $N$ of $q$. If $N=0$, then $Q_z$ is identically zero, and the equality above implies that $\mathcal D_q(\vec \mu)=0$. 
	Assuming now that $\mathcal D_h(\vec\mu)=0$ for every polynomial $h$ of degree at most $N-1$, we get that $\mathcal D_{Q_z}(\vec \mu)=0$ because $Q_z(x)$ is a polynomial of degree 
	at most $N-1$ in $x$, and the equality above implies $\mathcal D_q(\vec\mu)=0$, concluding the proof. 
\end{proof}

If $p\in \widehat S_\alpha$, then as it is discussed in Remark~\ref{remark:Angelesco-Nikishin}, the point $p$ belongs to exactly two of the supports of 
$\mu_1$, $\mu_2$ and $\mu_3$, and locally the union of these sets is an analytic arc. That is, there exists an open disk $U_p$ centered at $p$ such that
\begin{equation}\label{local_support_double_poles}
\gamma_p=U_p\cap (\supp\mu_1\cup \supp\mu_2\cup \supp\mu_3)
\end{equation}
is an analytic arc passing through $p$. 

Additionally, given any point $z \in (\supp\mu_1\cup\supp\mu_2\cup\supp\mu_3)\setminus \widehat S_\alpha$, there exists a small disk $B_z$ centered $z$, disjoint from 
$\widehat S_\alpha$ 
and such that the set 
$$
B_z\cap (\supp\mu_1\cup \supp\mu_2\cup\supp\mu_3)
$$
is a finite union of analytic arcs, which can only intersect at the common point $z$. In case $z\notin S_\alpha$, this intersection reduces to a single analytic arc. The collection
$$
\{B_z\}\cup \{U_p\}
$$
constructed above is an open cover of the compact set $\supp\mu_1\cup\supp\mu_2\cup\supp\mu_3$, from which we extract a finite subcover
$$
\left\{B_j\right \}_{j=1}^m\cup \left\{U_p\right\}_{p\in \widehat S_p},
$$
where for $j=1,\hdots,m$ we have $U_j=U_z$ for some  $z=z_j \in (\supp\mu_1\cup\supp\mu_2\cup\supp\mu_3)\setminus \widehat S_\alpha$. Set 
$$
B=\bigcup_{j=1}^m B_j,\quad U=\bigcup_{p\in \widehat S_\alpha}U_p.
$$
It follows from their construction that these sets satisfy
\begin{equation}\label{decomposition_supports}
B\cap \widehat S_\alpha=\emptyset,\quad \supp\mu_1\cup\supp\mu_2\cup\supp\mu_3\subset B\cup U.
\end{equation}

Consider a smooth partition of unity $\{\psi_k\}$ of $\supp\mu_1\cup\supp\mu_2\cup\supp\mu_3$ subordinated to the open cover $B\cup U$. That is, each function $\psi_k$ is 
real, belongs to $C_\infty(\C)$ and additionally satisfies the following properties.

\begin{itemize}
	\item $0\leq \psi_k(z)\leq 1$, for every $z\in\C$.
	\item For every $k$, $\supp\psi_k\subset U$ or $\supp\psi_k \subset B$.
	\item Every $z\in \supp\mu_1\cup\supp\mu_2\cup\supp\mu_3$ belongs to the support of a finite number of functions in the collection $\{\psi_k\}$.
	\item $\sum_{k} \psi_k(z)=1$, for every $z\in \supp\mu_1\cup\supp\mu_2\cup\supp\mu_3$.
\end{itemize}

Since $\supp\mu_1\cup\supp\mu_2\cup\supp\mu_3$ is compact, the collection $\{\psi_k\}$ can be assumed to be finite. Moreover, we can refine $\{\psi_k\}$ and assume that 
$\supp\psi_k\subset U$ whenever $\supp\psi_k\cap \widehat S_\alpha\neq \emptyset$. Set  
$$
\widehat \psi (z)=\sum_{\supp\psi_k\cap \widehat S_\alpha\neq \emptyset}\psi_k(z),\quad \psi(z)=\sum_{k} \psi_k(z)-\widehat \psi(z).
$$
The functions $\psi$ and $\widehat \psi$ belong to $C_\infty(\C)$, satisfy
\begin{equation}\label{conditions_support_psis}
\supp\psi\cap \widehat S_\alpha=\emptyset, \quad \supp\widehat\psi\subset U
\end{equation}
and 
\begin{equation}\label{splitting_supports}
\psi(z)+\widehat \psi(z)=1, \quad z\in \supp\mu_1\cup\supp\mu_2\cup\supp\mu_3.
\end{equation}

\begin{lem}\label{lemma_variations_polynomial_local}
	Under the conditions of Theorem~\ref{thm_reciprocal_spectral_curve}, if $q$ is a polynomial, then $\mathcal D_{\widehat \psi q}(\vec\mu)=0$, where $\widehat \psi$ is the function 
	constructed above.
\end{lem}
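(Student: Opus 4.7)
The plan is to exploit the linearity of the functional $\mathcal D_h(\vec\mu)$ in $h$, combined with the decomposition $\psi + \widehat\psi = 1$ on $\supp\mu_1\cup\supp\mu_2\cup\supp\mu_3$ provided by \eqref{splitting_supports}, in order to reduce the claim to the two facts already established: $\mathcal D_q(\vec\mu)=0$ for polynomials $q$ (Lemma~\ref{lemma_variations_polynomial}) and $\mathcal D_h(\vec\mu)=0$ for $C^2$ functions $h$ with $\supp h\cap \widehat S_\alpha=\emptyset$ (Proposition~\ref{lemma_variations_without_double_poles}).

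First I would observe that $\psi q$ lies in $C^2(\C)$ and, by \eqref{conditions_support_psis}, has support disjoint from $\widehat S_\alpha$, so Proposition~\ref{lemma_variations_without_double_poles} applies and gives $\mathcal D_{\psi q}(\vec\mu)=0$. The functional $\mathcal D$ is manifestly linear in $h$, so it suffices to prove that $\mathcal D_{(\psi+\widehat\psi)q}(\vec\mu)=\mathcal D_q(\vec\mu)$; combined with Lemma~\ref{lemma_variations_polynomial} this yields
$$0=\mathcal D_q(\vec\mu)=\mathcal D_{\psi q}(\vec\mu)+\mathcal D_{\widehat\psi q}(\vec\mu)=\mathcal D_{\widehat\psi q}(\vec\mu),$$
which is the desired conclusion.

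The main point, and the step that requires a little care, is to verify the identity $\mathcal D_{(\psi+\widehat\psi)q}(\vec\mu)=\mathcal D_q(\vec\mu)$, since \eqref{splitting_supports} only guarantees $\psi+\widehat\psi=1$ on the supports of the measures, not on a full neighborhood in $\C$. For the single integrals $\int \Phi_j'(x)h(x)\,d\mu_j(x)$ this is obvious, as the integrand depends on $h$ only through its restriction to $\supp\mu_j$. For the double integrals $\iint \tfrac{h(x)-h(y)}{x-y}d\mu_j(x)d\mu_k(y)$ the integrand depends on values of $h$ on the supports for off-diagonal $(x,y)$, while on the diagonal $\{x=y\}$ it reduces to $h'(x)$, where $\psi+\widehat\psi$ may have a non-zero derivative. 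However, since every $\mu_j$ is absolutely continuous with respect to arclength on its (1-dimensional) support and, by \eqref{assumption_intersection_supports}, the pairwise intersections $\supp\mu_j\cap\supp\mu_k$ are finite, the product measure $\mu_j\times\mu_k$ assigns zero mass to the diagonal $\{(x,x):x\in\C\}$. Thus the on-diagonal discrepancy between $((\psi+\widehat\psi)q)'$ and $q'$ is irrelevant, the two double integrals agree, and the identity follows.

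The hard part, if any, is this verification that $\mathcal D$ really descends to a functional on the restrictions $h\vert_{\supp\mu_1\cup\supp\mu_2\cup\supp\mu_3}$, which is what ultimately allows the cutoff argument to go through despite $\psi+\widehat\psi$ failing to be identically $1$ off the supports; once this is in place the conclusion is immediate by linearity.
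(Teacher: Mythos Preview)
Your proof is correct and follows essentially the same route as the paper's: split $q=\psi q+\widehat\psi q$ on the supports, apply Proposition~\ref{lemma_variations_without_double_poles} to $\psi q$ and Lemma~\ref{lemma_variations_polynomial} to $q$, and conclude by linearity. Your extra care with the diagonal is well placed; the paper handles this point elsewhere (see the remark after Proposition~\ref{proposition_local_variation_energy} and the observation in the proof of Theorem~\ref{thm_reciprocal_spectral_curve} that the measures have no point masses, so the diagonal has zero $\mu_j\times\mu_k$ mass).
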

\begin{proof}
	From \eqref{splitting_supports}, it follows that $q\equiv \psi q+\widehat \psi q$ on $\supp\mu_1\cup\supp\mu_2\cup\supp\mu_3$. From the definition of $\mathcal 
	D_h$ in \eqref{variations_energy} we then get 
	$$
	\mathcal D_{\widehat \psi q} (\vec\mu)=\mathcal D_q(\vec\mu) - \mathcal D_{\psi q}(\vec\mu).
	$$

	Using the first condition in \eqref{conditions_support_psis}, we get $\supp(\psi q)\cap \widehat S_\alpha=\emptyset$, so 
	Lemma~\ref{lemma_variations_without_double_poles} gives us $\mathcal D_{\psi q}(\vec\mu)=0$. Since $q$ is a polynomial, we learn from Lemma~\ref{lemma_variations_polynomial} 
	that $\mathcal D_q(\vec\mu)=0$, concluding the proof. 
\end{proof}

We are finally able to prove Theorem~\ref{thm_reciprocal_spectral_curve} in its full generality.
\begin{proof}[Proof of Theorem~\ref{thm_reciprocal_spectral_curve}]
	Recall the definition of the arcs $\{\gamma_p\}$, $p\in \widehat S_\alpha$, given in \eqref{local_support_double_poles}. Each of these arcs is a simple contour on the complex 
	plane, and we can find a smooth arc $\gamma\subset \C$ for which $\cup \gamma_p\subset \gamma$ and $\C\setminus \gamma$ is connected. In particular, the second condition in 
	\eqref{conditions_support_psis} implies
	\begin{equation}\label{equation_1_lemma_analytic_arc}
	\supp (\widehat \psi )\cap (\supp\mu_1\cup \supp\mu_2\cup \supp\mu_3) \subset \bigcup_{p\in \widehat S_\alpha}\gamma_p \subset \gamma.
	\end{equation}
	
	Consider a parametrization $\gamma:[0,1]\to \C$ of $\gamma$ by arc length, set $a=\gamma(0)$ and define a continuous function $g:\gamma\to \C$ by $g(z)=\gamma'(\gamma^{-1}(z))$. 
	That is, $g(z)$ is a unit vector tangent to $\gamma$ at the point $z$, varying continuously with $z$. Given $h\in C^2(\C)$, define
	$$
	G:\gamma \to \C,\quad G(z)=\frac{1}{g(z)}\left( \frac{\partial h}{\partial x}(z)\re g(z)+\frac{\partial h}{\partial y}(z)\im g(z) \right)
	$$
	
	$\gamma$ is a simple smooth arc, so it has empty interior. Since $\C\setminus \gamma$ is connected and $G$ is continuous, Mergelyan's Theorem tells 
	us that there exists a sequence of polynomials $(p_n)$ converging to $G$ uniformly on $\gamma$. In particular, for $\gamma(t)=z$ this implies that the convergence
	$$
	p_n(\gamma(t))\gamma'(t)=p_n(z)g(z)\to \frac{\partial h}{\partial x}(z)\re g(z)+\frac{\partial h}{\partial y}(z)\im g(z)=\frac{d}{dt}(h(\gamma(t))) \Big|_{t=\gamma^{-1}(z)}
	$$
	holds uniformly on $\gamma$, and as a consequence the sequence of polynomials

	$$
	q_n(z)=\int_a^z p_n(s)ds +h(a)
	$$
	converges to 
	$$
	\int_0^t\frac{d}{du}(h(\gamma(u)))du+h(a)=h(z)
	$$
	uniformly for $z\in \gamma$.
	
	In summary, we constructed a sequence of polynomials $(q_n)$ converging uniformly to $h$ on $\gamma$, and for which the sequence of derivatives $(q_n')=(p_n)$ converges to $G$ 
	uniformly on $\gamma$; in particular there exists $M>0$, independent of $n$, such that
	\begin{equation}\label{uniform_bound_derivatives}
	\left|q'_n(x)\right| \leq M,\quad x\in \gamma, \quad n\geq 1.
	\end{equation}
	
	Hence, the convergence
	\begin{equation}\label{convergence_polynomial_to_test_function}
	\widehat \psi(x)q_n(x)\to  \widehat \psi(x)h(x)
	\end{equation}
	holds true uniformly along $\gamma$. Due to \eqref{equation_1_lemma_analytic_arc}, this is enough to conclude that the convergence above holds uniformly on 
	$\supp\mu_1\cup\supp\mu_2\cup\supp\mu_3$, so that
	\begin{equation}\label{equation_2_lemma_analytic_arc}
	\int \Phi'_j \widehat \psi q_n \; d\mu_j \to  \int \Phi'_j \widehat \psi h \; d\mu_j,\quad j=1,2,3.
	\end{equation}
	
	The measures $\mu_1,\mu_2$ and $\mu_3$ do not have point masses, so the diagonal $\{x=y\}$ has zero $\mu_j\times \mu_k$ measure. Thus the limit 
	\eqref{convergence_polynomial_to_test_function} also implies that the convergence
	\begin{equation}\label{convergence_divided_difference}
	\frac{\widehat \psi(x)q_n(x)-\widehat \psi(y)q_n(y)}{x-y}\to \frac{\widehat \psi(x)h(x)-\widehat \psi(x)h(y)}{x-y} \\
	\end{equation}
	holds true pointwise $(\mu_j\times \mu_k)$-a.e.
	
	Since the arc $\gamma$ is connected, we also know that
	$$
	\left|\frac{\widehat \psi(x)q_n(x)-\widehat \psi(y)q_n(y)}{x-y}\right| \leq \sup_{s\in [0,1]} \left| \frac{d}{dt}\left( \left((\widehat \psi q_n )\circ \gamma \right)(t) \right)
	\right|_{t=s}
	$$
	whenever $x,y\in \gamma$. In virtue of \eqref{uniform_bound_derivatives} and \eqref{convergence_polynomial_to_test_function}, the right-hand side in the inequality above is 
	uniformly bounded in $n$. Hence the left-hand side of 
	\eqref{convergence_divided_difference} is uniformly bounded along $\gamma$, and using \eqref{equation_1_lemma_analytic_arc} and once again 
	\eqref{convergence_polynomial_to_test_function}, we can extend this conclusion to $\supp\mu_1\cup\supp\mu_2\cup\supp\mu_3$. Using the 
	Dominated Convergence Theorem 
	and \eqref{convergence_divided_difference} we conclude 
	$$
	\iint \frac{\widehat \psi(x)q_n(x)-\widehat \psi(y)q_n(y)}{x-y} d\mu_j(x)d\mu_k(y)\to \iint \frac{\widehat \psi(x)h(x)-\widehat \psi(y)h(y)}{x-y} d\mu_j(x)d\mu_k(y),
	$$
	for $j,k=1,2,3$. Combined with \eqref{equation_2_lemma_analytic_arc}, we finally get
	$$
	\mathcal D_{\widehat \psi q_n}(\vec\mu )\to \mathcal D_{\widehat \psi h}(\vec\mu ).
	$$
	
	From Lemma~\ref{lemma_variations_polynomial_local}, we know that $\mathcal D_{\widehat \psi q_n}(\vec\mu )=0$ for every $n$, hence $\mathcal D_{\widehat \psi h}(\vec\mu )=0$. On 
	the other hand, due to the first condition in \eqref{conditions_support_psis}, we have $\supp(\psi h)\cap \widehat S_\alpha=\emptyset$, so from
	Lemma~\ref{lemma_variations_without_double_poles} we get $\mathcal D_{ \psi h}(\vec\mu )=0$. Thus,
	$$
	\mathcal D_{h}(\vec\mu )=\mathcal D_{\widehat \psi h}(\vec\mu )+\mathcal D_{\psi h}(\vec\mu )=0,
	$$
	where for the first equality we used \eqref{splitting_supports}. Since $h\in C^2(\C)$ is arbitrary, Corollary~\ref{corollary_critical_measure} gives us that the measure $\vec \mu$ 
	is critical, concluding the proof.
\end{proof}


\section{Quadratic differentials}\label{appendix_quadratic_differentials}

A meromorphic quadratic differential $\varpi$ on a Riemann surface $\mathcal R$ is a differential form of type $(2,0)$, given locally by an expression $f(z)dz^2$, where $f$ is a 
meromorphic function of a local coordinate $z$. If $z=z(\zeta)$ is a conformal change of variables, then
$$
\widetilde{f}(\zeta)d\zeta^2 = f(z(\zeta)) (dz/d\zeta)^2 d\zeta^2
$$
represents $\varpi$ in the local coordinates $\zeta$. 

In this Appendix, we sketch the minimal background on quadratic differentials used throughout the paper. The general references are the monographs by 
Strebel \cite{strebel_book} and Jenkins \cite{jenkins_book}; some additional information can be found in \cite{MR1929066,Pommerenkebook,MR2724628}.

\subsection{Critical points and trajectories}

The {\it critical} or {\it singular points} of $\varpi=fdz^2$ are the zeros and poles of $f$; recall that a zero (resp., a pole) of $\varpi$ is a point $p$ where in a local chart 
sending $p$ to $0$ we have $f(z) = z^n \psi(z)$, with $\psi(0)\neq 0$, and with the integer $n\geq 1$ (resp., $n\leq -1$). The value $n$ is the \textit{order} of the critical point 
$p$, and is denoted by $\eta(p)$.   The rest of the points of $\mathcal R$ are called regular, and their order is assumed to be $\eta(p)=0$.

Critical points of order $\leq -2$ (i.e., poles of order 2 and higher) are called {\it infinite}, and the rest of the critical points are {\it  finite}. 

In a neighborhood of any regular point $p$, the primitive 
\begin{equation}\label{local_primitive}
\Upsilon(z)=\int_p^z \sqrt{-\varpi}=\int_p^z\sqrt{-f(s)}ds
\end{equation}
is well defined by specifying the branch of the square root at $p$ and continuing it analytically along the path of integration. Function $\Upsilon(z)$ provides a 
\textit{distinguished} or a \textit{natural} parameter on $\mathcal R$ in a neighborhood of $p$. 

We are mostly interested in the {\it trajectories} of a quadratic differential $\varpi$. A Jordan arc $\gamma\subset \mathcal R$ is called an {\it arc of trajectory} of
$\varpi$ if it is locally mapped by $\Upsilon$ to a vertical line. More precisely, this means that for any point $p\in \gamma$, there exists a neighborhood $U$ where the primitive 
$\Upsilon$ above is well defined and satisfies
\begin{equation}\label{definition_local_trajectory}
\re \Upsilon(z)=\const,\quad z\in \gamma\cap U.
\end{equation}
A maximal arc of trajectory is called a {\it trajectory} of $\varpi$.

Analogously, the {\it orthogonal trajectories} of $\varpi$ are 
trajectories of $-\varpi$; they can be equivalently defined by replacing ``$\re$'' by ``$\im$'' in \eqref{definition_local_trajectory}.

A trajectory $\gamma$ extending to a finite critical point along at least one of its directions is called {\it critical}; in the case when it happens in both directions, we call 
this trajectory {\it bounded} (also {\it finite} or \textit{short}), and {\it 
unbounded} (or {\it infinite}) otherwise. Notice that both ends of a short trajectory may coincide, in which case it forms a loop on $\mathcal R$.

A \textit{$\varpi$-chain} is a connected set on $\mathcal R$ made of a finite union of arcs of trajectories or orthogonal trajectories of $\varpi$. If no curves in a 
$\varpi$-chain 
belong to orthogonal trajectories,  we refer to it as a {\it path of trajectories} of $\varpi$ (or a \textit{$\varpi$-path}). In this case, in order to avoid the trivial 
situation, 
two consecutive arcs of a $\varpi$-path are required to intersect at a singular point of $\varpi$.

\subsection{The local structure of trajectories} \label{sec:localstructuretrajectoriesAppendix2}

The local behavior of trajectories of a meromorphic quadratic differential $\varpi$ is well understood.

From a point $p$ of order $\eta(p)=n\geq -1$ emanate $n+2$ trajectories, forming equal angles $\frac{2\pi}{n+2}$ at $p$. This covers also regular points, meaning that through any 
regular point passes exactly 
one trajectory, which is locally an analytic arc  (see Figure \ref{figure_traj_locally_1}). 

\begin{figure}
\begin{subfigure}{.33\textwidth}
\centering
\begin{overpic}[scale=1]{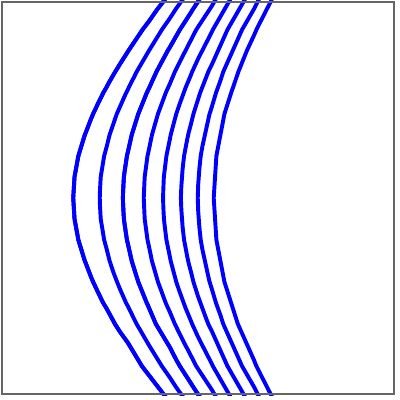}
\end{overpic}
\end{subfigure}%
\begin{subfigure}{.33\textwidth}
\centering
\begin{overpic}[scale=1]{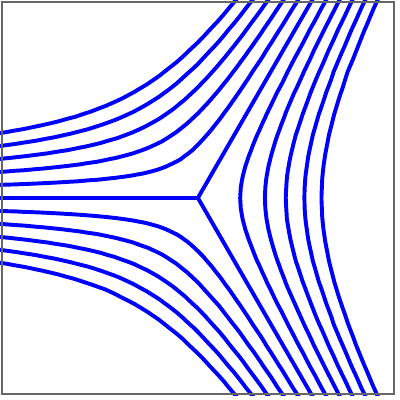}
\end{overpic}
\end{subfigure}%
\begin{subfigure}{0.33\textwidth}
\centering
\begin{overpic}[scale=1]{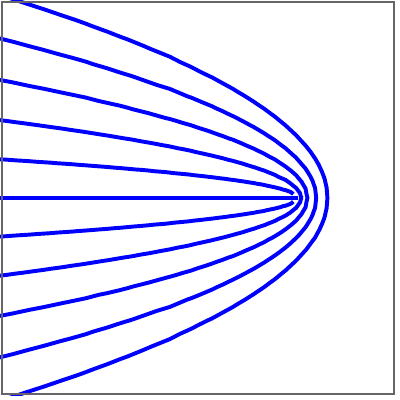}
\end{overpic}
\end{subfigure}
\caption{Structure of trajectories in a neighborhood of a regular point (left), simple zero (middle) and simple pole (right).}\label{figure_traj_locally_1}
\end{figure}

An infinite critical point $p$ or order $n\leq  -3$ has a neighborhood $G$ with the following property: there are $-(n+2)$ asymptotic directions, henceforth called 
{\it critical directions}, forming equal angles $\frac{2\pi}{-n-2}$ at $p$, such that each trajectory entering $G$ stays in $G$ and tends to $p$ in one of the critical 
directions \cite[Theorem 3.3]{jenkins_book}. If a trajectory is fully contained in $G$, then it tends to $p$ in two consecutive critical directions (see Figure 
\ref{figure_traj_locally_2}).

\begin{figure}
\begin{subfigure}{.5\textwidth}
\centering
\begin{overpic}[scale=1]{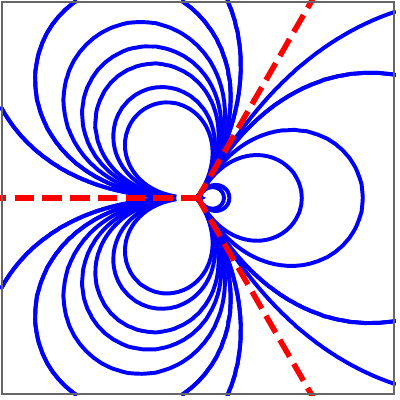}
\end{overpic}
\end{subfigure}%
\begin{subfigure}{.5\textwidth}
\centering
\begin{overpic}[scale=1]{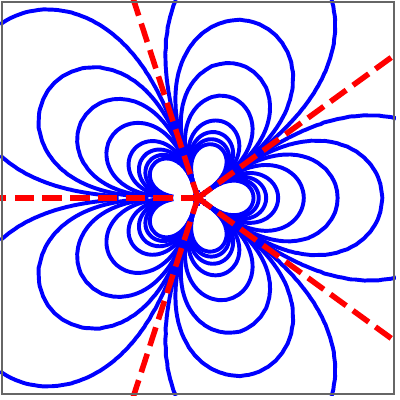}
\end{overpic}
\end{subfigure}
\caption{Structure of trajectories (solid lines) and critical directions (dashed lines) in a neighborhood of poles of order $3$ (left) and $5$ 
(right).}\label{figure_traj_locally_2}
\end{figure}

At a double pole $p$ there are three possibilities. For $\varpi=f(z)dz^2$, we define the {\it residue} $c$ of $\varpi$ at $z=p$ to be the residue of $\sqrt{f(z)}$ at $z=p$, 
which is well defined up to a sign. If $c\in \R$ then there are no trajectories emanating from $p$ and the trajectories near $p$ are closed loops. If $c\in i\R$, then there are 
trajectories emanating from $c$ in every direction. In the rest of the cases, the trajectories near $p$ converge to $p$ in a spiral form (see Figure \ref{figure_traj_locally_3}).

\begin{figure}
\begin{subfigure}{.33\textwidth}
\centering
\begin{overpic}[scale=1]{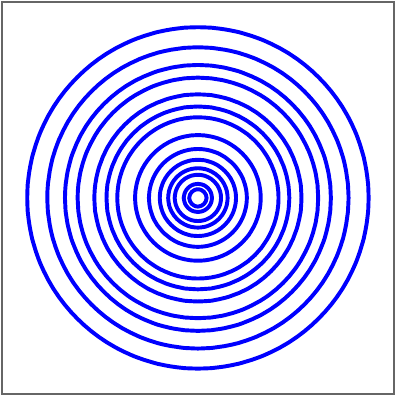}
\end{overpic}
\end{subfigure}%
\begin{subfigure}{.33\textwidth}
\centering
\begin{overpic}[scale=1]{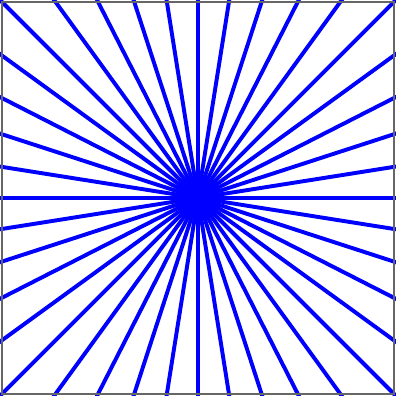}
\end{overpic}
\end{subfigure}%
\begin{subfigure}{0.33\textwidth}
\centering
\begin{overpic}[scale=1]{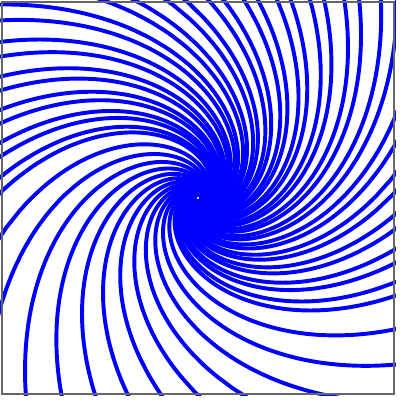}
\end{overpic}
\end{subfigure}
\caption{Structure of trajectories in a neighborhood of a double pole for $c\in \R$ (left), $c\in i\R$ (middle) and $c\in \C\setminus(\R\cup i\R)$ 
(right).}\label{figure_traj_locally_3}
\end{figure}

\subsection{Global structure of trajectories} \label{sec:globalstructuretrajectoriesAppendix2}

There are three possible behaviors for a trajectory $\gamma$ in the large,
\begin{enumerate}[(i)]
\item $\gamma$ is a closed curve containing no critical points.

\item $\gamma$ is an arc connecting two critical points (which may coincide; in this case $\gamma$ is a closed curve).

\item $\gamma$ is an arc that has no limit along at least one of its directions.
\end{enumerate}

Trajectories satisfying {\it (ii)} are called {\it short} or {\it finite}. Trajectories satisfying {\rm (iii)}  are called {\it recurrent}, and they are usually a major source of 
troubles when studying the global structure of trajectories of a given quadratic differential. Fortunately (for us)  in this paper we  deal with quadratic differentials with at 
most $3$ poles on a genus $0$ compact Riemann surface, and the absence of recurrent trajectories in this case is assured by Jenkin's Three Poles Theorem \cite[Thm.~8.5, 
page~226]{Pommerenkebook}.

It is intuitively clear that a non-recurrent and non-closed trajectory $\gamma$ has two limiting (extremal) values when we travel it in both opposite directions, and that can be 
distinct or equal. For convenience, we will denote these extremal values by $p(\gamma)$ and $q(\gamma)$. For instance, for a short trajectory $\gamma$ both $p(\gamma)$ and 
$q(\gamma)$ are finite critical points.

The set of the critical trajectories of a quadratic differential $\varpi$ (together with their limit points, i.e.~the critical points of $\varpi$) is the {\it critical graph} of 
$\varpi$, denoted by $\mathcal G=\mathcal G_\varpi$. 
According to \cite[Theorem 3.5]{jenkins_book} (see also \cite[\S 10]{strebel_book}), the complement of the
closure of $\mathcal G_\varpi$ in $\mathcal R$ consists of a finite number of domains called the \textit{domain configuration} of $\varpi$. 
The 
knowledge of $\mathcal G$ (or of the domain configuration of $\varpi$) is sufficient to fully understand the global structure of trajectories of $\varpi$, as evidenced by the 
following theorem: 
\begin{thm}[{Basic Structure Theorem \cite[Theorem~3.5]{jenkins_book}}]\label{theorem_global_dissection}
Let $\varpi$ be a meromorphic quadratic differential on a compact Riemann surface $\mathcal R$. Suppose in addition that $\varpi$ has no recurrent trajectories.

Then $\mathcal R\setminus\mathcal G$ decomposes into a finite union of disjoint domains $\cup \mathcal D$, each of them bounded by a finite number of critical 
trajectories. Each domain $\mathcal D$ lies into one of the following four classes. 

\begin{enumerate}[\it (i)]
 \item {\bf Half plane} (or \textbf{end}) \textbf{domain}: It is swept by trajectories converging to a pole $p$ of order $\geq 3$ in its two ends, and along consecutive critical 
directions. Its boundary consists of a $\varpi$-path with two unbounded critical trajectories and a finite number of short trajectories. For some choice of the branch of the 
square root, the natural parameter $\Upsilon$ in \eqref{local_primitive} is a conformal map from $\mathcal D$ to a vertical half plane 
 $$
 H=\{w\in\C \; \mid \; \re w>c\},
 $$
 for some $c\in \R$, and it extends continuously to the boundary of $\mathcal D$ with the identification $\Upsilon(p)=\infty$.
 
 \item {\bf Strip domain}: 
 It is swept by trajectories which both ends tend to poles $p$, $q$ of order $\geq 2$, possibly with $p=q$. The boundary $\partial \mathcal D\setminus \{p,q\}$ is a disjoint union 
of two $\varpi$-paths, each of them consisting of two unbounded critical trajectories converging to $p$ and $q$, and possibly a finite number of short trajectories.

 For some real constants $c_1<c_2$, $\Upsilon$ maps $\mathcal D$ conformally to a vertical strip
 \begin{equation}\label{uniformization_strip_domain}
 S=\{ w\in\C \; \mid \; c_1<\re w < c_2 \},
 \end{equation}
 and it extends continuously to the boundary of $\mathcal D$, with appropriate identification of the points $p,q$ with the directions $\pm i\infty$.

 \item {\bf Ring domain}: 
 It is swept by closed trajectories. Its boundary consists of two connected components, where each of them is a closed $\varpi$-path. For a suitably chosen real constant $c$ and 
some real numbers $0<r_1<r_2$, the function $z\mapsto e^{c\Upsilon(z)}$ maps $\mathcal D$ conformally to 
an annulus
 \begin{equation}\label{uniformization_ring_domain}
 R=\{ w\in\C \; \mid \; r_1<|w|<r_2 \}.
 \end{equation}
and it extends continuously to the boundary of $\mathcal D$.
 
 \item {\bf Circle domain}: It is swept by closed trajectories and contains exactly one double pole, with purely real residue. Its boundary is a 
closed $\varpi$-path.

For a suitably chosen real constant 
$c$ and some real number $r>0$, the function $z\mapsto e^{c\Upsilon(z)}$ is a conformal map from $\mathcal D$ to the circle centered at origin and radius $r$; it extends 
continuously to $\partial D$ and sends the double pole to the origin $w=0$.

\end{enumerate}
\end{thm}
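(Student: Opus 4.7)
The plan is to exploit the local theory of trajectories (Section~\ref{sec:localstructuretrajectoriesAppendix2}) in combination with a global topological argument, using the natural parameter $\Upsilon(z)=\int\sqrt{-\varpi}$ as a uniformization tool on each connected component of $\mathcal R\setminus\overline{\mathcal G}$. The first step is to observe that $\mathcal G$ is the union of finitely many critical trajectories: since $\mathcal R$ is compact and $\varpi$ meromorphic, the set of critical points is finite, and from each finite critical point of order $n$ emanate exactly $n+2$ trajectories. Hence $\overline{\mathcal G}$ is a finite $\varpi$-graph (using absence of recurrent trajectories to ensure each of these critical trajectories has a well-defined extremal behavior), and $\mathcal R\setminus\overline{\mathcal G}$ decomposes into finitely many open connected components $\mathcal D$.

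The second step is the classification. Fix a component $\mathcal D$ and a trajectory $\gamma$ through a point $p\in\mathcal D$. Since $\gamma$ avoids $\overline{\mathcal G}$ by construction, and recurrence is excluded by hypothesis, $\gamma$ is either closed or has both ends converging (along critical directions) to infinite critical points of order $\leq -2$. I would then show that the \emph{type} of trajectory is constant on $\mathcal D$: the set of points on closed trajectories is both open (by the local conformality of $\Upsilon$, nearby trajectories are also closed with nearby periods) and closed in $\mathcal D$ (a limit of closed trajectories in $\mathcal D$ cannot degenerate to an unbounded one without touching $\overline{\mathcal G}$). This gives two cases: either every trajectory in $\mathcal D$ is closed, or each one is unbounded with ends at infinite poles. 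In the unbounded case, a continuity argument on the pair of limit poles (and the pair of critical directions at them) shows the pair is the same for all trajectories in $\mathcal D$.

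For the uniformization, in the closed-trajectory case the function $\exp(c\Upsilon)$, for a suitable $c\in\mathbb R$ chosen so that the period of $\Upsilon$ around the trajectories equals $2\pi i/c$, is single-valued on $\mathcal D$ and maps it biholomorphically to an annulus \eqref{uniformization_ring_domain}; if the annulus degenerates at the inner radius $r_1=0$, the missing puncture is filled by a double pole with real residue, giving the circle domain. In the non-closed case $\Upsilon$ itself is single-valued on the simply connected $\mathcal D$ (no nontrivial period exists, since level sets of $\re\Upsilon$ are the trajectories), mapping $\mathcal D$ either onto a vertical strip \eqref{uniformization_strip_domain} (when the two limiting poles/critical directions are distinct) or onto a half-plane (when they coincide into a single pole approached from two consecutive critical directions). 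In each case the boundary of $\mathcal D$ is identified via $\Upsilon$ with the boundary of the model domain, yielding a $\varpi$-path of short trajectories plus unbounded critical trajectories. The hardest step, conceptually, is controlling the boundary: one must verify that $\Upsilon$ extends continuously to $\partial\mathcal D$ and that the trajectories of $\overline{\mathcal G}$ encountered there assemble into precisely the $\varpi$-paths described in (i)--(iv). This is handled by analyzing, in local charts around each finite critical point of $\partial \mathcal D$, how the sectors determined by the $n+2$ emanating trajectories glue to the model domain via $\Upsilon$.
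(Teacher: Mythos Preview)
The paper does not prove this theorem: it is quoted verbatim from Jenkins' monograph \cite[Theorem~3.5]{jenkins_book} and stated in Appendix~\ref{appendix_quadratic_differentials} as background, without any accompanying proof. So there is no ``paper's own proof'' to compare against; your sketch is being measured only against the classical argument in Jenkins and Strebel.

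As a sketch, your outline follows the standard route and captures the main ideas: finiteness of the critical graph, the closed/unbounded dichotomy for trajectories in a component, and uniformization by $\Upsilon$ or $e^{c\Upsilon}$. Two points deserve more care. First, your criterion separating strip from half-plane domains (``limiting poles/critical directions are distinct'' versus ``coincide into a single pole approached from two consecutive directions'') is not quite the right invariant: a strip domain may well have $p=q$ with the two ends approaching the same pole along \emph{non-consecutive} directions, and conversely the half-plane case is characterized by the image $\Upsilon(\mathcal D)$ being unbounded in $\re w$ (equivalently, by the trajectories filling a full sector at a pole of order $\geq 3$). The correct way to distinguish them is via the image of $\Upsilon$, not by counting poles. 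Second, the assertion that the set of points on closed trajectories is \emph{closed} in $\mathcal D$ is exactly where recurrence or degeneration could sneak in, and your one-line justification (``cannot degenerate to an unbounded one without touching $\overline{\mathcal G}$'') hides the real work: one needs to control the $\varpi$-length of nearby closed trajectories and invoke compactness of $\mathcal R$ to prevent the period from blowing up inside $\mathcal D$. These are the genuine technical steps in Jenkins' and Strebel's proofs, and your sketch correctly flags the boundary analysis as the hardest part but does not actually carry it out.
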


In case $\varpi$ has also recurrent trajectories, a fifth class of domains has to be added to the domain configuration of $\varpi$; we refer the reader to \cite{jenkins_book} for 
further details.

For a given short trajectory $\gamma$ connecting two finite critical points $p$, $q$ (which coincide if $\gamma$ is closed) we define its {\it length} by
\begin{equation}\label{definition_length}
\ell(\gamma)=\left|\int_\gamma \sqrt{-\varpi}\right|=\left|\im \int_\gamma \sqrt{-\varpi}\right|>0,
\end{equation}

The {\it width  of a strip domain} $\mathcal S$ is defined as $\sigma(\mathcal S)=|c_2-c_1|$, where the constants $c_1,c_2$ are as in \eqref{uniformization_strip_domain}. 
Alternatively, it can be computed as
\begin{equation}\label{definition_width}
\sigma(\mathcal S) = \left| \int_{p}^q \re \sqrt{-\varpi} \right|,
\end{equation}
where we integrate along any path in $\mathcal S$ connecting two points, $p$ and $q$, lying on different connected components of $\partial \mathcal S\setminus \{\mbox {poles of 
$\varpi$}\}$.

In the same spirit, the {\it width of a ring domain} $\mathcal D$ is defined as 
\begin{equation*}
\sigma(\mathcal D)=\frac{1}{c}\log\frac{r_2}{r_1}= \left| \re\int_{p}^q \sqrt{-\varpi} \right|,
\end{equation*} 
where $r_1,r_2$ and $c$ are as in \eqref{uniformization_ring_domain}, and we integrate along any path in $\mathcal D$ connecting two points, $p$ and $q$,  lying on different 
connected components of its boundary.

For any open simply connected domain $\mathcal D\subset \mathcal R$ bounded by a $\varpi$-chain (that is called a \emph{$\varpi$-polygon}) we define two values: the total order of 
the singular points of $\varpi$ in $\mathcal D$ is
\begin{equation*}
\lambda(\mathcal D)= \sum_{p_j\in \mathcal D}\eta(p_j),
\end{equation*}
where the summation is along all the singular points $p_j$ of   $\varpi$ in $\mathcal D$, while the  contribution from the singular points of $\varpi$ on the boundary is
\begin{equation*}
\kappa(\mathcal D)= \sum_{p_j\in \partial \mathcal D} \beta(p_j),
\end{equation*}
where the summation is along all the corners $p_j$ of  $\partial \mathcal D$, 
$$
\beta(p)=1-\theta(p_j) \,\frac{\eta(p)+2}{2\pi}, 
$$
and $\theta(p)$ is the inner angle (in radians) at the corner $p$.

Both values have a simple relation, as shown by the following simple consequence of the argument principle (also known as the Teichm\"uller lemma, see \cite[Theorem 
14.1]{strebel_book}):
\begin{thm}[Teichm\"uller lemma]\label{teichmuller_lemma}
If $\mathcal D$ is a $\varpi$-polygon, then
\begin{equation}\label{teichmuller_formula}
\kappa(\mathcal D) = 2+\lambda(\mathcal D).
\end{equation}
\end{thm}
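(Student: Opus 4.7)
The plan is to prove this via the Gauss--Bonnet theorem applied to the flat conical metric $ds=|\varpi|^{1/2}$ on $\mathcal D$, or equivalently via the argument principle for the (multi-valued) one-form $\sqrt{-\varpi}$. The preliminary step is the local analysis at a critical point $p$ of order $\eta=\eta(p)\neq -2$: in a local coordinate $z$ with $z(p)=0$, one writes $\varpi = z^\eta(c+O(z))\,dz^2$, and the substitution $w = c_0\, z^{(\eta+2)/2}$ reduces $\varpi$ to $dw^2$. In the resulting natural parameter, angles at $p$ are stretched by the factor $(\eta+2)/2$ relative to the $z$-coordinate, since a complete $2\pi$ turn in $z$ corresponds to a $(\eta+2)\pi$ turn in $w$.

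The main computation proceeds as follows. On $\mathcal D$ deprived of its singular points, $ds$ is flat, and arcs of trajectories and orthogonal trajectories (which compose $\partial \mathcal D$) are geodesics (vertical or horizontal lines in the $w$-coordinate), so every smooth piece of $\partial \mathcal D$ has vanishing geodesic curvature. At an interior critical point of order $\eta$, the cone angle is $(\eta+2)\pi$, producing an angular defect of $2\pi-(\eta+2)\pi=-\eta\pi$. At a boundary corner at a critical point $p$ with interior angle $\theta(p)$ in the $z$-plane, the interior angle in the flat metric is $\theta(p)(\eta(p)+2)/2$, so the exterior angle equals $\pi-\theta(p)(\eta(p)+2)/2=\pi\beta(p)$. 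Since $\mathcal D$ is a topological disk, $\chi(\mathcal D)=1$, and Gauss--Bonnet for a flat surface with conical singularities and piecewise geodesic boundary gives
\begin{equation*}
\sum_{p\in\mathcal D}(-\eta(p)\pi) + \sum_{p\in\partial\mathcal D}\pi\beta(p) = 2\pi,
\end{equation*}
which rearranges to $-\pi\lambda(\mathcal D)+\pi\kappa(\mathcal D)=2\pi$, that is, $\kappa(\mathcal D) = 2+\lambda(\mathcal D)$.

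The main obstacle will be the rigorous treatment of poles of $\varpi$, where the metric $ds$ is unbounded and Gauss--Bonnet does not apply directly. For interior poles of order $-n$ ($n\geq 2$), I would excise small metric neighborhoods, apply Gauss--Bonnet to the truncated region, and verify that the excision boundary contribution is consistent with the formal defect $-\eta\pi=n\pi$, using the local structure of trajectories near the pole described in Section~\ref{sec:localstructuretrajectoriesAppendix2}. For poles on $\partial\mathcal D$ (such as the asymptotic vertices in a half-plane or strip domain), a similar limiting procedure yields the correct value of $\beta(p)$ from the angular opening of the sector of trajectories leading into the pole. An alternative route, avoiding the metric singularity altogether, is to apply the argument principle directly to $\sqrt{-\varpi}$: since its argument is constant along each trajectory and orthogonal trajectory, the total change of argument around $\partial\mathcal D$ comes only from corner jumps and from interior critical points, and a direct counting reproduces \eqref{teichmuller_formula}.
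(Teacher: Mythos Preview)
The paper does not actually prove this statement: it is stated in the appendix as a known result, described as ``a simple consequence of the argument principle'' with a reference to Strebel's book \cite[Theorem~14.1]{strebel_book}. So there is no proof in the paper to compare against in detail.

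Your proposal is correct and follows one of the two standard routes. Your primary argument via Gauss--Bonnet for the flat conical metric $|\varpi|^{1/2}$ is valid; the local analysis giving cone angle $(\eta+2)\pi$ at a zero of order $\eta$ and the resulting exterior angle $\pi\beta(p)$ at a boundary corner is exactly right, and the final identity drops out immediately. Your alternative route via the argument principle is precisely what the paper alludes to, and is in fact the approach in Strebel's book: one tracks the change of $\arg\sqrt{-\varpi}$ along $\partial\mathcal D$, noting it is constant along each boundary arc and jumps by $\pi\beta(p)$ at each corner, while interior zeros and poles contribute via the usual argument-principle count. The two arguments are essentially dual to one another, and for the purposes of this paper---where the lemma is only quoted, not proved---either sketch would be more than sufficient. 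Your caveats about poles (excision near interior poles, limiting angles at boundary poles) are the right concerns, but note that in the paper's actual applications the $\varpi$-polygons considered never contain interior poles of order $\geq 2$ and the boundary corners are all at finite critical points, so the delicate cases do not arise.
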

A straightforward corollary of this lemma (and also a direct consequence of the maximum principle for holomorphic functions) is the following fact:
\begin{cor}\label{corol_teichmuller_lemma}
 If $\varpi$ is analytic (has no poles) in a $\varpi$-polygon $\mathcal D$, then $\partial \mathcal D$ must contain at least one pole of $\varpi$.
\end{cor}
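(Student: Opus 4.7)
The plan is to argue by contradiction using Teichm\"uller's lemma \eqref{teichmuller_formula}. Suppose $\varpi$ has no poles anywhere on $\overline{\mathcal D}$. First I would observe that every critical point of $\varpi$ lying in $\mathcal D$ is then a zero, so $\eta(p_j)\geq 1$ for each such point, yielding
$$\lambda(\mathcal D)=\sum_{p_j\in\mathcal D}\eta(p_j)\geq 0.$$
Teichm\"uller's identity immediately gives $\kappa(\mathcal D)=2+\lambda(\mathcal D)\geq 2$, which is the lower bound we will ultimately contradict.

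The next step is to bound the boundary contributions $\beta(p_j)$ from above. The absence of poles on $\partial\mathcal D$ means every corner is a zero of $\varpi$ of some order $n=\eta(p_j)\geq 1$. By the local structure of trajectories at a zero (Section~\ref{sec:localstructuretrajectoriesAppendix2}), exactly $n+2$ trajectories emanate from $p_j$ at equal angles $2\pi/(n+2)$. Two consecutive arcs of $\partial\mathcal D$ meeting at $p_j$ therefore form an inner angle which is a positive integer multiple of this minimal sector angle, giving
$$\theta(p_j)\ \geq\ \frac{2\pi}{\eta(p_j)+2}.$$
Substituting into $\beta(p_j)=1-\theta(p_j)(\eta(p_j)+2)/(2\pi)$ produces $\beta(p_j)\leq 0$ for every corner, and summing yields $\kappa(\mathcal D)\leq 0$, contradicting $\kappa(\mathcal D)\geq 2$. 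Consequently $\partial\mathcal D$ must contain at least one pole of $\varpi$.

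The only delicate point is the bound $\theta(p_j)\geq 2\pi/(\eta(p_j)+2)$, which is tight when the boundary is built from arcs of trajectories; this is the relevant setting, since in every application of this corollary in the paper (principle \textbf{P.3} of Section~\ref{sec:generalprinciples}) $\mathcal D$ is a simply connected component of the complement of the critical graph, and its boundary is a union of critical trajectories meeting at critical points. As a cross-check, I would also sketch the maximum-principle route: on a simply connected $\mathcal D$ with no poles of $\varpi$ in $\overline{\mathcal D}$, a branch of $\Upsilon(z)=\int\sqrt{-\varpi}$ (passing to the two-sheeted cover branched at odd-order zeros, if needed) is holomorphic on $\mathcal D$ and continuous up to $\partial\mathcal D$; since $\operatorname{Re}\Upsilon$ is constant on every trajectory arc of $\partial\mathcal D$, it takes only finitely many values there, forcing $\operatorname{Re}\Upsilon$ to be constant in $\mathcal D$ by the maximum principle, hence $\sqrt{-\varpi}\equiv 0$, a contradiction with $\varpi$ not being identically zero.
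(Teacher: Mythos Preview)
Your proof is correct and follows exactly the two routes the paper itself indicates (it gives no explicit proof, merely calling the corollary ``a straightforward corollary of this lemma (and also a direct consequence of the maximum principle)''). Your Teichm\"uller-lemma computation, with $\lambda(\mathcal D)\geq 0$ and $\beta(p_j)\leq 0$ at each corner, is precisely the intended argument, and your acknowledgment that the angle bound $\theta(p_j)\geq 2\pi/(\eta(p_j)+2)$ relies on $\partial\mathcal D$ being a union of arcs of \emph{trajectories} (not orthogonal trajectories) is well placed: as stated, the corollary tacitly assumes the boundary is a $\varpi$-path rather than a general $\varpi$-chain (a rectangle for $\varpi=dz^2$ would otherwise give a counterexample), and indeed every use of it in the paper---via principle \textbf{P.3}---concerns components of the critical graph complement, bounded by trajectories only.
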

This corollary is also the basis for the general principle \textbf{P.3} in Section~\ref{sec:generalprinciples}.

\section{Numerical experiments}\label{appendix_numerics}

In this Appendix we look under the hood of the calculation of the functions $\omega_j$ in \eqref{width_parameter_cubic}, as well as of the  numerical procedures used to find and 
plot the trajectories of the quadratic differential $\varpi$ from Section~\ref{section:dynamics}.

As it was mentioned,  Figure~\ref{figure_width_parameters} was obtained by means of a numerical evaluation of the integrals defining the functions $\omega_j$ in 
\eqref{width_parameter_cubic}. For that, we compute the integrals of the form
$$
\int_{z_1}^{z_2}\xi_j(s)ds
$$
along the line segment joining chosen roots $z_1$, $z_2$ of the discriminant in \eqref{defDiscriminant} by means of the composite trapezoidal rule. 

For $\tau$ fixed, we compute the points $z_k=z_k(\tau)$, choose a value $m\in \N$, and consider a grid of $m+1$ equally spaced nodes $\{p_j\}_{j=1}^{m+1}$,
$$
p_j=z_1 + \frac{j-1}{m}(z_2-z_1),\quad j=1,\cdots,m+1,
$$
that will be used as the quadrature nodes for the composite trapezoidal rule. At each $z=p_j$ we solve \eqref{spectral_curve} numerically, obtaining an (unordered) set of three 
solutions $\xi_{k} (p_j)$. Comparing their values at consecutive 
quadrature points, we collect them into a sequence of (ordered) vectors $\vec v_j=(\xi_{\sigma(1)} ,\xi_{\sigma(2)} ,\xi_{\sigma(3)} )^T(p_j)$, $j=1,\hdots,m+1$, where $\sigma$ 
is a permutation of $\{1,2,3\}$ that does not depend on $j$. We can then determine the permutation $\sigma$ using
\cref{coinciding_nodes,xi_at_branchpoints1,xi_at_branchpoints2,xi_at_branchpoints3,xi_at_branchpoints4} as boundary conditions.

This procedure is repeated for $n$ equally spaced values of $\tau$,
$$
\tau = \tau_k = \frac{k}{4(n+1)},\quad k=1,\dots,n,
$$
(notice that $\tau_k\in [0,1/4)$; we stop at $k=n$ in order to avoid the degenerate situation at $\tau=1/4$ for which some of the endpoints of integration diverge to $\infty$). 

The result of these calculations with $n=1000$ and $m=10000$ is plotted in Figure~\ref{figure_width_parameters}.

On the other hand, we also performed numerical experiments that helped us to build the intuition to predict (and confirm) the structure of the critical graphs in Section 
\ref{section:dynamics}. A sample of such graphs is presented in \cref{numerics_critical_graph_1,numerics_critical_graph_2,numerics_critical_graph_3}.

\begin{figure}
	\begin{subfigure}{0.33\textwidth}
		\centering
		\begin{overpic}[scale=1]{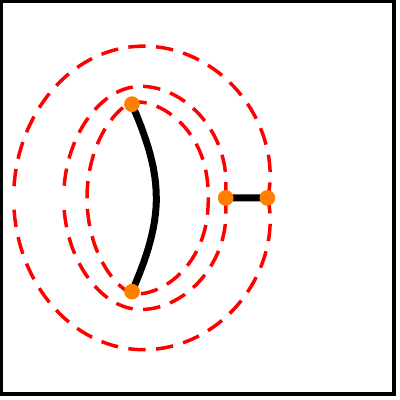}
		\end{overpic}
		\caption*{$\mathcal R_1$}
	\end{subfigure}%
	\begin{subfigure}{0.33\textwidth}
		\centering
		\begin{overpic}[scale=1]{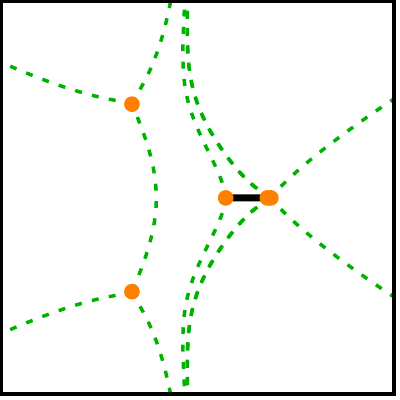}
		\end{overpic}
		\caption*{$\mathcal R_2$}
	\end{subfigure}%
	\begin{subfigure}{0.33\textwidth}
		\centering
		\begin{overpic}[scale=1]{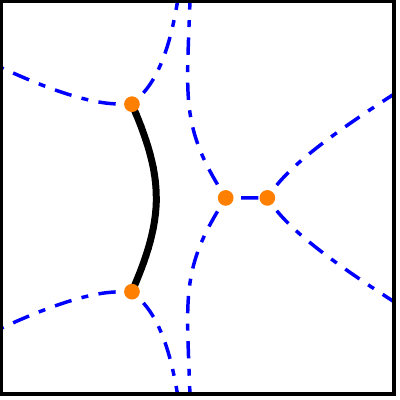}
		\end{overpic} 
		\caption*{$\mathcal R_3$}
	\end{subfigure}
	\caption{Numerical evaluation of the critical graph for $\tau=1/25$.}\label{numerics_critical_graph_1}
\end{figure}


\begin{figure}
	\begin{subfigure}{0.33\textwidth}
		\centering
		\begin{overpic}[scale=1]{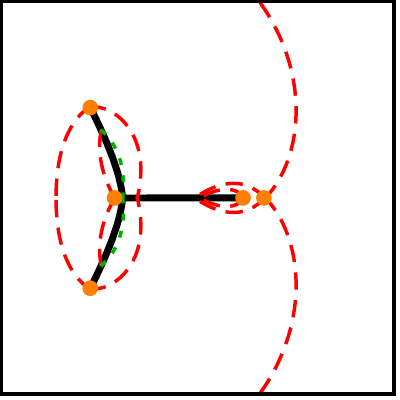}
		\end{overpic}
		\caption*{$\mathcal R_1$}
	\end{subfigure}%
	\begin{subfigure}{0.33\textwidth}
		\centering
		\begin{overpic}[scale=1]{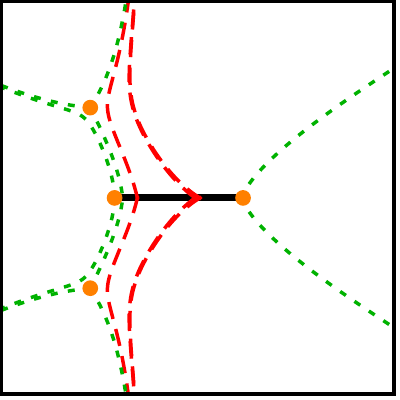}
		\end{overpic}
		\caption*{$\mathcal R_2$}
	\end{subfigure}%
	\begin{subfigure}{0.33\textwidth}
		\centering
		\begin{overpic}[scale=1]{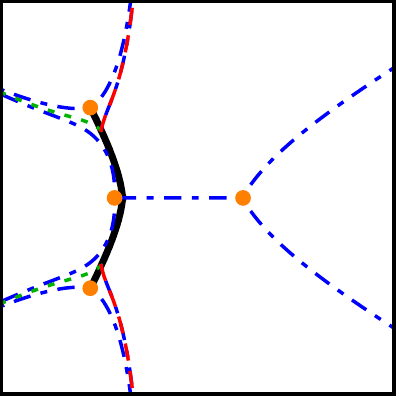}
		\end{overpic} 
		\caption*{$\mathcal R_3$}
	\end{subfigure}
	\caption{Numerical evaluation of the critical graph for $\tau=1/5$.}\label{numerics_critical_graph_2}
\end{figure}


\begin{figure}
	\begin{subfigure}{0.33\textwidth}
		\centering
		\begin{overpic}[scale=1]{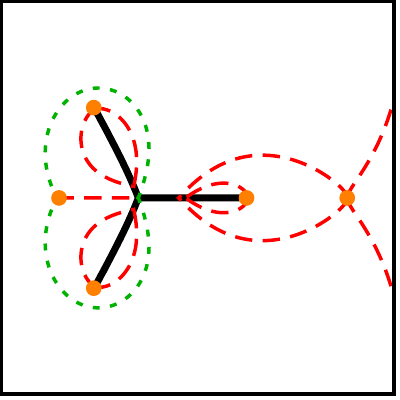}
		\end{overpic}
		\caption*{$\mathcal R_1$}
	\end{subfigure}%
	\begin{subfigure}{0.33\textwidth}
		\centering
		\begin{overpic}[scale=1]{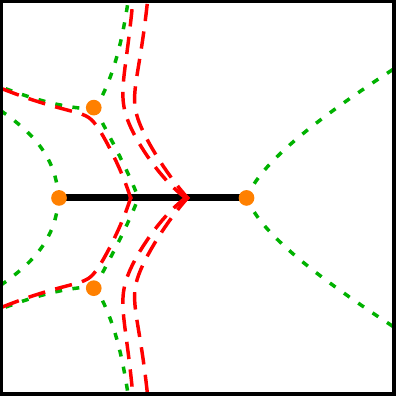}
		\end{overpic}
		\caption*{$\mathcal R_2$}
	\end{subfigure}%
	\begin{subfigure}{0.33\textwidth}
		\centering
		\begin{overpic}[scale=1]{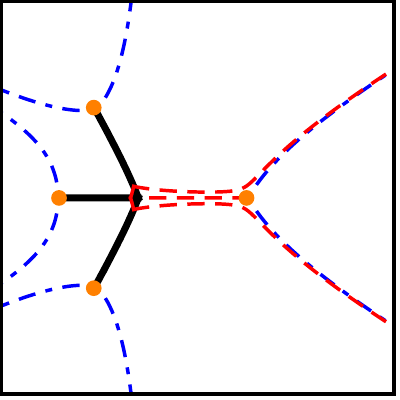}
		\end{overpic} 
		\caption*{$\mathcal R_3$}
	\end{subfigure}
	\caption{Numerical evaluation of the critical graph for $\tau=1/5$.}\label{numerics_critical_graph_3}
\end{figure}


The numerical procedure used for these pictures is as follows. If $\gamma(t), t\in J\subset \R,$ is a parameterization of a trajectory of $\varpi=Q^2(z)dz^2$, then
$$
Q^2(\gamma(t))\gamma'(t)=if(t),\quad t\in J,
$$
where $f$ is some real valued function. A different choice of $f$ correspond just to a reparameterization of $\gamma$. In a natural (arc-length) parametrization of $\gamma$ the 
equation 
above takes the form
\begin{equation}\label{ODE}
\gamma'(t)=i\frac{\overline{Q^2(\gamma(t))}}{|Q^2 (\gamma(t))|},\quad t\in J.
\end{equation}
This is an ordinary differential equation for $\gamma$ that can be solved by standard methods (e.g.~the family of Runge-Kutta algorithms). 

We should bewared of two aspects when implementing this method. The first one concerns the initial values, that usually are at branch points of \eqref{spectral_curve}, and thus, at 
zeros of $Q$, from where more than one trajectory emanates. This can be resolved by perturbing the initial point along the prescribed direction, which can be obtained from a local 
analysis of $Q$ at the singular point.

The second aspect concerns the choice of branches of $Q^2$ in \eqref{ODE}, where we take advantage of the fact that all the branch points of \eqref{spectral_curve} are quadratic. 
The value of $Q^2$ is found numerically by solving \eqref{spectral_curve}, which gives us the three possible values $\xi_k$. Near a branch point two of these values are close 
(corresponding to the two solutions coinciding at this branch point), allowing us to distinguish the third solution. This, in turn, allows to recognize on the the possible branches 
of $Q^2$, but not the other two.

For instance, at $z=b_2$ we will have $\xi_1 \approx \xi_3$ (see \eqref{xi_at_branchpoints4}), which singles out the branch  $\xi_2$ and makes the branch $Q^2=(\xi_1-\xi_3)^2$ 
easily distinguishable. In order to identify the remaining two branches (i.e., $(\xi_2-\xi_1)^2$ and  $(\xi_2-\xi_3)^2$) we must use further results about the local and global 
structure of the critical graph of $\varpi$, established in Section~\ref{section:dynamics}.


\section*{Acknowledgements}
Part of this work was carried out during our visit to the Department of Mathematics of the Vanderbilt University, USA,
as well as during a stay of the first author as a Visiting Chair Professor at the Department of Mathematics of the Shanghai Jiao Tong University (SJTU), China. We gratefully acknowledge the hospitality and the work environment of the host institutions.

We also thank 
L.~Zhang for explaining his article \cite{filipuk_vanassche_zhang}, which was our starting motivation, E.~A.~Rakhmanov for his interest, and A.B.J.~Kuijlaars 
for a careful reading of the first version of this manuscript and for his useful suggestions. In particular, we give him credit for the Example~\ref{example_arno}.

The first author was partially supported by the Spanish Government together with the European Regional Development Fund (ERDF) under grants
MTM2011-28952-C02-01 (from MICINN) and MTM2014-53963-P (from MINECO), by Junta de Andaluc\'{\i}a (the Excellence Grant P11-FQM-7276 and the research group FQM-229), and by Campus 
de Excelencia Internacional del Mar (CEIMAR) of the University of Almer\'{\i}a. The second author was supported by FWO Flanders projects G.0641.11 and G.0934.13.

\newpage

\bibliographystyle{amsplain}

\end{document}